\pgfplotsset{compat=1.15}
\newtheorem{theoreme}{Théorème}[section]
\newtheorem{theorem}[theoreme]{Theorem}
\newtheorem{prop-f}[theoreme]{Proposition}
\newtheorem{prop}[theoreme]{Proposition}
\newtheorem{lemme}[theoreme]{Lemme}
\newtheorem{lemma}[theoreme]{Lemma}
\newtheorem{definen}[theoreme]{Definition}
\newtheorem{remk}[theoreme]{Remark}
\newcommand{\E}{\mathbb{E}}
\newcommand{\N}{\mathbb{N}}
\renewcommand{\P}{\mathbb{P}}
\newcommand{\R}{\mathbb{R}}
\newcommand{\Z}{\mathbb{Z}}
\renewcommand{\l}{\ell}
\renewcommand{\lll}{\ell^\Lambda}
\newcommand{\cA}{\mathcal{A}}
\newcommand{\cB}{\mathcal{B}}
\newcommand{\cD}{\mathcal{D}}
\newcommand{\cE}{\mathcal{E}}
\newcommand{\cF}{\mathcal{F}}
\newcommand{\cG}{\mathcal{G}}
\newcommand{\cM}{\mathcal{M}}
\newcommand{\cN}{\mathcal{N}}
\newcommand{\cO}{\mathcal{O}}
\newcommand{\cP}{\mathcal{P}}
\newcommand{\cT}{\mathcal{T}}
\newcommand{\fA}{\mathfrak{A}}
\newcommand{\OG}{\overline{\gamma}}
\newcommand{\gu}{\gamma^{*}}
\newcommand{\gd}{\gamma^{**}}
\newcommand{\ou}{\overline{u}}
\newcommand{\ow}{\overline{w}}
\newcommand{\td}{t^{**}}
\newcommand{\Tu}{T^{*}}
\newcommand{\Td}{T^{**}}
\newcommand{\Bd}{B^{**}}
\newcommand{\ogu}{\overline{\gamma}^{*}}
\newcommand{\ogd}{\overline{\gamma}^{**}}
\newcommand{\ud}{u^{**}}
\newcommand{\vd}{v^{**}}
\newcommand{\zd}{z^{**}}
\newcommand{\oz}{\omega}
\newcommand{\tpm}{\tilde{\pi}}
\newcommand{\ed}{\cE^{**}}
\newcommand{\Ed}{E^{**}}
\newcommand{\Ep}{E^*_+}
\newcommand{\Em}{E^*_-}
\newcommand{\Eep}{E^{**}_+}
\newcommand{\Eem}{E^{**}_-}
\newcommand{\EeM}{E^{**}_P}
\newcommand{\Bus}{B_{1,s,N}}
\newcommand{\Bds}{B_{2,s,N}}
\newcommand{\Bts}{B_{3,s,N}}
\newcommand{\Bqs}{B_{4,s,N}}
\newcommand{\tts}{t_{3,s,N}}
\newcommand{\ulm}{u^\Lambda}
\newcommand{\vlm}{v^\Lambda}
\renewcommand{\r}{t_{\min}}
\newcommand{\ST}{t_{\max}}
\newcommand{\rr}{\overline{r}}
\newcommand{\pp}{\pi_{u_2,u_3} \cup \pi_{v_3,v_2}}
\newcommand{\Sun}{S^1}
\newcommand{\Sde}{S^2}
\newcommand{\Mun}{M}
\newcommand{\Ka}{K}
\newcommand{\Kb}{K_1}
\newcommand{\Kd}{K_2}
\newcommand{\Kf}{K_3}
\newcommand{\Kh}{K''}
\newcommand{\Ki}{K'}
\newcommand{\Ca}{C_1}
\newcommand{\Cb}{C_2}
\newcommand{\Cc}{C}
\newcommand{\Cd}{C_3}
\newcommand{\Ce}{C_4}
\newcommand{\Da}{D_1}
\newcommand{\Db}{D_3}
\newcommand{\Lo}{\overline{L}}
\newcommand{\Lu}{\underline{L}}
\newcommand{\PP}{\pi^+}
\newcommand{\PPP}{\pi^{++}}
\newcommand{\aM}{a_{\max}}
\newcommand{\avant}{\text{before}}
\newcommand{\apres}{\text{after}}
\renewcommand{\1}{\mathds{1}}
\renewcommand{\1}{\mathds{1}}
\renewcommand{\epsilon}{\varepsilon}
\renewcommand{\phi}{\varphi}
\definecolor{qqwuqq}{rgb}{0,0.39215686274509803,0}
\definecolor{ccqqqq}{rgb}{0.8,0,0}
\numberwithin{equation}{section}
\newcounter{numeroexo}
\begin{document}
	
	\title{Geodesics in first-passage percolation cross any pattern}
	\author{Antonin Jacquet\footnote{Institut Denis Poisson, UMR-CNRS 7013, Université de Tours, antonin.jacquet@univ-tours.fr}}
	\date{}
	\maketitle
	\begin{abstract}
		In first-passage percolation, one places nonnegative i.i.d.\ random variables ($T (e)$) on the edges of $\Z^d$.  A geodesic is an optimal path for the passage times $T(e)$. Consider a local property of the time environment. We call it a pattern. We investigate the number of times a geodesic crosses a translate of this pattern. Under mild conditions, we show that, apart from an event with exponentially small probability, this number is linear in the distance between the extremities of the geodesic.
	\end{abstract}
	
	\tableofcontents
	
	\section{Introduction and main result}
	
	\subsection{Settings}\label{Settings}
	
	Fix an integer $d \ge 2$. In this article, we consider the model of first passage percolation on the hypercubic lattice $\Z^d$. We denote by $0$ the origin of $\Z^d$ and by $\cE$ the set of edges in this lattice. The edges in $\cE$ are those connecting two vertices $x$ and $y$ such that $\|x-y\|_1=1$. A finite path $\pi=(x_0,\dots,x_k)$ is a sequence of adjacent vertices of $\Z^d$, i.e.\ for all $i=0,\dots,k-1$, $\|x_{i+1}-x_i\|_1=1$. We say that $\pi$ goes from $x_0$ to $x_k$. Sometimes we identify a path with the sequence of the edges that it visits, writing $\pi=(e_1,...,e_k)$ where for $i=1,\dots,k$, $e_i=\{x_{i-1},x_i\}$. We say that $k$ is the length of $\pi$ and we denote $|\pi|=k$.
	
	The basic random object consists of a family $T=\{T(e) \, : \, e \in \cE\}$ of i.i.d.\ non-negative random variables defined on a probability space $(\Omega,\cF,\P)$, where $T(e)$ represents the passage time of the edge $e$. Their common distribution is denoted by $F$. The passage time $T(\pi)$ of a path $\pi=(e_1,\dots,e_k)$ is the sum of the variables $T(e_i)$ for $i=1,\dots,k$. 
	
	For two vertices $x$ and $y$, we define the geodesic time
	\begin{equation}
		t(x,y)= \inf \{T(\pi) \, : \, \pi \mbox{ is a path from $x$ to $y$} \}. \label{Définition geodesic time.}
	\end{equation}
	A self-avoiding path $\gamma$ such that $T(\gamma)=t(x,y)$ is called a geodesic between $x$ and $y$.
	
	For the following and for the existence of geodesics, we need some assumptions on $F$. Let $\r$ denote the minimum of the support of $F$. We recall a definition introduced in \cite{VdBK}. A distribution $F$ with support in $[0,\infty)$ is called useful if the following holds: 
	\begin{align*}
		F(\r) < p_c & \mbox{ when } \r=0, \\
		F(\r) < \overrightarrow{p_c} & \mbox{ when } \r>0,
	\end{align*}
	where $p_c$ denotes the critical probability for the Bernoulli bond percolation model on $\Z^d$ and $\overrightarrow{p_c}$ the critical probability for the oriented Bernoulli bond percolation. 
	
	In the whole article, we assume that $F$ has support in $[0,\infty)$, is useful, and that 
	\begin{equation}
		\E \min \left[ T^d_1,\dots,T^d_{2d} \right] < \infty, \label{17. Hypothèse sur l'espérance des ti.}
	\end{equation} where $T_1,\dots,T_{2d}$ are independent with distribution $F$.
	
	As $F$ is useful, $F(0)<p_c$. By Proposition 4.4 in \cite{50years}, we thus know that geodesics between any points exist with probability one. 
	
	\subsection{Patterns}\label{Sous-section patterns dans l'introduction.}
	
	For a set $B$ of vertices, we denote by $\partial B$ its boundary, this is the set of vertices which are in $B$ and which can be linked by an edge to a vertex which is not in $B$. We say that an edge $e=\{u,v\}$ is contained in a set of vertices if $u$ and $v$ are in this set.
	
	Let $L_1,\dots,L_d$ be non-negative integers. To avoid trivialities we assume that at least one of them is positive. We fix $\displaystyle \Lambda=\prod_{i=1}^d \{0,\dots,L_i\}$ and two distinct vertices $\ulm$ and $\vlm$ on the boundary of $\Lambda$. These points $\ulm$ and $\vlm$ are called endpoints. Then we fix an event $\cA^\Lambda$, with positive probability, only depending on the passage time of the edges joining two vertices of $\Lambda$. We say that $\mathfrak{P}=(\Lambda,u^\Lambda,v^\Lambda,\cA^\Lambda)$ is a pattern.
	Let $x \in \Z^d$. Define:
	\begin{itemize}
		\item for $y \in \Z^d$, $\theta_x y = y-x$,
		\item for $e=\{u,v\}$ an edge connecting two vertices $u$ and $v$, $\theta_x e = \{\theta_x u, \theta_x v\}$.
	\end{itemize}
	
	Similarly, if $\pi=(x_0,\dots,x_k)$ is a path, we define $\theta_x \pi=(\theta_x x_0,\dots, \theta_x x_k)$. Then $\theta_x T$ denotes the environment $T$ translated by $-x$, i.e. the family of random variables indexed by the edges of $\Z^d$ defined for all $e \in \cE$ by 
	\[
	\left(\theta_x T\right) (e) = T \left( \theta_{-x} e \right).
	\]
	
	Let $\pi$ be a self-avoiding path and $x \in \Z^d$. We say that $x$ satisfies the condition $(\pi ; \mathfrak{P})$ if these two conditions are satisfied:
	\begin{enumerate}
		\item $\theta_x \pi$ visits $u^\Lambda$ and $v^\Lambda$, and the subpath of $\theta_x \pi$ between $u^\Lambda$ and $v^\Lambda$ is entirely contained in $\Lambda$,
		\item $\theta_x T \in \cA^\Lambda$.
	\end{enumerate}
	Note that, if $x$ satisfies the condition $(\pi ; \mathfrak{P})$ when $\pi$ is a geodesic, then $(\theta_x \pi)_{u^\Lambda,v^\Lambda}$ is one of the optimal paths from $u^\Lambda$ to $v^\Lambda$ entirely contained in $\Lambda$ in the environment $\theta_x T$.	
	When the pattern is given, we also say "$\pi$ takes the pattern in $\theta_{-x}\Lambda$" for "$x$ satisfies the condition $(\pi;\mathfrak{P})$".
	We denote:
	\begin{equation}
		N^\mathfrak{P}(\pi)=\sum_{x \in \Z^d} \1_{\{x \text{ satisfies the condition }(\pi ; \mathfrak{P})\}}. \label{Compteur nombre de motifs empruntés introduction.}	
	\end{equation}
	Note that the number of terms in this sum is actually bounded from above by the number of vertices in $\pi$. If $N^\mathfrak{P}(\pi) \ge 1$, we say that $\pi$ takes the pattern. The aim of the article is to investigate, under reasonable conditions on $\mathfrak{P}$, the behavior of $N^\mathfrak{P}(\gamma)$ for all geodesics $\gamma$ from $0$ to $x$ with $\| x \|_1$ large. The first step is to determine these reasonable conditions, that is why we define the notion of valid patterns. 
	
	\begin{definen}
		Denote by $\{\epsilon_1,\dots,\epsilon_d\}$ the vectors of the canonical basis. An external normal unit vector associated to a vertex $z$ of the boundary of $\Lambda$ is an element $\alpha$ of the set $\{\pm\epsilon_1,\dots,\pm\epsilon_d\}$ such that $z+\alpha$ does not belong to $\Lambda$.
	\end{definen}
	
	\begin{definen}\label{Définition motif valable.}
		We say that a pattern is valid if the following two conditions hold:
		\begin{itemize}
			\item $\cA^\Lambda$ has a positive probability,
			\item the support of $F$ is unbounded or there exist two distinct external normal unit vectors, one associated with $\ulm$ and one associated with $\vlm$.
		\end{itemize} 
	\end{definen}
	
	\begin{remk}
		The existence of the two distinct vectors in the second condition of Definition \ref{Définition motif valable.} is equivalent to the fact that the endpoints of the pattern belong to two different faces.  
		Note that a real obstruction can appear when this second condition is not satisfied. For example, take $d=2$, $F = \frac{1}{2} \delta_1 + \frac{1}{2} \delta_4$, $\Lambda = \{0,1\} \times \{0,1,2,3\}$, $u^\Lambda=(0,2)$, $v^\Lambda=(0,1)$, and $\cA^\Lambda$ the event on which for all edges $e \in \Lambda$ such that $e$ is adjacent to $u^\Lambda$ or $v^\Lambda$, $T(e)=4$ and for all other edges $e$ of $\Lambda$, $T(e)=1$. This is the pattern of Figure \ref{Figure motif non valable dans l'intro.} and in what follows in this remark, we use the notations of this figure.
		The only geodesic from $\ulm$ to $\vlm$ entirely contained in the pattern is $(\ulm,\vlm)$. However, neither $(a,\ulm,\vlm)$, nor $(b,\ulm,\vlm)$, nor $(\ulm,\vlm,e)$, nor $(\ulm,\vlm,d)$ is a geodesic. Hence, every geodesic taking the pattern would contain the path $(c,\ulm,\vlm,f)$ but this path is not a geodesic.
	\end{remk}

	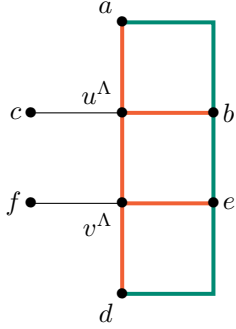
\begin{figure}
		\begin{center}
			\begin{tikzpicture}[scale=1.2]
				\draw (0,0) grid (1,3);
				\draw[color=PineGreen,line width=1.5pt] (0,3) -- (1,3) -- (1,0) -- (0,0);
				\draw[color=RedOrange,line width=1.5pt] (0,0) -- (0,3); 
				\draw[color=RedOrange,line width=1.5pt] (0,1) -- (1,1);
				\draw[color=RedOrange,line width=1.5pt] (0,2) -- (1,2);
				\draw (-1,1) -- (0,1);
				\draw (-1,2) -- (0,2);
				\draw (0,3) node[above left] {$a$};
				\draw (0,3) node {$\bullet$};
				\draw (1,2) node[right] {$b$};
				\draw (1,2) node {$\bullet$};
				\draw (-1,2) node[left] {$c$};
				\draw (-1,2) node {$\bullet$};
				\draw (0,0) node[below left] {$d$};
				\draw (0,0) node {$\bullet$};
				\draw (1,1) node[right] {$e$};
				\draw (1,1) node {$\bullet$};
				\draw (-1,1) node[left] {$f$};
				\draw (-1,1) node {$\bullet$};
				\draw (0,2) node[above left] {$\ulm$};
				\draw (0,2) node {$\bullet$};
				\draw (0,1) node[below left] {$\vlm$};
				\draw (0,1) node {$\bullet$};
			\end{tikzpicture}
			\caption{Example of a pattern which can not be taken by a geodesic. The passage times of the edges in red are equal to $4$ and those of the edges in green are equal to $1$.}\label{Figure motif non valable dans l'intro.}
		\end{center}
	\end{figure}
	
	\subsection{Main result}
	
	The main result of this paper is the following.
	
	\begin{theorem}\label{17. Théorème à démontrer.}
		Let $\mathfrak{P}=(\Lambda,u^\Lambda,v^\Lambda,\cA^\Lambda)$ be a valid pattern and assume that $F$ is useful and satisfies \eqref{17. Hypothèse sur l'espérance des ti.}. Then there exist $\alpha >0$, $\beta_1 >0$ and $\beta_2>0$ such that for all $x \in \Z^d$, \[ \P \left(\exists \mbox{ a geodesic $\gamma$ from $0$ to $x$ such that } N^\mathfrak{P}(\gamma) < \alpha \|x\|_1 \right) \le \beta_1 \mathrm{e}^{-\beta_2 \|x\|_1}. \]
	\end{theorem}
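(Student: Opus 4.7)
My plan is a block-decomposition argument. Fix a scale $N$ much larger than $\max_i L_i$, set $B_N=\{0,\dots,N-1\}^d$, choose $y\in\Z^d$ such that $\Lambda_y := y+\Lambda$ sits strictly inside $B_N$, and define a \emph{good box event} $G$ on the edges of $B_N$ as the intersection of (i) the pattern event $\theta_y T\in \cA^\Lambda$ (which, after translation, places the pattern at $\Lambda_y$) and (ii) a local event on the edges of $B_N\setminus\Lambda_y$ that forces any cheapest self-avoiding crossing of $B_N$ to enter at $y+u^\Lambda$, follow the pattern from $y+u^\Lambda$ to $y+v^\Lambda$, and exit at $y+v^\Lambda$. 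The construction of (ii) uses the validity assumptions: if $F$ has unbounded support, we give very large passage times to every edge of $B_N\setminus\Lambda_y$ outside a narrow ``channel'' connecting two faces of $B_N$ to the pattern endpoints, so that any crossing avoiding the channel is prohibitively expensive; if $F$ has bounded support, the existence of two \emph{distinct} external normal unit vectors at $u^\Lambda$ and $v^\Lambda$ allows the channel to enter and exit $B_N$ through different faces, while the useful condition provides positive probability for the absence in $B_N\setminus(\Lambda_y\cup\text{channel})$ of any $\r$-open detour cheaper than the channel-plus-pattern route. Together with $\P(\cA^\Lambda)>0$, this yields $\P(G)\ge p>0$ for some $p=p(\mathfrak{P},F,N)>0$.

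\noindent I then tile a neighborhood of the segment $[0,x]$ with the disjoint translates $B_N+Nk$, $k\in\Z^d$, and let $G_k$ be the corresponding event. Since the $G_k$ depend on disjoint edge sets they are i.i.d.\ Bernoulli$(p)$, so any deterministic family of such translates contains at least $p/2$ good ones off an event exponentially small in the family size. The other input is an a priori control on geodesics: up to an event of probability at most $\beta_1' \mathrm{e}^{-\beta_2'\|x\|_1}$, every geodesic $\gamma$ from $0$ to $x$ has length at most $C\|x\|_1$ and stays in a ball of radius $C\|x\|_1$. Since $|\gamma|\ge \|x\|_1$ and each translate contains at most $(N+1)^d$ vertices, $\gamma$ visits at least $c\|x\|_1$ distinct translates on this event. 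By the forcing property (ii), each visited translate on which $G_k$ holds contributes at least one to $N^\mathfrak{P}(\gamma)$, so it suffices to show that a positive fraction of the translates visited by $\gamma$ are good.

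\noindent The main obstacle lies in this last step: the set of translates visited by $\gamma$ is itself random and correlated with the $G_k$, whose defining edges lie inside those very translates. I would handle this by conditioning on the configuration of edges \emph{outside} $\bigcup_k (B_N+Nk)$, under which the $G_k$ remain independent, and by rephrasing ``$\gamma$ visits $B_N+Nk$'' through crossings of $\partial(B_N+Nk)$, which are measurable with respect to the conditioning. Combined with the uniform Chernoff bound above and a deterministic enumeration over the $O(\|x\|_1^d)$ candidate translates in the a priori region containing $\gamma$, this should produce the announced estimate $\beta_1 \mathrm{e}^{-\beta_2\|x\|_1}$. The other technically demanding point is the explicit construction of the forcing event (ii) in the bounded-support case, which requires suppressing low-cost detours around the channel and leans crucially on the useful hypothesis (the only place where it enters quantitatively).
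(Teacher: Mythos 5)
Your proposal takes a genuinely different route from the paper, which uses a modification/resampling argument rather than a direct forcing-by-blocks argument. Unfortunately the forcing step has a gap that the paper's authors were specifically designing around, and I don't see how to repair it within your framework.

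The core problem is the ``forcing property~(ii)'': you want a local event on the edges of $B_N$ so that \emph{any} geodesic visiting the box is compelled to take the pattern. But the portion of a geodesic $\gamma$ from $0$ to $x$ inside $B_N$ is not a ``cheapest crossing of $B_N$''. First, the subpath of $\gamma$ between its entry point $a\in\partial B_N$ and exit point $b\in\partial B_N$ need not stay in $B_N$; it can leave and re-enter. Second, and more fundamentally, the entry and exit points $a,b$ are determined by the \emph{global} environment outside the box, and can be any pair of boundary vertices; in particular they can be two adjacent vertices on the same face, in which case the geodesic from $a$ to $b$ is a single edge and obviously does not take the pattern. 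No event on the edges of $B_N$ alone, with positive probability, can prevent this. The paper's proof avoids this by requiring the selected geodesic to enter the innermost ball $B_1$ of the box (so its entry and exit into $B_2$ are on opposite sides), and even then it does not deterministically force all geodesics; it performs a \emph{modification} (edge resampling) and argues that the modified environment --- which has the same law --- has a positive conditional probability of having all geodesics pass through the pattern. In the bounded-support case the paper needs a two-step modification (creating a cheap ``highway'' through the pattern plus ``walls'' around it) precisely because there are no ``prohibitively expensive'' edges available, and Lemmas~\ref{17. Toute géodésique touche pi avant et après le motif.} and~\ref{17. Toute géodésique qui touche pi avant et après le motif emprunte le motif.} are substantial pieces of work establishing that the modified environment actually traps all geodesics; your paragraph sketching ``suppressing low-cost detours'' has no counterpart to this and I do not see how to supply one when the event must handle all $(a,b)$.

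There is a second, related gap in your conditioning step. You propose to condition on the edges outside $\bigcup_k(B_N+Nk)$ and rephrase ``$\gamma$ visits $B_N+Nk$'' through boundary crossings ``measurable with respect to the conditioning''. But which boxes $\gamma$ visits manifestly depends on the edge weights \emph{inside} the boxes as well (the geodesic optimizes over the whole environment), so this measurability claim fails, and the independence between ``visited'' and ``good'' is not recovered. The paper's answer to this is the concentric-annuli / $M$-sequence construction together with the iterated bound $\P(\cM(\ell))\le\lambda\,\P(\cM(\ell-1))$ of Lemma~\ref{17. Majoration par lambda puissance q}, which sidesteps the dependence by comparing the probability of the event ``no pattern up to annulus $\ell$'' with ``no pattern up to annulus $\ell-1$'' via resampling. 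So while your high-level intuition (good local boxes should appear often along any geodesic) is the same, the mechanism you propose to exploit it does not close.
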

	Showing the existence of a constant $c > 0$  such that, for all large $n$,
	\begin{equation}\label{l:esperance-lineaire}
		\E[N^\mathfrak{P}(\pi(n))] \ge cn
	\end{equation}
	where $\mathfrak{P}$ is a properly designed pattern and where $\pi(n)$ is the first geodesic from $0$ to $n\epsilon_1$ (geodesics are ordered in an arbitrary way), has been a key intermediate result to show several properties in first-passage percolation.
	The first result of this kind appears in an article by van den Berg and Kesten \cite{VdBK}. Let us recall their setting. Assume that $F$ is a finite mean distribution on $[0,+\infty)$.
	Denote by $\mu(F)$ the time constant associated to $F$, that is
	\[
	\mu(F) = \lim_{n\to\infty} \frac{\E[t(0,n\epsilon_1)]}n.
	\]
	Let $\tilde F$ be another finite mean distribution on $[0,+\infty)$.
	Assume $F$ useful, $F \neq \tilde F$ and $d \ge 2$.
	If $\tilde F$ is more variable\footnote{One says that $\tilde F$ is more variable than $F$ 
		if there exists two random variables $T$ -- with distribution $F$ -- and $\tilde T$ -- with distribution $\tilde F$ -- such that $\E[\tilde T|T] \le T$.
		See Definition (2.1) and Theorem 2.6 in \cite{VdBK}.} than $F$,
	then $\mu(\tilde F) < \mu(F)$.
	This is the main result of \cite{VdBK} and estimate \eqref{l:esperance-lineaire} is the content of their Proposition 5.22. The proof relies on a modification argument. 
	
	In \cite{Nakajima}, Nakajima proves a version of \eqref{l:esperance-lineaire} to show that the number of geodesics between two vertices has an exponential growth if the distribution has an atom. The result can be deduced from Theorem \ref{17. Théorème à démontrer.} as follows. Denote by $\kappa$ an atom of the distribution. Consider the pattern $\mathfrak{P}=(\Lambda,\ulm,\vlm,\cA^\Lambda)$ where:
	\begin{itemize}
		\item $\displaystyle \Lambda=\{0,1\} \times \{0,1\} \times \prod_{i=3}^{d} \{0\}$,
		\item $\ulm=(0,\dots,0)$ and $\vlm=(1,1,0,\dots,0)$,
		\item $\cA^\Lambda$ the event on which the passage time of every edge of $\Lambda$ is equal to $\kappa$.
	\end{itemize}
	The key fact about this pattern is the following: each time a geodesic takes the pattern, the geodesic can chose any of the two optimal paths between the endpoints.
	
	Then, one of the most recent result of this kind appears in an article by Krishnan, Rassoul-Agha and Seppäläinen in \cite{KRAS} (see Theorem 5.4 and Theorem 6.2). They use \eqref{l:esperance-lineaire} for some specific geodesics in order to get results about the Euclidean length of geodesics and the strict concavity of the expected passage times as a function of the weight shifts.
	
	We explain in this section the differences between our result and the ones in \cite{KRAS}, \cite{Nakajima} and \cite{VdBK} but we give more details about \cite{KRAS} below in Section \ref{Sous-section Introduction Some applications.} as we wish to strengthen some of their results to illustrate the use of Theorem \ref{17. Théorème à démontrer.}. Theorem \ref{17. Théorème à démontrer.} is stronger on three aspects that are commented below:
	\begin{enumerate}
		\item\label{Intro 1.} It deals with general patterns while the results in \cite{KRAS}, \cite{Nakajima} and \cite{VdBK} are stated for specific patterns. 
		\item\label{Intro 2.} In the case of non-uniqueness of geodesics, it gives the result for all geodesics and not only for a specific one.
		\item\label{Intro 3.} It provides an at least linear growth of the number of crossed patterns out of an event of exponentially small probability.
	\end{enumerate}

	Since the proof given by van den Berg and Kesten, it has been clear that \eqref{l:esperance-lineaire} should be true for any reasonable pattern. As explained above, \eqref{l:esperance-lineaire} has indeed been proven for several specific patterns in \cite{KRAS}, \cite{Nakajima} and \cite{VdBK}. In some part of the proof of \eqref{l:esperance-lineaire}, one needs to design a new environment in which the geodesics have to cross the pattern. When the support of $F$ is unbounded, the argument is relatively straightforward. However, when the support of $F$ is bounded, this is more involved. Actually, in \cite{KRAS}, \cite{Nakajima} and \cite{VdBK}, when the support of $F$ is bounded, each of the proofs is technical and makes use of specific properties of the considered pattern. The extension to any reasonable pattern, while naturally expected, actually requires new arguments and is a significant difficulty in the proof of Theorem \ref{17. Théorème à démontrer.}. Thanks to Theorem \ref{17. Théorème à démontrer.}, we can for example generalize Theorem 6.2 in \cite{KRAS}. See Theorem \ref{Théorème 6.2 KRAS} below. Let us note however that the strategy developed in the bounded case in Section  \ref{Section preuve du théorème 6.2.} to remove the restriction in Assumption 6.1 in \cite{KRAS} could be used in the proof of Theorem 6.2 in \cite{KRAS}.
	
	In \cite{KRAS}, \cite{Nakajima} and \cite{VdBK}, \eqref{l:esperance-lineaire} is proven only for a specific geodesic. This has no consequence on the main results of \cite{Nakajima} and \cite{VdBK}. However obtaining a result for all geodesics enables to strengthen one of the main results of \cite{KRAS} in the bounded case. See Remark \ref{Remarque KRAS pour toute géodésique.}. Dealing with all geodesics is obtained thanks to a new idea using concentric annuli to define and localize good boxes (see Section \ref{Sous-section preuve, cas non borné}).
	
	The last difference with the results of \cite{KRAS}, \cite{Nakajima} and \cite{VdBK} is that our result is stronger than a result in expectation. However, notice that the result in expectation is sufficient for the applications in \cite{Nakajima} and \cite{VdBK}. We refer to Section \ref{Sous-section Introduction Some applications.} for comments on \cite{KRAS}.
	
	
	
	A result fulfilling items \ref{Intro 2.} and \ref{Intro 3.} above appears in an article by Andjel and Vares \cite{AndjelVares} for the number of edges with large time crossed by a geodesic.
	
	\begin{theorem}[Theorem 2.3 in \cite{AndjelVares}]
		Let $F$ be a useful distribution on $[0,+\infty)$ with unbounded support. Then, for each $M$ positive there exists $\epsilon=\epsilon(M)>0$ and $\alpha=\alpha(M)>0$ so that for all $x$, we have
		\begin{equation}
			\P \left(\exists \text{ geodesic $\pi$ from $0$ to $x$ such that } \sum_{e\in\pi} \1_{T(e) \ge M} \le \alpha \|x\|_1 \right) \le \mathrm{e}^{-\epsilon \|x\|_1}. \label{Théorème Andjel/Vares}
		\end{equation}		
	\end{theorem}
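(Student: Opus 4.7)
The plan is to deduce the Andjel--Vares estimate as a short corollary of Theorem \ref{17. Théorème à démontrer.} by applying it to the minimal pattern that detects a single edge with passage time at least $M$. Fix $M > 0$ and consider the pattern $\mathfrak{P} = (\Lambda, u^\Lambda, v^\Lambda, \cA^\Lambda)$ with
\[
\Lambda = \{0,1\} \times \prod_{i=2}^d \{0\}, \qquad u^\Lambda = 0, \qquad v^\Lambda = \epsilon_1,
\]
and $\cA^\Lambda = \bigl\{T(\{0,\epsilon_1\}) \ge M\bigr\}$. Since $F$ has unbounded support, $\P(\cA^\Lambda) > 0$; moreover the second validity condition in Definition \ref{Définition motif valable.} is automatically satisfied by the unbounded support hypothesis. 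Hence $\mathfrak{P}$ is a valid pattern in the sense of Definition \ref{Définition motif valable.}.

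Next I would unwind the definition of $N^\mathfrak{P}(\gamma)$ for this specific pattern. A vertex $x \in \Z^d$ satisfies the condition $(\gamma;\mathfrak{P})$ precisely when $\gamma$ traverses the edge $\{x, x+\epsilon_1\}$ and this edge has passage time at least $M$ (the subpath-in-$\Lambda$ requirement is trivial since $\Lambda$ contains only a single edge). Because distinct $x$'s correspond to distinct edges of $\gamma$, one obtains the pointwise inequality
\[
N^\mathfrak{P}(\gamma) \le \sum_{e \in \gamma} \1_{T(e) \ge M}.
\]
Consequently the event appearing in the Andjel--Vares statement is contained in $\{\exists \gamma : N^\mathfrak{P}(\gamma) \le \alpha \|x\|_1\}$.

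Applying Theorem \ref{17. Théorème à démontrer.} to $\mathfrak{P}$ produces constants $\alpha^*, \beta_1, \beta_2 > 0$ (depending on $M$) with
\[
\P\bigl(\exists \gamma : N^\mathfrak{P}(\gamma) < \alpha^* \|x\|_1\bigr) \le \beta_1 \mathrm{e}^{-\beta_2 \|x\|_1}.
\]
Taking $\alpha$ slightly smaller than $\alpha^*$ and $\epsilon$ slightly smaller than $\beta_2$, the inclusion of events together with a routine absorption of the prefactor $\beta_1$ into the exponent (valid for $\|x\|_1$ larger than some threshold, the finitely many remaining values being handled by shrinking $\epsilon$ once more) yields the required bound $\mathrm{e}^{-\epsilon \|x\|_1}$. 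There is no genuine obstacle in this deduction: the only substantive input is Theorem \ref{17. Théorème à démontrer.} itself, and the unbounded support hypothesis in Andjel--Vares is precisely what makes validity of the one-edge pattern immediate, so the difficult bounded-support case of the main theorem is entirely bypassed.
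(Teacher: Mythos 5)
Your reduction is exactly the one the paper itself sketches in the paragraph following the theorem statement: the one-edge pattern with $\cA^\Lambda=\{T(\{0,\epsilon_1\})\ge M\}$ is valid (unbounded support gives both $\P(\cA^\Lambda)>0$ and the second condition of Definition \ref{Définition motif valable.}), $N^{\mathfrak{P}}(\gamma)$ counts precisely the $\epsilon_1$-direction edges of $\gamma$ with passage time at least $M$ and is therefore dominated by $\sum_{e\in\gamma}\1_{T(e)\ge M}$, and the prefactor $\beta_1$ is absorbed as you describe (for the finitely many small values of $\|x\|_1$ one can note that the complementary event contains the event that all $2d$ edges at the origin have passage time at least $M$, so the probability is uniformly bounded away from $1$). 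Two points deserve explicit attention. First, circularity: the paper uses the bounded-support version of the Andjel--Vares estimate (Theorem \ref{Théorème Andjel/Vares dans le cas borné.}, via \eqref{on définit alpha.}) as an input to the proof of Theorem \ref{17. Théorème à démontrer.} in the bounded case; your deduction is legitimate only because, for $F$ with unbounded support, Theorem \ref{17. Théorème à démontrer.} is established by the argument of Section 2, which relies on Lemma 5.5 of \cite{VdBK} (inequality \eqref{17. définition de delta}) and not on any form of the Andjel--Vares result. This needs to be said. Second, a genuine gap relative to the statement as quoted: Theorem \ref{17. Théorème à démontrer.} assumes the moment condition \eqref{17. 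Hypothèse sur l'espérance des ti.}, whereas Theorem 2.3 of \cite{AndjelVares} assumes only that $F$ is useful with unbounded support. Your argument therefore proves the estimate only under this additional moment hypothesis --- harmless inside the paper, where it is a standing assumption, but strictly weaker than the cited theorem, whose proof in full generality is the direct modification argument of \cite{AndjelVares} rather than a corollary of Theorem \ref{17. Théorème à démontrer.}.
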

	
	Theorem \ref{17. Théorème à démontrer.} is a generalization of this theorem since, to get this result, we can take the pattern (reduced to one edge) $\mathfrak{P}=(\{\ulm,\vlm\},\ulm,\vlm,\cA^\Lambda)$ where $\ulm=(0,\dots,0)$, $\vlm=(1,0,\dots,0)$ and $\cA^\Lambda$ is the event on which the passage time of the only edge of the pattern is greater than $M$. The proof of Theorem \ref{17. Théorème à démontrer.} is partly inspired by the proof of this theorem and by the proof of \eqref{l:esperance-lineaire} in \cite{VdBK}.
	
	Even if it is stated for distributions with unbounded support, one can check that Theorem 2.3 in \cite{AndjelVares} holds for $F$ with bounded support with the same proof. As we need this extension in the proof of Theorem \ref{17. Théorème à démontrer.} we state it below.
	
	\begin{theorem}\label{Théorème Andjel/Vares dans le cas borné.}
		Let $F$ be a useful distribution on $[0,+\infty)$ with bounded support. Then, for each $M$ positive such that $F([M,+\infty))>0$, there exists $\epsilon=\epsilon(M)>0$ and $\alpha=\alpha(M)>0$ so that for all $x$, we have \eqref{Théorème Andjel/Vares}.
	\end{theorem}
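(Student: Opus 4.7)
The plan is to transcribe the proof of Theorem 2.3 in \cite{AndjelVares} step by step and to verify that the unbounded support of $F$ is nowhere essential: it is used only to guarantee the positivity of $F([M,+\infty))$, which we here assume as an explicit hypothesis.

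First, I would recall the Kesten-type deterministic length bound, which relies only on the usefulness of $F$ (and is therefore equally valid in the bounded case): there exist constants $K=K(F,d)<\infty$ and $c_0>0$ such that
\[
\P\bigl(\exists \text{ a geodesic } \gamma \text{ from } 0 \text{ to } x \text{ with } |\gamma|>K\|x\|_1\bigr)\le e^{-c_0\|x\|_1}.
\]
On the complementary event, every geodesic from $0$ to $x$ is contained in a deterministic tube $\cT_x$ around $[0,x]$ of size $O(\|x\|_1)$.

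Next, tile $\cT_x$ by disjoint boxes of some fixed side length $N=N(M)$ to be chosen. Declare a box $B$ \emph{good} if every edge with both endpoints in $B$ has passage time at least $M$: since $F([M,+\infty))>0$, this event has probability $p(N,M)>0$, and goodness of disjoint boxes is independent. A Peierls-type argument in the spirit of Section 2 of \cite{AndjelVares}, combined with the previous length bound, then shows that outside an exponentially unlikely event every geodesic from $0$ to $x$ visits at least $\alpha\|x\|_1$ good boxes, and therefore contains at least $\alpha\|x\|_1$ edges $e$ with $T(e)\ge M$.

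The only place where the original Andjel--Vares argument invokes the unbounded support of $F$ is to ensure, for arbitrary $M>0$, a positive-probability event on which some edge has passage time $\ge M$. Once this positivity is granted as a hypothesis, as in our statement, the proof transports itself word for word. The main technical point to check is that no step implicitly requires $M$ to be arbitrarily large; but $M$ is fixed throughout the argument and all constants $K$, $\epsilon(M)$, $\alpha(M)$, $N(M)$ depend only on $M$ and on $F$, so the adaptation to the bounded-support case is immediate.
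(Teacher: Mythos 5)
The proposed proof misrepresents the structure of Andjel and Vares's argument and, as written, does not go through. Declaring a box good when every edge inside it has passage time at least $M$ yields an event whose probability $p(N,M)=F([M,\infty))^{|B|_e}$ is typically very small, and the lattice-animal union bound (the number of animals of size $j$ grows like $7^{dj}$) cannot be absorbed by a Chernoff bound at rate governed by $p$ unless $p$ is close to $1$, not merely positive; this is precisely why the paper's own good-box counting in Lemma~\ref{17. Majoration de la proba du complémentaire de G.} uses \emph{typical} boxes, whose probability tends to $1$ as $N\to\infty$, rather than boxes defined by a low-probability local event. More decisively, even if many all-slow-edge boxes exist, a geodesic will simply route around them: nothing in your argument forces a geodesic to visit such a box, and in general it will not, since a good box in your sense is exactly the kind of region a time-minimizing path avoids. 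So the step ``therefore contains at least $\alpha\|x\|_1$ edges $e$ with $T(e)\ge M$'' rests on a premise that is neither established nor true.

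The proof the paper actually has in mind when it says ``with the same proof'' is a modification (resampling) argument using \emph{penalized} geodesics, not a Peierls containment. One restricts attention to the cheapest path from $0$ to $x$ among those avoiding all edges with $T(e)\ge M$, attaches to it a deterministic sequence of boxes it crosses, and by resampling the environment inside a selected box shows that the probability of the bad event decreases by a factor $\lambda<1$ at each step, giving a bound of the form $\lambda^{c\|x\|_1}$; this is exactly the structure the paper reuses via Lemma~\ref{17. Majoration par lambda puissance q} and its citation of Lemma~3.8 of \cite{AndjelVares}. You are right that the unbounded support of $F$ enters their original statement only to guarantee $F([M,\infty))>0$ for arbitrary $M$, and right that once this is assumed the same argument transports verbatim; but the content that needs transporting is the modification argument, and your proposal replaces it with a counting argument that does not establish the result.
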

	
	\subsection{Some applications}\label{Sous-section Introduction Some applications.}
	
	Several of the main results recently obtained in \cite{KRAS} are based on modification arguments leading to results of the type \eqref{l:esperance-lineaire}. We take advantage of Theorem \ref{17. Théorème à démontrer.} to slightly improve some of these results. The purpose of this section is primarily to illustrate the use of Theorem \ref{17. Théorème à démontrer.}, the details of the proofs are postponed to Section \ref{Section preuve des généralisations de KRAS.}.
	
	\subsubsection*{Euclidean length of geodesics}\label{Sous-section Théorème 6.2 dans KRAS.}
	
	Consider the following two assumptions 
	on the distribution $F$:
	\begin{enumerate}[label=(H\arabic*)]
		\item\label{Hypothèse H1} There exist strictly positive integers $k$ and $\l$ and atoms $r'_1,\dots,r'_{k+2\l},$ $s'_1,\dots,s'_k$ (not necessarily distinct) such that 
		\begin{equation}
			\sum_{i=1}^{k+2\l} r'_i = \sum_{j=1}^{k}s'_j. \label{Egalité avec les atomes}
		\end{equation}
		\item\label{Hypothèse H2} There exist strictly positive integers $k$ and $\l$ and atoms $r<s$ such that $(k+2\l)r=ks$, or $F$ has an atom in $0$.
	\end{enumerate}
	Note that \ref{Hypothèse H2} is strictly stronger than \ref{Hypothèse H1}. For $x \in \Z^d$, we denote by $\Lu_{0,x}$ (resp. $\Lo_{0,x}$) the minimal (resp. maximal) Euclidean length of self-avoiding geodesics from $0$ to $x$. In \cite{KRAS},  Krishnan, Rassoul-Agha and Seppäläinen prove the following theorem. 
	
	\begin{theorem}[Theorem 6.2 in \cite{KRAS}]\label{Vrai Théorème 6.2 KRAS.}
		Assume that $\P(T(e)=\r) < p_c$ and $\E \min \left[ T^p_1,\dots,T^p_{2d} \right] < \infty$ with $p>1$. Furthermore, assume one of the following two assumptions:
		\begin{itemize}
			\item the support of $F$ is unbounded and \ref{Hypothèse H1} is satisfied,
			\item the support of $F$ is bounded and \ref{Hypothèse H2} is satisfied.
		\end{itemize}
		Then, there exist constants $0<D,\delta,M<\infty$ such that
		\begin{equation}
			\P \left(\Lo_{0,x}-\Lu_{0,x} \ge D \|x\|_1 \right) \ge \delta \text{ for $\|x\|_1 \ge M$}. \label{Résultat vrai théorème 6.2.}
		\end{equation}
	\end{theorem}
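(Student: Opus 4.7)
The plan is to build, depending on the hypotheses, a valid pattern $\mathfrak{P}=(\Lambda,\ulm,\vlm,\cA^\Lambda)$ which on $\cA^\Lambda$ admits exactly two self-avoiding paths from $\ulm$ to $\vlm$ inside $\Lambda$ realizing the minimal passage time in $\Lambda$, and whose Euclidean lengths differ by exactly $2\l$. Once such a pattern is constructed, Theorem \ref{17. Théorème à démontrer.} immediately yields constants $\alpha,\beta_1,\beta_2>0$ such that, outside of an event of probability at most $\beta_1\mathrm{e}^{-\beta_2\|x\|_1}$, every geodesic $\gamma$ from $0$ to $x$ satisfies $N^\mathfrak{P}(\gamma)\ge\alpha\|x\|_1$. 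On this event, at each location where $\gamma$ takes $\mathfrak{P}$, the portion of $\gamma$ inside the translated box must coincide with one of the two optimal internal paths, say $\pi_{\text{short}}$ or $\pi_{\text{long}}$; swapping these independently at any subset of occurrences produces a new geodesic with identical total passage time. Forcing systematically $\pi_{\text{long}}$ and, separately, $\pi_{\text{short}}$ at every occurrence yields two geodesics from $0$ to $x$ whose Euclidean lengths differ by at least $2\l\cdot N^\mathfrak{P}(\gamma)\ge 2\l\alpha\|x\|_1$. Hence $\Lo_{0,x}-\Lu_{0,x}\ge 2\l\alpha\|x\|_1$, and setting $D=2\l\alpha$ and choosing $M$ large enough that $1-\beta_1\mathrm{e}^{-\beta_2 M}\ge\delta$ for some fixed $\delta\in(0,1)$ gives the claim.

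For the pattern construction, in both cases I would place inside a small box $\Lambda\subset\Z^d$ two self-avoiding paths $\pi_{\text{short}}$ and $\pi_{\text{long}}$ of respective lengths $k$ and $k+2\l$ sharing their endpoints $\ulm,\vlm$ on two distinct faces of $\Lambda$ (the parity of lengths is consistent since two lattice paths with the same endpoints have the same parity of length). Under \ref{Hypothèse H1}, define $\cA^\Lambda$ by prescribing the atoms $s'_1,\dots,s'_k$ on the edges of $\pi_{\text{short}}$ and $r'_1,\dots,r'_{k+2\l}$ on those of $\pi_{\text{long}}$ -- the totals being equal by \eqref{Egalité avec les atomes} -- together with a sufficiently large passage time $M^*$ on every other edge of $\Lambda$. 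Unboundedness of the support makes the event positively probable and forces every alternative internal path to be strictly more costly. Under \ref{Hypothèse H2}, either assign the value $0$ to every edge of $\pi_{\text{short}}\cup\pi_{\text{long}}$ in the atom-at-$0$ subcase, or assign $r$ to each of the $k+2\l$ edges of $\pi_{\text{long}}$ and $s$ to each of the $k$ edges of $\pi_{\text{short}}$, producing the common time $(k+2\l)r=ks$. The geometry of $\Lambda$ is chosen thin enough (for instance a thickened segment of cross-section comparable to $\l$, with its remaining edges forced to the largest attainable value) that any internal competitor must accumulate enough high-value edges to exceed the optimum. In either case $\P(\cA^\Lambda)>0$ and the endpoints lie on distinct faces, so $\mathfrak{P}$ is valid in the sense of Definition \ref{Définition motif valable.}.

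The hard part will be the pattern construction in the bounded regime of \ref{Hypothèse H2}: one must design a geometry in $\Lambda$ accommodating two self-avoiding paths of prescribed lengths $k$ and $k+2\l$ with endpoints on distinct faces, while simultaneously ruling out every competing internal path using only the bounded set of atoms of $F$. Since the maximal value imposed on the wall edges is itself bounded, the exclusion of competitors requires a careful balance between the shape of $\Lambda$, the number of forced high-valued edges any competing path must cross, and the values $r$ and $s$. A secondary subtlety is to check that the swap step above produces self-avoiding paths, which follows from the fact that the occurrences counted by $N^\mathfrak{P}(\gamma)$ are associated to disjoint copies of $\Lambda$ along $\gamma$ and therefore act on disjoint sets of edges.
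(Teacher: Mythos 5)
Your overall strategy matches the one the paper uses to prove its own Theorem~\ref{Théorème 6.2 KRAS} in Section~\ref{Section preuve du théorème 6.2.}: construct a valid pattern admitting two optimal internal paths whose Euclidean lengths differ by $2\l$, invoke Theorem~\ref{17. Théorème à démontrer.} to force linearly many pattern crossings, and swap the long path for the short one. However, as a proof of the statement actually asked --- Theorem~6.2 of \cite{KRAS} --- this cannot work, and one of the constructions you sketch would fail on its own terms.

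The moment hypotheses do not match. Theorem~\ref{17. Théorème à démontrer.} requires $\E\min[T_1^d,\dots,T_{2d}^d]<\infty$ (the standing assumption~\eqref{17. Hypothèse sur l'espérance des ti.}, which enters through the Euclidean-length control used in the proof of Lemma~\ref{17. Majoration de la proba du complémentaire de G.}), whereas the theorem you are asked to prove only assumes $\E\min[T_1^p,\dots,T_{2d}^p]<\infty$ for some $p>1$. For $1<p<d$ these are genuinely different conditions, so your argument cannot run under KRAS's hypotheses. This is exactly why the paper states Theorem~\ref{Théorème 6.2 KRAS} as a separate result rather than as a re-proof of KRAS's theorem, and explicitly remarks that its moment assumption is more restrictive than theirs. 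At best your outline proves the paper's version, not the statement in the box.

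Your bounded-case construction under~\ref{Hypothèse H2} would also fail as sketched. Putting $r$ on every edge of $\pi_{\mathrm{long}}$, $s$ on every edge of $\pi_{\mathrm{short}}$, and the maximal atom $\aM$ on the wall edges does not rule out hybrid competitors: a path can follow a portion of $\pi_{\mathrm{long}}$, cut across a bounded number of wall edges (whose cost is capped at $\aM$), and rejoin $\pi_{\mathrm{short}}$, beating both. The paper's Lemma~\ref{Lemme théorème 6.2 KRAS.} handles exactly this: it builds a large rectangle with the atoms $r'_i$, $s'_j$ \emph{alternated} around all four sides and proves optimality of $\PP$ and $\PPP$ by a three-step argument that decomposes passage times into their coordinate-direction contributions; none of this is visible from your sketch, and the paper moreover does it under the weaker hypothesis~\ref{Hypothèse H1}. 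Finally, a small technicality in your swap step: the count $N^\mathfrak{P}$ of~\eqref{Compteur nombre de motifs empruntés introduction.} includes overlapping translates of $\Lambda$, so the swaps cannot be performed independently at every $x$ counted; the paper passes to the maximal number $\cN^\mathfrak{P}$ of \emph{disjoint} occurrences and uses $\cN^\mathfrak{P}(\pi)\ge c\,N^\mathfrak{P}(\pi)$ for a geometric constant $c>0$. Your footnote asserting disjointness of the $N^\mathfrak{P}$-counted occurrences is not correct, although the fix only costs a constant.
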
 
	
	We use Theorem \ref{17. Théorème à démontrer.} to prove the following result. It generalizes in a way Theorem \ref{Vrai Théorème 6.2 KRAS.} since in the case of bounded support, we have a less restrictive assumption and since the lower bound in \eqref{Résultat théorème 6.2.} is exponentially close to one in the distance instead of the uniform lower bound in \eqref{Résultat vrai théorème 6.2.}. However, the assumption on the moment is less restrictive in Theorem \ref{Vrai Théorème 6.2 KRAS.}.
	
	\begin{theorem}\label{Théorème 6.2 KRAS}
		Assume that $F$ is useful and $\E \min \left[ T^d_1,\dots,T^d_{2d} \right] < \infty$. Furthermore, assume \ref{Hypothèse H1}. Then there exist constants $0<\beta_1,\beta_2,D<\infty$ such that
		\begin{equation}
			\P \left(\Lo_{0,x}-\Lu_{0,x} \ge D \|x\|_1 \right) \ge 1-\beta_1 \mathrm{e}^{-\beta_2 \|x\|_1}. \label{Résultat théorème 6.2.}
		\end{equation}
	\end{theorem}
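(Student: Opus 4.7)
The strategy is to apply Theorem \ref{17. Théorème à démontrer.} to a pattern $\mathfrak{P}=(\Lambda,\ulm,\vlm,\cA^\Lambda)$ purposely built from the atoms provided by \ref{Hypothèse H1}. The pattern will contain two paths from $\ulm$ to $\vlm$---a \emph{short} one $\pi_s$ of Euclidean length $k$ and a \emph{long} one $\pi_\ell$ of Euclidean length $k+2\l$---sharing the same total passage time
\[
S=\sum_{i=1}^{k+2\l}r'_i=\sum_{j=1}^{k}s'_j.
\]
Each time a geodesic takes the pattern, these two options will both be available, which will make it possible to locally increase or decrease its Euclidean length by $2\l$ without affecting its total passage time.

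Concretely, I would take $\Lambda=\{0,\dots,k\}\times\{0,\dots,\l\}\times\{0\}^{d-2}$, $\ulm=0$ and $\vlm=k\epsilon_1$; $\pi_s$ to be the straight segment from $\ulm$ to $\vlm$ along $\epsilon_1$; and $\pi_\ell$ to be the detour going from $\ulm$ up to $\l\epsilon_2$, across to $k\epsilon_1+\l\epsilon_2$, and down to $\vlm$. The event $\cA^\Lambda$ prescribes the passage times of the edges of $\pi_s$ (resp.\ $\pi_\ell$) to be the atoms $s'_1,\dots,s'_k$ (resp.\ $r'_1,\dots,r'_{k+2\l}$) in some fixed order, and requires the passage times of the remaining edges of $\Lambda$ to be large enough, within the support of $F$, that $\pi_s$ and $\pi_\ell$ are the only paths from $\ulm$ to $\vlm$ entirely contained in $\Lambda$ with total passage time at most $S$. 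Since the $r'_i$ and $s'_j$ are atoms of $F$, $\cA^\Lambda$ has positive probability; since $\ulm$ and $\vlm$ lie on opposite faces of $\Lambda$ with distinct external normal vectors $-\epsilon_1$ and $+\epsilon_1$, the pattern is valid in the sense of Definition \ref{Définition motif valable.}.

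Let $\gamma$ be a geodesic from $0$ to $x$ and let $y$ be a translate at which $\gamma$ takes $\mathfrak{P}$. The sub-path of $\gamma$ between $\ulm+y$ and $\vlm+y$ is, being a sub-path of a geodesic, itself optimal between these two endpoints and, by the taking-pattern condition, contained in $\Lambda+y$; the design of $\cA^\Lambda$ then forces it to be a translate of either $\pi_s$ or $\pi_\ell$. Replacing it by the other preserves the total passage time of $\gamma$ and modifies its Euclidean length by exactly $2\l$. Because $\Lambda$ has bounded size, a constant fraction of the $N^\mathfrak{P}(\gamma)$ taking translates can be selected so that the boxes $\Lambda+y$ are pairwise disjoint and the corresponding swaps can be performed independently; choosing the short option at each selected occurrence yields a self-avoiding geodesic $\gamma_{\min}$, choosing the long option yields $\gamma_{\max}$, and
\[
|\gamma_{\max}|-|\gamma_{\min}|\ge c\l\,N^\mathfrak{P}(\gamma)
\]
for some constant $c>0$ depending only on the pattern. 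Combining this with the lower bound $N^\mathfrak{P}(\gamma)\ge\alpha\|x\|_1$ delivered by Theorem \ref{17. Théorème à démontrer.} uniformly over all geodesics from $0$ to $x$, outside an event of probability at most $\beta_1\mathrm{e}^{-\beta_2\|x\|_1}$, together with the trivial inequalities $\Lo_{0,x}\ge|\gamma_{\max}|$ and $\Lu_{0,x}\le|\gamma_{\min}|$, produces $\Lo_{0,x}-\Lu_{0,x}\ge c\l\alpha\|x\|_1$ on that event, which is precisely \eqref{Résultat théorème 6.2.}.

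The main obstacle I expect is the design of $\cA^\Lambda$ in the bounded-support case. There one cannot freely inflate the passage times of the edges of $\Lambda$ off $\pi_s\cup\pi_\ell$, and one has to arrange, using only atoms of $F$ together with the arithmetic identity \eqref{Egalité avec les atomes}, that no path in $\Lambda$ can improve upon the value $S$. This is the direct counterpart of the bounded-case technical difficulty that has to be overcome in the proof of Theorem \ref{17. Théorème à démontrer.} itself, and is precisely what the tailored construction of Section \ref{Section preuve du théorème 6.2.} is there to address.
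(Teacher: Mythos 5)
Your plan mirrors the paper's own route almost step for step: the two-segment pattern built from the atoms of \ref{Hypothèse H1}, the observation that the short and long segments have the same passage time on $\cA^\Lambda$, and the swap argument that converts pattern crossings into a gap between Euclidean lengths, combined with Theorem \ref{17. Théorème à démontrer.} to get linearly many disjoint crossings outside an exponentially small event. However, you leave the bounded-support case unresolved, and this is where the entire weight of the paper's proof lies. Concretely, the paper sets $\aM=\max(r'_1,\dots,r'_{k+2\l},s'_1,\dots,s'_k)$, observes that the identity \eqref{Egalité avec les atomes} forces at least $2\l$ of the $r'_i$ to lie strictly below $\aM$, introduces an integer $\alpha$ satisfying \eqref{On fixe alpha dans la preuve du théorème 6.2 de KRAS.}, and places periodic repetitions of the atoms along the perimeter of an $\alpha k\times\alpha\l$ rectangle while filling the remaining edges with the atom $\aM$. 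Lemma \ref{Lemme théorème 6.2 KRAS.} then proves that $\PP$ and $\PPP$ are optimal among paths contained in $\Lambda$; this is precisely the replacement for your phrase ``make the off-path edges large enough within the support'', which cannot be realized when the support is bounded, and it requires a multi-step case analysis driven by \eqref{Egalité avec les atomes}. Without an argument of this kind there is no valid pattern in the bounded case.

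A second, smaller issue is that you assert $\gamma_{\min}$ and $\gamma_{\max}$ are self-avoiding geodesics without justification. The paper sidesteps this by starting from the first geodesic $\pi(x)$ of minimal Euclidean length, showing it must use the $\PP$ segment at every crossing, and only then replacing $\PP$ by $\PPP$ to obtain $\widehat\pi(x)$. Self-avoidingness of $\widehat\pi(x)$ is then automatic when $0$ is not an atom (a loop of positive time would contradict geodesic status), and when $0$ is an atom the paper adds a separately designed pattern whose off-path edges have strictly positive time, precisely to prevent $\pi(x)$ from touching the $\PPP$ vertices outside $\PP$. Starting from an arbitrary geodesic $\gamma$, as you do, one cannot in general rule out that $\pi_\l+y$ intersects $\gamma$ outside the subpath being replaced, so the claimed inequality $|\gamma_{\max}|-|\gamma_{\min}|\ge c\l\,N^{\mathfrak P}(\gamma)$ does not follow directly.
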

	
	The proof of this theorem is the aim of Section \ref{Section preuve du théorème 6.2.}.
	
	\subsubsection*{Strict concavity of the expected passage times as a function of the weight shifts}
	
	
	For $b \in \R$, define the $b$-shifted weights by \[T^{(b)}=\{T^{(b)}(e) \, : \, e \in \cE\} \text{ with } T^{(b)}(e)=T(e) + b \text{ for all } e \in \cE.\] Following the notations of \cite{KRAS} (see Section 2.2 in \cite{KRAS}), all the quantities associated with the passage times $T^{(b)}$ acquire the superscript. For example, $t^{(b)}(x,y)$ is the geodesic time between $x$ and $y$ defined at \eqref{Définition geodesic time.}, where the infimum is only on self-avoiding paths. Theorem A.1 in \cite{KRAS} gives the existence of a constant $\epsilon_0 > 0$ with which we have an extension of the Cox-Durett shape theorem for the shifted weights $T^{(-b)}$ for $b<\r+\epsilon_0$ (note that here the weights can be negative). Note that (ii) in Theorem A.1 in \cite{KRAS} guarantees that $\E [t^{(-b)}(0,x)]$ is finite if $b \in (0,\r + \epsilon_0)$.
	
	\begin{theorem}\label{Théorème 5.4 KRAS}
		Assume $F$ useful. Furthermore, assume that the support of $F$ is bounded and that it contains at least two strictly positive reals. Then, there exists a finite positive constant $M$ and a function $D(b)>0$ of $b>0$ such that the following bounds hold for all $b \in (0,\r+\epsilon_0)$ and all $\|x\|_1 \ge M$: 
		\begin{equation}
			\E [t^{(-b)}(0,x)] \le \E [t(0,x)] - b \E [\Lo_{0,x}]-D(b)b\|x\|_1. \label{Résultat Théorème 5.4 KRAS.}
		\end{equation} 
	\end{theorem}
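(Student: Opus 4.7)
The plan is to compare $\E[t^{(-b)}(0,x)]$ with the elementary upper bound obtained by evaluating $T^{(-b)}$ on a longest geodesic of $T$, and to improve it by a linear-in-$\|x\|_1$ term through a detour argument based on Theorem~\ref{17. Théorème à démontrer.}. For any self-avoiding path $\gamma'$ from $0$ to $x$,
\[
t^{(-b)}(0,x)\le T(\gamma')-b|\gamma'|,
\]
so applying this to a geodesic $\gamma^{\max}$ of $T$ with maximal Euclidean length already gives the weak bound $\E[t^{(-b)}(0,x)]\le\E[t(0,x)]-b\,\E[\Lo_{0,x}]$. The goal is to exhibit, with probability exponentially close to one, a path $\gamma'$ strictly longer than $\gamma^{\max}$ by a linear amount while only marginally more expensive in the $T$-weights.

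For each $b\in(0,\r+\epsilon_0)$ I design a valid pattern $\mathfrak{P}_b=(\Lambda,u^\Lambda,v^\Lambda,\cA^\Lambda)$ as follows. Let $\rr>\r$ be another element of $\mathrm{supp}\,F$, guaranteed by the assumption, and let $\aM=\sup\mathrm{supp}\,F\ge\rr$. Set $\Lambda=\{0,\ldots,K\}\times\{0,\ldots,L\}\times\{0\}^{d-2}$, $u^\Lambda=0$, $v^\Lambda=K\epsilon_1$, and define $\cA^\Lambda$ as the event on which edges along the ``short'' path at $\{x_2=0\}$ take values in a tiny neighborhood of $\rr$, edges on the ``long'' detour (going up to $\{x_2=L\}$, across to $\{x_1=K\}$, and back down to $v^\Lambda$) in a tiny neighborhood of $\r$, and all remaining edges of $\Lambda$ in a tiny neighborhood of $\aM$. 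I then choose integers $K,L\ge1$ with
\[
\frac{K}{L}\in\Bigl(\frac{2(\r-b)_+}{\rr-\r},\,\frac{2\r}{\rr-\r}\Bigr),
\]
an open interval that is nonempty exactly when $b>0$. Such a choice makes the short path the unique optimal path from $u^\Lambda$ to $v^\Lambda$ in $\Lambda$ (the $\aM$-weights on interior edges kill any sub-detour since $\aM\ge\rr$), gives a strictly positive excess cost $\eta:=(K+2L)\r-K\rr$ for the long detour, and ensures $\eta<2Lb$. The pattern is valid since $\P(\cA^\Lambda)>0$ (each of $\r,\rr,\aM$ lies in the support of $F$) and since $u^\Lambda,v^\Lambda$ lie on the opposite faces $\{x_1=0\}$ and $\{x_1=K\}$.

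Theorem~\ref{17. Théorème à démontrer.} then provides $\alpha,\beta_1,\beta_2>0$ (depending on $b$) such that, with probability at least $1-\beta_1\mathrm{e}^{-\beta_2\|x\|_1}$, every geodesic from $0$ to $x$, and in particular $\gamma^{\max}$, takes $\mathfrak{P}_b$ at least $\alpha\|x\|_1$ times. From these occurrences I extract a subcollection of at least $\alpha\|x\|_1/C(\Lambda)$ pairwise disjoint ones, and at each I substitute the short-path segment of $\gamma^{\max}$ by the translated long detour, erasing any loops that may have appeared. The resulting path $\gamma'$ from $0$ to $x$ satisfies
\[
|\gamma'|\ge\Lo_{0,x}+\frac{2L\alpha}{C(\Lambda)}\|x\|_1,\qquad T(\gamma')\le t(0,x)+\frac{\eta\alpha}{C(\Lambda)}\|x\|_1,
\]
up to an error made arbitrarily small by shrinking the intervals in $\cA^\Lambda$. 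Plugging into the initial inequality yields, on this good event,
\[
t^{(-b)}(0,x)\le t(0,x)-b\,\Lo_{0,x}-\kappa(b)\|x\|_1,\qquad \kappa(b):=\frac{(2Lb-\eta)\alpha}{C(\Lambda)}>0.
\]

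Taking expectations, the complementary event contributes at most $O(\|x\|_1\mathrm{e}^{-\beta_2\|x\|_1})$ since both $|t^{(-b)}(0,x)|$ and $\Lo_{0,x}$ are deterministically dominated by a multiple of $\|x\|_1$ (using $\r>0$ and the bounded support). Absorbing this error into half of the linear gain for $\|x\|_1$ large enough yields the claim with $D(b)=\kappa(b)/(2b)>0$. The main obstacle is the pattern design: one must simultaneously secure $\P(\cA^\Lambda)>0$, strict optimality of the short path in $\Lambda$ (genuinely exploiting $\aM>\r$ and the bounded support to slow the interior of $\Lambda$, together with a careful verification that no sub-detour via interior edges is competitive with the short path), and the strict inequality $\eta/(2L)<b$, a constraint that grows increasingly delicate as $b\to0^+$ and forces $K,L$ to be large, whence the natural degeneracy $D(b)\to0$ at the origin. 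The secondary technicality is the disjointness extraction and the self-avoidance cleanup of $\gamma'$ after many swaps, which costs at most a constant factor absorbed into $C(\Lambda)$.
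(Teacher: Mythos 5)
Your overall strategy is the right one and mirrors the paper's: design a pattern whose straight segment $\PP$ is uniquely optimal under $T$, whose detour $\PPP$ becomes strictly better after the $b$-shift, invoke Theorem~\ref{17. Théorème à démontrer.} to get linearly many occurrences on a longest geodesic, and swap $\PP$ for $\PPP$. The paper does exactly this (Section~\ref{Section preuve des généralisations de KRAS.}, proof of Theorem~\ref{Théorème 5.4 KRAS}), with $\PP$-weights near a value $s$, detour weights near a smaller $r$, and inequalities from Lemma~5.5 of \cite{KRAS} playing the role of your ratio condition. The difficulties are in the implementation, and two of them are genuine gaps.

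First, you put the detour weights near $\r=\min\mathrm{supp}\,F$. The theorem does not assume $\r>0$ (it only assumes the support contains two strictly positive reals, which is compatible with an atom at $0$). If $\r=0$, your detour has passage time $0<K\rr$, so $\PP$ is not optimal, $\eta<0$, and your interval for $K/L$ is empty. The paper avoids this by choosing \emph{two strictly positive} elements $0<r<s$ of the support and never touching $\r$; you would need to do the same (use a strictly positive $r_1$ on the detour rather than $\r$), and then also recheck the deterministic bound on $\Lo_{0,x}$ that you invoke ``using $\r>0$''.

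Second, and more seriously, your pattern has no buffer, and you dismiss the resulting self-avoidance problem as ``erasing any loops''. In your $\Lambda$, $\PPP$ lives on the boundary of the pattern and carries cheap near-$\r$ weights, so a geodesic approaching $u^\Lambda$ from outside is precisely incentivised to slide down a vertical segment of $\PPP$ before reaching $u^\Lambda$, take $\PP$, and leave via the other vertical. If it enters at $(0,j)$ and exits at $(K,j')$ using $\PPP$, then replacing $\PP$ by $\PPP$ produces a path revisiting $(0,1),\dots,(0,j)$ and $(K,1),\dots,(K,j')$, and loop-erasure removes $2(j+j')$ edges: when $j=j'=L$ the length gain is \emph{exactly cancelled}. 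So your lower bound $|\gamma'|\ge\Lo_{0,x}+\frac{2L\alpha}{C(\Lambda)}\|x\|_1$ is not justified; you would need to show that at linearly many occurrences the geodesic does not touch $\PPP\setminus\PP$ outside the window $[u^\Lambda,v^\Lambda]$, and nothing in your design forces this. The paper solves this by encasing the detour box $\Lambda_0$ in a buffer of width $L=k+\l+1$ of high-weight edges inside a larger $\Lambda$, and proving condition~\ref{Propriété 2 théorème 5.4.}: crossing the buffer costs $\ge Ls$, more than any $\|\cdot\|_1$-optimal excursion inside $\Lambda_0$. This guarantees that the selected geodesic never takes an edge of $\PPP\setminus\PP$, so the swap yields a genuinely self-avoiding path with the full $2\l$ per occurrence of extra length. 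Your construction would need an analogous buffer (or a use of the burying device of Lemma~\ref{17. Lemme motif valable}) before the loop-erasure step could be made rigorous.

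A smaller point: your assertion that ``$\aM$-weights kill any sub-detour since $\aM\ge\rr$'' is the analogue of the paper's (P1) and does require the careful case-check you flag; the paper establishes it cleanly using the interval structure $G(\delta)\subset H$ and the openness of $H$.
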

	
	\begin{remk}\label{Remarque KRAS pour toute géodésique.}
		In Theorem \ref{Théorème 5.4 KRAS}, we slightly strengthen Theorem 5.4 in \cite{KRAS} in the bounded case. Indeed, $\E [\Lu_{0,x}]$ in \cite{KRAS} is replaced by $\E [\Lo_{0,x}]$ in \eqref{Résultat Théorème 5.4 KRAS.}. This strengthening is made possible by the fact that Theorem \ref{17. Théorème à démontrer.} gives a result for all geodesics and thus, in particular, for the geodesic of maximal Euclidean length. We focus on the bounded case in Theorem \ref{Théorème 5.4 KRAS} since Theorem 5.4 in \cite{KRAS} already contains \eqref{Résultat Théorème 5.4 KRAS.} in the unbounded case.
	\end{remk}
	
	
	\subsection{Sketch of the proof}\label{Section sketch of proof}
	
	In what follows, we give an informal sketch of proof of Theorem \ref{17. Théorème à démontrer.}. Fix a pattern $\mathfrak{P}$ and $x \in \Z^d$ with $\|x\|$ large. Consider the event:
	\[
	\cM = \{\text{there exists a geodesic from }0\text{ to }x\text{ which does not take the pattern}\}.
	\]
	The aim is to prove that $\cM$ has a probability small enough in $\|x\|$.
	More precisely, we want to prove
	\begin{equation}
		\P(\cM) \ll \frac{1}{\|x\|^{d-1}}. \label{Nouveau sketch of proof 4.}
	\end{equation}
	From this result, by a standard re-normalization argument, we easily get that, out of a very low probability event, every geodesic from $0$ to $x$ takes a number of patterns linear in $\|x\|$ (see Proposition \ref{17. Gros théorème à démontrer, un seul motif.} in Section \ref{Sous-section réduction dans l'introduction.} for a formal statement of \eqref{Nouveau sketch of proof 4.}).
	\paragraph*{General idea.}
	The idea is to define a suitable sequence of events $\cM(\ell)$ for $0 \le \ell \le q$ such that, for some positive constant $c<1$,
	\begin{enumerate}
		\item $q \ge c\|x\|$,
		\item $\cM \subset \cM(q) \cup \cB$ where $\displaystyle \P(\cB) \ll \frac{1}{\|x\|^{d-1}}$,
		\item for all $\ell \ge 1$, 
		\begin{equation}
			\P(\cM(\ell)) \le c \P(\cM(\ell-1)).\label{Nouveau sketch of proof 1.}
		\end{equation}
	\end{enumerate}
	If the above holds, we get $\P(\cM) \le c^{c\|x\|_1} + \P(\cB)$, which allows us to conclude.
	The event $\cM(\ell)$ is approximately "there exists a geodesic from $0$ to $x$ which does not take the pattern until a distance of order $\l$", where we have to precise the sense of "distance of order $\l$".
	The complementary event of $\cB$ is approximately "each geodesic crosses enough good boxes" and these good boxes are the boxes in which the environment and the geodesics behave in a typical way. This enables us to try to modify the environment to ensure that all geodesics from $0$ to $x$ take the pattern inside.
	We take a good definition for the event $\cM(\l)$ to have $\cM(\l) \subset \cM(\l-1)$ and thus \eqref{Nouveau sketch of proof 1.} is equivalent to the existence of a constant $\eta>0$ (by taking $\eta=\frac{1}{c}-1$) such that 
	\begin{equation}
		\P (\cM(\l-1) \setminus \cM(\l)) \ge \eta \P (\cM(\l)). \label{Nouveau sketch of proof 2.}
	\end{equation}
	To get \eqref{Nouveau sketch of proof 2.}, we would like to make a modification in an environment where $\cM(\l)$ occurs to get a new environment in which the event $\cM(\l-1) \setminus \cM(\l)$ occurs. This requires some stability in the definition of the events $\cM(\cdots)$ under the modification. This will be made clearer later. 
	
	\paragraph*{Associated geodesics.}
	We need the notion of associated paths. For the remaining of the sketch of the proof, "geodesic" means "geodesic from $0$ to $x$".
	Let $B$ be a set of vertices (it is intended to be the "selected box", i.e.\ the box where me make the modification). 
	Two paths $\pi^1$ and $\pi^2$ from $0$ to $x$ are $B$-associated if there exists $a,b \in B$ such that :
	\begin{enumerate}
		\item $\pi^1$ and $\pi^2$ visit successively $a$ and $b$.
		\item $\pi^1_{0,a} = \pi^2_{0,a}$ and $\pi^1_{b,x} = \pi^2_{b,x}$.
		\item $\pi^1_{a,b}$ and $\pi^2_{a,b}$ are entirely contained in $B$.
	\end{enumerate}
	In particular the two paths coincide outside $B$.
	With this definition, we can clearly enumerate the properties we want after the modification. Imagine we have a geodesic $\gamma$ "selected" in a certain way and a "good" box $B$ (where we want to make the modification) such that $\gamma$ crosses $B$. The aim is to modify the environment in $B$ such that:
	\begin{enumerate}
		\item Every geodesic in the new environment takes the pattern in $B$.
		\item Every geodesic in the new environment is $B$-associated with a geodesic in the original environment.
		\item The geodesic $\gamma$ is $B$-associated with at least one geodesic in the new environment. 
	\end{enumerate}
	If we identify two geodesics $B$-associated, the last two properties can be rephrased as follows: we have not won new geodesics, we have not lost the geodesic $\gamma$. It is one of the keys of the stability.
	Getting these properties in the unbounded case is elementary but it is a significant difficulty in the bounded case (see Section \ref{Sous-section schéma de preuve dans le cas borné.} where we give the main ideas of the modification).
	
	\paragraph*{Definition of the sequence $\cM(\l)$.}
	We would like to get a result on every geodesic (in the case where there is no uniqueness, which is a case we do not want to eliminate). A definition of the type:
	\[
	\cM(\ell) \approx \{\text{the first geodesic (in the lexicographical order) does not take the pattern in its first } \ell \text{ good boxes}\}
	\]
	does not provide a result for all geodesics. 
	However, it is possible to get a result for the first geodesic in the lexicographical order with this definition.
	
	If every box crossed were a good box, we could define a sequence of large concentric annuli centered in the origin and use a definition of the type:
	\[
	\cM(\ell) \approx \{\text{there exists a geodesic which does not take the pattern in the } \ell \text{ first annuli}\}.
	\]
	Thus, we could choose one of these geodesics (let us denote it by $\gamma$) and choose a box crossed by $\gamma$ in the $\l$-th annulus (let us denote it by $B$.)
	By making a modification giving the three properties stated above in the paragraph about the associated geodesics in the box $B$, in the new environment, the event $\cM(\ell-1) \setminus \cM(\ell)$ would occur.
	Indeed, $\cM(\l)$ would not occur since every geodesic in the new environment would take the pattern in $B$ and thus in the $\l$-th annulus. 
	However, $\cM(\l-1)$ would occur: this crucially uses the fact that $\gamma$ is $B$-associated with a geodesic in the new environment (and the fact that the environment outside $B$, and thus outside the $\l$-annulus, is not modified).
	
	We could use the above definition if every geodesic crossed a good box in every annulus. Since it is not the case, we have to use a definition of the type:
	\[
	\cM(\ell) \approx \{\text{there exists a geodesic $\gamma$ which does not take the pattern in the union of the first $a_\ell(\gamma)$ annuli}\}
	\]
	where $a_\ell(\gamma)$ is the index of the $\ell$-th annulus in which $\gamma$ crosses a good box (see Section \ref{Sous-section preuve, cas non borné} and Section \ref{Sous-section preuve, cas borné}).
	
	\paragraph*{Modification.}
	Fix a positive integer $\ell$.
	Now, the aim is to define a general plan to prove \eqref{Nouveau sketch of proof 2.}.
	Here we do not discuss the modification itself (see Section \ref{Sous-section schéma de preuve dans le cas borné.} in the bounded case).
	Recall that we define the sequence $\cM(\l)$ to have
	\begin{equation}
	\cM(\ell) \subset \cM(\ell-1). \label{Nouveau sketch of proof 3.}
	\end{equation}
	We denote by $T$ the environment (i.e.\ the family of passage times on the edges) and by $T'$ an independent copy of $T$. The basic idea consists in creating a modified environment $T^*=\phi(T,T')$ where $T^*(e)=T(e)$ for some edges (whose passage times do not change) and $T^*(e)=T'(e)$ for the other edges (whose passage times are re-sampled).
	In other words, we define a random set of edges $R(T)$ (the edges we want to re-sample) and we set $T^* = \phi(T,T')=\phi_{R(T)}(T,T')$ where, for every set of edges $r$,
	$\phi_r(T,T')(e)$ is equal to $T'(e)$ if $e \in r$ and to $T(e)$ else.
	
	In a utopian situation, imagine that we could define a new environment $T^*=\phi(T,T')$ and an event $\cA$ (ensuring the success of the modification) such that:
	\begin{itemize}
		\item $\Tu$ and $T$ have the same distribution,
		\item $\eta := \P(T' \in \cA) > 0$,
		\item and $\{T \in \cM(\ell) \text{ and } T' \in \cA\} \subset \{T^* \in \cM(\ell-1) \setminus \cM(\ell)\}$.
	\end{itemize}
	Then we would have 
	\[
	\P(T \in \cM(\ell-1) \setminus \cM(\ell)) = \P(T^* \in  \cM(\ell-1) \setminus \cM(\ell)) \ge \P(T \in \cM(\ell) \cap T' \in \cA) = \eta \P(T \in \cM(\ell)),
	\]
	which is \eqref{Nouveau sketch of proof 2.}.
	
	However, this situation is unrealistic since the set of re-sampled edges $R(T)$ depends on $T$, and thus the distribution of $\Tu$ is different from the distribution of $T$.
	But when $r$ is fixed, $\phi_r(T,T')$ and $T$ have the same distribution.
	It is possible to rely on this fact as soon as, observing only the modified environment, we can guess approximately in which box we performed the modification 
	(see the use of the $S^1$-variables and $S^2$-variables in Lemma \ref{17. cas non borné, lemme pour les inégalités avec les indicatrices dans la modification.} and Lemma \ref{17. ancien lemme 12.9.}). 
	
	In the case where the passage times are bounded, we use two independent copies of $T$ and we make a two-steps modification. The way we actually perform the modification in the bounded case is sketched in Section \ref{Sous-section schéma de preuve dans le cas borné.}. The ideas described in this paragraph can be adapted to this two-steps modification without difficulties.
		
	\paragraph*{Comparison with the plan of the proof of Proposition 5.22 in \cite{VdBK}.}
	
	Let us compare the above strategy with the plan used in \cite{VdBK} to prove Proposition 5.22. Fix $x$ in $\Z^d$. In \cite{VdBK}, van den Berg and Kesten also start by associating with some specific geodesic $\gamma$ some sequence of $q=C\|x\|_1$ good boxes. By simple geometric arguments, they then get some family $\cB$ of boxes such that \[\E [\text{number of boxes of $\cB$ which are good and crossed by $\gamma$}] \ge c\|x\|_1\] where $c$ is a positive constant. Fix some box $B \in \cB$. Then they also define a new environment $\Tu$ by resampling the times of the edges in $B$. It is then sufficient (this is the technical part of the proof in the bounded case) to prove 
	\[ \P(\text{every geodesic in $\Tu$ crosses the pattern in $B$}|\text{in the environment $T$, $\gamma$ crosses $B$ and $B$ is good}) \ge \eta \] for some positive constant $\eta>0$. In particular, and contrary to what happens in our framework, it is not necessary in this setting to control what happens to geodesic(s) outside the considered box when we resample the times of the edges in the box. This is not a problem if the geodesic in the new environment completely changes.
	
	\paragraph*{Comparison with the plan of the proof of Theorem 2.3 in \cite{AndjelVares}.}
	
	In \cite{AndjelVares}, the main difference with the strategy described above is the use of penalized geodesics. Indeed, Andjel and Vares only consider geodesics which do not take edges whose passage time is greater than $M$ and it allows them to get a result on all geodesics from $0$ to $x$ thanks to the modification argument. However, it seems difficult to use penalized geodesics with the patterns, that is why we use the strategy of concentric annuli developped in Section \ref{Sous-section preuve, cas non borné}. 
	
	
	\subsection{Reduction}\label{Sous-section réduction dans l'introduction.}
	
	One can check that, using a standard re-normalization argument, Theorem \ref{17. Théorème à démontrer.} is a simple consequence of the following proposition (see for example the proof of Theorem 2.3 in \cite{AndjelVares}).
	
	\begin{prop}\label{17. Gros théorème à démontrer, un seul motif.}
		Let $\mathfrak{P}=(\Lambda,u^\Lambda,v^\Lambda,\cA^\Lambda)$ be a valid pattern and assume that $F$ is useful and satisfies \eqref{17. Hypothèse sur l'espérance des ti.}. Then there exist $C>0$ and $D>0$ such that for all $n \ge 0$, for all $x$ such that $\|x\|_1=n$, 
		\begin{equation}
			\P \left( \mbox{$\exists$ a geodesic $\gamma$ from $0$ to $x$ such that } N^\mathfrak{P}(\gamma)=0 \right) \le D \mathrm{e}^{-C n^\frac{1}{d}}. \label{Résultat gros héorème à démontrer, un seul motif.}
		\end{equation}
	\end{prop}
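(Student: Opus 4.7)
The plan is to follow the roadmap outlined in Section~\ref{Section sketch of proof}. Let $\cM$ denote the event that there exists a geodesic from $0$ to $x$ that does not take the pattern, write $n=\|x\|_1$, and aim for $\P(\cM) \le De^{-Cn^{1/d}}$. First I fix a mesoscopic scale $N$, much larger than the diameter of $\Lambda$, tile $\Z^d$ by boxes of side $N$, and declare a box \emph{good} when the local environment satisfies several typicality conditions: quantitative control of the passage times on a thickened copy of the box (using \eqref{17. Hypothèse sur l'espérance des ti.}); a distinguished sub-box in which the pattern can be imprinted and through which local surgery is geometrically feasible in the sense of the $B$-associated paths from the sketch; and, in the bounded-support case, a bound on the number of edges whose passage time lies in the top of the support, supplied by Theorem~\ref{Théorème Andjel/Vares dans le cas borné.}. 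I then introduce concentric $\ell^1$-annuli $A_1, \dots, A_q$ of width of order $N$ centred at $0$, with $q$ and $N$ tuned so that $q \ge c n^{1/d}$ and, outside a bad event $\cB$ with $\P(\cB) \le De^{-Cn^{1/d}}$, every geodesic from $0$ to $x$ crosses at least one good box in each annulus.

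For a geodesic $\gamma$, let $a_\ell(\gamma)$ denote the index of the $\ell$-th annulus in which $\gamma$ crosses a good box. For $0 \le \ell \le q$ I define
\[
\cM(\ell) = \left\{ \exists \text{ a geodesic } \gamma \text{ from } 0 \text{ to } x \text{ not taking } \mathfrak{P} \text{ inside } \textstyle\bigcup_{k \le a_\ell(\gamma)} A_k \right\}.
\]
Then $\cM(q) \subset \dots \subset \cM(0) = \cM$ and $\cM \subset \cM(q) \cup \cB$. The core of the proof is to establish the existence of $\eta>0$, independent of $\ell$ and $n$, such that $\P(\cM(\ell-1) \setminus \cM(\ell)) \ge \eta \, \P(\cM(\ell))$ for every $1 \le \ell \le q$; iterating yields $\P(\cM(q)) \le (1+\eta)^{-q} \le e^{-c' n^{1/d}}$, which combined with $\P(\cB) \le De^{-Cn^{1/d}}$ gives the proposition.

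To produce the multiplicative gap, I argue as in the sketch. On $\cM(\ell)$, a deterministic rule selects a witness geodesic $\gamma$ and a good box $B$ crossed by $\gamma$ in the $\ell$-th relevant annulus. I resample the edges contained in $B$ using an independent copy $T'$ of $T$, producing $T^* = \phi_{R(T)}(T, T')$. The resampling is engineered so that, on a positive-probability event $\cA$ depending only on the restriction of $T'$ to $B$: every geodesic in $T^*$ takes $\mathfrak{P}$ inside $B$; every geodesic in $T^*$ is $B$-associated with some geodesic of $T$; and $\gamma$ is $B$-associated with some geodesic of $T^*$. The first property forces $\cM(\ell)$ to fail for $T^*$, while the last two keep $\cM(\ell-1)$ alive via the untouched segments of $\gamma$. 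The fact that $R(T)$ is $T$-measurable, so that $T^*$ and $T$ do not have the same distribution, is handled by introducing auxiliary localisation variables that pinpoint the approximate position of $B$, along the lines of Lemma~\ref{17. cas non borné, lemme pour les inégalités avec les indicatrices dans la modification.} and Lemma~\ref{17. ancien lemme 12.9.}; once these are conditioned upon, the selected box becomes deterministic, the modification operates on a deterministic set of edges, and the distributional identity is restored up to a uniformly bounded factor.

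The main obstacle lies in the construction of the resampling in the bounded case. When $F$ has unbounded support, a thin corridor carrying the pattern can be imposed by sampling very small passage times; in the bounded case, passage times cannot be pushed below $\r$, so a single resampling is insufficient. The argument then relies on the two-step resampling sketched in Section~\ref{Sous-section schéma de preuve dans le cas borné.}, where a first step arranges a favourable surrounding geometry and a second step imprints the pattern; preserving the $B$-associated paths stability through both steps is the main technical difficulty, and this is precisely where the second condition in Definition~\ref{Définition motif valable.} (two distinct external normal vectors at $\ulm$ and $\vlm$) is used to glue the pattern into the modified environment while respecting the support constraints.
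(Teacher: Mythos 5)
The proposal follows the paper's strategy essentially step for step: a renormalization into concentric $\ell^1$-annuli, a decreasing family of events $\cM(\ell)$ keyed to the index $a_\ell(\gamma)$ of the $\ell$-th annulus in which $\gamma$ crosses a good box, a multiplicative gap $\P(\cM(\ell-1)\setminus\cM(\ell)) \ge \eta\,\P(\cM(\ell))$ obtained by resampling the environment in a selected good box (with localization variables to compensate for the fact that the resampled environment has a different law), a one-step surgery in the unbounded case and a two-step surgery in the bounded case using the second condition of Definition~\ref{Définition motif valable.}, and the control on good-box crossings via the moment hypothesis and Theorem~\ref{Théorème Andjel/Vares dans le cas borné.}. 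The one imprecision worth noting is the claim that outside $\cB$ every geodesic crosses a good box in \emph{every} annulus: the paper's event $\cG^p(N)$ only guarantees crossings in a positive fraction ($\lfloor p/2 \rfloor$) of the annuli, which is exactly why the bookkeeping variable $a_\ell(\gamma)$ is introduced, and the exponent $n^{1/d}$ in the final bound in fact comes from the lattice-animal estimate bounding $\P(\cG^p(N)^c)$, not from the number of annuli $q$ (which is of order $n$, not $n^{1/d}$).
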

	
	Thus, the aim of the paper is now to prove Proposition \ref{17. Gros théorème à démontrer, un seul motif.}.
	Although they share some similarities, the proofs of Proposition \ref{17. Gros théorème à démontrer, un seul motif.} differ according to whether the support of $F$ is bounded or unbounded. As the proof is easier in the unbounded case, we decide to first give the proof in the unbounded case in Section 2 and then give the proof in the bounded case in Section 3.
	
	\subsection{Some tools and notations}\label{Sous-section Some tools and notations.}
	
	In this subsection, we recall some results and fix some notations. First, we denote by $\N$ the set of all non-negative integers, by $\N^*$ the set $\N \setminus \{0\}$, and by $\R_+$ the set of all $x \in \R$ such that $x \ge 0$. 
	
	For a self-avoiding\footnote{The definition can be extended to not necessarily self-avoiding paths by saying that a vertex $x$ is visited by $\pi$ before $y$ if there exists $i_0 \in \{0,\dots,k\}$ such that $x_{i_0}=x$ and for all $j \in \{0,\dots,k\}$, $x_j=y$ implies that $j>i_0$.} path $\pi=(x_0,...,x_k)$ going from $x_0$ to $x_k$, we say that $x_i$ is visited by $\pi$ before $x_j$ if $i<j$; we say that an edge $\{x_i,x_{i+1}\}$ is visited before an edge $\{x_j,x_{j+1}\}$ if $i<j$. A subpath of $\pi$ going from $x_i$ to $x_j$ (where $i,j \in \{0,\dots,k\}$ and $i<j$) is the path $(x_i,\dots,x_j)$ and is denoted by $\pi_{x_i,x_j}$.
	
	For a set $B$ of vertices, we denote by $\partial B$ its boundary, this is the set of vertices of $B$ which can be linked by an edge to a vertex which is not in $B$. Note that when we define a set of vertices of $\Z^d$, sometimes we also want to say that an edge is contained in this set. So, we say that an edge $e=\{u,v\}$ is contained in a set of vertices if $u$ and $v$ are in this set. Since now a subset $B$ of $\Z^d$ can be seen as a set of vertices or as a set of edges, we denote by $|B|_v$ the number of vertices of $B$ and by $|B|_e$ its number of edges. 
	
	Then, we define different balls in $\Z^d$ or $\R^d$. For all $c \in \Z^d$ and $r \in \R_+$, we denote 
	\begin{align*}
		B_\infty (c,r) & = \{u \in \Z^d \, : \, \|u-c\|_\infty \le r \}, \\
		B_1 (c,r) & = \{u \in \Z^d \, : \, \|u-c\|_1 \le r \},
	\end{align*}
	and for $n \in \N^*$, we denote by $\Gamma_n$ the boundary of $B_1(0,n)$, i.e.\ \begin{equation}
		\Gamma_n=\{u \in \Z^d \, : \, \|u\|_1=n \}. \label{définition Gamman}
	\end{equation}
	Also for $c \in \Z^d$ and $r \in \R_+$, we denote by $B(c,r)$ the random ball \[B(c,r) = \{ u \in \Z^d \, : \, t(c,u) \le r \}. \] Then, for $x$ and $y$ in $\R^d$, we define $t(x,y)$ as $t(x',y')$ where $x'$ is the unique vertex in $\Z^d$ such that $x \in x' + [0,1)^d$ (similarly for $y'$). For $c \in \Z^d$ and $r \in \R_+$, we denote by $\tilde{B}(c,r)$ the random ball \[\tilde{B}(c,r) = \{ y \in \R^d \, : \, t(c,y) \le r \}. \]
	Let $x$ in $\R^d$. Thanks to \eqref{17. Hypothèse sur l'espérance des ti.}, we can define
	\begin{equation}
		\mu(x) = \lim\limits_{n \to \infty} \frac{t(0,nx)}{n} \, a.s. \label{Définition constante de temps.}
	\end{equation}  Thanks to the hypothesis \eqref{17. Hypothèse sur l'espérance des ti.} and since $F(0)<p_c(\Z^d)$, for all $x \in \R^d \setminus \{0\}$, we have $\mu(x) \in (0, \infty)$. Furthermore, $\mu$ is a norm on $\R^d$ and describes the first order of approximation of $\tilde{B}(0,r)$ when $r$ goes to infinity. For $c \in \Z^d$ and $r \in \R_+$, we denote \[B_\mu(c,r)=\{ y \in \R^d \, : \, \mu(c-y) \le r\}.\]
	Fix $\textbf{B} = B_\mu(0,1)$, then the Cox-Durett shape theorem (Theorem 2.16 in \cite{50years}) guarantees that for each $\epsilon > 0$, 
	\begin{equation}
		\P \left( (1-\epsilon) \textbf{B} \subset \frac{\tilde{B}(0,t)}{t} \subset (1+\epsilon) \textbf{B} \text{ for all large $t$} \right) = 1. \label{17. Théorème de forme asymptotique.}
	\end{equation}
	One can easily prove that the result \eqref{17. Théorème de forme asymptotique.} is equivalent to
	\begin{equation}
		\lim\limits_{\mu(x) \to \infty} \frac{t(0,x)-\mu(x)}{\mu(x)} = 0 \, \, \, \, a.s. \label{Autre version du théorème de forme asymptotique}
	\end{equation}
	Since $\mu$ is a norm, we can fix two constants $c_\mu > 0$ and $C_\mu > 0$ such that for all $y$ in $\R^d$, \[c_\mu \|y\|_1 \le \mu (y) \le C_\mu \|y\|_1. \]
	Finally, since $F$ is useful, by Lemma 5.5 in \cite{VdBK}, there exist $\delta = \delta(F) > 0$ and $D_0 = D_0(F)$ fixed for the remaining of the article such that for all $u, \, v \in \Z^d$,
	\begin{equation}
		\P (t(u,v) \le (\r+\delta) \|u-v\|_1) \le \mathrm{e}^{-D_0 \|u-v\|_1}. \label{17. définition de delta}
	\end{equation}
	Notice that, using the Borel-Cantelli lemma with this result, we get that for all $u \in \Z^d$ different from $0$, 
	\begin{equation}
		\mu(u) \ge (\r+\delta)\|u\|_1. \label{Lien entre mu et delta.}
	\end{equation}
	
	\section{Unbounded case}
	\subsection{Proof of Proposition \ref{17. Gros théorème à démontrer, un seul motif.} in the unbounded case}\label{Sous-section preuve, cas non borné}
	
	Let $\mathfrak{P}=(\Lambda,u^\Lambda,v^\Lambda,\cA^\Lambda)$ be a valid pattern. It is convenient to reduce to the case where there exists an integer $\lll > 0$, fixed for the remaining of the proof, such that\footnote{We make a very slight abuse of notation: we also consider patterns where $0$ is in the center of $\Lambda$.} $\Lambda=B_\infty(0,\lll)$. There is no loss of generality (see Lemma \ref{Lemme annexe A.1 sur les overlapping pattern.} in Appendix \ref{Annexe sur l'hypothèse sur les motif dans le cas non borné.}).
	Let us begin with the definitions of a typical box and of a successful box. To this end, we have to fix some constants. 
	
	\paragraph*{Boxes and constants.}
	
	Recall that $\r$, and $\delta$ have been fixed in the introduction. The minimum of the support of $F$ is denoted by $\r$ and $\delta$ comes from \eqref{17. définition de delta}. 
	Since \[\lim\limits_{M \to \infty} \P \left( \cA^\Lambda \mbox{ is realized and for all edges $e \in \Lambda$, } T(e) \le M  \right) = \P ( \cA^\Lambda )  >0, \] there exists a positive constant $M^\Lambda$ fixed for the rest of the proof such that \[\P \left( \cA^\Lambda \mbox{ is realized and for all edges $e \in \Lambda$, } T(e) \le M^\Lambda \right) >0.\]
	Even if it means replacing $\cA^\Lambda$ by $\cA^\Lambda \cap \{\forall e \in \Lambda, \, T(e) \le M^\Lambda\}$, we can assume that 
	\begin{equation}
		\cA^\Lambda \subset \{\forall e \in \Lambda, \, T(e) \le M^\Lambda\}.\label{On fixe M Lambda dans la preuve du cas non borné.}
	\end{equation}
	We fix
	\begin{equation}
		\tau^\Lambda = M^\Lambda \|u^\Lambda-v^\Lambda\|_1, \label{On fixe tau Lambda dans le cas non borné.}
	\end{equation}  which is an upper bound for the travel time of an optimal path (for the passage time) going from $u^\Lambda$ to $v^\Lambda$ and entirely contained in $\Lambda$ on the event $\cA^\Lambda$.
	
	For $i \in \{1,2,3\}$, $B_{i,s,N}$ is the ball in $\Z^d$ of radius $r_i N$ for the norm $\|.\|_1$ centered at the point $sN$ where the constants $r_i$ are defined as follows. We fix $r_1=d$. Denote by $\Ka$ the number of edges in $B_\infty(0,\lll+3)$. 
	Then, fix $r_2$ an integer such that 
	\begin{equation}
		r_2 \delta - r_1 (\r+\delta) - \Ka \r - \tau^\Lambda > 0. \label{17. r2 cas non borné}
	\end{equation}
	Let $r_{2,3}$ be a positive real such that \[B_{2,0,1} \subset B_\mu \left(0,\frac{r_{2,3}}{2} \right) \cap \Z^d ,\] then we fix $r_3$ an integer such that \[B_\mu (0,9 r_{2,3}) \cap \Z^d \subset B_{3,0,1}.\]

	We use the word "box" to talk about $\Bts$. Recall that we denote by $\partial B_{i,s,N}$ the boundary of $B_{i,s,N}$, that is the set of points $z \in \Z^d$ such that $\|z-sN\|_1=r_iN$. For $u$ and $v$ two vertices contained in $\Bts$, we denote by $\tts(u,v)$ the  minimum of the times of all paths entirely contained in $\Bts$ and going from $u$ to $v$.  
	
	\paragraph*{Crossed boxes and weakly crossed boxes.} 
	
	We say that a path
	\begin{itemize}
		\item crosses a box $\Bts$ if it visits a vertex in $\Bus$,
		\item weakly crosses a box $\Bts$ if it visits a vertex in $\Bds$.
	\end{itemize}

	\paragraph*{Paths associated in a box.}\label{Chemins associés dans une boîte.}
	
	We say that two paths $\gamma$ and $\gamma'$ from $0$ to the same vertex $x$ are associated in a box $\Bts$ if there exist two distinct vertices $s_1$ and $s_2$ such that the following conditions hold:
	\begin{itemize}
		\item $\gamma$ and $\gamma'$ visit $s_1$ and $s_2$,
		\item $\gamma_{0,s_1}=\gamma'_{0,s_1}$,
		\item $\gamma_{s_1,s_2}$ and $\gamma'_{s_1,s_2}$ are entirely contained in $\Bts$,
		\item $\gamma_{s_2,x}=\gamma'_{s_2,x}$.
	\end{itemize}
	In particular, these two paths coincide outside $\Bts$. 
	
	\paragraph*{Typical boxes.}
	
	We define a sequence $\left( \nu (N) \right)_{N \in \N^*}$ such that for all $N \in \N^*$, $\nu(N) > M^\Lambda$ and 
	\begin{equation}
		\lim\limits_{N \to \infty} \P \left( \sum_{e \in B_{2,0,N}} T(e) \ge \nu (N) \right) =0. \label{17. Définition de nu(N).}
	\end{equation}
	Note that $F((\nu(N),+\infty))>0$ for all $N \in \N^*$ since the support of $F$ is unbounded. 
	
	We can now define typical boxes. A box $\Bts$ is typical if it verifies the following properties:
	\begin{enumerate}[label=(\roman*)]
		\item $\cT(s;N)$ is realized, where $\cT(s;N)$ is the following event: \[ \left\{\sup_{z \in \Bds} \tts (Ns,z) \le r_{2,3} N \right\} \cap \left\{\inf_{z \in \partial \Bts} \tts (Ns,z) \ge 4 r_{2,3} N \right\},  \]
		\item every path $\pi$ entirely contained in $\Bts$ from $u_\pi$ to $v_\pi$ with $\|u_\pi-v_\pi\|_1 \ge (r_2-r_1) N$ has a passage time verifying: 
		\begin{equation}
			t(\pi) \ge (\r+\delta) \, \|u_\pi-v_\pi\|_1, \label{17. chemin pas anormalement courts version non bornée}
		\end{equation}
		\item $\displaystyle \sum_{e \in B_{2,0,N}} T(e) < \nu (N)$.
	\end{enumerate}
	
	\begin{lemma}\label{Gros lemme typical boxes}
		We have these three properties about typical boxes. 
		\begin{enumerate}
			\item Let $s \in \Z^d$ and $N \in \N^*$. If $\Bts$ is a typical box, for all points $u_0$ and $v_0$ in $\Bds$, every geodesic from $u_0$ to $v_0$ is entirely contained in $\Bts$. 
			\item Let $s \in \Z^d$ and $N \in \N^*$. The typical box property only depends on the passage times of the edges in $\Bts$.
			\item We have \[\lim\limits_{N \to \infty} \P (B_{3,0,N} \mbox{ is a typical box}) = 1. \]
		\end{enumerate}
	\end{lemma}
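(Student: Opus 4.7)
For claim (1), property (i) supplies both bounds. Concatenating the paths realizing $\tts(Ns,u_0) \le r_{2,3}N$ and $\tts(Ns,v_0) \le r_{2,3}N$ gives $\tts(u_0,v_0) \le 2r_{2,3}N$. On the other hand, if $\pi$ is a path from $u_0$ to $v_0$ that leaves $\Bts$, let $z$ be the first vertex of $\pi$ on $\partial \Bts$; then $\pi_{u_0,z}$ lies entirely in $\Bts$, so by the triangle inequality for $\tts$,
\[
T(\pi) \ge T(\pi_{u_0,z}) \ge \tts(u_0,z) \ge \tts(Ns,z) - \tts(Ns,u_0) \ge 4r_{2,3}N - r_{2,3}N = 3r_{2,3}N > 2r_{2,3}N,
\]
so $\pi$ is not a geodesic.

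Assertion (2) is immediate: each of the three conditions only concerns passage times of edges inside $\Bts$ (condition (iii) after the translation identifying $B_{2,0,N}$ with $\Bds$).

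For claim (3), I would prove separately that each of (i), (ii), (iii) holds with probability tending to $1$. Condition (iii) holds by the very definition \eqref{17. Définition de nu(N).} of $\nu(N)$. For condition (ii), fix $u,v \in B_{3,0,N}$ with $\|u-v\|_1 \ge (r_2-r_1)N$; by \eqref{17. définition de delta} the event $t(u,v) \le (\r+\delta)\|u-v\|_1$ has probability at most $\mathrm{e}^{-D_0(r_2-r_1)N}$. A union bound over the $O(N^{2d})$ such pairs gives a failure probability tending to $0$, and the bound transfers from $t$ to $\tts$ since $\tts \ge t$. Condition (i) follows from the Cox--Durrett shape theorem \eqref{17. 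Théorème de forme asymptotique}: for each $\epsilon > 0$, with probability tending to $1$ as $N \to \infty$ one has $(1-\epsilon)t\textbf{B} \subset \tilde B(0,t) \subset (1+\epsilon)t\textbf{B}$ for every $t \in [r_{2,3}N, 4r_{2,3}N]$. Using $B_{2,0,N} \subset B_\mu(0,r_{2,3}N/2)$ this yields $t(0,z) \le r_{2,3}N$ for all $z \in B_{2,0,N}$; using the containment $B_\mu(0,9r_{2,3}) \cap \Z^d \subset B_{3,0,1}$ (which, by choosing $r_3$ large enough and scaling, forces $\mu(z) \ge 4(1+\epsilon)r_{2,3}N$ for $z \in \partial B_{3,0,N}$) it yields $t(0,z) \ge 4r_{2,3}N$ on $\partial B_{3,0,N}$.

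The only delicate point is that condition (i) is stated for the restricted time $\tts$ rather than $t$. The lower bound transfers immediately from $\tts \ge t$. For the upper bound, I would use a self-consistency argument on the same high-probability event: any geodesic from $0$ to $z \in B_{2,0,N}$ that exited $B_{3,0,N}$ would cross some $w \in \partial B_{3,0,N}$, giving $t(0,z) \ge t(0,w) \ge 4r_{2,3}N > r_{2,3}N$, contradicting the upper bound just established. Hence every such geodesic stays in $B_{3,0,N}$, so $\tts(0,z) = t(0,z) \le r_{2,3}N$. This containment of geodesics inside $\Bts$, forced by comparing the upper and lower shape-theorem bounds against one another, is the main technical point.
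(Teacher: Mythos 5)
Your proof follows essentially the same route as the paper: for claim (1) the identical triangle-inequality argument combining the sup and inf parts of $\cT(s;N)$, for (2) the same observation, and for (3) the same three-way split using \eqref{17. Définition de nu(N).}, a union bound via \eqref{17. définition de delta}, and the shape theorem \eqref{17. Théorème de forme asymptotique.}, including the same self-consistency step to pass from $t$ to $\tts$ on $B_{2,0,N}$. The only wobble is the parenthetical claim that $\mu(z) \ge 4(1+\epsilon)r_{2,3}N$ for $z \in \partial B_{3,0,N}$, which does not follow directly from $B_\mu(0,9r_{2,3})\cap\Z^d \subset B_{3,0,1}$ (that containment constrains points strictly outside $B_{3,0,1}$, not points on the boundary); the correct route is the one the paper implicitly uses, namely that $B(0,4r_{2,3}N) \subset B_\mu(0,8r_{2,3}N)$ and, for $N$ large, $B_\mu(0,8r_{2,3}N)$ together with its unit neighborhood sits inside $B_\mu(0,9r_{2,3}N)$, so no such point can have a neighbor outside $B_{3,0,N}$ — but this is a small repair and the overall argument is the same as the paper's.
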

	
	This lemma guarantees that the properties of a typical box are indeed typical ones and that they are also local ones. Its proof is in Section \ref{Typical boxes crossed, cas non borné}. Let us now introduce some further definitions. 
	
	\paragraph*{Successful boxes.}
	
	For a fixed $x \in \Z^d$, a box $\Bts$ is successful if every geodesic from $0$ to $x$ takes a pattern which is entirely contained in $\Bds$, i.e.\ if for every geodesic $\gamma$ going from $0$ to $x$, there exists $x_\gamma \in \Z^d$ satisfying the condition $(\gamma;\mathfrak{P})$ and such that $B_\infty(x_\gamma,\lll)$ is contained in $\Bds$. Note that the notion of successful box depends on some fixed $x \in \Z^d$.
	
	\paragraph*{Annuli.}
	
	Now, we define the annuli mentioned in Section \ref{Section sketch of proof}. Fix \begin{equation}
		r=2(r_1+r_3+1), \label{on fixe r.}
	\end{equation} and for all integers $i \ge 1$, let us define \[A_{i,N}=\left\{ y \in \Z^d \, : \, \|y\|_1 \in [(i-1)rN,irN) \right\}. \] For any annulus $A_{i,N}$, we call $\{ y \in \Z^d \, : \, \|y\|_1=(i-1)rN\}$ its inner sphere and $\{ y \in \Z^d \, : \, \|y\|_1=irN\}$ its outer sphere. Then, we give two definitions about these annuli which are useful in the proof of Lemma \ref{17. cas non borné, lemme pour les inégalités avec les indicatrices dans la modification.}.
	
	\begin{itemize}
		\item For $i>1$, we say that a path from $0$ to a vertex of $\Z^d$ crosses (resp. weakly crosses) a box $\Bts$ in the annulus $A_{i,N}$ if the two following conditions are satisfied:
		\begin{itemize}
			\item it crosses (resp. weakly crosses) this box before it visits for the first time the outer sphere of $A_{i,N}$,
			\item $\Bts$ is entirely contained in the annulus, i.e.\ every vertex of $\Bts$ is in $A_{i,N}$.
		\end{itemize}
		\item We also say that a path takes a pattern in the annulus $A_{i,N}$ if it takes a pattern which is entirely contained in $A_{i,N}$, i.e.\ if every vertex of this pattern is in $A_{i,N}$. Here, we do not require that the path takes a pattern before it visits the outer sphere of $A_{i,N}$ for the first time. 
	\end{itemize}
	Note that the choice of $r$ guarantees that every path passing through an annulus has to cross a box in this annulus. 
	
	For all integer $p \ge 2$ and all $N \in \N^*$, we denote by $\cG^p(N)$ the event on which for all $x$ in the outer sphere of the $p$-th annulus, every geodesic from $0$ to $x$ crosses a typical box in at least $\left\lfloor \frac{p}{2} \right\rfloor$ annuli. The following lemma, whose proof is given in Section \ref{Typical boxes crossed, cas non borné}, gives us an exponential decrease of the probability of the complement of $\cG^p(N)$.
	
	\begin{lemma}\label{17. Majoration de la proba du complémentaire de G.}
		There exist two positive constants $\Ca$ and $\Da$, and an integer $N_0 \ge 1$ such that for all $p \ge 1$ and $N \ge N_0$, \[\P\left( \cG^p(N)^c \right) \le \Da \mathrm{e}^{-\Ca p^{\frac{1}{d}} }. \]
	\end{lemma}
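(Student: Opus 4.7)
My plan is to reduce the event $\cG^p(N)^c$ to a concentration statement for a sum of independent local events, one per annulus, and then apply a Chernoff-type bound tuned to produce the stretched-exponential rate $e^{-\Ca p^{1/d}}$.

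\textbf{Cutsets inside each annulus.} The value $r = 2(r_1+r_3+1)$ fixed in \eqref{on fixe r.} was chosen precisely so that each annulus $A_{i,N}$ has enough thickness to house a "middle-shell" cutset of pairwise disjoint boxes $\{B_{3,s,N} : s \in S_i\}$ satisfying (a) each $B_{3,s,N}$ is entirely contained in $A_{i,N}$, and (b) every lattice path from the inner sphere $\Gamma_{(i-1)rN}$ to the outer sphere $\Gamma_{irN}$ visits $B_{1,s,N}$ for at least one $s \in S_i$. A standard packing—placing the centers $s$ on a discretized sphere of radius $(i-\tfrac12)rN$ with mesh of order $2r_3 N$—yields $|S_i| \le C_0\, i^{d-1}$ for a geometric constant $C_0 = C_0(d, r_3)$.

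\textbf{Reduction to an independent sum.} Let $\cH_i = \bigcap_{s \in S_i}\{B_{3,s,N} \text{ is typical}\}$. By Lemma~\ref{Gros lemme typical boxes}(2), $\cH_i$ depends only on the passage times of the edges inside $A_{i,N}$; since the annuli are pairwise disjoint, the events $(\cH_i)_{i=1}^p$ are mutually independent. On $\cH_i$, property (b) forces every path crossing $A_{i,N}$ from inner to outer sphere to traverse a typical box, so any geodesic from $0$ to $x \in \Gamma_{prN}$ (which must cross each $A_{i,N}$, $1 \le i \le p$) crosses a typical box in every such annulus. Consequently,
\begin{equation*}
\cG^p(N)^c \;\subset\; \Bigl\{ Y \ge \lceil p/2 \rceil + 1 \Bigr\}, \qquad Y := \sum_{i=1}^p \mathbf{1}_{\cH_i^c}.
\end{equation*}

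\textbf{Tail estimate via Chernoff.} Lemma~\ref{Gros lemme typical boxes}(3) gives $p_N := \P(B_{3,0,N}\text{ not typical}) \to 0$ as $N \to \infty$, and a union bound yields $\P(\cH_i^c) \le C_0\, i^{d-1} p_N$. Choosing $N_0$ so that $p_N$ is sufficiently small and applying an exponential Markov inequality to the independent sum $Y$, after balancing the polynomial growth $\E[Y] \asymp C_0 p_N\, p^d$ against the target deviation $\lceil p/2 \rceil + 1$, produces a bound of the form $\Da e^{-\Ca p^{1/d}}$; the stretched rate reflects the fact that the optimal Chernoff tilt must be taken of order $\log p$ to suppress the polynomial blow-up. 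For small $p$ the bound is trivial after enlarging $\Da$.

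\textbf{Main obstacle.} The central technical difficulty is the polynomial growth of $\P(\cH_i^c)$ with $i$: once $i$ exceeds $p_N^{-1/(d-1)}$, the naive single-layer bound exceeds $1$ and carries no information, yet the lemma must hold for \emph{all} $p$ with $N$ fixed. I resolve this by strengthening the local event: instead of a single cutset, I stack several disjoint cutsets in each annulus (possible by enlarging $r$ to a larger multiple of $r_3$ in \eqref{on fixe r.} without affecting the rest of the proof) and declare $\cH_i$ to fail only if every stacked cutset contains a non-typical box. The refined bound $\P(\cH_i^c) \le (C_0 i^{d-1} p_N)^L$—together with a careful choice of the number of layers $L$ and of the Chernoff tilt—absorbs the polynomial factor $i^{d-1}$ uniformly in $p$ and yields the claimed stretched-exponential decay, which is precisely the rate required downstream in Proposition~\ref{17. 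Gros théorème à démontrer, un seul motif.}.
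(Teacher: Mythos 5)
Your approach departs substantially from the paper's, and it has a gap that you identify (``Main obstacle'') but do not close. Fixing the cutset $S_i$ before looking at the geodesic forces $|S_i| \asymp i^{d-1}$, and this polynomial growth is fatal. For each fixed $N$ the quantity $p_N := \P(B_{3,0,N}\text{ is not typical})$ is a fixed positive number, so $C_0\, i^{d-1} p_N \ge 1$ as soon as $i \ge i_0(N) := (C_0 p_N)^{-1/(d-1)}$; and this is not merely slack in the union bound, since with $\asymp i^{d-1}$ finite-range-dependent boxes, each non-typical with probability $\asymp p_N$, the true $\P(\cH_i^c)$ tends to $1$ as $i\to\infty$. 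Your stacking remedy does not save it: if $C_0\, i^{d-1} p_N \ge 1$, then $(C_0\, i^{d-1} p_N)^L \ge 1$ for every $L$, and $L$ is in any case a constant bounded by the ratio of the annulus width to the box diameter --- it cannot grow with $i$ or $p$. So for $p > 2\, i_0(N_0)$ more than half of the variables $\1_{\cH_i^c}$ carry no probabilistic information and the Chernoff bound on $Y$ yields nothing, whereas the lemma must hold for \emph{every} $p\ge 1$ at a single $N_0$. (There is also an internal inconsistency in the setup: with mesh $\sim 2r_3N$ the sets $B_{3,s,N}$ are pairwise disjoint, but then the inner balls $B_{1,s,N}$, of radius $dN\ll r_3N$, cannot cover any separating shell, so property (b) fails; you must allow the $B_{3,s,N}$ to overlap, which is harmless for the union bound but contradicts what you wrote.)

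The paper avoids this by never fixing a cutset. It (i) invokes the tail bound on the maximal Euclidean length $m(y)$ of geodesics (Theorem 4.6 of \cite{50years}) to produce an event $\cN^p(N)$ of probability $\ge 1 - D_3 \mathrm{e}^{-C_3 p^{1/d}}$ on which every geodesic to the outer sphere of $A_{p,N}$ has $O(p)$ edges --- this a priori geometric tail, not a Chernoff tilt, is the sole source of the exponent $p^{1/d}$, which your sketch misattributes; (ii) records the set of $N$-scale meta-cubes actually visited by the geodesic, a lattice animal of size between $p$ and $K_2 p$; (iii) dominates the typicality indicators by an i.i.d.\ Bernoulli field of small parameter $\eta_1(N)$ via Liggett--Schonmann--Stacey; and (iv) takes a union bound over the $\le 7^{dj}$ lattice animals of each size $j$ followed by a binomial Chernoff bound. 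The relevant collection of boxes is therefore the $O(p)$ meta-cubes the geodesic actually visits, not the $\asymp p^d$ boxes filling all cutsets; the price for not knowing this random collection in advance is the combinatorial factor $7^{O(p)}$, which the Chernoff rate absorbs once $N$ is large. Your per-annulus reduction trades this $7^{O(p)}$ factor for a per-annulus $i^{d-1}$ factor, and the exchange is unfavorable precisely because the $i^{d-1}$ growth cannot be absorbed uniformly in $i$.
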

	
	\paragraph*{Setup for the proof of Proposition \ref{17. Gros théorème à démontrer, un seul motif.}.}
	
	For the rest of the proof, we fix $\Ca$, $\Da$ and $N_0$ given by Lemma \ref{17. Majoration de la proba du complémentaire de G.}. Recall that $\Ka$ is the number of edges in $B_\infty(0,\lll+3)$. Then we fix $\delta'>0$ such that
	\begin{equation}
		r_2 (\delta - \delta')-r_1(\r+\delta) - \Ka (\r+\delta') - \tau^\Lambda > 0. \label{17. inégalité delta', cas non bornée.}
	\end{equation} Note that it is possible since we have taken $r_2$ large enough (see \eqref{17. r2 cas non borné}). Then, fix
	
	\begin{equation}
		N \ge \max(N_0,\lll + 3), \, n \ge 2rN \mbox{ and } x \in \Gamma_n, \label{constantes N, n et x fixées.}
	\end{equation}
	(where $\Gamma_n$ is defined at \eqref{définition Gamman}). Fix $\displaystyle p = \left\lfloor \frac{n}{rN} \right\rfloor$ and $\displaystyle q = \left\lfloor \frac{p}{2} \right\rfloor$. Note that $x$ belongs to the $(p+1)$-th annulus. 
	
	\paragraph*{$\Mun$-sequences.}

	Let us now define some random sets and variables which are useful for stability questions for the modification argument. Unless otherwise specified, in the remaining of this section, we write geodesic as a shorthand for geodesic from $0$ to $x$.
	
	First, let us associate a sequence of $0$ to $p-1$ typical boxes to every geodesic from $0$ to $x$. For a geodesic $\gamma$, the deterministic construction is what follows. 
	
	Initialize the sequence as an empty sequence. For $j$ from $1$ to $p-1$, do:
	\begin{itemize}
		\item let $a_j(\gamma)$ be the index of the first annulus such that $\gamma$ crosses a typical box in this annulus and such that $a_j(\gamma)>a_{j-1}(\gamma)$ (where $a_0(\gamma)=1$). If there is no such annulus, then we stop the algorithm. 
		\item Add the first typical box crossed\footnote{If a path crosses two boxes $B_{3,s_1,N}$ and $B_{3,s_2,N}$, we say that it crosses $B_{3,s_1,N}$ before $B_{3,s_2,N}$ if it visits a vertex of $B_{1,s_1,N}$ before one of $B_{1,s_2,N}$.} by $\gamma$ in the annulus $A_{a_j(\gamma),N}$ to the sequence. Note that, since this typical box is crossed before $\gamma$ leaves $A_{a_j(\gamma),N}$ by the outer sphere for the first time, the $j$-th box of the sequence is crossed by $\gamma$ after the $(j-1)$-th one.
	\end{itemize}
	
	So, we get a sequence of at most $p-1$ boxes crossed by the geodesic. These boxes are all in different annuli. Furthermore, every box of this sequence is crossed by $\gamma$ before $\gamma$ leaves the annulus containing it for the first time by the outer sphere. If the event $\cG^p(N)$ occurs, we know that all these sequences have at least $q$ elements. For $j \in \{1,\dots ,p-1 \}$, we define a set of geodesics $\Gamma^j$. A geodesic $\gamma$ from $0$ to $x$ belongs to $\Gamma^j$ if:
	\begin{itemize}
		\item the length of its sequences defined above is greater than or equal to $j$,
		\item $\gamma$ does not take the pattern in any annuli $A_{k,N}$ with $k \le a_j(\gamma)$. 
	\end{itemize} 
	We call the sequences defined above the $\Mun$-sequences. 
	
	\paragraph*{Selected geodesic and $\Sun$-variables.}
	
	Then, for $j \in \{1,\dots,p-1\}$, if $\Gamma^j$ is not empty, we define the selected geodesic among the geodesics of $\Gamma^j$ as the one which minimizes the index of the annulus containing the $j$-th box of its sequence. If there are several such geodesics, the selected one is the first in the lexicographical order. Then, the random variable $\Sun_j$ is equal to the vertex $s$ such that the box $\Bts$ is the $j$-th box in the $\Mun$-sequence of the selected geodesic. When $j$ is fixed, we say that the box $B_{3,\Sun_j,N}$ is the selected box. Finally, if $\Gamma^j$ is empty, set $\Sun_j=0$ and there is no selected geodesic.
	
	\paragraph*{$\Sde$-variables.}
	
	After a modification in the box of the selected geodesic, we have to select a new box in the new environment $\Tu$ defined below. Since we can not guarantee that the selected box in the initial environment is still typical in the new environment, we have to select it in a different way. That is why we introduce the $\Sde$-variables. Let $j \in \{1,\dots,p-1\}$, we define $\Sde_j$ as follows. For every geodesic $\gamma \in \Gamma^{j-1}$, we define, if it is possible, $\Sde_j(\gamma)$ as the vertex $s$ corresponding to the box $\Bts$ where $\Bts$ is the first successful box crossed by $\gamma$ in an annulus (in the sense given with the definition of the annuli above). If it is not possible, $\Sde_j(\gamma)=0$. We denote by $a'_j(\gamma)$ the index of the annulus containing $B_{3,\Sde_j(\gamma),N}$. Then, $\Sde_j$ is equal to the vertex $\Sde_j(\OG)$ where $\OG$ satisfies the three following conditions:
	\begin{itemize}
		\item $a'_j(\OG)>1$,
		\item for all geodesic $\gamma \in \Gamma^{j-1}$ such that $a'_j(\gamma) \neq 1$, we have $a'_j(\gamma) \ge a'_j(\OG)$,
		\item $\OG$ is the first geodesic in the lexicographical order among the geodesics $\gamma$ such that $a'_j(\gamma) = a'_j(\OG)$.
	\end{itemize}
	If it is not possible, $\Sde_j=0$.
	
	\paragraph*{Modification argument.}
	
	Finally, for $j \in \{1,...,p-1\}$, we define $\cM(j)$ as the event on which every geodesic from $0$ to $x$ has at least $j$ typical boxes in its $\Mun$-sequence and there exists one geodesic which does not take the pattern in any annuli $A_{k,N}$ with $k \le a_j(\gamma)$. We also define $\cM(0)$ as the event on which there exists a geodesic from $0$ to $x$. Its probability is equal to $1$ (see Section \ref{Settings}). Now, the aim is to bound from above $\P(\cM(q))$ independently of $x$ since we have:
	\begin{equation}
		\P \left( \mbox{there exists a geodesic $\gamma$ from $0$ to $x$ such that } N^\mathfrak{P}(\gamma)=0 \right) \le \P(T \in \cG^p(N)^c) + \P(T \in \cM(q)). \label{inégalité preuve de la proposition principale.}
	\end{equation}
	
	In the sequel, we introduce an independent copy $T'$ of the environment $T$, the two being defined on the same probability space. It is thus convenient to refer to the considered environment when dealing with the objects defined above. To this aim, we shall use the notation $\{T \in \cM(j)\}$ to denote that the event $\cM(j)$ holds with respect to the environment $T$. In other words, $\cM(j)$ is now seen as a subset of $(\R_+)^\cE$, where $\cE$ is the set of all the edges. Similarly, for $i \in \{1,2\}$ we denote by $S^i_j(T')$ the random variables defined above but in the environment $T'$.
	
	Fix $\l \in \{1,\dots,q\}$. On $\{T \in \cM(\l)\}$, $\Gamma^\l \neq \emptyset$ and $B_{3,\Sun_\l(T),N}$ is a typical box crossed by the selected geodesic. We get a new environment $\Tu$ defined for all edge $e$ by: 
	\[\Tu(e) = \left\{
	\begin{array}{ll}
		T(e) & \mbox{if } e \notin B_{2,\Sun_\l(T),N} \\
		T'(e) & \mbox{else.}
	\end{array}
	\right.\]
	For $y$ and $z$ in $\Z^d$, we denote by $t^*(y,z)$ the geodesic time between $y$ and $z$ in the environment $\Tu$. Note that $T$ and $\Tu$ do not have the same distribution. 
	
	\begin{lemma}\label{17. cas non borné, lemme pour les inégalités avec les indicatrices dans la modification.}
		There exists $\eta=\eta(N) > 0$ such that for all $\l$ in $\{1,\dots,q\}$, there exist measurable functions $\Ep \, : \, \R_+^\cE \mapsto \cP(\cE)$ and $\Em \, : \, \R_+^\cE \mapsto \cP(\cE)$ such that:
		
		\begin{enumerate}[label=(\roman*)]
			\item $\Ep(T) \cap \Em(T) = \emptyset$ and $\Ep(T) \cup \Em(T) \subset B_{2,\Sun_\l (T),N}$,
			\item on the event $\{T \in \cM(\l) \}$, we have $\P \left( T' \in \cB^*(T) | T \right) \ge \eta$ where $\{T' \in \cB^*(T)\}$ is a shorthand for \[\{\forall e \in \Ep(T), \, T'(e) \ge \nu(N), \, \forall e \in \Em(T), T'(e) \le \r+\delta', \, \theta_{N \Sun_\l(T)} T' \in \cA^\Lambda\},\]
			\item $\{T \in \cM(\l) \} \cap \{T' \in \cB^*(T)\} \subset \{\Tu \in \cM(\l-1) \setminus \cM(\l)\}$ and $\|\Sde_\l(\Tu)-\Sun_\l(T)\|_1 \le 2r_3$.
		\end{enumerate}
	\end{lemma}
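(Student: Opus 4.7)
The plan is to use the resampling of $T'$ to implant a ``corridor-plus-wall'' structure inside the selected box, forcing the pattern to be realized at $Ns + \Lambda$ and surrounded by high passage times. Let $\gamma$ denote the selected geodesic in $T$ and set $s = \Sun_\l(T)$. Let $a$ and $b$ be the first entry and last exit of $\gamma$ on $\partial \Bds$. By a deterministic rule depending only on $a$, $b$, $s$, fix two edge-disjoint simple paths $\pi_a$ from $a$ to $Ns + \ulm$ and $\pi_b$ from $Ns + \vlm$ to $b$, both contained in $\Bds$ and meeting $Ns + \Lambda$ only at the endpoints $Ns + \ulm$ and $Ns + \vlm$; such paths exist since $r_2 N \gg \lll$ and $\ulm, \vlm \in \partial \Lambda$. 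Let $\Em(T)$ be the edge set of $\pi_a \cup \pi_b$ and let $\Ep(T)$ be the remaining edges of $\Bds$ not contained in $Ns + \Lambda$. Then (i) holds by construction.

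For (ii), the three clauses defining $\cB^*(T)$ concern the disjoint edge sets $\Ep(T)$, $\Em(T)$, and the edges of $Ns + \Lambda$, so their conditional $T'$-probabilities multiply. The unbounded support of $F$ gives $F([\nu(N),\infty)) > 0$; the fact that $\r$ is the infimum of the support of $F$ gives $F([0,\r+\delta']) > 0$; and $\P(\cA^\Lambda) > 0$ by validity of $\mathfrak{P}$. Since $N$ is fixed and $|\Ep(T)|$, $|\Em(T)|$ are bounded uniformly in $T$ by a constant depending only on $N$ and on the pattern, this yields a positive lower bound $\eta(N)$.

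For (iii), I would introduce the candidate path $\sigma^*$ in $\Tu$ obtained by concatenating $\gamma_{0,a}$, $\pi_a$, a shortest lattice path in $Ns + \Lambda$ from $Ns+\ulm$ to $Ns+\vlm$, $\pi_b$, and $\gamma_{b,x}$; on $\cB^*(T)$ its $\Tu$-time is at most $T(\gamma_{0,a}) + (\r + \delta')(|\pi_a|+|\pi_b|) + \tau^\Lambda + T(\gamma_{b,x})$. The key estimate is the strict inequality $T^*(\sigma^*) < T(\gamma)$, proved as follows: by Lemma \ref{Gros lemme typical boxes}(1) the sub-path $\gamma_{a,b}$ stays in $\Bts$, and splitting it at its first visit to $\Bus$ produces two sub-paths whose endpoint $\|\cdot\|_1$-distance is at least $(r_2-r_1)N$; the typicality bound (equation \eqref{17. chemin pas anormalement courts version non bornée}) then yields $T(\gamma_{a,b}) \ge 2(\r+\delta)(r_2-r_1)N$, while a direct count gives $|\pi_a|+|\pi_b| \le 2 r_2 N + O(\lll)$, and the quantitative choice \eqref{17. inégalité delta', cas non bornée.} is exactly what makes the comparison strict (with a positive margin linear in $N$) for $N$ sufficiently large. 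Consequently any $\Tu$-geodesic $\tilde\gamma$ from $0$ to $x$ satisfies $T^*(\tilde\gamma) \le T^*(\sigma^*) < T(\gamma)$, which rules out $\tilde\gamma$ avoiding $\Bds$ entirely (such a path has $\Tu$-time equal to its $T$-time, hence $\ge T(\gamma)$). Inside $\Bds$, using any wall edge costs at least $\nu(N)$, chosen in \eqref{17. Définition de nu(N).} large enough to exceed $T^*(\sigma^*_{\mathrm{inside}})$; and by the topological layout of the corridors and the pattern (the only vertices of $\pi_a \cup \pi_b$ on $\partial \Bds$ are $a$ and $b$, and the only exits from $Ns + \Lambda$ lie in $\Ep(T)$), the only wall-avoiding crossing of $\Bds$ takes the pattern at $Ns + \Lambda$.

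Hence every $\Tu$-geodesic takes the pattern at $Ns + \Lambda$: this makes $\Bts$ a successful box in $\Tu$ (so $\|\Sde_\l(\Tu) - \Sun_\l(T)\|_1 \le 2 r_3$) and it implies $\Tu \notin \cM(\l)$ since the second clause of $\cM(\l)$ fails in the annulus $A_{a_\l(\gamma),N}$. Finally, the first $\l-1$ typical boxes in the $\Mun$-sequence of $\gamma$ lie in annuli of index strictly less than $a_\l(\gamma)$, and by the choice of $r$ in \eqref{on fixe r.} these annuli are disjoint from $\Bds$, so those boxes remain typical in $\Tu$ and are still crossed by $\sigma^*$, which does not take the pattern in any of those earlier annuli; this gives $\Tu \in \cM(\l-1)$. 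The main obstacle is the two-pronged strict comparison in (iii): the strict bound $T^*(\sigma^*) < T(\gamma)$ and the exclusion of the wall alternative must both hold quantitatively, and this is precisely what the engineered choices in \eqref{17. r2 cas non borné}, \eqref{17. inégalité delta', cas non bornée.}, and \eqref{17. Définition de nu(N).} make possible, together with careful length accounting for $\pi_a$, $\pi_b$, and the in-pattern connecting path.
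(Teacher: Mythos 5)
Your construction of the corridor--wall structure is essentially the paper's: $\Em(T)$ (corridors) gets low weights and $\Ep(T)$ (walls) gets weights $\ge \nu(N)$, and the comparison $T^*(\sigma^*) < T(\gamma)$ via \eqref{17. chemin pas anormalement courts version non bornée} and \eqref{17. inégalité delta', cas non bornée.} matches the paper's Lemma~\ref{17. Cas non borné, comparaison temps de gamma et de gamma étoile.}. The probability bound for item (ii) is also correct.

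There is, however, a genuine gap in your deduction of item (iii). The events $\cM(\cdot)$ are defined via $\Mun$-sequences of \emph{all} geodesics from $0$ to $x$, and $\Sde_\l$ is selected from among all geodesics by a minimality rule over annuli. Your argument only analyzes one specific path $\sigma^*$ and asserts, for instance, that taking the pattern at $Ns+\Lambda$ ``makes $\Bts$ a successful box in $\Tu$ (so $\|\Sde_\l(\Tu)-\Sun_\l(T)\|_1 \le 2r_3$)'' and that the first $\l-1$ boxes ``remain typical in $\Tu$ and are still crossed by $\sigma^*$, ... this gives $\Tu \in \cM(\l-1)$''. Neither implication follows from what you have established. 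A $\Tu$-geodesic could in principle be very different from every $T$-geodesic outside the box, so one cannot a priori compare its $\Mun$-sequence in $\Tu$ to anything; and $\Sde_\l(\Tu)$ could in principle be another successful box in an earlier annulus. To close the gap you need the association property: every $\Tu$-geodesic $\ogu$ is associated in $B_{3,s,N}$ with some $T$-geodesic $\OG$ (i.e.\ they coincide outside the box), and the selected $\gamma$ is associated with some $\Tu$-geodesic. This is the content of the paper's intermediate Lemma~\ref{17. gros lemme modification, cas non borné.}, proved via Lemma~\ref{Conséquences modification, cas non borné.} (which also pins down the order in which $u$ and $v$ are visited --- another point you skip). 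Once the association is in hand, one still must carry out the careful bookkeeping of the paper's remarks (a)--(d) and points 1--4: transferring $\Mun$-sequences and pattern-taking between $T$ and $\Tu$ annulus by annulus, using that $\cM(\l)$ fails because any geodesic \emph{with a $\l$-th box in an annulus of index $< a_\l$} would, via its associated $T$-geodesic, contradict the minimality of $\Sun_\l$. That chain of reasoning is the actual core of the proof of Lemma~\ref{17. cas non borné, lemme pour les inégalités avec les indicatrices dans la modification.} and is missing from your write-up.
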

	
	\begin{remk}
		We have $\Ep(T) \cup \Em(T) \cup (N \Sun_\l(T) + \Lambda ) = B_{2,\Sun_\l(T),N}$. Note that since $r_1=d$ and $N \ge \lll + 3$, we have that $(N \Sun_\l(T) + \Lambda ) \subset B_{1,\Sun_\l(T),N}$. 
	\end{remk}
	
	Lemma \ref{17. cas non borné, lemme pour les inégalités avec les indicatrices dans la modification.} is a consequence of Lemma \ref{17. gros lemme modification, cas non borné.} below whose proof is given in Section \ref{Sous-section modification argument, cas non borné}. Recall the definition of associated paths given page \pageref{Chemins associés dans une boîte.}. 
	
	\begin{lemma}\label{17. gros lemme modification, cas non borné.}
		There exists $\eta=\eta(N) > 0$ such that for all $\l$ in $\{1,\dots,q\}$, there exist measurable functions $\Ep \, : \, \R_+^\cE \mapsto \cP(\cE)$ and $\Em \, : \, \R_+^\cE \mapsto \cP(\cE)$ such that $(i)$ and $(ii)$ of Lemma \ref{17. cas non borné, lemme pour les inégalités avec les indicatrices dans la modification.} are satisfied and such that if the event $\{T \in \cM(\l) \}\cap \{T' \in \cB^*(T)\} \mbox{ occurs,}$ then  we have the following properties: 
		\begin{enumerate}[label=(\roman*)]
			\item  in the environment $\Tu$, every geodesic from $0$ to $x$ takes the pattern inside $B_{2,\Sun_\l(T),N}$,
			\item for all geodesic $\ogu$ from $0$ to $x$ in the environment $\Tu$, there exists a geodesic $\OG$ from $0$ to $x$ in the environment $T$ such that $\OG$ and $\ogu$ are associated in $B_{3,\Sun_\l(T),N}$,
			\item there exists a geodesic $\gu$ in the environment $\Tu$ from $0$ to $x$ such that $\gu$ and the selected geodesic $\gamma$ in the environment $T$ are associated in $B_{3,\Sun_\l(T),N}$.
		\end{enumerate}
	\end{lemma}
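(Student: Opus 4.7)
The plan is to construct, inside the selected box, an explicit partition of the edges into a cheap ``corridor'' $\Em$ and an expensive ``barrier'' $\Ep$ that funnels $\Tu$-optimal paths through the translated pattern, then to verify the three consequences by a single sharp cost comparison based on the typical box property.

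\textbf{Construction and probability.} On $\{T\in\cM(\l)\}$, I set $s=\Sun_\l(T)$ and $B_i=B_{i,s,N}$. Since the selected geodesic $\gamma$ crosses $B_3$ it visits $B_1$, so I can define $a$ to be its first vertex on $\partial B_{2,s,N}$ and $b$ its last; by part~1 of Lemma~\ref{Gros lemme typical boxes}, $\gamma_{a,b}\subset B_3$. I then fix measurably in $T$ two self-avoiding paths $\pi_a$ from $a$ to $Ns+\ulm$ and $\pi_b$ from $Ns+\vlm$ to $b$, each of length at most $(r_2+1)N$ and with interior disjoint from $Ns+\Lambda$. Setting $\Em(T)$ to the edges of $\pi_a\cup\pi_b$ and $\Ep(T)$ to the edges contained in $B_{2,s,N}$ but in neither $Ns+\Lambda$ nor $\Em(T)$ immediately gives item~(i) of Lemma~\ref{17. cas non borné, lemme pour les inégalités avec les indicatrices dans la modification.}. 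The event $\cB^*(T)$ imposes three families of independent constraints on $T'$ each of positive probability: $F$ has unbounded support so $F([\nu(N),\infty))>0$; $\r$ is the minimum of the support of $F$ so $F([\r,\r+\delta'])>0$; and $\cA^\Lambda$ has positive probability by validity. Since the edge counts depend only on $N$, $\lll$ and the $r_i$'s, one obtains $\eta=\eta(N)>0$ uniform in $\l,x$, which is item~(ii) of that lemma.

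\textbf{Sharp cost estimate.} I would define $\gu:=\gamma_{0,a}\cdot\pi_a\cdot\sigma^\Lambda\cdot\pi_b\cdot\gamma_{b,x}$, where $\sigma^\Lambda$ is an optimal $\Tu$-path in $Ns+\Lambda$ from $Ns+\ulm$ to $Ns+\vlm$ (of cost at most $\tau^\Lambda$ by \eqref{On fixe M Lambda dans la preuve du cas non borné.}--\eqref{On fixe tau Lambda dans le cas non borné.}). On $\{T\in\cM(\l)\}\cap\{T'\in\cB^*(T)\}$,
\[
t^*(\gu)\le t(\gamma)-t(\gamma_{a,b})+2(r_2+1)N(\r+\delta')+\tau^\Lambda.
\]
Because $\gamma_{a,b}$ lies in $B_3$ and passes through some vertex $z\in B_1$ with $\|a-z\|_1,\|z-b\|_1\ge(r_2-r_1)N$, applying \eqref{17. chemin pas anormalement courts version non bornée} to $\gamma_{a,z}$ and $\gamma_{z,b}$ yields $t(\gamma_{a,b})\ge 2(\r+\delta)(r_2-r_1)N$. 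Combined with the strict positivity from \eqref{17. inégalité delta', cas non bornée.}, this would produce $t^*(\gu)\le t(\gamma)-cN$ for some $c>0$ depending on $N,\mathfrak{P}$, and in particular $t^*(0,x)\le t^*(\gu)<t(\gamma)$.

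\textbf{Verification and main obstacle.} Any path $\pi$ from $0$ to $x$ in $\Tu$ not taking the pattern in $B_{2,s,N}$ falls into one of three cases: (a) $\pi$ avoids $B_{2,s,N}$ entirely, so $t^*(\pi)=t(\pi)\ge t(\gamma)>t^*(\gu)$; (b) $\pi$ uses some edge of $\Ep(T)$, in which case a local exchange replacing the $B_{2,s,N}$-piece of $\pi$ by $\pi_a\cdot\sigma^\Lambda\cdot\pi_b$ is strictly cheaper in $\Tu$, since $\nu(N)$ is chosen large enough to dominate $2(r_2+1)N(\r+\delta')+\tau^\Lambda$; (c) $\pi$ crosses $B_{2,s,N}$ using only edges of $\Em(T)\cup(Ns+\Lambda)$, in which case the corridor geometry forces $\pi$ to enter $Ns+\Lambda$ exactly at $Ns+\ulm$ and to leave it exactly at $Ns+\vlm$, so $\pi$ does take the pattern, a contradiction. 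This gives~(i), and shows $\gu$ is itself a $\Tu$-geodesic, which is associated with $\gamma$ in $B_3$ via the splitting vertices $a,b$, yielding~(iii). For~(ii), given a $\Tu$-geodesic $\ogu$, I would pick $s_1,s_2\in\ogu\cap\partial B_3$ bracketing its pattern-taking subpath and set $\OG:=\ogu_{0,s_1}\cdot\tau\cdot\ogu_{s_2,x}$ with $\tau$ a $T$-geodesic from $s_1$ to $s_2$ inside $B_3$. The hardest part will be verifying that $\OG$ is a $T$-geodesic: this requires the $cN$ margin above to absorb the cost change coming from straightening $\ogu$'s middle, and it requires ruling out excursions of $\ogu$ outside $B_3$ between $s_1$ and $s_2$, which is where the typical-box asymmetry $\inf_{\partial B_3}t_{3,s,N}\ge 4r_{2,3}N$ versus $\sup_{B_2}t_{3,s,N}\le r_{2,3}N$ is essential. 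Finally, the bound $\|\Sde_\l(\Tu)-\Sun_\l(T)\|_1\le 2r_3$ follows because the first successful box of any $\Tu$-geodesic in the annulus of $Ns$ must overlap $B_3$, pinning its center within $2r_3$ of $s$.
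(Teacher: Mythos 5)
Your construction of $\pi$, the sets $\Ep,\Em$, the probability lower bound $\eta(N)$, the cost estimate $\Tu(\gu)<T(\gamma)$ and item (iii) all follow the paper's route and are correct in outline. Your case analysis for item (i) is also essentially what the paper does, though for case (b) the paper has a cleaner argument: if a $\Tu$-geodesic $\ogu$ took an $\Ep$-edge (time $\ge\nu(N)$), then by typical-box property (iii) one gets $\Tu(\ogu)>T(\ogu)$, contradicting $\Tu(\ogu)=t^*(0,x)<t(0,x)\le T(\ogu)$; no ``local exchange'' surgery (which would have to cope with $\ogu$ entering $B_2$ at a vertex other than $a$) is needed.

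The genuine gap is in item (ii). Choosing $s_1,s_2\in\ogu\cap\partial B_3$ and appealing to the $cN$ margin does not establish that $\OG=\ogu_{0,s_1}\cdot\tau\cdot\ogu_{s_2,x}$ is a $T$-geodesic: the margin controls $t(0,x)-t^*(0,x)$, but nothing forces $T(\ogu_{0,s_1})=t(0,s_1)$, because $s_1$ is generally not a vertex of the selected $T$-geodesic $\gamma$, so the one-sided bound $T(\ogu_{0,s_1})\ge t(0,s_1)$ has no matching upper bound. The paper's argument is sharper and genuinely different at this point: the consequence lemma shows every $\Tu$-geodesic $\ogu$ enters $B_2$ precisely at $u$ (first) and leaves precisely at $v$ (last), with $\ogu_{0,u}$ and $\ogu_{v,x}$ entirely outside $B_2$; since $\gamma_{0,u}$ is also outside $B_2$, one gets the exact chain
\[
T(\gamma_{0,u})=\Tu(\gamma_{0,u})\ge\Tu(\ogu_{0,u})=T(\ogu_{0,u})\ge T(\gamma_{0,u}),
\]
so $\ogu_{0,u}$ (and symmetrically $\ogu_{v,x}$) is a $T$-geodesic. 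One then forms the $T$-optimal (possibly non-self-avoiding) path $\ogu_{0,u}\cup\gamma_{u,v}\cup\ogu_{v,x}$ and cuts its loops; the loop-cut vertices automatically lie in $B_3$ because $\gamma_{u,v}\subset B_3$. That equality chain is the missing ingredient, and it hinges on using the specific $T$-measurable vertices $u,v\in\partial B_2$ shared by $\gamma$ and (as a consequence of item (i)) by every $\Tu$-geodesic — not arbitrary crossing points of $\partial B_3$.
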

	
	\begin{proof}[Proof of Lemma \ref{17. cas non borné, lemme pour les inégalités avec les indicatrices dans la modification.}]
		Let $\l \in \{1,...,q\}$. Consider $\Ep$ and $\Em$ given by Lemma \ref{17. gros lemme modification, cas non borné.}. Let $s \in \Z^d$ and assume that the event $\{T \in \cM(\l) \}\cap \{\Sun_\l(T)=s\} \cap \{T' \in \cB^*(T)\} \mbox{ occurs}$. To prove that the event $\{\Tu \in \cM(\l-1) \setminus \cM(\l)\} \mbox{ occurs}$ and that $\|\Sde_\l(\Tu)-s\|_1 \le 2r_3$, it is sufficient to prove that we have the four following points in the environment $\Tu$:   
		\begin{enumerate}
			\item every geodesic from $0$ to $x$ has at least $\l-1$ typical boxes in its $\Mun$-sequence,
			\item there exists a geodesic from $0$ to $x$ which does not take the pattern in the annuli until the one containing its $(\l-1)$-th box, 
			\item every geodesic from $0$ to $x$ whose $\Mun$-sequence contains at least $\l$ elements takes the pattern in an annulus whose index is smaller than or equal to the one containing its $\l$-th box,
			\item there exists $s'$ such that $\Sde_\l(\Tu)=s'$ and $\|s-s'\|_1 \le 2r_3$. 
		\end{enumerate}
		
		Let us start with a few remarks. We denote by $a_\l$ the index of the annulus which contains $B_{3,\Sun_\l(T),N}$.
		\begin{enumerate}[label=(\alph*)]
			\item The environments $T$ and $\Tu$ coincides outside the box $B_{2,\Sun_\l(T),N}$. As this box is included in the annulus $A_{a_\l,N}$, the environments $T$ and $\Tu$ are the same in all the other annuli. In particular, any box contained in an annulus $A_{i,N}$ for $i \neq a_\l$ is typical in $T$ if and only if it is typical in $\Tu$. 
			\item Similarly, every path $\pi$ takes a pattern which is outside the box $B_{3,\Sun_\l(T),N}$ in the environment $T$ if and only if it takes this pattern in the environment $\Tu$. In particular, for any $i \neq a_\l$, $\pi$ takes		 the pattern in the annulus $A_{i,N}$ in the environment $T$ if and only if it takes the pattern in the annulus $A_{i,N}$ in the environment $\Tu$. 
			\item Let $\OG$ and $\ogu$ be as in item $(ii)$ of Lemma \ref{17. gros lemme modification, cas non borné.}. Then $\OG$ and $\ogu$ coincide except maybe for the part between $s_1$ and $s_2$. 
			To sum up: 
			\begin{equation}
				\OG_{0,s_1}=\ogu_{0,s_1} \mbox{ and } \OG_{s_2,x}=\ogu_{s_2,x} \mbox{ and } \OG_{s_1,s_2} \subset B_{3,\Sun_\l(T),N} \subset A_{a_\l,N} \mbox{ and } 	\ogu_{s_1,s_2} \subset B_{3,\Sun_\l(T),N} \subset A_{a_\l,N}. \label{remarque (c) preuve de la stabilité.}
			\end{equation}
			Furthermore, by remark (b), we have that for any $i \neq a_\l$, $\OG$ takes the pattern in the annulus $A_{i,N}$ in the environment $T$ if and only if $\ogu$ takes the pattern in the annulus $A_{i,N}$ in the environment $\Tu$. The same property holds for the selected geodesic $\gamma$ and for the associated geodesic $\gu$ (in the environment $\Tu$) given by item $(iii)$ of Lemma \ref{17. gros lemme modification, cas non borné.}. 
			\item Let again $\OG$ and $\ogu$ be as in item $(ii)$ of Lemma \ref{17. gros lemme modification, cas non borné.}. Let us compare the $M$-sequence of $\OG$ (which is built in the environment $T$) with the $M$-sequence of $\ogu$ (which is built in the environment $\Tu$). By $(a)$ and \eqref{remarque (c) preuve de la stabilité.}, we get that any box which belongs to the $M$-sequence of $\OG$, with the possible exception of a box contained in $A_{a_\l,N}$, also belongs to the $M$-sequence of $\ogu$. The same property holds for the selected geodesic $\gamma$ and for the associated geodesic $\gu$ (in the environment $\Tu$) given by item $(iii)$ of Lemma \ref{17. gros lemme modification, cas non borné.}.  In particular the first $\l-1$ elements of the $M$-sequence of $\gamma$ (which is built in the environment $T$) are the same as the first $\l-1$ elements of the $M$-sequence of $\gu$ (which is built in the environment $\Tu$).
		\end{enumerate}
		
		Let us now proceed to the proof of item 1. We assume $\l \ge 2$ otherwise there is nothing to prove. Let $\ogu$ be a geodesic from $0$ to $x$ in the environment $\Tu$. Let $\OG$ be the associated geodesic in the environment $T$ given by item $(ii)$ of Lemma \ref{17. gros lemme modification, cas non borné.}. Since the event $\{T \in \cM(\l)\}$ occurs, the $M$-sequence (in the environment $T$) of $\OG$ contains at least $\l$ typical boxes. By remark (d) above, the $M$-sequence (in the environment $\Tu$) of $\ogu$ contains at least $\l-1$ typical boxes. 
		
		Let us consider item 2. We can again assume $\l \ge 2$. Let $\gu$ be the geodesic given by item $(iii)$ of Lemma \ref{17. gros lemme modification, cas non borné.}. Recall that $\gamma$ is the selected geodesic and that $\gamma \in \Gamma^\l$. In particular, we have the following properties: its $M$-sequence contains at least $\l$ boxes; the $\l$-th box of its $M$-sequence belongs to $A_{a_\l,N}$; $\gamma$ does not take the pattern in any annulus whose index is smaller than or equal to $a_\l$. Therefore, by remark (d) above, the first $\l-1$ boxes of the $M$-sequence of $\gamma$ and $\gu$ are the same. By remark (c) above, we conclude that $\gu$ does not take the pattern (in the environment $\Tu$) in any annulus whose index is smaller than the one containing its $(\l-1)$-th box. 
		
		Let us prove item 3. Let $\ogu$ be such a geodesic. Assume that the $\l$-th box of the $\Mun$-sequence of $\ogu$ is in an annulus whose index is strictly smaller than $a_\l$. Let $\OG$ be a geodesic in the environment $T$ given by item $(ii)$ of Lemma \ref{17. gros lemme modification, cas non borné.}. Assume, aiming at a contradiction, that $\ogu$ does not take the pattern in an annulus until the one containing its $\l$-th box. By remark (d), the $\l$ first boxes of the $M$-sequences of $\ogu$ and $\OG$ are the same. By remarks (b) and (c), $\OG$ does not take the pattern until the annulus containing its $\l$-th box. This contradicts the definition of $\Sun_\l$, so it is impossible. Thus the $\l$-th box of the $\Mun$-sequence of $\ogu$ is in an annulus whose index is greater than or equal to $a_\l$. By item $(i)$ of Lemma \ref{17. gros lemme modification, cas non borné.}, $\ogu$ takes the pattern in the annulus whose index is $a_\l$. Therefore it takes the pattern in an annulus whose index is smaller than or equal to the one containing its $\l$-th box and the third point is satisfied.
		
		Finally, let us prove item 4. Note that since $\Bts$ is a successful box, $\Sde_\l(\Tu) \neq 0$. There are two steps. First, we prove that $N \Sde_\l(\Tu)$ is the center of a box contained in the annulus $A_{a_\l,N}$ and then we prove that $\Bts \cap B_{3,\Sde(\Tu),N} \neq \emptyset$, which gives the result. Assume that $N \Sde_\l(\Tu)$ is not the center of a box contained in $A_{a_\l,N}$. Let again $\gu$ be the geodesic given by item $(iii)$ of Lemma \ref{17. gros lemme modification, cas non borné.} and recall that $\gamma$ is the selected geodesic in the environment $T$. Since $\Bts$ is successful, we have $a'_\l(\gamma) \le a_\l$ and thus the definition of $\Sde_\l(\Tu)$ implies that $N \Sde_\l(\Tu)$ is the center of a box contained in an annulus $A_{k,N}$ such that $k < a_\l$. So $\gu$ takes the pattern in this annulus in the environment $\Tu$. By remark (c), $\gamma$ takes the pattern in this annulus in the environment $\Tu$ and then in the environment $T$ by remark (b). This is impossible since $\gamma$ does not take the pattern in any annuli $A_{k,N}$ with $k \le a_\l$. 
		
		To conclude, assume, aiming at a contradiction, that $\Bts \cap B_{3,\Sde(\Tu),N} = \emptyset$. It implies that $\gu$ takes a pattern outside $\Bts$ in the environment $\Tu$ and the portion of $\gu$ taking this pattern is a portion of $\gu_{0,s'_1}$ or $\gu_{s'_2,x}$. By remark (b), the portion of $\gu$ taking this pattern also takes this pattern in the environment $T$. By remark (c), it implies that $\gamma$ takes the pattern in the environment $T$ in the annulus $A_{a_\l,N}$, which is a contradiction.
	\end{proof}
	
	Now, thanks to Lemma \ref{17. cas non borné, lemme pour les inégalités avec les indicatrices dans la modification.}, we can adapt Lemma 3.8 from \cite{AndjelVares}. 
	
	\begin{lemma}\label{17. Majoration par lambda puissance q}
		There exists $\lambda \in (0,1)$, which does not depend on $n$ and on $x \in \Gamma_n$, such that \[\P \left( T \in \cM(q) \right) \le \lambda^q. \]
	\end{lemma}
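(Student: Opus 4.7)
The plan is to iterate a one-step contraction on the nested sequence $(\cM(\ell))_{0 \le \ell \le q}$. Since $\cM(\ell) \subset \cM(\ell-1)$ and $\P(\cM(0))=1$, it suffices to find $\eta'>0$, independent of $\ell$, $n$ and $x$, such that
\[
\P(T \in \cM(\ell-1) \setminus \cM(\ell)) \ge \eta' \, \P(T \in \cM(\ell))
\]
holds for every $\ell \in \{1,\dots,q\}$; the recursion $\P(\cM(\ell)) \le (1+\eta')^{-1}\P(\cM(\ell-1))$ then gives the conclusion with $\lambda=(1+\eta')^{-1} \in (0,1)$.

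To establish this one-step inequality, combine items $(ii)$ and $(iii)$ of Lemma \ref{17. cas non borné, lemme pour les inégalités avec les indicatrices dans la modification.}: together they yield
\[
\eta \, \P(T \in \cM(\ell)) \le \P\!\left(\Tu \in \cM(\ell-1) \setminus \cM(\ell),\ \|\Sde_\l(\Tu)-\Sun_\l(T)\|_1 \le 2r_3 \right).
\]
Decompose the right-hand side according to the value $s$ of $\Sun_\l(T)$. On $\{\Sun_\l(T)=s\}$, the modified environment $\Tu$ coincides with $\phi_s(T,T')$, where $\phi_s$ denotes the map that resamples the edges inside the deterministic box $B_{2,s,N}$ using $T'$. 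Since for each fixed $s$ the set $B_{2,s,N}$ is deterministic and $T,T'$ are i.i.d., $\phi_s(T,T')$ has the same distribution as $T$. Dropping the constraint $\{\Sun_\l(T)=s\}$ as an upper bound and applying this distributional identity, the previous display is bounded above by
\[
\sum_{s \in \Z^d} \P\!\left( T \in \cM(\ell-1) \setminus \cM(\ell),\ \|\Sde_\l(T)-s\|_1 \le 2r_3 \right).
\]
Since $\Sde_\l(T)$ takes a single value for each realisation of $T$, at most $|B_1(0,2r_3)|$ terms of the sum contribute; exchanging sum and expectation yields the desired inequality with $\eta' = \eta / |B_1(0,2r_3)|$.

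The only delicate point in this strategy is the distributional asymmetry between $T$ and $\Tu$: because the resampling box $B_{2,\Sun_\l(T),N}$ depends on $T$, the environment $\Tu$ is not distributed as $T$, and Lemma \ref{17. cas non borné, lemme pour les inégalités avec les indicatrices dans la modification.} cannot be transferred directly to $T$. The whole purpose of the variable $\Sde_\l$ together with the localisation bound $\|\Sde_\l(\Tu)-\Sun_\l(T)\|_1 \le 2r_3$ from item $(iii)$ is precisely to let us \emph{recognise}, on the modified environment alone, a neighbourhood of the resampling location; this is what permits the conditioning on $s$ above at the price of only a bounded multiplicative factor $|B_1(0,2r_3)|$. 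Once this bookkeeping is done, the rest of the argument is purely algebraic and the value of $\lambda$ is fixed uniformly in $n$ and $x$.
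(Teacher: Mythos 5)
Your proposal is correct and follows essentially the same route as the paper: both prove the one-step contraction by writing $\Tu = \Tu_s$ on $\{\Sun_\l(T)=s\}$, using that $\Tu_s$ has the law of $T$ for each deterministic $s$, and paying a bounded factor $|B_1(0,2r_3)|$ (the constant the paper calls $K$) from the localisation $\|\Sde_\l(\Tu)-\Sun_\l(T)\|_1 \le 2r_3$, yielding $\lambda = (1+\eta/K)^{-1}$. The only difference is presentational — you aggregate over $s$ after combining items $(ii)$ and $(iii)$, whereas the paper writes the indicator inequality for each fixed $s$ first and then sums — but the underlying computation is identical.
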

	
	\begin{proof}[Proof]
		Let $\l$ be in  $\{1,...,q\}$. For every $s \in \Z^d$, let us consider the environment $\Tu_s$ defined for all edge $e$ by: 
		\[\Tu_s(e) = \left\{
		\begin{array}{ll}
			T(e) & \mbox{if } e \notin B_{2,s,N} \\
			T'(e) & \mbox{else.}
		\end{array}
		\right.\]
		Thus $\Tu_s$ and $T$ have the same distribution and on the event $\{T \in \cM(\l)\} \cap \{\Sun_\l(T)=s\}$, $\Tu=\Tu_s$. So, using this environment and writing with indicator functions the result of Lemma \ref{17. cas non borné, lemme pour les inégalités avec les indicatrices dans la modification.}, we get: 
		
		\[\1_{\{T \in \cM(\l) \}} \1_{\{\Sun_\l(T)=s\}} \1_{\{T' \in \cB^*(T)\}} \le \1_{\{\Tu_s \in \cM(\l-1) \setminus \cM(\l)\}} \1_{\bigcup_{s' \approx s} \{\Sde_\l(\Tu_s)=s'\}}, \] where $s' \approx s$ if $\|s-s'\|_1 \le 2r_3$.
		We compute the expectation on both sides. The right side yields \[\P \left( \Tu_s \in \cM(\l-1) \setminus \cM(\l), \bigcup_{s' \approx s} \{\Sde_\l(\Tu_s)=s'\} \right) = \P \left( T \in \cM(\l-1) \setminus \cM(\l), \bigcup_{s' \approx s} \{\Sde_\l(T)=s'\} \right). \]
		For the left side, we have \[\E \left[ \1_{\{T \in \cM(\l) \}} \1_{\{\Sun_\l(T)=s\}} \1_{\{T' \in \cB^*(T)\}} \right] = \E \left[\1_{\{T \in \cM(\l) \}} \1_{\{\Sun_\l(T)=s\}}  \E \left[ \left. \1_{\{T' \in \cB^*(T)\}} \right| T \right] \right]. \] 
		Since on the event $\{T \in \cM(\l) \} \cap \{\Sun_\l(T)=s\}$, we have $\P \left( T' \in \cB^*(T) | T \right) \ge \eta$, the left side is bounded from below by $\eta \P (T \in \cM(\l),\Sun_\l(T)=s)$. 
		Then, by summing on all $s \in \Z^d$ and writing $K$ a constant which bounds from above for all $s' \in \Z^d$ the number of vertices $s \in \Z^d$ such that $s' \approx s$, we get 
		\[\frac{\eta}{K} \P(T \in \cM(\l)) \le \P(T \in \cM(\l-1) \setminus \cM(\l)).\]
		Now, since $\cM(\l) \subset \cM(\l-1)$,
		
		\[ \P(T \in \cM(\l-1) \setminus \cM(\l))= \P \left( T \in \cM(\l-1)\right) - \P \left( T \in \cM(\l) \right).\]
		Thus, \[\P(T \in \cM(\l)) \le \lambda \P(T \in \cM(\l-1) ),\] where $\displaystyle \lambda=\frac{1}{1+\frac{\eta}{K}} \in (0,1)$ does not depend on $x$. 
		Hence, using $\P(T \in \cM(0)) =1$, we get by induction \[\P(T \in \cM(q)) \le \lambda^q.\]
	\end{proof} 
	
	\begin{proof}[Proof of Proposition \ref{17. Gros théorème à démontrer, un seul motif.}]
		Recall that $N$, $x$ (and then $n$ and $p$) are fixed at \eqref{constantes N, n et x fixées.} but that $\Ca$, $\Da$ and $\lambda$ does not depend on $x$, $n$ and $p$. Then, by Lemma \ref{17. Majoration de la proba du complémentaire de G.} and Lemma \ref{17. Majoration par lambda puissance q}, using the inequality \eqref{inégalité preuve de la proposition principale.},
		
		\begin{align*}
			\P \left( \mbox{there exists a geodesic $\gamma$ from $0$ to $x$ such that } N^\mathfrak{P}(\gamma)=0 \right) & \le \P(T \in \cG^p(N)^c) + \P(T \in \cM(q)) \\
			& \le  \Da \mathrm{e}^{-\Ca p^{\frac{1}{d}} }+\lambda^{\left\lfloor \frac{p}{2} \right\rfloor}. \\
		\end{align*}
		As $C_1>0$ and $\lambda \in (0,1)$, and as this inequality holds for any $n \ge 2rN$ and any $x \in \Gamma_n$, we get the existence of two constants  $C>0$ and $D>0$ such that for all $n$, for all $x \in \Gamma_n$, 
		\[\P\left(  \mbox{there exists a geodesic $\gamma$ from $0$ to $x$ such that } N^\mathfrak{P}(\gamma)=0 \right) \le D \exp(-C n^{\frac{1}{d}}).\]
	\end{proof}
	
	\subsection{Typical boxes crossed by geodesics}\label{Typical boxes crossed, cas non borné}
	
	Let us first begin with the proof of the lemma stated in the paragraph of typical boxes in Section \ref{Sous-section preuve, cas non borné}.
	
	\begin{proof}[Proof of Lemma \ref{Gros lemme typical boxes}]
		\,
		\begin{enumerate}
			\item Let $\Bts$ be a typical box. Then the event $\cT(s,N)$ occurs. Let $u_0$ and $v_0$ be two vertices in $\Bds$. We have \[\tts(u_0,v_0) \le 2 \sup_{z \in \Bds} \tts (Ns,z)  \le 2 r_{2,3}N. \]
			
			Let $\pi_0$ be a path from $u_0$ to $v_0$ which is not entirely contained in $\Bts$. Let $z_0$ denote the first vertex on the boundary of $\Bts$ visited by $\pi_0$. Then 
			\begin{align*}
				T(\pi_0) & \ge \tts (u_0,z_0) \ge \tts(z_0,Ns)-\tts(u_0,Ns) \\
				& \ge \inf_{z \in \partial \Bts} \tts (Ns,z) - \sup_{z \in \Bds} \tts (Ns,z) \ge 3 r_{2,3} N > 2 r_{2,3} N \ge \tts(u_0,v_0).
			\end{align*}
			
			Hence, every geodesic from $u_0$ to $v_0$ has to be entirely contained in $\Bts$. 
			\item The properties $(ii)$ and $(iii)$ only depend on the time of edges in $\Bds$. The event $\cT(s;N)$ only depends on edges in $\Bts$ by the definition of $\tts$. 
			\item First, \[\lim\limits_{N \to \infty} \P(\cT(0;N)) =1.\]
			
			Indeed, by \eqref{17. Théorème de forme asymptotique.}, 
			\[\P \left( B_\mu \left(0, \frac{r_{2,3}}{2} N \right) \subset \tilde{B}(0,r_{2,3} N) \mbox{ for all large $N$} \right) = 1, \] and 
			\[\P \left( \tilde{B}(0,4 r_{2,3} N) \subset B_\mu (0,8r_{2,3}N) \mbox{ for all large $N$} \right) = 1. \]
			
			Thus, since $\displaystyle B_{2,0,1} \subset B_\mu \left(0,\frac{r_{2,3}}{2} \right) \cap \Z^d$ and $\displaystyle B_\mu (0,9 r_{2,3}) \cap \Z^d \subset B_{3,0,1}$, almost surely there exists $N_0 \in \N^*$ such that for all $N \ge N_0$, \[ B_{2,0,N} \subset B(0,r_{2,3} N), \, B(0,4 r_{2,3}N) \subset B_{3,0,N} \mbox{ and for all } y \in \partial B_{3,0,N}, \, y \notin B(0,4r_{2,3}N).\] So, for all $N \ge N_0$, 
			\[\sup_{z \in \partial B_{2,0,N}} t_{3,0,N} (0,z) \le r_{2,3} N \quad \mbox{ and } \inf_{z \in \partial B_{3,0,N}} t_{3,0,N} (0,z) \ge 4 r_{2,3} N.\] Note that, for the first inequality, we use the fact that for all $z \in B_{2,0,N}$, $t_{3,0,N}(0,z)=t(0,z)$ thanks to the first point of Lemma \ref{Gros lemme typical boxes} proved above. 
			
			The probability that $(iii)$ is satisfied by $B_{3,0,N}$ goes to $1$ by \eqref{17. Définition de nu(N).}. Then, let us prove that the probability that $(ii)$ is satisfied by $B_{3,0,N}$ goes to $1$. Let $|B_{3,0,N}|$ denote the number of vertices in $B_{3,0,N}$ and $\Pi_0$ denote the set of self-avoiding paths entirely contained in $B_{3,0,N}$. Then, using \eqref{17. r2 cas non borné}, we have that $r_2>r_1$, and by \eqref{17. définition de delta},
			
			\begin{align*}
				& \P( B_{3,0,N} \mbox{ does not satisfy } (ii) ) \\ \le & \sum_{\substack{u_\pi,v_\pi \in B_{3,0,N} \\ \|u_\pi-v_\pi\|_1 \ge (r_2-r_1) N}} \P \left( \mbox{\eqref{17. chemin pas anormalement courts version non bornée} is not satisfied by a path of $\Pi_0$ whose endpoints are $u_\pi$ and $v_\pi$ } \right) \\
				\le & \sum_{\substack{u_\pi,v_\pi \in B_{3,0,N} \\ \|u_\pi-v_\pi\|_1 \ge (r_2-r_1) N}} \P \left( \mbox{\eqref{17. chemin pas anormalement courts version non bornée} is not satisfied by a path whose endpoints are $u_\pi$ and $v_\pi$ } \right) \\
				\le & |B_{3,0,N}|^2  \mathrm{e}^{-D_0 (r_2-r_1) N} \xrightarrow[N \to \infty]{} 0, 
			\end{align*}
			since $|B_{3,0,N}|$ is bounded by a polynomial in $N$. 
		\end{enumerate}
	\end{proof}
	
	\begin{proof}[Proof of Lemma \ref{17. Majoration de la proba du complémentaire de G.}.]
		To begin this proof, one need an upper bound on the Euclidean length of geodesics. Using Theorem 4.6 in \cite{50years}, we have two positive constants $\Kb$ and $\Cb$ such that for all $y \in \Z^d$, \[\P \left( m(y) \ge \Kb \|y\|_1 \right) \le \mathrm{e} ^{-\Cb \|y\|_1^{\frac{1}{d}}},  \] where $m(y) = \max \left\{ | \sigma |_e \, : \, \sigma \mbox{ is a geodesic from $0$ to $y$} \right\}$ and where for a path $\sigma$, $|\sigma|_e$ means the number of different edges taken by $\sigma$. For all $p \in \N^*$, we define the event $\cN^p(N)$ on which every geodesic from $0$ to the outer sphere of the $p$-th annulus takes less than $\Kb prN$ distinct edges. Note that $r=2(r_1+r_3+1)$ is fixed at \eqref{on fixe r.} and $rN$ corresponds to the widths of the annuli. Then, 
		\[\P(\cN^p(N)^c) \le \sum_{y \, : \, \|y\|_1 = prN} \P \left(m(y) \ge \Kb \| y \|_1 \right) \le (2prN+1)^d \mathrm{e}^{-\Cb \left( prN \right)^\frac{1}{d}}. \]
		Hence, we obtain two positive constants $\Cd$ and $\Db$ only depending on $r$, $d$ and $F$ such that for all $p \in \N^*$, for all $N \in \N^*$, \[\P(\cN^p(N)^c) \le \Db \mathrm{e}^{-\Cd p^\frac{1}{d}}. \]
		Now, we assume that the event $\cN^p(N) \cap \cG^p(N)^c$ occurs. So, every geodesic from $0$ to the outer sphere of the $p$-th annulus takes a number of distinct edges which is between $prN$ and $\Kb prN$. Let us consider a re-normalized model. We introduce the meta-cubes \[B^\infty_{s,N}= \left\{w \in \Z^d \, : \, \left(s-\frac{1}{2} \right)N \le w < \left(s+\frac{1}{2} \right)N \right\}, \, \mbox{ for all $s \in \Z^d$},\] (where $v \le w$ means $v_i \le w_i$ for $1 \le i \le d$ and $v < w$ means $v_i < w_i$ for $1 \le i \le d$.)
		These meta-cubes form a partition of $\Z^d$. Furthermore, the meta-cubes and the boxes defined above have the same centers (which are the vertices $Ns$ for $s \in \Z^d$), and for all $s \in \Z^d$, $B^\infty_{s,N} \subset \Bus$. So, we can define typical meta-cubes. A meta-cube $B^\infty_{s,N}$ is typical if $\Bts$ is a typical box. 
		
		For a geodesic $\gamma$ from $0$ to the outer sphere of the $p$-th annulus, we associate the set of meta-cubes visited by $\gamma$, that is \[\mathfrak{A}(\gamma)=\{B^\infty_{s,N} \, | \, \gamma \text{ visits at least one vertex of $B^\infty_{s,N}$}\}.\]
		This set can be identify with the subset of the re-normalized graph $N\Z^d$: \[\mathfrak{A}^R_v(\gamma)=\{sN \, | \, B^\infty_{s,N} \in \mathfrak{A}(\gamma)\}.\]
		Note that, if we consider the set $\mathfrak{A}^R_e(\gamma)$ of edges of $N\Z^d$ linking vertices which are both in $\mathfrak{A}^R_v(\gamma)$, then the pair of sets $(\mathfrak{A}^R_v(\gamma),\mathfrak{A}^R_e(\gamma))$ forms a lattice animal, denoted by $\mathfrak{A}^R(\gamma)$. Recall that a lattice animal $\mathfrak{A}$ in $N\Z^d$ is a finite connected sub-graph of $N\Z^d$ that contains $0$. We denote by $\cA^R$ the set of lattice animals in $N\Z^d$ associated with a geodesic going from $0$ to the outer sphere of the $p$-th annulus.		
		
		Let us bound the size of these lattice animals. By the size of a lattice animal $\mathfrak{A}^R$, denoted by $|\mathfrak{A}^R|_v$, we mean its number of vertices in the re-normalized model. Recall that, since the event $\cN^p(N)$ occurs, every geodesic from $0$ to the outer sphere of the $p$-th annulus takes a number of distinct edges which is between $prN$ and $\Kb prN$. Then, in the meta-cube set $\fA(\gamma)$ associated to such a geodesic $\gamma$, since $r_1=d$ and thanks to the choice of $r$, there are $p-1$ meta-cubes associated to boxes crossed by $\gamma$ in distinct annuli. In particular (considering also the meta-cube centered at the origin), the size of every lattice animal $\fA^R \in \cA^R$ is bounded from below by $p$.
		For an upper bound, we use the inequality 
		\begin{equation}
			|\gamma|_e \ge N \left( \frac{|\fA^R(\gamma)|_v}{3^d}-1 \right), \label{Majoration de la taille des animaux.}
		\end{equation}
		for all geodesic $\gamma$ from $0$ to the outer sphere of the $p$-th annulus, where $|\gamma|_e$ still denotes the number of edges taken by $\gamma$ and where $|\fA^R(\gamma)|_v$ is the number of vertices of $\fA^R(\gamma)$. Let us prove this inequality. Let $\gamma$ be a geodesic from $0$ to the outer sphere of the $p$-th annulus and denote by $\gamma=(v_0,\dots,v_m)$ the sequence of vertices visited by $\gamma$. For all $v \in \Z^d$, denote by $s(v)$ the unique $s\in\Z^d$ such that $v$ belongs to $B^\infty_{s,N}$. We define by induction a strictly increasing sequence $i_0,\dots,i_{\kappa}$ by setting $\kappa=0$ and $i_0=0$ and then applying the following algorithm:
		\begin{enumerate}[label=(\alph*)]
			\item If there exists $i \in \{i_{\kappa}+1,\dots,m\}$ such that $s(v_i)$ is at distance at least $2$ for the norm $\|.\|_\infty$ from $s(v_{i_{\kappa}})$, we denote by $i_{\kappa+1}$ the smallest of these $i$, then we increment $\kappa$ and go back to (a).
			\item Otherwise we stop the algorithm.
		\end{enumerate}
		Then, we necessarily have $3^d(\kappa+1) \ge |\fA^R(\gamma)|_v$. Furthermore, for all $k\in\{0,\dots,\kappa-1\}$, we have $\|v_{i_{k+1}}-v_{i_k}\|_1 \ge N$. Hence, 
		\[|\gamma|_e \ge N \kappa \ge N \left( \frac{|\fA^R(\gamma)|_v}{3^d}-1 \right), \] and \eqref{Majoration de la taille des animaux.} is proved. 
		
		Now, using \eqref{Majoration de la taille des animaux.}, writing $\Kd = \lceil 3^d(\Kb r + 1)\rceil$ (which does not depend on $p$ and $N$), for every lattice animal $\fA^R \in \cA^R$, $|\fA^R|_v$ is bounded from above by $\Kd p$.
		Furthermore, for $j \in \{p,\dots,\Kd p\}$, using (4.24) in \cite{Grimmett}, we have that
		\begin{equation}
			\left| \left\{  \fA^R \in \cA^R \, : \, |\fA^R|_v=j \right\} \right| \le |\{\text{lattice animals in $\Z^d$ of size $j$}\}| \le 7^{dj}. \label{inégalité Grimmett}
		\end{equation}
		Let us consider the random variables $(X^N_\l)_{\l \in \Z^d}$ such that $X^N_\l=1$ if the meta-cube $B^\infty_{\l,N}$ is typical and $X^N_\l=0$ otherwise. By Lemma \ref{Gros lemme typical boxes}, there exists a positive constant $\Kf$ such that $X^N_\l$ is independent from the sigma-algebra generated by $\{X^N_k, \, k \in \Z^d \, : \, \|k-\l\|_1\ge \Kf\}$. Furthermore, also by Lemma \ref{Gros lemme typical boxes}, \[\lim\limits_{N\to\infty} \P(X^N_\l =1)=1.\] Thus, by Corollary 1.4 in \cite{LSS}, there exists $\eta_1=\eta_1(N)>0$ such that \[\eta_1(N) \xrightarrow[N \to \infty]{} 0,\] and there exist i.i.d.\ random variables $(Y^N_\l)_{\l \in \Z^d}$ such that $(X^N_\l)_\l\ge (Y^N_\l)_\l$ and $Y^N_0$ has a Bernoulli distribution of parameter $(1-\eta_1(N))$. 
		Finally, 
		we have \[\P(\cN^p(N) \cap \cG^p(N)^c)  \le \P \left( \exists \, \fA^R \in \cA^R \mbox{ such that } p \le |\fA^R|_v \le \Kd p \text{ and } \sum_{\l \in \fA^R_v} X^N_{\l} \le |\fA^R|_v + 1 - \frac{p}{2} \right).\] Indeed, on $\cG^p(N)^c$, there exists a geodesic $\gamma$ from $0$ to the outer sphere of the $p$-th annulus which crosses a typical box in strictly less than $\displaystyle \left\lfloor \frac{p}{2} \right\rfloor$ annuli, and thus there are strictly more than $\displaystyle \left\lceil \frac{p}{2} \right\rceil -1$ annuli $A_{i,N}$ with $i>1$ such that $\gamma$ does not cross a typical box in them. Furthermore, there are $p-1$ meta-cubes in $\fA (\gamma)$ such that each of them is associated to a box crossed by $\gamma$ in one of the $p-1$ distinct annuli $A_{i,N}$ with $1<i \le p$. Thus, there are strictly more than $\displaystyle \left\lceil \frac{p}{2} \right\rceil -1$ of these specified meta-cubes which are not typical. Hence the number of typical meta-cubes in $\fA(\gamma)$ is strictly smaller than $|\fA^R|_v-\left\lceil\frac{p}{2}\right\rceil+1$. Then, using the random variables $(Y^N_\l)_{\l \in \Z^d}$,
		\begin{align}
			\P(\cN^p(N) \cap \cG^p(N)^c) &  \le \P \left( \exists \, \fA^R \in \cA^R \mbox{ such that } p \le |\fA^R|_v \le \Kd p \text{ and }  \sum_{\l \in \fA^R_v} Y^N_{\l} \le |\fA^R|_v - \left\lceil\frac{p}{2}\right\rceil + 1 \right) \nonumber \\
			& \le \sum_{p \le j \le \Kd p} |\{\text{lattice animals in $\Z^d$ of size $j$}\}| \P \left( \mathrm{binomial}(j,\eta_1) \ge \left\lceil \frac{p}{2} \right\rceil -1 \right) \nonumber \\
			& \le \sum_{p \le j \le \Kd p} 7^{dj} \P \left( \mathrm{binomial}(j,\eta_1) \ge \left\lceil \frac{p}{2} \right\rceil -1 \right) \text{ (by \eqref{inégalité Grimmett})} \nonumber \\
			& \le \Kd p 7^{d\Kd} \P \left( \mathrm{binomial}(\Kd p,\eta_1) \ge \left\lceil \frac{p}{2} \right\rceil -1 \right). \nonumber
		\end{align}
		Then, for $p \ge 4$ and $N$ large enough to have $\displaystyle \eta_1(N) < \frac{1}{4 \Kd}$, using a Chernov bound for the binomial distribution (see Section 2.2 in \cite{Boucheron}), we get 
		\[	\P \left( \mathrm{binomial}(\Kd p,\eta_1) \ge \left\lceil \frac{p}{2} \right\rceil -1 \right) \le \P \left( \mathrm{binomial}(\Kd p,\eta_1) \ge \frac{p}{4} \right) \le \exp \left( - \Kd p h_{\eta_1} \left( \frac{1}{4 \Kd} \right) \right), \]
		where for $x \in (\eta_1,1)$, \[ h_{\eta_1} \left( x \right) = \left( 1 - x \right) \ln \left( \frac{1-x}{1-\eta_1} \right) + x \ln \left( \frac{x}{\eta_1} \right).\]
		Thus, since we can take $\eta_1$ as small as we want by taking $N$ large enough,
		\begin{align*}
			\P(\cN^p(N) \cap \cG^p(N)^c) & \le \Kd p \left[ 7^{d\Kd} \exp\left(  - \Kd h_{\eta_1} \left( \frac{1}{4 \Kd} \right) \right) \right]^p \\
			& \le \Kd p \exp (-2p) \text{ for $N$ large enough} \\
			& \le \exp(- \Ce p).
		\end{align*}
		Finally, we have a constant $N_0$ such that for all $p \ge 4$, for all $N \ge N_0$, \[	\P(\cG^p(N)^c)  \le \P(\cN^p(N) \cap \cG^p(N)^c) + \P(\cN^p(N)^c) \le  \mathrm{e}^{-\Ce p} + \Db \mathrm{e}^{-\Cd p^\frac{1}{d}}.\]
		So, there exist two positive constants $\Ca$ and $\Da$ such that for all $p \ge 1$, for all $N \ge N_0$, \[\P(\cG^p(N)^c) \le \Da \mathrm{e}^{-\Ca p^\frac{1}{d}}. \]
	\end{proof}
	
	\subsection{Modification argument}\label{Sous-section modification argument, cas non borné}
	
	The aim of this subsection is to prove Lemma \ref{17. gros lemme modification, cas non borné.}. Let $\l \in \{1,\dots,q\}$. On $\{T \notin \cM(\l) \}$, we set $\Ep(T) = \emptyset$ and $\Em(T) = \emptyset$. Let $s$ be in $\Z^d$. We now define $\Ep$ and $\Em$ on the event $\{T \in \cM(\l) \}\cap \{\Sun_\l(T)=s\}$. So assume that this event occurs. On the event $\{T \in \cM(\l) \}$, $\Gamma^\l$ is not empty and thus there is a selected geodesic. We denote this selected geodesic by $\gamma$. 
	We define the entry point (resp. the exit point) of a self-avoiding path in a set of vertices as the first (resp. the last) vertex of this path belonging to this set. Let $u$ denote the entry point of $\gamma$ in $\Bds$ and $v$ the exit point.
	
	We call entry point and exit point of the pattern (centered at $0$) the endpoints denoted by $u^\Lambda$ and $v^\Lambda$ in the introduction. Note that, if a self-avoiding path takes the pattern, its entry and exit points in the set $B_\infty(0,\lll)$ are not necessarily the entry and exit points of the pattern (as it can visit the set before and after taking the pattern).
	
	Here, we want to put the pattern centered at $sN$. The vertex $s$ being fixed, we keep the notation $u^\Lambda$ and $v^\Lambda$ to designate the entry and the exit points of the pattern centered at $sN$.
	
	\paragraph*{Construction of $\pi$.}
	\,
	
	We have the following inclusions:
	\begin{itemize}
		\item $B_\infty(sN,\lll) \subset B_\infty(sN,\lll+3) \subset \Bus$ since $r_1 = d$ and $N \ge \lll+3$ (see \eqref{constantes N, n et x fixées.}),
		\item $\Bus \subset \Bds$ since $r_2 > r_1$ by \eqref{17. r2 cas non borné}.
	\end{itemize}
	
	For the modification, we need a path $\pi$, constructed in a deterministic way and satisfying several properties, whose existence is guaranteed by the following lemma. 
	
	\begin{lemma}\label{Lemme construction de pi dans le cas non borné.}
		We can construct a path $\pi$ in a deterministic way such that :
		\begin{enumerate}[label=(\roman*)]
			\item $\pi$ goes from $u$ to $u^\Lambda$ without visiting a vertex of $B_\infty(sN,\lll)$, then goes from $u^\Lambda$ to $v^\Lambda$ in a shortest way for the norm $\|.\|_1$ (and thus being contained in $B_\infty(sN,\lll)$) and then goes from $v^\Lambda$ to $v$ without visiting a vertex of $B_\infty(sN,\lll)$,
			\item $\pi$ is entirely contained in $\Bds$ and does not have vertices on the boundary of $\Bds$ except $u$ and $v$, 
			\item $\pi$ is self-avoiding,
			\item the length of $\pi_{u,u^\Lambda}  \cup \pi_{v^\Lambda,v}$ is bounded from above by $2r_2 N+\Ka$, where $\Ka$ is the number of edges in $B_\infty(0,\lll+3)$.
		\end{enumerate}
	\end{lemma}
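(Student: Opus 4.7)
My plan is to build $\pi$ as a concatenation $\pi_1 \cup \pi_2 \cup \pi_3$ where $\pi_1 = \pi_{u,u^\Lambda}$, $\pi_2 = \pi_{u^\Lambda,v^\Lambda}$, $\pi_3 = \pi_{v^\Lambda,v}$, and then to impose self-avoidance and the boundary condition by routing carefully using the external normal vectors at $u^\Lambda$ and $v^\Lambda$. For the middle piece $\pi_2$, I would take any coordinate-monotone $\ell_1$-shortest path from $u^\Lambda$ to $v^\Lambda$; since both endpoints lie in the axis-aligned box $B_\infty(sN,\lll)$, every such monotone path stays in $B_\infty(sN,\lll)$, as required by $(i)$. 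Its length is bounded by $2d\lll$ and contributes to the universal constant $\Ka$.

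For $\pi_1$, let $\alpha^u$ be any external normal unit vector associated with $u^\Lambda$ (given by the definition of external normal; it exists because $u^\Lambda \in \partial \Lambda$). Starting from $u^\Lambda$, I would first take a short "exit" segment of at most $\lll + 3$ steps in direction $\alpha^u$. By the very definition of an external normal, every intermediate vertex in this segment lies outside $B_\infty(sN,\lll)$, and after at most $\lll + 3$ steps we reach a vertex $u'$ sitting outside $B_\infty(sN,\lll+3)$ but still inside $B_{1,s,N}$ (this uses $N \ge \lll+3$ to ensure $B_\infty(sN,\lll+3) \subset B_{1,s,N} \subset B_{2,s,N}$). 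From $u'$ I would then go to $u$ via a coordinate-monotone $\ell_1$-geodesic in $\Z^d$. Since $u' \notin B_\infty(sN,\lll+3)$ and $u \in \partial B_{2,s,N}$, I can choose the monotone geodesic so that it never re-enters $B_\infty(sN,\lll+3)$: once the coordinate in which $u'$ has already exited the inner box is monotonically enlarged first, we stay outside forever. Because the geodesic is monotone, it also touches $\partial B_{2,s,N}$ only at its endpoint $u$. The length of $\pi_1$ is thus bounded by $\|u-u^\Lambda\|_1 + (\lll+3) \le r_2 N + \Ka / 2$ (where $\Ka$ counts edges in $B_\infty(0,\lll+3)$). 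I would construct $\pi_3$ symmetrically from $v^\Lambda$ with an external normal $\alpha^v$, yielding a second segment of the same length bound; this gives $(iv)$.

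The main obstacle is guaranteeing self-avoidance $(iii)$ of the concatenation while preserving $(i),(ii),(iv)$. By construction $\pi_2$ lives inside $B_\infty(sN,\lll)$ and each of $\pi_1,\pi_3$ meets $B_\infty(sN,\lll)$ only at $u^\Lambda$ (respectively $v^\Lambda$), so the only potential collisions are between $\pi_1$ and $\pi_3$, in the annular region $B_{2,s,N} \setminus B_\infty(sN,\lll)$. I would resolve this in two stages. First, if the chosen exit normals $\alpha^u$ and $\alpha^v$ (or the initial axes of monotone traversal to $u$ and $v$) can be selected so that $\pi_1$ and $\pi_3$ live in disjoint coordinate half-spaces relative to $sN$, the two paths are automatically disjoint in the annulus. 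Second, in the degenerate case where they cannot be so separated (e.g.\ $u^\Lambda = v^\Lambda$-neighbor with identical normals, or $u = v$-side geodesics collide), I would modify $\pi_3$ inside the thin shell $B_\infty(sN,\lll+3)\setminus B_\infty(sN,\lll)$ by rerouting its initial exit along a different face of the cube $B_\infty(sN,\lll+3)$ before resuming the monotone $\ell_1$-geodesic to $v$; this costs at most the edge count of $B_\infty(0,\lll+3)$, i.e.\ $\Ka$, and the rerouted segment stays within $B_{1,s,N}$. Any residual self-intersections (which can only occur inside the shell of size $\Ka$) are then removed by loop-erasure restricted to that shell, which preserves endpoints, keeps $\pi_2$ untouched, and only shortens the path, so $(i),(ii),(iv)$ all survive. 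The resulting path satisfies all four properties.
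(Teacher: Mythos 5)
Your overall decomposition---a middle piece $\pi_2$ in $B_\infty(sN,\lll)$ and two connecting pieces $\pi_1,\pi_3$ living in $\Bds\setminus B_\infty(sN,\lll)$---is the same as the paper's, but your mechanism for guaranteeing that $\pi_1$ and $\pi_3$ are vertex-disjoint has a genuine gap.

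The loop-erasure step is the fatal flaw. If $\pi_1$ and $\pi_3$ share a vertex $w$ in the shell, the concatenated walk visits $u,\dots,w,\dots,u^\Lambda,\dots,v^\Lambda,\dots,w,\dots,v$, and erasing the loop at $w$ deletes everything between the two visits to $w$, \emph{including $u^\Lambda$, $v^\Lambda$ and the entire $\pi_2$ segment}. There is no notion of ``loop-erasure restricted to that shell'' that avoids this: the loop created by a cross-intersection of $\pi_1$ and $\pi_3$ necessarily contains $\pi_2$. So property $(i)$ is destroyed, not preserved, by your repair step. The only case loop erasure handles safely is a self-intersection within $\pi_1$ alone or within $\pi_3$ alone, which is not the problem you need to solve. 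Disjointness of $\pi_1$ and $\pi_3$ has to be built in \emph{a priori}, which is exactly what the paper does: it first builds two paths from $u$ and from $v$ to $sN$ that are disjoint except at $sN$ (an elementary combinatorial lemma), truncates them at $\partial B_\infty(sN,\lll+3)$ so the outer pieces are automatically disjoint, and then gives an explicit, case-split construction of two disjoint connectors inside the shell $B_\infty(sN,\lll+3)\setminus B_\infty(sN,\lll)$ joining the truncation points to $u^\Lambda$ and $v^\Lambda$.

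A secondary issue: the claim that you ``can choose the monotone geodesic so that it never re-enters $B_\infty(sN,\lll+3)$: once the coordinate in which $u'$ has already exited the inner box is monotonically enlarged first, we stay outside forever'' is false as stated. The exit coordinate of $u'$ sits at $(sN)_k+(2\lll+3)$, but $u$ can have $u_k-(sN)_k$ large and \emph{negative} while all other coordinates of $u$ are within $\lll+3$ of $sN$; any monotone geodesic then carries coordinate $k$ through the interval $[(sN)_k-\lll-3,(sN)_k+\lll+3]$ at a moment when no other coordinate is far out, so it re-enters the shell (and, in fact, the inner box). This case must be handled, and handling it is again tangled with keeping $\pi_1$ away from $\pi_3$. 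Finally, $u'\in B_{1,s,N}$ is not automatic from $N\ge\lll+3$ alone once $\lll$ is large (your displaced point has $\ell_1$-distance up to $(d+1)\lll+3$ from $sN$, which need not be $\le dN$), though this can be patched by replacing $B_{1,s,N}$ with a slightly larger inner ball.
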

	
	The proof of this lemma is given in Appendix \ref{Annexe construction de pi pour le cas non borné.} but the idea is to construct two paths, one from $u$ to $sN$ and the other from $sN$ to $v$ which minimize the distance for the norm $\|.\|_1$ and such that the only vertex belonging to both paths is $sN$. Then, we denote by $u_0$ the first vertex of $B_\infty(sN,\lll+3)$ visited by the path from $u$ to $sN$ and $v_0$ the last vertex of $B_\infty(sN,\lll+3)$ visited by the path from $sN$ to $v$. We construct two paths entirely contained in $B_\infty(sN,\lll+3)$ from $u_0$ to $u^\Lambda$ and from $v^\Lambda$ to $v_0$ which do not take vertices of $B_\infty(sN,\lll)$ except $u^\Lambda$ and $v^\Lambda$ and which have no vertices in common and we consider the concatenation of the path from $u$ to $u_0$, the one from $u_0$ to $u^\Lambda$, a path from $u^\Lambda$ to $v^\Lambda$ in a shortest way, the path from $v^\Lambda$ to $v_0$ and the one from $v_0$ to $v$ (see Figure \ref{Figure du chemin pi.}).
	
	Let $\pi$ be the path given by Lemma \ref{Lemme construction de pi dans le cas non borné.}.
	
	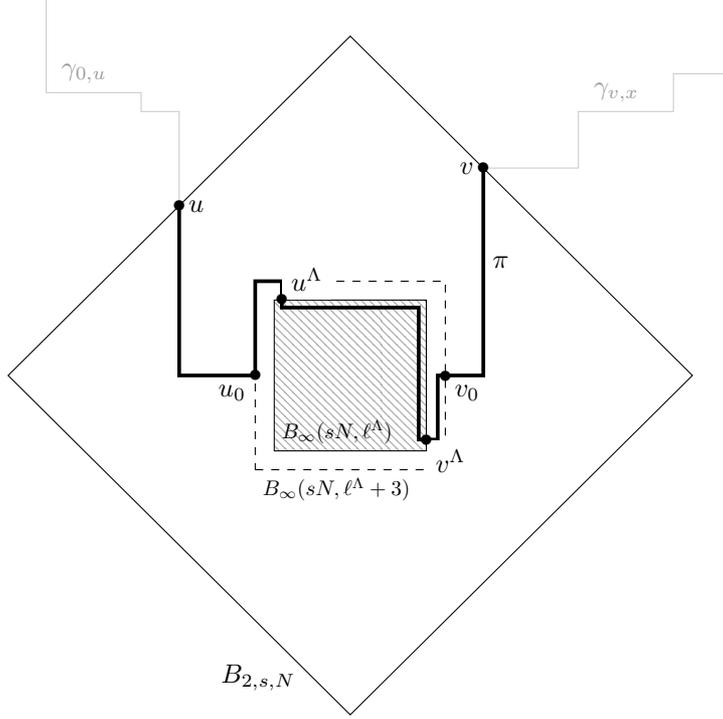
\begin{figure}
		\begin{center}
			\begin{tikzpicture}[scale=0.5]
				\draw (0,9) -- (9,0) -- (0,-9) -- (-9,0) -- cycle;
				\draw (-1.2,-8) node[left] {$\Bds$};
				\draw[pattern= north west lines, pattern color=gray!70] (-2,-2) rectangle (2,2);
				\draw (-2,-2) node[above right,scale=0.8] {$B_\infty(sN,\lll)$};
				\draw[dashed] (-2.5,-2.5) rectangle (2.5,2.5);
				\draw (-2.5,-2.5) node[below right,scale=0.8] {$B_\infty(sN,\lll+3)$};
				\draw[line width=1.4pt] (-4.5,4.5) -- (-4.5,0) -- (-2.5,0) -- (-2.5,2.5) -- (-1.8,2.5) -- (-1.8,2) -- (-1.8,1.8) -- (1.8,1.8) -- (1.8,-1.7) -- (2,-1.7) -- (2.3,-1.7) -- (2.3,0) -- (3.5,0) -- (3.5,5.5);
				\draw (-4.5,4.5) node[right] {$u$};
				\draw (3.5,5.5) node[left] {$v$};
				\draw (-2.5,0) node[below left] {$u_0$};
				\draw (2.5,0) node[below right] {$v_0$};
				\draw (-1.8,2) node[above right,fill=white] {$u^\Lambda$};
				\draw (2,-1.7) node[below right,fill=white] {$v^\Lambda$};
				\draw (3.5,3) node[right] {$\pi$};
				\draw[color=gray!40] (-9,10)--(-8,10) -- (-8,7.5) -- (-5.5,7.5) -- (-5.5,7) -- (-4.5,7) -- (-4.5,4.5);
				\draw[color=gray!80] (-7,7.5) node[above] {$\gamma_{0,u}$};
				\draw[color=gray!40] (3.5,5.5) -- (6,5.5) -- (6,7) -- (8.5,7) -- (8.5,8) -- (10,8);
				\draw[color=gray!80] (7,7) node[above] {$\gamma_{v,x}$};
				\draw (-4.5,4.5) node {$\bullet$};
				\draw (3.5,5.5) node {$\bullet$};
				\draw (-2.5,0) node {$\bullet$};
				\draw (2.5,-0.02) node {$\bullet$};
				\draw (-1.8,2) node {$\bullet$};
				\draw (2,-1.72) node {$\bullet$};
			\end{tikzpicture}
			\caption{Example of the construction of $\pi$ in dimension $2$.}\label{Figure du chemin pi.}
		\end{center}
	\end{figure}
	
	\paragraph*{Definition of $\Ep$, $\Em$ and $\cB^*$.}
	
	Define $\Em(T)$ as the set of edges $e$ such that $e \in \pi \setminus B_\infty(sN,\lll)$ and $\Ep(T)$ as the set of edges which are in $\Bds$ but which are not in $B_\infty(sN,\lll) \cup \pi$. Recall that $\{T' \in \cB^*(T)\}$ is a shorthand for 
	\[\{\forall e \in \Ep(T), \, T'(e) \ge \nu(N), \, \forall e \in \Em(T), T'(e) \le r+\delta', \, \theta_{N \Sun_\l(T)} T' \in \cA^\Lambda\}.\]
	Fix $\eta=\tilde{p}^{|\Bds|}\P(T \in \cA^\Lambda)$, where $\tilde{p}=\min(F([\r,\r+\delta']),F([\nu(N),\ST]))$. Thus, $\eta$ only depends on $F$, the pattern and $N$ and we have
	\[\P \left( T' \in \cB^*(T) | T \right) \ge \tilde{p}^{|\Bds|}\P(T \in \cA^\Lambda)=\eta,\] 
	
	\paragraph*{Consequences of the modification.} 
	
	We denote by $\gu$ the path $\gamma_{0,u} \cup \pi \cup \gamma_{v,x}$. Note that $\gu$ is a self-avoiding path. 
	
	\begin{lemma}\label{17. Cas non borné, comparaison temps de gamma et de gamma étoile.}
		We have $\Tu(\gu) < T(\gamma)$.
	\end{lemma}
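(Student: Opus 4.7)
The plan is to decompose both $T(\gamma)$ and $\Tu(\gu)$ along common pieces and bound the differing middle portion. Write
\[T(\gamma) = T(\gamma_{0,u}) + T(\gamma_{u,v}) + T(\gamma_{v,x}), \quad \Tu(\gu) = \Tu(\gamma_{0,u}) + \Tu(\pi) + \Tu(\gamma_{v,x}).\]
Since $u$ and $v$ are, respectively, the first and last vertices of $\gamma$ visited inside $\Bds$, every edge of $\gamma_{0,u}$ and $\gamma_{v,x}$ lies outside $\Bds$, so $T$ and $\Tu$ agree on these edges. It therefore suffices to prove $\Tu(\pi) < T(\gamma_{u,v})$.

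For the upper bound on $\Tu(\pi)$, I will use the three-piece decomposition from Lemma \ref{Lemme construction de pi dans le cas non borné.}. The outer pieces $\pi_{u,\ulm}$ and $\pi_{\vlm,v}$ avoid $B_\infty(sN,\lll)$ by construction, so all their edges belong to $\Em(T)$; on $\{T'\in\cB^*(T)\}$ each such edge has $\Tu$-time at most $\r+\delta'$, and item $(iv)$ of the construction lemma bounds their total length by $2r_2N+\Ka$. Hence $\Tu(\pi_{u,\ulm}) + \Tu(\pi_{\vlm,v}) \le (\r+\delta')(2r_2N+\Ka)$. The middle piece $\pi_{\ulm,\vlm}$ is a shortest $\|.\|_1$-path inside $sN+\Lambda$, of length $\|\ulm-\vlm\|_1$; the inclusion $\cA^\Lambda \subset \{\forall e\in\Lambda,\ T(e)\le M^\Lambda\}$ together with $\theta_{sN}T'\in\cA^\Lambda$ bounds each of its $\Tu$-times by $M^\Lambda$, giving $\Tu(\pi_{\ulm,\vlm}) \le M^\Lambda\|\ulm-\vlm\|_1 = \tau^\Lambda$. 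Summing,
\[\Tu(\pi) \le (\r+\delta')(2r_2N+\Ka) + \tau^\Lambda.\]

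For the lower bound on $T(\gamma_{u,v})$, recall that the selected geodesic $\gamma$ crosses the box $\Bts$ (by construction of its $\Mun$-sequence), so $\gamma$ visits a vertex $w\in\Bus$; this visit must occur on $\gamma_{u,v}$ since $\Bus\subset\Bds$ and $u,v$ are the first and last points of $\gamma$ inside $\Bds$. The subpaths $\gamma_{u,w}$ and $\gamma_{w,v}$ are themselves geodesics between their endpoints, and by item 1 of Lemma \ref{Gros lemme typical boxes} (applicable since $\Bts$ is typical) each is entirely contained in $\Bts$. Because $u,v\in\partial\Bds$ and $w\in\Bus$, the triangle inequality yields $\|u-w\|_1 \ge (r_2-r_1)N$ and $\|w-v\|_1 \ge (r_2-r_1)N$, so property $(ii)$ of typical boxes applies to both subpaths, giving $T(\gamma_{u,w}) \ge (\r+\delta)\|u-w\|_1$ and similarly for $\gamma_{w,v}$. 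Adding these, $T(\gamma_{u,v}) \ge 2(\r+\delta)(r_2-r_1)N$.

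Subtracting the two bounds gives
\[T(\gamma_{u,v}) - \Tu(\pi) \;\ge\; 2N\bigl[r_2(\delta-\delta') - r_1(\r+\delta)\bigr] - \Ka(\r+\delta') - \tau^\Lambda,\]
and inequality \eqref{17. inégalité delta', cas non bornée.} shows that the coefficient of $N$ strictly exceeds $\Ka(\r+\delta')+\tau^\Lambda$, so the right-hand side is strictly positive for every $N\ge 1$. The only real subtlety is the bookkeeping of constants: the typical-box length gain on $\gamma_{u,v}$ must dominate both the length cost of rerouting through $\pi$ and the fixed pattern overhead $\tau^\Lambda$, which is exactly the reason the constants in \eqref{17. r2 cas non borné} and \eqref{17. inégalité delta', cas non bornée.} were tuned as they were.
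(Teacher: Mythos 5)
Your proof is correct and follows essentially the same route as the paper's: decompose $T(\gamma)-\Tu(\gu)$ into the agreeing outer parts and the middle pieces $\gamma_{u,v}$ versus $\pi$, bound $T(\gamma_{u,v})$ from below via the typical-box property \eqref{17. chemin pas anormalement courts version non bornée} applied to $\gamma_{u,w}$ and $\gamma_{w,v}$, bound $\Tu(\pi)$ from above via $\cB^*(T)$ and item $(iv)$ of Lemma \ref{Lemme construction de pi dans le cas non borné.}, and close with \eqref{17. inégalité delta', cas non bornée.}. You simply spell out a few bookkeeping steps (the explicit decomposition, the membership of the outer $\pi$-edges in $\Em(T)$, the triangle-inequality estimate for $\|u-w\|_1$) that the paper leaves implicit.
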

	
	\begin{proof}[Proof]
		We have that $\gamma_{u,v}$ visits at least one vertex in $\Bus$. Denote by $w$ the first of these vertices. Then, $\gamma_{u,w}$ and $\gamma_{w,v}$ are two geodesics, both between two vertices in $\Bds$. Using item 1 in Lemma \ref{Gros lemme typical boxes}, $\gamma_{u,w}$ and $\gamma_{w,v}$ are entirely contained in $\Bts$. Thus, since $\Bts$ is a typical box, using \eqref{17. chemin pas anormalement courts version non bornée} and the fact that $\|u-w\|_1 \ge (r_2-r_1)N$ and $\|v-w\|_1 \ge (r_2-r_1)N$, we have \[T(\gamma_{u,v}) \ge 2N(r_2-r_1)(\r+\delta).\] Then, by the construction of $\pi$ and of $\cB^*(T)$, \[\Tu(\pi) \le (2r_2N+\Ka)(\r+\delta') + \tau^\Lambda,\] where $\tau^\Lambda$ is fixed at \eqref{On fixe tau Lambda dans le cas non borné.}.
		Thus, \[T(\gamma) - \Tu(\gu) \ge 2N(r_2(\delta-\delta')-r_1(\r+\delta))- \Ka(\r+\delta') - \tau^\Lambda. \]
		By \eqref{17. inégalité delta', cas non bornée.} and since $2N \ge 1$, we get $T(\gamma) - \Tu(\gu) > 0$.
	\end{proof}
	
	\begin{lemma}\label{Conséquences modification, cas non borné.}
		Let $\ogu$ be a geodesic from $0$ to $x$ in the environment $\Tu$. Then $\ogu$ weakly crosses the box $\Bts$ and the first vertex of $\Bds$ visited by $\ogu$ is $u$ and the last is $v$.
		Furthermore, $\ogu$ takes the pattern in $B_\infty(sN,\lll)$, $\ogu_{u,u^\Lambda}=\pi_{u,u^\Lambda}$ and $\ogu_{v^\Lambda,v}=\pi_{v^\Lambda,v}$.
	\end{lemma}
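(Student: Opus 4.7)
The plan is to leverage Lemma \ref{17. Cas non borné, comparaison temps de gamma et de gamma étoile.}, which gives $\Tu(\gu) < T(\gamma) = t(0,x)$, together with typicality property (iii) of $\Bts$ (namely $\sum_{e \subset \Bds} T(e) < \nu(N)$) to pin down $\ogu$. The first claim, that $\ogu$ weakly crosses $\Bts$, is immediate: otherwise all edges of $\ogu$ lie outside $\Bds$, so $\Tu(\ogu) = T(\ogu) \ge t(0,x)$, contradicting $\Tu(\ogu) \le \Tu(\gu) < t(0,x)$.

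The crux is to show that $\ogu$ uses no edge of $\Ep(T)$. Let $E_{\mathrm{in}}$ denote the set of edges of $\ogu$ contained in $\Bds$. Since $\Tu \equiv T$ off $\Bds$,
\begin{equation*}
\Tu(\ogu) = T(\ogu) - T(E_{\mathrm{in}}) + \Tu(E_{\mathrm{in}}) \ge T(\gamma) - T(E_{\mathrm{in}}) + \Tu(E_{\mathrm{in}}),
\end{equation*}
and combining with $\Tu(\ogu) \le \Tu(\gu) = T(\gamma) - T(\gamma_{u,v}) + \Tu(\pi)$ yields
\begin{equation*}
\Tu(E_{\mathrm{in}}) \le T(E_{\mathrm{in}}) + \Tu(\pi) - T(\gamma_{u,v}) < T(E_{\mathrm{in}}),
\end{equation*}
the final strict inequality being $\Tu(\pi) < T(\gamma_{u,v})$ as shown in the proof of the preceding lemma via \eqref{17. inégalité delta', cas non bornée.}. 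Typicality (iii) gives $T(E_{\mathrm{in}}) \le \sum_{e \subset \Bds} T(e) < \nu(N)$, hence $\Tu(E_{\mathrm{in}}) < \nu(N)$; if $\ogu$ contained any edge $e_0 \in \Ep(T)$, then $\Tu(e_0) \ge \nu(N)$ would force $\Tu(E_{\mathrm{in}}) \ge \nu(N)$, a contradiction. Thus every edge of $\ogu$ in $\Bds$ belongs to $\pi \cup B_\infty(sN,\lll)$.

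The remaining conclusions follow from structural analysis of this cheap subgraph. Since $B_\infty(sN,\lll) \subset \Bus \subset \mathrm{int}(\Bds)$ and $\pi$ meets $\partial \Bds$ only at $u,v$ by Lemma \ref{Lemme construction de pi dans le cas non borné.}~(ii), the subgraph $\pi \cup B_\infty(sN,\lll)$ touches $\partial \Bds$ exactly at $\{u,v\}$. Combined with self-avoidance of $\ogu$ and the previous step, this forces $\ogu$ to enter and exit $\Bds$ exactly once each, at two distinct points of $\{u,v\}$. The orientation (first $u$, last $v$) is forced as follows: in the reverse orientation, two triangle inequalities combined with the decomposition $t(0,x) = t(0,u) + T(\gamma_{u,v}) + t(v,x)$ give
\begin{equation*}
t(0,v) + t(u,x) \ge 2 t(0,x) - t(0,u) - t(v,x) = t(0,x) + T(\gamma_{u,v}),
\end{equation*}
so $\Tu(\ogu) \ge t(0,v) + t(u,x) \ge t(0,x) + T(\gamma_{u,v}) > t(0,x) > \Tu(\gu)$, contradicting $\Tu(\ogu) \le \Tu(\gu)$ (with $T(\gamma_{u,v}) > 0$ from the typical-box lower bound). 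Finally, within $\pi \cup B_\infty(sN,\lll)$ the unique route from $u$ to $B_\infty(sN,\lll)$ is $\pi_{u,u^\Lambda}$, since this subpath of $\pi$ is disjoint from $B_\infty(sN,\lll)$ and $u$ has no other incident edges in the subgraph, giving $\ogu_{u,u^\Lambda} = \pi_{u,u^\Lambda}$; symmetrically $\ogu_{v^\Lambda,v} = \pi_{v^\Lambda,v}$. Between $u^\Lambda$ and $v^\Lambda$ the path $\ogu$ stays in $B_\infty(sN,\lll)$, so $\ogu$ takes the pattern.

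The main obstacle is the second paragraph: translating the global comparison $\Tu(\gu) < T(\gamma)$ and the typical-box bound $\sum_{e \subset \Bds}T(e) < \nu(N)$ into the local estimate $\Tu(E_{\mathrm{in}}) < \nu(N)$ that rules out every $\Ep(T)$-edge. Once that is in hand, the geometric and orientation conclusions are essentially automatic from the structure of $\pi \cup B_\infty(sN,\lll)$ and self-avoidance.
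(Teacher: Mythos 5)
Your proposal is correct and follows essentially the same route as the paper: you use $\Tu(\gu) < T(\gamma)$ together with typicality~(iii) (the bound $\sum_{e\subset\Bds}T(e)<\nu(N)$) to rule out any $\Ep(T)$-edge on $\ogu$, and then read off the structure from the subgraph $\pi\cup B_\infty(sN,\lll)$. The only (cosmetic) deviation is in the orientation step, where you sum two triangle inequalities against the decomposition $t(0,x)=t(0,u)+T(\gamma_{u,v})+t(v,x)$ instead of the paper's argument via concatenating $\gamma_{0,u}$ with $\ogu_{u,x}$ and $\ogu_{0,v}$ with $\gamma_{v,x}$; both rely on exactly the same ingredients ($\gamma$ being a $T$-geodesic, the strict saving from the modification, and $T(\gamma_{u,v})\ge 0$).
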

	
	\begin{proof}[Proof]
		Let $\ogu$ be a geodesic from $0$ to $x$ in the environment	$\Tu$. By Lemma \ref{17. Cas non borné, comparaison temps de gamma et de gamma étoile.}, $\Tu(\ogu) < T(\ogu)$. Thus $\ogu$ takes an edge of $\Bds$ and by item $(iii)$ of the definition of a typical box and since there is no edge whose time has been modified outside $\Bds$, $\ogu$ can not take any edge of time greater than $\nu(N)$ in $\Bds$. 
		Indeed, assume that $\ogu$ takes an edge $e$ such that $\Tu(e) \ge \nu(N)$. Then, denoting by $\cE(\ogu)$ the edges of $\ogu$ and using \eqref{17. Définition de nu(N).},
		\begin{align*}
			\Tu(\ogu) & = \sum_{f \in \cE(\ogu)\cap\Bds} \Tu(f) + \sum_{f \in \cE(\ogu)\cap\Bds^c} \Tu(f) \ge \nu(N) + \sum_{f \in \cE(\ogu) \cap \Bds^c} T(f) \\
			& > \sum_{f \in \Bds} T(f) + \sum_{f \in \cE(\ogu)\cap\Bds} T(f) \ge \sum_{f \in \cE(\ogu)\cap\Bds} T(f) + \sum_{f \in \cE(\ogu)\cap\Bds^c} T(f) = T(\ogu),
		\end{align*}
		which is impossible. Hence, $\ogu$ has to take edges of $\pi$ or of the pattern and can not take other edges of $\Bds$. 
		
		Since $\pi$ does not visit any vertex on the boundary of $\Bds$ except $u$ and $v$, $\ogu$ has to visit $u$ and $v$ and to follow $\pi$ between $u$ and $u^\Lambda$ and between $v^\Lambda$ and $v$. If $\ogu_{u^\Lambda,v^\Lambda}$ leaves the pattern, it takes an edge whose time is greater than $\nu(N)$, which is impossible. So, $\ogu_{u^\Lambda,v^\Lambda}$ is a path entirely contained in $B_\infty(sN,\lll)$ and is optimal for the passage time since $\ogu$ is a geodesic. 
		
		To conclude, let us show that $u$ is visited by $\ogu$ before $v$.  Assume that it is not the case. Then, there exists $\ogu_1$ a geodesic from $0$ to $v$ and $\ogu_2$ a geodesic from $u$ to $x$ in the environment $\Tu$ which does not take any edge in $\Bds$. Thus, there are also geodesics in the environment $T$. Then, \[T(\ogu_1) + T(\ogu_2) \le t^*(0,x) < t(0,x) \mbox{ by Lemma \ref{17. Cas non borné, comparaison temps de gamma et de gamma étoile.}}.\] By concatenating $\gamma_{0,u}$ and $\ogu_2$, we obtain a path from $0$ to $x$. Thus, \[T(\gamma_{0,u})+T(\ogu_2) \ge t(0,x).\] So $T(\ogu_1) < T(\gamma_{0,u})$, which implies \[T(\ogu_1)+T(\gamma_{v,x})<T(\gamma_{0,u})+T(\gamma_{v,x}) \le t(0,x),\] which is impossible since $\ogu_1 \cup \gamma_{v,x}$ is a path from $0$ to $x$. 
	\end{proof}
	
	Using Lemma \ref{Conséquences modification, cas non borné.}, we can prove Lemma \ref{17. gros lemme modification, cas non borné.}. Indeed, by this previous lemma, every geodesic from $0$ to $x$ takes the pattern inside $\Bds$ and the first item holds. For the third item, one can check that the concatenation of $\gamma_{0,u}$, $\pi_{u,u^\Lambda}$, one of the optimal paths for the passage time between $u^\Lambda$ and $v^\Lambda$ entirely contained in $B_\infty(sN,\lll)$, $\pi_{v^\Lambda,v}$ and $\gamma_{v,x}$ gives a geodesic $\gu$ which is associated to $\gamma$ in $\Bts$ (with $s_1=u$ and $s_2=v$). Finally, let us prove the second item. If $\ogu$ is a geodesic from $0$ to $x$ in the environment $\Tu$, then $\ogu_{u,v}$ is contained in $\Bds$. Furthermore, \[T(\gamma_{0,u}) = \Tu(\gamma_{0,u}) \ge \Tu (\ogu_{0,u}) = T(\ogu_{0,u}) \ge T(\gamma_{0,u}), \] so $T(\ogu_{0,u})=T(\gamma_{0,u})$ and $\ogu_{0,u}$ is a geodesic in the environment $T$. Similarly, $\ogu_{v,x}$ is a geodesic in the environment $T$. Hence, we get a not necessarily self-avoiding optimal path for the passage time by considering $\pi'=\ogu_{0,u} \cup \gamma_{u,v} \cup \ogu_{v,x}$. We get a geodesic $\gamma'$ which satisfies the properties of this item by cutting the loops of $\pi'$ using a standard process\footnote{Note that $\ogu_{0,u}$, $\gamma_{u,v}$ and $\ogu_{v,x}$ are three self-avoiding paths and that $\ogu_{0,u}$ and $\ogu_{v,x}$ do not have vertices in common. So we can consider $s_1$ the last vertex belonging to both path $\ogu_{0,u}$ and $\gamma_{u,v}$ in the order in which they are visited by $\gamma_{u,v}$ and $s_2$ the first vertex belonging to both paths $\gamma_{s_1,v}$ and $\ogu_{v,x}$ in the order in which they are visited by $\gamma_{u,v}$. Thus, since $\gamma_{u,v}$ is entirely contained in $\Bts$, $s_1$ and $s_2$ are two vertices contained in $\Bts$ and we can take $\gamma'=\ogu_{0,s_1} \cup \gamma_{s_1,s_2} \cup \ogu_{s_2,x}$.}.
	
	\section{Bounded case}\label{Section cas borné.}
	
	In this section, we assume that the support of $F$ is bounded. In this case, Theorem \ref{17. Théorème à démontrer.} also follows from Proposition \ref{17. Gros théorème à démontrer, un seul motif.}. 
	Our proof of Proposition \ref{17. Gros théorème à démontrer, un seul motif.} still follows the strategy given in the preceding section, but the modification argument is more involved. Let $\mathfrak{P}=(\Lambda,u^\Lambda,v^\Lambda,\cA^\Lambda)$ be a valid pattern. We set $\ST=\sup (\text{support}(F))$. Remark that, because of \eqref{17. définition de delta}, we have $\r + \delta < \ST$. 
	
	\subsection{Oriented pattern}\label{Sous-section surmotif}
	
	The proof in the bounded case uses a modification argument in which we have to connect the pattern to a straight path in a given direction. It is convenient to show the feasibility of this construction before starting the modification. The following lemma, whose proof is in Appendix \ref{Annexe overlapping pattern}, shows that it is indeed feasible by proving that a pattern can be associated to $d$ patterns (with a larger size), each having endpoints aligned in a distinct direction, and each having the original pattern as a sub-pattern. By direction, we mean one of the $d$ directions of the canonical basis which is denoted by $\{\epsilon_1,\dots,\epsilon_d\}$. Recall $\displaystyle \Lambda = \prod_{i=1}^d \{0,\dots,L_i\}$. 
	
	\begin{lemma}\label{17. Lemme motif valable}
		There exists $\l_0>\max(L_1,\dots,L_d)$ such that the following holds. Set $\Lambda_0=\{-\l_0,\dots,\l_0\}^d$. For all $j \in \{1,\dots,d\}$, there exists a pattern $\mathfrak{P}^j=(\Lambda_0,-\l_0 \epsilon_j,\l_0 \epsilon_j,\cA^{\Lambda_0}_j)$ such that:
		\begin{itemize}
			\item $\P\left(\cA^{\Lambda_0}_j\right)$ is positive, 
			\item on $\cA^{\Lambda_0}_j$, any path from $-\l_0 \epsilon_j$ to $\l_0 \epsilon_j$ optimal for the passage time among the paths entirely inside $\Lambda_0$ contains a subpath from $u^\Lambda$ to $v^\Lambda$ entirely inside $\Lambda$,
			\item $\cA^{\Lambda_0}_j \subset \cA^\Lambda$.
		\end{itemize}
	\end{lemma}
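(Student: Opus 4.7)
The strategy is to graft, on each side of the original pattern $\Lambda$, a corridor of low-passage-time edges inside $\Lambda_0$ so that, on the resulting event, the only optimal way to go from $-\l_0\epsilon_j$ to $\l_0\epsilon_j$ is to enter $\Lambda$ at $u^\Lambda$, traverse the original pattern, and exit at $v^\Lambda$.

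First I would use the validity of $\mathfrak{P}$ to choose external normal unit vectors $\alpha_u$ and $\alpha_v$ at $u^\Lambda$ and $v^\Lambda$, distinct when $F$ has bounded support. For each $j\in\{1,\dots,d\}$, I would then construct two vertex-disjoint self-avoiding paths $\pi_u^j$ from $-\l_0\epsilon_j$ to $u^\Lambda$ and $\pi_v^j$ from $v^\Lambda$ to $\l_0\epsilon_j$, each contained in $\Lambda_0$, meeting $\Lambda$ only at its endpoint in $\Lambda$ (entering along $\alpha_u$ and $\alpha_v$ respectively), and of total length at most $2\l_0+C$ with $C=C(\Lambda)$. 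In the bounded case, the distinct external normals allow routing the two paths along different faces of $\Lambda$ to guarantee disjointness; in the unbounded case this step is straightforward. Fixing a small $\epsilon>0$ and a threshold $T_{\mathrm{bad}}$ with $\r<T_{\mathrm{bad}}\le\ST$ and $F([T_{\mathrm{bad}},+\infty))>0$ (taken arbitrarily large in the unbounded case, close to $\ST$ in the bounded one), I would define $\cA^{\Lambda_0}_j$ as the intersection of three conditions on disjoint edge sets: $\cA^\Lambda$ holds; every edge of $\pi_u^j\cup\pi_v^j$ has passage time at most $\r+\epsilon$; every edge of $\Lambda_0$ not in $\Lambda\cup\pi_u^j\cup\pi_v^j$ has passage time at least $T_{\mathrm{bad}}$. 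Independence of the three edge sets yields $\P(\cA^{\Lambda_0}_j)>0$, and the inclusion $\cA^{\Lambda_0}_j\subset\cA^\Lambda$ is immediate.

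The core of the argument is to compare, on $\cA^{\Lambda_0}_j$, the ``good path'' $\pi^\star=\pi_u^j\cup\gamma^\Lambda\cup\pi_v^j$ (with $\gamma^\Lambda$ an optimal path from $u^\Lambda$ to $v^\Lambda$ in $\Lambda$, of time at most $\tau^\Lambda$) against any optimal self-avoiding path $\sigma$ from $-\l_0\epsilon_j$ to $\l_0\epsilon_j$ in $\Lambda_0$. One has $T(\pi^\star)\le (2\l_0+C)(\r+\epsilon)+\tau^\Lambda$. The combinatorial key is that, in the subgraph of $\Z^d$ whose edges are those of $\pi_u^j\cup\pi_v^j$ together with all edges inside $\Lambda$, any self-avoiding path from $-\l_0\epsilon_j$ to $\l_0\epsilon_j$ must follow $\pi_u^j$ to $u^\Lambda$, traverse $\Lambda$ from $u^\Lambda$ to $v^\Lambda$, then follow $\pi_v^j$, since $\pi_u^j$ and $\pi_v^j$ are the unique bridges from the exterior endpoints into $\Lambda$. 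Consequently, if $\sigma$ contains no subpath from $u^\Lambda$ to $v^\Lambda$ entirely in $\Lambda$, it must use at least one ``bad'' edge, so $T(\sigma)\ge T_{\mathrm{bad}}+(2\l_0-1)\r$. The contradiction $T(\sigma)>T(\pi^\star)$ then boils down to $T_{\mathrm{bad}}-\r>2\l_0\epsilon+C(\r+\epsilon)+\tau^\Lambda$.

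The main obstacle is the bounded case: $T_{\mathrm{bad}}-\r\le\ST-\r$ cannot be inflated, whereas $C$ and $\tau^\Lambda$ are geometric constants attached to $\Lambda$. Closing the gap demands $\epsilon=\epsilon(\l_0)=o(1/\l_0)$ so that $2\l_0\epsilon\to 0$, a construction of $\pi_u^j$ and $\pi_v^j$ as efficient as possible (keeping $C$ minimal by exploiting the two distinct external normals), and possibly refining $\cA^\Lambda$ to a smaller positive-probability subevent on which $\tau^\Lambda$ is small enough for $\ST-\r$ to absorb $C\r+\tau^\Lambda$; with these in place, any sufficiently large $\l_0>\max(L_1,\dots,L_d)$ works.
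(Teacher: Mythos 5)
Your combinatorial framework matches the paper's: attach two vertex-disjoint corridors to $u^\Lambda$ and $v^\Lambda$, entering along distinct external normals, make corridor edges cheap and wall edges expensive, and observe that any wall-free path from $-\l_0\epsilon_j$ to $\l_0\epsilon_j$ is forced to traverse $\Lambda$ from $u^\Lambda$ to $v^\Lambda$. This is exactly the structure the paper builds with the path $\tpm$ and the event $\cA^{\Lambda_0}_j$.

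The gap is in the quantitative comparison, and it is fatal precisely in the bounded case, which is the only case this lemma is invoked for. You charge a deviant competitor a single penalty $T_{\mathrm{bad}}-\r$ for the (at least one) wall edge it crosses, and you need this to exceed the overhead $C\r + \tau^\Lambda$ of the good path, where $\tau^\Lambda$ bounds the optimal $u^\Lambda$-to-$v^\Lambda$ time inside $\Lambda$ on $\cA^\Lambda$. With bounded support, $T_{\mathrm{bad}} - \r \le \ST - \r$ is a fixed number, but $\tau^\Lambda$ is dictated by the given pattern and can easily exceed it: an $\cA^\Lambda$ forcing passage times near $\ST$ on a pattern with $\|u^\Lambda-v^\Lambda\|_1 \ge 2$ gives $\tau^\Lambda \approx 2\ST > \ST - \r$. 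Your proposed remedy of restricting $\cA^\Lambda$ to a sub-event with smaller $\tau^\Lambda$ is unavailable: $\cA^\Lambda$ is part of the given pattern (the lemma requires $\cA^{\Lambda_0}_j \subset \cA^\Lambda$), and passing to a sub-event cannot lower the $u^\Lambda$-to-$v^\Lambda$ geodesic time inside $\Lambda$ below what it already is on $\cA^\Lambda$. Sending $\epsilon = o(1/\l_0)$ only removes the $O(\l_0)\epsilon$ term and leaves this fixed deficit intact.

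The missing ingredient, and what the paper's proof actually does, is a wall penalty that grows linearly in $\l_0$. The paper makes the wall only slightly more expensive than the corridor (wall edges cost at least $\nu_0 \ge \r+\delta$, corridor edges at most $\r+\delta''$, a per-edge gap of order $\delta$) and then exploits the geometry: a path from $\l_0\epsilon_j$ to $-\l_0\epsilon_j$ that avoids the corridor $\tpm_{u_1,u_2}$ must still cross the slab of $j$-coordinates strictly between $\lll+\l_1$ and $\l_0$, and in that slab the only non-wall edges are those of $\tpm_{u_1,u_2}$ itself. The competitor therefore pays the extra $\approx\delta$ on at least $\l_0-\lll-\l_1$ edges, an accumulated penalty of order $\l_0\delta$ which, for $\l_0$ large enough, dominates every $\Lambda$-dependent constant including $\tau^\Lambda$. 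This slab-crossing count is the key step your argument is missing.
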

	
	We fix $\l_0$, $\Lambda_0$ and the patterns $\mathfrak{P_j}$, for $j=1,\dots,d$, for the remaining of the proof. 
	By definition, for all $j \in \{1,\dots,d\}$,
	$N^\mathfrak{P}(\pi) \ge N^\mathfrak{P_j}(\pi),$
	and actually
	\[ N^\mathfrak{P}(\pi) \ge \sum_{x \in \Z^d} \1_{\{\text{there exists $j \in \{1,\dots,d\}$, }x \text{ satisfies the condition }(\pi ; \mathfrak{P_j})\}}. \] 
	From now on, we forget the original pattern and only consider the oriented patterns $\mathfrak{P_j}$ for all directions $j\in\{1,\dots,d\}$. We talk about oriented pattern when its orientation is specified and we simply say "pattern" when talking about one of the oriented patterns $\mathfrak{P_j}$. Consistently with these conventions and to lighten notations, we write $\lll$ instead of $\lll_0$, $\Lambda$ instead of $\Lambda_0$ and $\cA^{\Lambda}_j$ instead of $\cA^{\Lambda_0}_j$. 
	
	Now, several parameters related to the distribution $F$ have to be introduced. First, we fix a positive real $\nu$ such that :
	\begin{itemize}[label=\textbullet]
		\item $\r+\delta \le \nu \le \ST$,
		\item $F([\nu,+\infty))>0$,
		\item the event $\cA^\Lambda \cap \{\forall e \in \Lambda, \, T(e) \le \nu\}$ has a positive probability. 
	\end{itemize}
	Notice that, if $F$ has an atom, one could have $\nu=\ST$. 
	Even if it means replacing $\cA^{\Lambda}_j$ by $\cA^{\Lambda}_j \cap \{\forall e \in \Lambda, \, T(e) \le \nu\}$, we can assume that \[\cA^{\Lambda}_j \subset \{\forall e \in \Lambda, \, T(e) \le \nu\}. \] 
	Further, we set $\tau^\Lambda=2\lll\nu$, which can be interpreted as an upper bound for the passage time of an optimal path from $\lll \epsilon_j$ to $-\lll \epsilon_j$ on the event $\cA^{\Lambda}_j$. Finally, we denote by $T^\Lambda$ the constant $K^\Lambda (\ST-\r)$ where $K^\Lambda$ is the number of edges in an oriented pattern. We will use it as an upper bound for the time that a path can save using edges of a pattern after a modification.
	
	\subsection{Proof of Proposition \ref{17. Gros théorème à démontrer, un seul motif.} in the bounded case}\label{Sous-section preuve, cas borné}
	
	We keep the overall plan of the unbounded case. Unlike in the unbounded case, we cannot use edges of prohibitive time and thus the modification argument is more elaborate here. This section follows the structure of Section \ref{Sous-section preuve, cas non borné} but the one step modification is replaced by a two-steps modification. To this aim, we slightly change the structure of our boxes and our definition of typical boxes. Let us begin by fixing some constants.
	
	\paragraph*{Constants.}
	
	Note that we keep the notations introduced in Section \ref{Sous-section Some tools and notations.} and that $\tau^\Lambda$ and $T^\Lambda$ are fixed in Section \ref{Sous-section surmotif}.
	In the next two paragraphs, we introduce the constants used in the proof. Before looking at their definitions below, one can keep in mind that we fix $\epsilon \ll 1$ and $\displaystyle r_1 \ll \frac{1}{\epsilon} \ll \nabla \ll r_2 \ll r_3 \ll r_4$, where "$\ll$" means that the ratio is large enough and only depends on the dimension $d$ and on the distribution $F$. 
	\begin{itemize}[label=\textbullet]
		\item We fix $\displaystyle \delta'=\min \left(\frac{\delta}{4}, \frac{\delta}{1+d} \right)$.
		\item We fix $L_1$ given by Lemma \ref{Lemme construction de pi.} only depending on $d$ and $\lll$, and $L_2 = L_1+(10+d)\lll$.
		\item Using Theorem \ref{Théorème Andjel/Vares dans le cas borné.} with $M=\r+\delta$, we get two constants $\alpha > 0$ and $\Cc$, fixed for the rest of the proof, such that for all $n \in \N^*$ and $u,v \in \Z^d$ such that $\|u-v\|_1=n$, 
		\begin{equation}
			\P \left( \exists \mbox{ a geodesic $\OG$ from $u$ to $v$ such that } \sum_{e \in \OG} \1_{\{T(e) \ge \r + \delta \}} \le \alpha n \right) \le \mathrm{e}^{-\Cc n}. \label{on définit alpha.} 
		\end{equation} 
		\item Fix $\epsilon > 0$ such that \[\epsilon < \min \left( \frac{1}{11}, \, \frac{\delta}{24 C_\mu}\right). \]
		\item  Fix $\nabla$ such that
		\begin{equation}
			\nabla > \max \left( \frac{4(1 + \ST) C_\mu}{\epsilon c_\mu}, \, 6dL_2C_\mu, \, \frac{8 C_\mu T^\Lambda}{3 \delta}, \, 4C_\mu(2\ST+\tau^\Lambda) \right). \label{On fixe t0 ou nabla0.}
		\end{equation}  
		We give here other lower bounds for $\nabla$ that we need for the sequel and which are consequences of \eqref{On fixe t0 ou nabla0.}. 
		\begin{itemize}
			\item Using the fact that $c_\mu \le C_\mu$, we get $\displaystyle \nabla > \frac{4C_\mu}{\epsilon c_\mu}> \frac{1}{\epsilon}$.
			\item From the inequality $\displaystyle \nabla > \frac{4(1 + \ST) C_\mu}{\epsilon c_\mu}$, using the fact that $\displaystyle \epsilon < \frac{1}{11}$, we have $1-\epsilon > 1 - 3\epsilon > 1 - 10 \epsilon > \epsilon$ and then $\displaystyle \nabla > \frac{1 + 2 \ST}{1-\epsilon}$, $\displaystyle \nabla > \frac{4(1+\ST)}{1-10\epsilon}$ and $\displaystyle \nabla > \frac{3 + 2 \ST}{1-3\epsilon}$.
			\item Since $\displaystyle \epsilon < \frac{\delta}{24 C_\mu}$ and $\displaystyle \nabla > \frac{1}{\epsilon}$, we get $\displaystyle \nabla > \frac{24 C_\mu}{\delta}$.
			\item Finally, since $\displaystyle \delta-\delta' \ge \frac{3 \delta}{4}$, we have from $\displaystyle \nabla > \frac{8 C_\mu T^\Lambda}{3\delta}$ that $\displaystyle \nabla > \frac{2 C_\mu T^\Lambda}{\delta-\delta'}$.
		\end{itemize}
	\end{itemize}
	
	\paragraph*{Boxes.}
	
	With theses constants, we can now define boxes. For $i \in \{1,2,3,4\}$, as in the unbounded case, $B_{i,s,N}$ is the ball of radius $r_i$ for the norm $\|.\|_1$ centered at the point $sN$ with:
	
	\begin{itemize}[label=\textbullet]
		\item $r_1=d$,
		\item $r_2$ an integer such that
		\begin{equation}
			r_2 > \max \left( r_1 + \frac{2(\nabla+2)}{c_\mu}, r_1 + L_1 + \frac{3\nabla}{c_\mu} + \frac{2\ST(1+(1+d)\lll)}{\nu} \right), \label{on fixe r2, cas borné}
		\end{equation}
		\item $r_3$ an integer such that \[ r_3 > \frac{7r_2(4\ST+\alpha \delta)}{\alpha \delta}, \]
		Note that $r_3 \ge r_2 + 1$.
		\item $r_4$ an integer such that \begin{equation}
			r_4 > \frac{r_3(\r+\delta+\ST)}{\r+\delta}. \label{On fixe r4.}
		\end{equation}
		Note that $r_4 \ge r_3 + 1$.
	\end{itemize}
	We use the word "box" to talk about $\Bqs$. Recall that we denote by $\partial B_{i,s,N}$ the boundary of $B_{i,s,N}$, that is the set of points $z \in \Z^d$ such that $\|z-sN\|_1=r_iN$. 
	
	\paragraph*{Crossed boxes and weakly crossed boxes.} 
	
	We say that a path
	\begin{itemize}
		\item crosses a box $\Bqs$ if it visits a vertex in $\Bus$,
		\item weakly crosses a box $\Bqs$ if it visits a vertex in $\Bts$.
	\end{itemize}

	\paragraph*{Paths associated in a box.}
	
	We say that two paths $\gamma$ and $\gamma'$ from $0$ to the same vertex $x$ are associated in a box $\Bqs$ if there exist two distinct vertices $s_1$ and $s_2$ such that the following conditions hold:
	\begin{itemize}
		\item $\gamma$ and $\gamma'$ visit $s_1$ and $s_2$,
		\item $\gamma_{0,s_1}=\gamma'_{0,s_1}$,
		\item $\gamma_{s_1,s_2}$ and $\gamma'_{s_1,s_2}$ are entirely contained in $\Bqs$,
		\item $\gamma_{s_2,x}=\gamma'_{s_2,x}$.
	\end{itemize}
	In particular, these two paths coincide outside $\Bqs$. 
	
	\paragraph*{Typical boxes.}
	
	$\Bqs$ is called a typical box if it verifies the following properties:
	
	\begin{enumerate}[label=(\roman*)]
		\item every geodesic $\gamma_{u,v}$ from $u$ to $v$ entirely contained in $\Bts$ with $\|u-v\|_1 \ge N$ has at least $\alpha \|u-v\|_1$ edges whose time is greater than or equal to $\r+\delta$,
		\item every path $\pi$ from $u$ to $v$ entirely contained in $\Bqs$ with $\|u-v\|_1 \ge N$ has a passage time verifying: 
		\begin{equation}
			t(\pi) \ge (\r+\delta) \, \|u-v\|_1, \label{17. chemin pas anormalement courts}
		\end{equation}
		\item for all $u$ and $v$ in $\Bts$, we have \[(1-\epsilon)\mu(u-v)- N \le t(u,v) \le (1+\epsilon)\mu(u-v)+ N.\]
	\end{enumerate}
	
	As in the unbounded case, we need properties which are guaranteed with the definition of typical boxes. We state them in the following lemma whose proof is given in Section \ref{Sous-section Properties of a typical box, cas borné.}. 
	
	\begin{lemma}\label{17. Propriétés typical box cas borné.}
		We have these three properties about typical boxes. 
		\begin{enumerate}
			\item If $\Bqs$ is a typical box, for all points $u_0$ and $v_0$ in $\Bts$, every geodesic from $u_0$ to $v_0$ is entirely contained in $\Bqs$. 
			\item The typical box property only depends on the time of the edges in $\Bqs$.
			\item We have \[ \lim\limits_{N\to \infty} \P \left( B_{4,0,N} \mbox{ is a typical box} \right) =1. \]
		\end{enumerate}
	\end{lemma}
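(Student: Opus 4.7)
The plan is to treat the three claims in order, with the third being the most delicate.

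For the first claim, I would proceed by contradiction. Suppose a geodesic $\gamma$ from $u_0\in\Bts$ to $v_0\in\Bts$ leaves $\Bqs$. Since $\gamma$ is self-avoiding and starts/ends inside $\Bts$, there exist distinct vertices $w_1,w_2\in\partial \Bqs$, with $w_1$ visited before $w_2$, such that both $\gamma_{u_0,w_1}$ and $\gamma_{w_2,v_0}$ are entirely contained in $\Bqs$, and each has endpoints at $\|\cdot\|_1$-distance at least $(r_4-r_3)N \ge N$. Applying condition (ii) to both sub-paths yields
\[
T(\gamma)\ge T(\gamma_{u_0,w_1})+T(\gamma_{w_2,v_0})\ge 2(\r+\delta)(r_4-r_3)N.
\]
On the other hand any lattice path from $u_0$ to $v_0$ has at most $2r_3 N$ edges each of weight bounded by $\ST$, hence $t(u_0,v_0)\le 2r_3 \ST N$. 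The choice of $r_4$ in \eqref{On fixe r4.} rewrites as $(\r+\delta)(r_4-r_3)>r_3\ST$, contradicting $T(\gamma)=t(u_0,v_0)$.

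For the second claim, let $\tau'$ be the event obtained by replacing in conditions (i) and (iii) the global passage time $t$ by the time restricted to paths contained in $\Bqs$. Then $\tau'$ is by construction measurable with respect to the $\sigma$-algebra generated by $\{T(e):e\in\Bqs\}$. Since the argument of the first claim used only condition (ii), the same reasoning applies on $\tau'$: every global geodesic between vertices of $\Bts$ is contained in $\Bqs$, so $t(u,v)=t_{\Bqs}(u,v)$ there, and the two families of geodesics appearing in (i) coincide. This gives $\tau'=\tau$, so the typical box property only depends on the times of the edges in $\Bqs$.

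For the third claim, I would show that each of the three conditions defining the typical box holds with probability tending to one. For condition (ii), a path in $\Bqs$ from $u$ to $v$ provides an upper bound on $t(u,v)$, so the failure event is contained in $\bigcup_{u,v\in\Bqs,\|u-v\|_1\ge N}\{t(u,v)\le(\r+\delta)\|u-v\|_1\}$; by \eqref{17. définition de delta} and a union bound over the polynomially many pairs the probability is at most $|\Bqs|^2 \mathrm{e}^{-D_0 N}\to 0$. For condition (i), Theorem \ref{Théorème Andjel/Vares dans le cas borné.} applied with $M=\r+\delta$ together with the choice of $\alpha$ in \eqref{on définit alpha.} yields, again by a union bound over pairs in $\Bts$, a failure probability at most $|\Bts|^2 \mathrm{e}^{-\Cc N}\to 0$. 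For condition (iii), I would use the Cox–Durrett shape theorem \eqref{17. Théorème de forme asymptotique.}: when $\mu(u-v)$ is large enough, a translated shape theorem gives the multiplicative bound, while when $\mu(u-v)$ is bounded the additive slack $N$ absorbs any deviation since $t(u,v)\le \ST\,\|u-v\|_1\le 2r_3\ST N$.

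The main obstacle is obtaining the uniform-in-$(u,v)$ estimate required by condition (iii), since \eqref{17. Théorème de forme asymptotique.} is stated at the origin and translating it yields a random threshold $T_u$ that may depend on $u$. Overcoming this requires either a uniform version of the shape theorem (available under the moment hypothesis \eqref{17. Hypothèse sur l'espérance des ti.}) providing a common deterministic $T_0$ valid for every translate $u\in\Bts$, or Kesten-type concentration bounds for $t(0,x)$ about $\mu(x)$ that allow a union bound over the polynomially many pairs of $\Bts$.
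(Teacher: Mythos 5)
Your arguments for items 1 and 2, and for conditions (i) and (ii) of item 3, match the paper's proof in all essentials: item 1 by bounding $t(u_0,v_0)\le 2r_3\ST N$ from above and applying condition (ii) to the two sub-paths reaching $\partial\Bqs$, item 2 by noting that once (ii) holds the restricted and global geodesic times on $\Bts$ coincide, and items (i),(ii) of the third claim by union bounds via \eqref{17. définition de delta} and \eqref{on définit alpha.} respectively.

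The gap is in condition (iii) of item 3, and you yourself flag it but do not close it. Your two suggested repairs are both heavier and less clearly available than what is needed: a ``uniform shape theorem with a common deterministic threshold over all translates in $\Bts$'' is not a standard statement, and the Kesten-concentration route also requires a quantitative bound on $\E t(0,x)-\mu(x)$ (Alexander's approximation), which typically assumes exponential moments, not the minimal moment hypothesis \eqref{17. Hypothèse sur l'espérance des ti.} used here. The paper's fix is more elementary and bypasses any rate of convergence. Fix a constant $\rho>0$ (chosen as $\rho=\frac{1}{2d((1+\epsilon)C_\mu+\ST)}$) and consider the mesh $\lfloor\rho N\rfloor\Z^d\cap\Bts$. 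Because the mesh spacing scales with $N$ while $\Bts$ has radius $r_3N$, the number of mesh points is bounded uniformly in $N$. Any two distinct mesh points $u',v'$ satisfy $\mu(u'-v')\ge c_\mu\lfloor\rho N\rfloor\to\infty$, so by \eqref{Autre version du théorème de forme asymptotique} and stationarity the probability that $|t(u',v')-\mu(u'-v')|>\epsilon\mu(u'-v')$ tends to $0$; a union bound over the finitely many pairs then gives the purely multiplicative estimate \eqref{17. propriété trois bis anneaux typiques.} on the mesh with probability tending to $1$. Finally, for general $u,v\in\Bts$ one rounds to nearest mesh points $u',v'$ with $\|u-u'\|_1,\|v-v'\|_1\le d\rho N$, incurring a discrepancy at most $2d\rho N\ST$ in $t$ and $2d\rho N C_\mu$ in $\mu$; the choice of $\rho$ makes the total additive error exactly $N$, which is the additive slack appearing in condition (iii). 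Your proposal is missing this renormalization-type step, which is the new ingredient the paper needed for the uniform two-sided estimate.
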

	
	\paragraph*{Successful boxes.}
	
	For a fixed $x \in \Z^d$, a box $\Bqs$ is successful if every geodesic from $0$ to $x$ takes a pattern which is entirely contained in $\Bds$, i.e.\ if for every geodesic $\gamma$ going from $0$ to $x$, there exist $j \in \{1,\dots,d\}$ and $x_\gamma \in \Z^d$ satisfying the condition $(\gamma;\mathfrak{P}^j)$ such that $B_\infty(x_\gamma,\lll)$ is contained in $\Bds$. 
	
	\paragraph*{Annuli.}
	
	Following the proof in the unbounded case, we define the annuli $A_{i,N}$ with $r = 2(r_1+r_4+1)$ and $\cG^p(N)$ as in Section \ref{Sous-section preuve, cas non borné} but with the definitions of crossed and typical boxes defined here in Section \ref{Sous-section preuve, cas borné}. The bound on $\P(\cG^p(N)^c)$ of Lemma \ref{17. Majoration de la proba du complémentaire de G.} also holds here. The proof is exactly the same in this case thanks to Lemma \ref{17. Propriétés typical box cas borné.}. For the rest of the proof, we fix $C_1$, $D_1$ and $N_0$ given by Lemma \ref{17. Majoration de la proba du complémentaire de G.}.
	
	\paragraph*{Modification argument.}
	
	Fix $\Ki = T^\Lambda+2(C_\mu L_1+\ST(\lll+1))$. Then, fix 
	\begin{equation}
		N > \max \left( N_0, \frac{12 C_\mu \Ki}{\delta \nabla} \right), \, n \ge 2rN \text{ and } x \in \Gamma_n, \label{On fixe N, n et x dans le cas borné.}
	\end{equation} (where $\Gamma_n$ is defined at \eqref{définition Gamman}). Fix $\displaystyle p= \left\lfloor \frac{n}{rN} \right\rfloor$ and $\displaystyle q = \left\lfloor \frac{p}{2} \right\rfloor$. For $j \in \{1,\dots,q\}$, we define $\Gamma^j$, $\Sun_j$, $\Sde_j$ and $\cM(j)$ as in Section \ref{Sous-section preuve, cas non borné} but with the notions of typical and successful boxes defined here in Section \ref{Sous-section preuve, cas borné}. As in the unbounded case, the aim is to bound from above $\P(T \in \cM(q))$ independently of $x$. For the sequel, we use a two-steps modification. So we introduce two independent copies $T'$ and $T''$ of the environment $T$, the three being defined on the same probability space.
	
	Fix $\l \in \{1,...,q\}$. On $\{ T \in \cM(\l) \}$, $B_{4,\Sun_\l(T),N}$ is a typical box crossed by the selected geodesic. From this configuration, as a first step, we shall associate a set of edges $\Ep(T)$ which is contained in $B_{3,\Sun_\l(T),N} \setminus B_{2,\Sun_\l(T),N}$. It corresponds to the edges for which we want to reduce the time. Then, we get a new environment $\Tu$ defined for all edge $e$ by:
	\begin{equation}
		\Tu(e) = \left\{
		\begin{array}{ll}
			T'(e) & \mbox{if } e \in \Ep(T) \\
			T(e) & \mbox{else.}
		\end{array}
		\right.	\label{Définition de T*, cas borné.}
	\end{equation}
	
	From this environment, as a second step, we get three new subsets $\Eep(T,T')$, $\Eem(T,T')$ and $\EeM(T,T')$ of edges of $B_{2,\Sun_\l(T),N}$ which are respectively the edges for which we want to reduce the time, to increase the time and the edges of the location where we want to put the pattern. We get a third environment $\Td$ defined for all edge $e$ by: 
	\begin{equation}
		\Td(e) = \left\{
		\begin{array}{ll}
			T''(e) & \mbox{if } e \in \Eep(T,T') \cup \Eem(T,T') \cup \EeM(T,T') \\
			\Tu(e) & \mbox{else.}
		\end{array}
		\right. \label{Définition de T**, cas borné.}
	\end{equation}
	Note that $\Td$ and $T$ do not have the same distribution. For $y$ and $z$ in $\Z^d$, we denote by $t^*(y,z)$ (resp. $\td(y,z)$) the geodesic time between $y$ and $z$ in the environment $\Tu$ (resp. $\Td$). Similarly, we define for $c \in \Z^d$ and $t \in \R_+$:
	\begin{equation}
		B^*(c,t) = \{ u \in \Z^d \, : \, t^*(c,u) \le t \} \mbox{ and } \Bd(c,t) = \{ u \in \Z^d \, : \, \td(c,u) \le t \}. \label{Définition des boules des temps dans les environnements Tu et Td.}
	\end{equation}
	We formalize this modification in the next lemma and we will describe precisely the construction of $\Ep$, $\Eep$, $\Eem$ and $\EeM$ in the next subsection.
	
	\begin{lemma}\label{17. ancien lemme 12.9.}
		There exists $\eta = \eta (N)$ such that for all $\l$ in $\{1,\dots,q\}$, there exist measurable functions $\Ep \, : \, (\R_+)^\cE \mapsto \cP(\cE)$, $\Eep \, : \, (\R^\cE_+)^2\mapsto \cP(\cE)$, $\Eem \, : \, (\R^\cE_+)^2 \mapsto \cP(\cE)$, $\EeM \, : \, (\R^\cE_+)^2 \mapsto \cP(\cE)$ and $\cO \; : \, (\R^\cE_+)^2 \mapsto \{1,\dots,d\}$ such that:
		\begin{enumerate}[label=(\roman*)]
			\item $\Ep(T)$, $\Eep(T,T')$, $\Eem(T,T')$ and $\EeM(T,T')$ are pairwise disjoint and are contained in $B_{3,\Sun_\l(T)}$, 
			\item on the event $\{T \in\cM(\l)\}$, $\P \left( \left. T' \in \cB^*(T) \right| T \right) \ge \eta$ and on the event $\{T \in\cM(\l)\} \cap \{T' \in \cB^*(T)\}$, $\P \left( \left. T'' \in \cB^{**}(T,T') \right| T,T' \right) \ge \eta$, where $\{T' \in \cB^*(T)\}$ is a shorthand for \[ \{\forall e \in \Ep(T), \, T'(e) \le \r+\delta' \}, \] and $\{T'' \in \cB^{**}(T,T')\}$ is a shorthand for
			\[ \left\{ \forall e \in \Eep(T,T'), \, T''(e) \le \r+\delta', \, \forall e \in \Eem(T,T'), \, T''(e) \ge \nu, \, \theta_{N \Sun_\l(T)} T'' \in \cA^{\Lambda}_{\cO (T,T')} \right\}, \]
			\item $\{T \in \cM(\l) \} \cap \{T' \in \cB^*(T) \} \cap \{T'' \in \cB^{**}(T,T')\} \subset \{\Td \in \left(\cM(\l-1) \setminus \cM(\l) \right) \}$ and $\|\Sde_\l(\Td)-\Sun_\l(T)\|_1 \le 2r_4$.
		\end{enumerate}
	\end{lemma}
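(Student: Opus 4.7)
The strategy is to follow the scheme of Lemma \ref{17. gros lemme modification, cas non borné.}, replacing the one-step modification by a two-stage one: in the bounded case we cannot create ``walls'' of arbitrarily large passage time, so instead we first build two fast corridors in $\Bts\setminus\interieur{\Bds}$ that funnel every geodesic through two prescribed boundary vertices of $\Bds$ aligned along some canonical direction $\epsilon_j$, and only then install an oriented copy of $\mathfrak{P}^j$ inside $\Bds$. Concretely, on $\{T\in\cM(\l)\}$ let $\gamma$ denote the selected geodesic, $s=\Sun_\l(T)$, and $u,v$ its entry and exit points on $\partial\Bts$. Pick in a deterministic way a direction $j=j(u,v)\in\{1,\dots,d\}$ and two vertices $u^\dagger,v^\dagger\in\partial\Bds$ aligned along $\pm\epsilon_j$, and build two self-avoiding paths from $u$ to $u^\dagger$ and from $v$ to $v^\dagger$ entirely inside $\Bts\setminus\interieur{\Bds}$ (such paths exist by a direct variant of Lemma \ref{Lemme construction de pi dans le cas non borné.}). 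Let $\Ep(T)$ be the set of their edges and set $\cO(T,T')=j$. Then, working inside $\Bds$ and using Lemma \ref{17. Lemme motif valable}, build a self-avoiding path from $u^\dagger$ to $-\lll\epsilon_j+Ns$, through the oriented pattern to $\lll\epsilon_j+Ns$, and back to $v^\dagger$ (feasibility uses the lower bound \eqref{on fixe r2, cas borné} on $r_2$). Define $\EeM(T,T')$ as the edges of this translated pattern, $\Eep(T,T')$ as the edges of the connector outside the pattern, and $\Eem(T,T')$ as the remaining edges of $\Bds$.

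Item (i) is then immediate: the four sets are pairwise disjoint and contained in $\Bts$. Item (ii) holds because $|\Ep(T)|$, $|\Eep|$, $|\Eem|$ and $|\EeM|$ are bounded by a deterministic function of $N$, while $F([\r,\r+\delta'])$, $F([\nu,\ST])$ and $\P(\cA^{\Lambda}_j)$ are all positive. For item (iii), set
\[
\gu := \gamma_{0,u}\cup(\text{first corridor})\cup(\text{connector})\cup(\text{pattern})\cup(\text{connector})\cup(\text{second corridor})\cup\gamma_{v,x}.
\]
Applying typical-box property (ii) to $\gamma_{u,v}$ inside $\Bqs$ (which contains it, by part 1 of Lemma \ref{17. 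Propriétés typical box cas borné.}) gives $T(\gamma_{u,v})\ge 2(r_3-r_1)N(\r+\delta)$, whereas a direct count bounds $\td(\gu_{u,v})$ by roughly $2r_3N(\r+\delta')+2r_2N(\r+\delta')+\tau^\Lambda$. The calibration of $\nabla,\delta',r_1,r_2,r_3,r_4$ done in the preamble of Section \ref{Sous-section preuve, cas borné} is exactly what makes the difference strictly positive, so $\td(\gu)<t(\gamma)$, as in Lemma \ref{17. Cas non borné, comparaison temps de gamma et de gamma étoile.}. A structural argument mirroring Lemma \ref{Conséquences modification, cas non borné.} then shows that every geodesic of $\Td$ from $0$ to $x$ must actually follow both corridors and cross the pattern: any deviation inside $\Bts$ either runs against an edge of $\Eem$ of time $\ge\nu$ (prohibitive by the same energy comparison) or stays in the typical region and is beaten by property (ii) of $\Bqs$. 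Given these two facts, the stability bookkeeping of the proof of Lemma \ref{17. cas non borné, lemme pour les inégalités avec les indicatrices dans la modification.} transfers verbatim, with $\Bqs$ in the role of $\Bts$ for the notion of associated geodesics, yielding both the inclusion $\{T\in\cM(\l)\}\cap\{T'\in\cB^*(T)\}\cap\{T''\in\cB^{**}(T,T')\}\subset\{\Td\in\cM(\l-1)\setminus\cM(\l)\}$ and the bound $\|\Sde_\l(\Td)-\Sun_\l(T)\|_1\le 2r_4$.

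\textbf{Main obstacle.} The delicate point, absent in the unbounded case, is the calibration required for the inequality $\td(\gu)<t(\gamma)$ and for the ``prohibitive'' role of $\Eem$ to hold \emph{uniformly} in the geometry of $u,v,s$, while simultaneously absorbing the $\epsilon$-fluctuation of actual passage times around $\mu$-distances coming from typical-box property (iii). This is precisely what the multiscale ordering $\epsilon\ll 1\ll\nabla\ll r_2\ll r_3\ll r_4$ and the choices of $\delta',L_1,L_2,\tau^\Lambda,T^\Lambda$ in the preamble of Section \ref{Sous-section preuve, cas borné} are designed to handle; once those inequalities are in place, the remaining arithmetic is routine, but setting them up is the real content of the bounded-case modification.
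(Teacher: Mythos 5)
Your first modification is fundamentally different from the paper's, and it breaks a key invariance. The paper defines $\Ep(T)$ (see \eqref{On définit Ep(T)}) as the set of \emph{high-time edges of the selected geodesic $\gamma$ itself} on the segments $\gamma_{u,u_0}$ and $\gamma_{v_0,v}$. This is not cosmetic: lowering times only on edges of $\gamma$ guarantees (Lemma \ref{17. Première modification.}) both that $\gamma$ remains a geodesic in $\Tu$ and, because the saving $\ge\alpha(r_3-r_2)N(\delta-\delta')$ is enjoyed by no path bypassing these edges, that every $\Tu$-geodesic must traverse \emph{all} of $\Ep(T)$, hence must visit $s_1$ and $s_2$ in the correct order. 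Your proposal instead builds two deterministic corridors from $u,v\in\partial\Bts$ to prescribed points $u^\dagger,v^\dagger\in\partial\Bds$ and makes those fast. These corridor edges are not on $\gamma$, so $\gamma$ generally ceases to be a geodesic in $\Tu$, and there is no reason why a $\Tu$-geodesic should be funneled through $u^\dagger$ or $v^\dagger$ at all: a geodesic in $T$ that entered $\Bts$ far from $u$ or $v$ gains nothing from the corridors and need not approach them. With this, the whole stability bookkeeping (items (ii), (iii) of Lemma \ref{17. gros lemme modification.} and the associated-geodesics argument with $s_1,s_2$) collapses.

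The second genuine gap is the assertion that once the walls $\Eem(T,T')$ have time $\ge\nu$, all detours are ``prohibitive by the same energy comparison.'' In the bounded case that is precisely what is \emph{not} true: $\nu\le \ST<\infty$, so a geodesic can afford to cross a bounded number of wall edges, and the unbounded-case argument of Lemma \ref{Conséquences modification, cas non borné.} does not carry over. The paper handles this with an apparatus you have not replaced: the definition of $u_1,v_1$ via $B_\mu(c_0,N\nabla)$, the partition of edges into ``before/after/intermediate'' via $B^*(0,\Tu(\gamma_{0,u_1}))$ and $B^*(x,\Tu(\gamma_{v_1,x}))$, the fact that these balls are unchanged under $T^*\mapsto T^{**}$ (Lemma \ref{17. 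Egalité des boules des deuxième et troisième configurations, et aucune arête du motif dans ces boules.}), and then the chain of Lemmas \ref{17. Toute géodésique touche pi.}--\ref{17. Toute géodésique qui touche pi avant et après le motif emprunte le motif.} that compares $\mu$-distances along $\pi$ with the saving available to detours, edge by edge and direction by direction. Your proof attributes all of this to ``routine'' calibration, but the calibration only becomes routine after this structure is in place; without it the inequality $\td(\gd_{0,w})>\td(\gamma^\pi_{0,w})$ used to force every $\Td$-geodesic through the pattern is not established. In short, both halves of the two-step modification in your proposal differ essentially from the paper's, and neither half is made to work.
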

	
	The proof of Lemma \ref{17. ancien lemme 12.9.} is left to the reader. It is the same as the proof of Lemma \ref{17. cas non borné, lemme pour les inégalités avec les indicatrices dans la modification.}, replacing the use of Lemma \ref{17. gros lemme modification, cas non borné.} by the following one.
	
	\begin{lemma}\label{17. gros lemme modification.}
		There exists $\eta = \eta (N)$ such that for all $\l$ in $\{1,\dots,q\}$, there exist measurable functions $\Ep \, : \, (\R_+)^\cE \mapsto \cP(\cE)$, $\Eep \, : \, (\R^\cE_+)^2 \mapsto \cP(\cE)$, $\Eem \, : \, (\R^\cE_+)^2 \mapsto \cP(\cE)$, $\EeM \, : \, (\R^\cE_+)^2 \mapsto \cP(\cE)$ and $\cO \; : \, (\R^\cE_+)^2 \mapsto \{1,\dots,d\}$ such that items $(i)$ and $(ii)$ of Lemma \ref{17. ancien lemme 12.9.} are satisfied and such that if the event $\{T \in \cM(\l) \} \cap \{T' \in \cB^*(T) \} \cap \{T'' \in \cB^{**}(T,T')\}$ occurs, then we have the following properties: 
		\begin{enumerate}[label=(\roman*)]
			\item  in the environment $\Td$, every geodesic from $0$ to $x$ takes the pattern inside $B_{2,\Sun_\l(T),N}$,
			\item for all geodesic $\ogd$ from $0$ to $x$ in the environment $\Td$, there exists a geodesic $\OG$ from $0$ to $x$ in the environment $T$ such that $\OG$ and $\ogd$ are associated in $B_{4,\Sun_\l(T),N}$,
			\item there exists a geodesic $\gd$ in the environment $\Td$ from $0$ to $x$ such that $\gd$ and the selected geodesic $\gamma$ in the environment $T$ are associated in $B_{4,\Sun_\l(T),N}$.
		\end{enumerate}
	\end{lemma}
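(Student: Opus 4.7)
The plan is to follow the architecture of Lemma~\ref{17. gros lemme modification, cas non borné.}, but to split the modification into two stages since the bounded support of $F$ forbids the use of prohibitive passage times to block bypass routes directly. On $\{T\in\cM(\l)\}$, set $s=\Sun_\l(T)$, denote by $\gamma$ the selected geodesic and let $u,v\in\partial\Bts$ be its entry and exit vertices (so that $\gamma_{u,v}\subset\Bqs$ by Lemma~\ref{17. Propriétés typical box cas borné.}). Stage one will carve two fast corridors in $\Bts\setminus\Bds$ joining $\{u,v\}$ to deterministic target vertices $u_0,v_0\in\partial\Bds$, forcing every $\Tu$-geodesic from $0$ to $x$ to reach $\partial\Bds$ exactly at $u_0,v_0$. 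The orientation $\cO(T,T')$ will then be read off from $(u_0,v_0)$, and stage two will plant an oriented copy of the pattern (Lemma~\ref{17. Lemme motif valable}) inside $sN+\Lambda$ and surround it by a barrier of $\nu$-edges so that every $\Td$-geodesic enters the pattern region through the two faces orthogonal to $\cO(T,T')$; Lemma~\ref{17. Lemme motif valable} then guarantees that its subpath through $\Lambda$ contains a subpath realizing the original $\mathfrak{P}$.

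Stage one. Using the analogue in the bounded case of Lemma~\ref{Lemme construction de pi dans le cas non borné.} (namely Lemma~\ref{Lemme construction de pi.}), I will build two vertex-disjoint self-avoiding paths $\sigma_1\colon u\to u_0$ and $\sigma_2\colon v_0\to v$ of length at most $(r_3-r_2)N+L_1$, both contained in $\Bts\setminus\mathrm{int}(\Bds)$, and set $\Ep(T)$ to be their edge set. On $\cB^*(T)$ the passage time through each $\sigma_i$ in $\Tu$ is at most $((r_3-r_2)N+L_1)(\r+\delta')$. Any alternative self-avoiding path in $\Bts$ from $u$ or $v$ to a point of $\partial\Bds$ distinct from $u_0,v_0$ has $\|\cdot\|_1$-length at least $(r_3-r_2)N$ and, by typical-box property~(i) applied inside $\Bts$, uses at least $\alpha(r_3-r_2)N$ edges of $T$-time $\ge\r+\delta$ which remain unchanged in $\Tu$; the resulting time gap of order $\alpha(r_3-r_2)N(\delta-\delta')$ in favor of the corridor, combined with property~(ii) bounding passage times from below in $\Bqs$, will force every $\Tu$-geodesic from $0$ to $x$ to weakly cross $\Bqs$ and enter/leave $\Bds$ exactly at $u_0,v_0$. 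I then define $\cO(T,T')=j$ deterministically from $(u_0,v_0)$, for instance as a coordinate direction forming the largest angle with $v_0-u_0$.

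Stage two. Applying Lemma~\ref{Lemme construction de pi.} inside $\Bds$, I will build a self-avoiding path $\pi$ going $u_0\to-\lll\epsilon_j+sN$, straight through $sN+\Lambda$ to $\lll\epsilon_j+sN$, then to $v_0$, with the outer pieces avoiding $sN+\Lambda$. I then set $\EeM(T,T')$ to be the edges of $sN+\Lambda$, $\Eep(T,T')=\pi\setminus\EeM$, and $\Eem(T,T')$ a shell of edges in $\Bds\setminus(sN+\Lambda\cup\pi)$ arranged so that any self-avoiding path from $u_0$ to $v_0$ which either skips $sN+\Lambda$ or enters it through a face other than $\{x_j=\pm\lll\}+sN$ must traverse at least one $\Eem$-edge. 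On $\cB^{**}(T,T')$ the edges of $\Eep$, $\Eem$ and $\EeM$ have respective $\Td$-times at most $\r+\delta'$, at least $\nu$, and realizing $\cA^\Lambda_j$. A time comparison weighing the at most $\tau^\Lambda+C_\mu r_2 N(\r+\delta')$ cost of $\pi$-through-pattern against any bypass which must pay an extra $\nu-(\r+\delta')$ from an $\Eem$-edge will force every $\Td$-geodesic from $0$ to $x$ to follow $\pi$ through the pattern, giving~(i) via Lemma~\ref{17. Lemme motif valable}. Item~(iii) will be the concatenation $\gamma_{0,u}\cup\sigma_1\cup\pi\cup\sigma_2\cup\gamma_{v,x}$, whose $\Td$-optimality follows from the same inequalities; for~(ii), the same argument shows that any $\Td$-geodesic $\ogd$ enters and leaves $\Bts$ at $u,v$, and since $T=\Td$ outside $\Bts$, the pieces $\ogd_{0,u}$ and $\ogd_{v,x}$ are $T$-geodesics, so a loop-cutting construction as in the unbounded case yields an associated $T$-geodesic $\OG$ in $\Bqs$. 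The lower bound $\eta(N)>0$ follows from $|\Ep|,|\Eep|,|\Eem|,|\EeM|$ being polynomial in $N$ and from $F([\r,\r+\delta'])$, $F([\nu,\ST])$, $\P(\cA^\Lambda_j)$ being all strictly positive.

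The hard part will be the quantitative time-gap estimate underlying both stages: the $\alpha(r_3-r_2)N(\delta-\delta')$ gain produced by the corridor in stage one must absorb simultaneously the $L_1(\r+\delta')$ overhead of the detours provided by Lemma~\ref{Lemme construction de pi.}, the $\tau^\Lambda$ cost of crossing the planted pattern, the at most $T^\Lambda$-saving a competitor could extract by briefly dipping into the pattern region through a non-aligned face, and the $\epsilon\mu(\cdot)$ fluctuation allowed by typical-box property~(iii) on the outer geodesic. All of these are exactly what is encoded by the hierarchy $r_1\ll r_2\ll r_3\ll r_4$, the choice $\delta'=\min(\delta/4,\delta/(1+d))$, and the lower bound $N>12C_\mu\Ki/(\delta\nabla)$ fixed at~\eqref{On fixe N, n et x dans le cas borné.}; verifying that these parameters make the time gap strictly positive uniformly in the random environment will be the core quantitative content of the proof.
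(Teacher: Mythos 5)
Your two-stage architecture is the right skeleton, and you have correctly identified the role of the oriented pattern from Lemma~\ref{17. Lemme motif valable} and the parameter hierarchy $r_1\ll r_2\ll r_3\ll r_4$; but the content of both stages is off in ways that do not merely differ from the paper's proof — they leave real gaps.

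\textbf{Stage one.} Your $\Ep(T)$ is the edge set of two freshly carved corridors $\sigma_1,\sigma_2$ disjoint from $\gamma_{u_0,v_0}$. The paper's $\Ep(T)$ is instead the set \eqref{On définit Ep(T)} of \emph{slow edges of $\gamma$ itself} in $\Bts\setminus\Bds$, i.e.\ those $e$ on $\gamma_{u,u_0}$ or $\gamma_{v_0,v}$ with $T(e)>\r+\delta$. This difference is not cosmetic. Because the paper only accelerates edges lying on $\gamma$, one gets for free that $T(\sigma)-T^*(\sigma)\le T(\gamma)-T^*(\gamma)$ for every path $\sigma$, which is exactly what is needed to prove parts~(ii) and~(iii) of Lemma~\ref{17. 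Première modification.}: every $\Tu$-geodesic must use every edge of $\Ep(T)$, and $\gamma$ remains a $\Tu$-geodesic. With your corridor construction $\gamma$ is \emph{not} accelerated, so $\gamma$ need not be a $\Tu$-geodesic, and there is no analogue of the comparison $T(\gu)-T^*(\gu) < T(\gamma)-T^*(\gamma)$. Moreover you justify the claim that any competitor from $u$ or $v$ to a vertex of $\partial\Bds$ other than $u_0,v_0$ has $\ge\alpha(r_3-r_2)N$ slow edges by invoking typical-box property~(i), but that property is a statement about $T$-geodesics, not about arbitrary self-avoiding paths and not about $\Tu$-geodesics, so the step does not go through as written. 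A sealed corridor that the geodesic cannot exit midway would also have to be proved, which is not addressed.

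\textbf{Stage two.} You build $\pi$ from $u_0$ to $v_0$ straight through $sN+\Lambda$ and wall it with a \emph{shell}. The paper instead builds $\pi$ between the two vertices $u_1,v_1$ defined in \eqref{On introduit u1 et v1.} by intersecting $\gamma$ with the $\mu$-ball $B_\mu(c_0,N\nabla)$, places the pattern near the midpoint $c_1$ of $[u_1,v_1]$ (not at $sN$), and crucially classifies edges of $\Bds$ as ``before'', ``intermediate'' or ``after'' via the balls $B^*(0,T^*(\gamma_{0,u_1}))$ and $B^*(x,T^*(\gamma_{v_1,x}))$. The walls $\Eem(T,T')$ in \eqref{On définit Eemoins(T,T').} consist of \emph{all} low-time intermediate edges of $\Bds$ not on $\pi$ or the pattern, and Lemma~\ref{17. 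Egalité des boules des deuxième et troisième configurations, et aucune arête du motif dans ces boules.} shows these balls are unchanged in $\Td$ so that the pattern lies strictly between them. The inclusions of Lemma~\ref{Inclusion des boules des temps T étoile dans les boules des temps T.}, which make this whole construction consistent, again require $\gamma$ to be a $\Tu$-geodesic, which your stage one does not provide. A thin shell, as you propose, also fails mechanically: the selected geodesic $\gamma$ itself passes through $\Bds$ and could supply a fast bypass of $sN+\Lambda$ unless its low-time edges between $u_1$ and $v_1$ are also walled, which is exactly what the paper's ``intermediate'' classification accomplishes. Finally, the orientation $\cO(T,T')$ in \eqref{On définit O(T,T').} is forced to be the orientation of the length-$10\lll$ step of $\pi$ containing the chosen vertex near $c_1$; it is not a free choice ``from $(u_0,v_0)$'', which is the whole reason Lemma~\ref{17. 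Lemme motif valable} must supply overlapping patterns in all $d$ orientations.
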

	
	The proof of Lemma \ref{17. gros lemme modification.} is the aim of Section \ref{Sous-section modification argument, cas borné}. We now conclude the proof of Proposition \ref{17. Gros théorème à démontrer, un seul motif.} in the bounded case.  The following lemma is the counterpart of Lemma \ref{17. Majoration par lambda puissance q} in this case.
	
	\begin{lemma}\label{17. Majoration par lambda puissance q, cas borné}
		There exists $\lambda \in (0,1)$ which does not depend on $n$ and on $x \in \Gamma_n$ such that \[\P \left( T \in \cM(q) \right) \le \lambda^q. \]
	\end{lemma}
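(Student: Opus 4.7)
The plan is to mirror the proof of Lemma \ref{17. Majoration par lambda puissance q}, with the two-step modification from Lemma \ref{17. ancien lemme 12.9.} replacing the single-step modification of the unbounded case; this will yield the contraction factor $\eta^2/K$ in place of $\eta/K$, but the overall structure is unchanged.

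Fix $\l \in \{1,\dots,q\}$ and $s \in \Z^d$. I would introduce an auxiliary environment $\Td_s$, a deterministic function of $(T,T',T'')$, built by running the recipe that produces $\Td$ with the random center $\Sun_\l(T)$ replaced by the fixed vertex $s$: first form $T^*_s$ by resampling a subset of $B_{3,s,N} \setminus B_{2,s,N}$ determined by $T|_{B_{3,s,N}}$ with values from $T'$, then form $\Td_s$ by resampling a subset of $B_{2,s,N}$ determined by $(T,T')|_{B_{3,s,N}}$ with values from $T''$. Since the resampling is confined to the deterministic box $B_{3,s,N}$ and since $(T,T',T'')$ are i.i.d., one should check that $\Td_s$ has the same distribution as $T$; moreover, on $\{T\in\cM(\l)\} \cap \{\Sun_\l(T)=s\}$ the ``center-$s$'' recipe coincides with the original construction and therefore $\Td_s = \Td$ on this event.

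Combining this identity with item $(iii)$ of Lemma \ref{17. ancien lemme 12.9.}, one obtains the pointwise inequality
\[
\1_{\{T\in\cM(\l)\}}\1_{\{\Sun_\l(T)=s\}}\1_{\{T'\in\cB^*(T)\}}\1_{\{T''\in\cB^{**}(T,T')\}}
\le \1_{\{\Td_s\in\cM(\l-1)\setminus\cM(\l)\}}\,\1_{\bigcup_{s'\approx s}\{\Sde_\l(\Td_s)=s'\}},
\]
where $s' \approx s$ means $\|s-s'\|_1 \le 2r_4$. Taking expectations, the right-hand side equals $\P(T\in\cM(\l-1)\setminus\cM(\l),\,\Sde_\l(T)\approx s)$ by the distributional identity. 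For the left-hand side, conditioning first on $T$ and then on $(T,T')$ and applying item $(ii)$ of the same lemma yields the lower bound $\eta^2\,\P(T\in\cM(\l),\,\Sun_\l(T)=s)$. Summing over $s\in\Z^d$, letting $K=K(d,r_4)$ bound the number of $s$ with $s\approx s'$, and using $\cM(\l)\subset\cM(\l-1)$ gives $\displaystyle \frac{\eta^2}{K}\P(T\in\cM(\l))\le\P(T\in\cM(\l-1))-\P(T\in\cM(\l))$, so $\P(T\in\cM(\l))\le\lambda\,\P(T\in\cM(\l-1))$ with $\lambda := 1/(1+\eta^2/K)\in(0,1)$, independent of $n$ and $x$. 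Since $\P(T\in\cM(0))=1$, iterating over $\l=1,\dots,q$ yields the desired bound $\P(T\in\cM(q))\le\lambda^q$.

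The main obstacle is the careful construction of $\Td_s$ so that both the distributional identity $\Td_s \sim T$ and the pathwise identity $\Td_s=\Td$ on $\{\Sun_\l(T)=s\}$ hold simultaneously. The subtlety is that the resampled subsets $\Ep(T)$ and $\Eep(T,T')\cup\Eem(T,T')\cup\EeM(T,T')$ depend on the underlying random data, and resampling on data-dependent subsets does not in general preserve the distribution; one must verify the identity in two stages, exploiting the independence of $T''$ from $(T,T')$ and the localization in $B_{3,s,N}$ guaranteed by item $(i)$ of Lemma \ref{17. ancien lemme 12.9.}. Once this is in place, the remainder of the argument is a direct transcription of the proof of Lemma \ref{17. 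Majoration par lambda puissance q}.
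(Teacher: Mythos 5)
Your proposal is essentially correct in structure but has a genuine gap at the crucial step: the claimed distributional identity $\Td_s \sim T$ is false as stated, and you have in fact put your finger on exactly why --- ``resampling on data-dependent subsets does not in general preserve the distribution.'' This is not a technicality to be ``verified in two stages''; it is a real obstruction. After fixing the center $s$, the resampled sets $\Ep(T)$ and $\Eep(T,T')\cup\Eem(T,T')\cup\EeM(T,T')$ still depend on the underlying weights (e.g.\ $\Ep(T)$ collects edges of the selected geodesic with $T(e)>\r+\delta$ and replaces them by small values), so the map $(T,T',T'')\mapsto\Td_s$ systematically biases the passage times and $\Td_s$ does not have the law of $T$. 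Neither localization in a deterministic box nor the independence of $T''$ from $(T,T')$ rescues this: resampling on any genuinely random subset changes the law, even when that subset lives in a fixed finite box.

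The paper's proof resolves this by indexing not only over $s$ but also over a fixed \emph{deterministic} subset $\ed$ of edges of $B_{3,s,N}$, and defining $\Td_{s,\ed}$ by replacing $T$ with $T''$ on $\ed\cap B_{2,s,N}$ and with $T'$ on $\ed\cap(B_{3,s,N}\setminus B_{2,s,N})$. For each fixed $(s,\ed)$ this is a resampling on a deterministic set and hence $\Td_{s,\ed}\sim T$; moreover $\Td=\Td_{s,\ed}$ on the event $\{\Sun_\l(T)=s\}\cap\{\Ed(T,T')=\ed\}$ where $\Ed(T,T')$ denotes the union of the four resampled sets. One then sums the indicator inequality over all $2^{|B_{3,s,N}|_e}$ choices of $\ed$, which costs a further multiplicative constant $K$ bounding the number of such subsets; the final contraction factor is therefore $\eta^2/(KK')$ rather than the $\eta^2/K$ you obtain. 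The rest of your argument --- the two-layer conditioning to extract $\eta^2$, summing over $s$, using $\cM(\l)\subset\cM(\l-1)$ and iterating from $\P(T\in\cM(0))=1$ --- coincides with the paper's.
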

	
	\begin{proof}[Proof]
		Let $\l$ be in  $\{1,...,q\}$. For every $s\in\Z^d$ and $\ed$ subset of edges of $\Bts$, let us consider the environment $\Td_{s,\ed}$ defined for all edge $e$ by:
		\begin{equation*}
			\Td_{s,\ed} = \left\{
			\begin{array}{ll}
				T''(e) & \mbox{if } e \in  \ed \cap \Bds \\
				T'(e) & \mbox{if } e \in \ed \cap (\Bts \setminus \Bds) \\
				T(e) & \mbox{else.}
			\end{array}
			\right. 
		\end{equation*}
		We define $\Ed(T,T')=\Ep(T) \cup \Eep(T,T') \cup \Eem(T,T') \cup \EeM(T,T')$. Thus, for every $s$ and $\ed$, $\Td_{s,\ed}$ and $T$ have the same distribution and on the event $\{T \in \cM(\l)\} \cap \{\Sun_\l(T)=s\} \cap \{\Ed(T,T')=\ed\}$, $\Td=\Td_{s,\ed}$.
		Using this environment and writing with indicator functions the result $(iii)$ of Lemma \ref{17. ancien lemme 12.9.}, we have:
		\[ \1_{\{T \in\cM(\l)\}} \1_{\left\{\Sun_\l(T)=s\right\}} \1_{\{T' \in \cB^*(T)\}} \1_{\left\{\Ed(T,T')=\ed\right\}} \1_{\{T'' \in \cB^{**}(T,T')\}} \le \1_{\left\{\Td_{s,\ed} \in \cM(\l-1) \setminus \cM(\l) \right\}} \1_{\bigcup_{s' \approx s} \left\{\Sde_\l(\Td_{s,\ed})=s'\right\}}.\]
		We take the expectation on both sides. The right side yields
		\[\P \left( \Td_{s,\ed} \in \cM(\l-1) \setminus \cM(\l), \bigcup_{s' \approx s} \{\Sde_\l(\Td_{s,\ed})=s'\} \right) = \P \left( T \in \cM(\l-1) \setminus \cM(\l), \bigcup_{s' \approx s} \{\Sde_\l(T)=s'\} \right). \]
		For the left side, we have 
		\begin{align*}
			& \E \left[ \1_{\{T \in\cM(\l)\}} \1_{\left\{\Sun_\l(T)=s\right\}} \1_{\{T' \in \cB^*(T)\}} \1_{\left\{\Ed(T,T')=\ed\right\}} \1_{\{T'' \in \cB^{**}(T,T')\}} \right] \\
			= & \E \left[ \E \left[ \left. \E \left[ \left. \1_{\{T \in\cM(\l)\}} \1_{\left\{\Sun_\l(T)=s\right\}} \1_{\{T' \in \cB^*(T)\}} \1_{\left\{\Ed(T,T')=\ed\right\}} \1_{\{T'' \in \cB^{**}(T,T')\}} \right| T,T' \right] \right| T \right] \right] \\
			= & \E \left[ \1_{\{T \in\cM(\l)\}} \1_{\left\{\Sun_\l(T)=s\right\}} \E \left[ \left. \1_{\{T' \in \cB^*(T)\}} \1_{\left\{\Ed(T,T')=\ed\right\}} \P \left( \left. T'' \in \cB^{**}(T,T') \right| T,T' \right) \right| T \right] \right].
		\end{align*}
		Since on the event $\{T \in\cM(\l)\} \cap \left\{\Sun_\l(T)=s\right\} \cap \{T' \in \cB^*(T)\} \cap \left\{\Ed(T,T')=\ed\right\}$, $\P \left( \left. T'' \in \cB^{**}(T,T') \right| T,T' \right)$ is bounded from below by $\eta$, the left side is bounded from below by \[\eta \E \left[ \1_{\{T \in\cM(\l)\}} \1_{\left\{\Sun_\l(T)=s\right\}} \E \left[ \left. \1_{\{T' \in \cB^*(T)\}} \1_{\left\{\Ed(T,T')=\ed\right\}} \right| T \right] \right]. \]
		Then, by summing on all $\ed$ subsets of edges of $\Bts$ and writing $K$ a constant which does not depend on $s$ and which bounds from above the number of different subsets of edges of $\Bts$, we get
		\[\frac{\eta}{K} \E \left[ \1_{\{T \in\cM(\l)\}} \1_{\left\{\Sun_\l(T)=s\right\}} \P \left( \left. T' \in \cB^*(T) \right| T \right) \right] \le \P \left( T \in \cM(\l-1) \setminus \cM(\l), \bigcup_{s' \approx s} \{\Sde_\l(T)=s'\} \right).\]
		Now, on the event $\{T \in\cM(\l)\} \cap \{\Sun_\l(T)=s\}$, $\P \left( \left. T' \in \cB^*(T) \right| T \right)$ is bounded from below by $\eta$, so the left side is bounded from below by \[\frac{\eta^2}{K} \P \left( T \in\cM(\l), \Sun_\l(T)=s  \right) . \]
		Then, by summing on all $s \in \Z^d$ and writing $K'$ a constant which bounds from above for all $s \in \Z^d$ the number of vertices $s' \in \Z^d$ such that $s' \approx s$, we get 
		\[\frac{\eta^2}{KK'} \P(T \in \cM(\l)) \le \P(T \in \cM(\l-1) \setminus \cM(\l)).\]
		Now, since $\cM(\l) \subset \cM(\l-1)$,
		
		\[ \P(T \in \cM(\l-1) \setminus \cM(\l))= \P \left( T \in \cM(\l-1)\right) - \P \left( T \in \cM(\l) \right).\]
		Thus, \[\P(T \in \cM(\l)) \le \lambda \P(T \in \cM(\l-1) ),\] where $\displaystyle \lambda=\frac{1}{1+\frac{\eta^2}{KK'}} \in (0,1)$ does not depend on $x$. 
		Hence, using $\P(T \in \cM(0)) =1$, we get by induction \[\P(T \in \cM(q)) \le \lambda^q.\]
	\end{proof}
	
	From Lemma \ref{17. Majoration par lambda puissance q, cas borné}, the proof of Proposition \ref{17. Gros théorème à démontrer, un seul motif.} is the same as in the unbounded case.
	
	\subsection{Properties of a typical box}\label{Sous-section Properties of a typical box, cas borné.}
	
	In this section, we state and prove the following lemma, which gives us properties of typical boxes useful for the modification argument, and the proof of Lemma \ref{17. Propriétés typical box cas borné.}.
	
	\begin{lemma}\label{17. lemme technique partie argument de modification}
		If $\Bqs$ is a typical box, we have the following properties.
		\begin{enumerate}[label=(\roman*)]
			\item For all $u$ and $v$ in $\Bts$ with $\|u-v\|_1 \ge \Kh N$ where $\displaystyle \Kh = \frac{1}{\epsilon c_\mu} >0$, \[ (1-2\epsilon) \mu (u-v) \le t(u,v) \le (1+2\epsilon) \mu(u-v). \]
			\item For all $z \in \Bts$, for all $\rr > 0$, for all $N \in \N^*$, \[\Bts \cap B(z,N\rr) \subset \Bts \cap B_\mu \left(z,\frac{N(\rr+1)}{1-\epsilon} \right),\] and if $\displaystyle \rr \ge \frac{1}{\epsilon}-2$, \[\Bts \cap B_\mu \left(z,\frac{N(\rr+1)}{1-\epsilon} \right) \subset \Bts \cap B_\mu \left(z,\frac{N\rr}{1-2\epsilon} \right).\]
			\item For all $z \in \Bts$, for all $\rr > 0$, for all $N \in \N^*$, \[\Bts \cap B_\mu(z,N\rr) \subset \Bts \cap B \left(z,N((1+\epsilon)\rr + 1) \right), \] and if $\displaystyle \rr \ge \frac{1}{\epsilon}$, \[\Bts \cap B \left(z,N((1+\epsilon)\rr + 1) \right) \subset \Bts \cap B \left(z,(1+2\epsilon)N\rr \right). \]
		\end{enumerate}
	\end{lemma}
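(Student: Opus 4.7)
The three items are all direct consequences of property $(iii)$ in the definition of a typical box, combined with the bound $c_\mu \|y\|_1 \le \mu(y)$ recalled in Section \ref{Sous-section Some tools and notations.}. The general plan is to unpack the affine inequality $(1-\epsilon)\mu(u-v)-N \le t(u,v) \le (1+\epsilon)\mu(u-v)+N$ and show that, as soon as the relevant distance is large enough, the additive error $N$ can be absorbed into a multiplicative error of order $\epsilon$.

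For item $(i)$, the hypothesis $\|u-v\|_1 \ge \Kh N = N/(\epsilon c_\mu)$ yields $\mu(u-v) \ge c_\mu \|u-v\|_1 \ge N/\epsilon$, so $N \le \epsilon \mu(u-v)$. Substituting into property $(iii)$ of a typical box gives $t(u,v) \le (1+\epsilon)\mu(u-v)+\epsilon\mu(u-v) = (1+2\epsilon)\mu(u-v)$, and symmetrically $t(u,v) \ge (1-2\epsilon)\mu(u-v)$.

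For item $(ii)$, I would pick $y \in \Bts \cap B(z,N\rr)$, so $t(z,y) \le N\rr$; the lower bound in property $(iii)$ then gives $(1-\epsilon)\mu(z-y)-N \le N\rr$, hence $\mu(z-y) \le N(\rr+1)/(1-\epsilon)$, which is the first inclusion. The second inclusion reduces to the elementary inequality $N(\rr+1)/(1-\epsilon) \le N\rr/(1-2\epsilon)$; clearing denominators yields $\epsilon \rr \ge 1-2\epsilon$, i.e.\ $\rr \ge 1/\epsilon - 2$, which is precisely the hypothesis. Item $(iii)$ is handled symmetrically: for $y \in \Bts \cap B_\mu(z,N\rr)$, the upper bound in property $(iii)$ yields $t(z,y) \le (1+\epsilon)N\rr + N = N((1+\epsilon)\rr+1)$, which is the first inclusion; and tightening to $(1+2\epsilon)N\rr$ amounts to $1 \le \epsilon \rr$, i.e.\ $\rr \ge 1/\epsilon$.

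No serious obstacle is expected, since everything reduces to routine bookkeeping with these affine inequalities. The only point requiring attention is to confirm that the thresholds on $\rr$ stated here are consistent with the values used when this lemma is invoked in Section \ref{Sous-section modification argument, cas borné}; this is built in via the choice $\nabla > 1/\epsilon$ guaranteed by \eqref{On fixe t0 ou nabla0.}.
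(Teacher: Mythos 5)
Your proof is correct and follows essentially the same route as the paper's: absorb the additive error $N$ from property $(iii)$ of a typical box into a multiplicative $\epsilon$ error once the relevant scale is large enough. For items $(ii)$ and $(iii)$ the paper is terse (it only says the computation is easy and references item $(i)$), and your explicit reduction of the second inclusions to a comparison of radii is a clean and accurate way to fill in that gap.
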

	
	\begin{proof}[Proof]
		\begin{enumerate}[label=(\roman*)]
			\item Let $u$ and $v$ be in $\Bts$ with $\|u-v\|_1 \ge \Kh N$. Then, since $\Bqs$ is a typical box, we have \[(1-\epsilon)\mu(u-v) - N \le t(u,v) \le (1+\epsilon)\mu(u-v)+N.\] The requirement on $u$ and $v$ implies \[\frac{N}{c_\mu} \le \epsilon \|u-v\|_1,\] so $\epsilon \mu(u-v) \ge N$. 
			\item and (iii) In both cases, it is easy to check that the first inclusion follows from property (iii) of a typical box and an easy computation using (i) shows the second inclusion. 
		\end{enumerate}
	\end{proof}

	\begin{proof}[Proof of Lemma \ref{17. Propriétés typical box cas borné.}] \,
		\begin{enumerate}
			\item Let $\Bqs$ be a typical box and $u_0$ and $v_0$ two points of $\Bts$. Then, taking paths minimizing the distance for the norm $\|.\|_1$ between $u_0$ and $sN$ and between $v_0$ and $sN$, we have $t(u_0,v_0) \le 2 r_3 N \ST$. Then, if a geodesic $\gamma_{u_0,v_0}$ takes an edge which is not in $\Bqs$, since $r_4 \ge r_3 + 1$, using the item $(ii)$ of the definition of a typical box leads to \[T(\gamma_{u_0,v_0}) \ge 2(r_4-r_3) N (\r+\delta), \] which is impossible since $\displaystyle r_4 > \frac{r_3(\r+\delta+\ST)}{\r+\delta}$. Note that we only use item (ii) of the definition of a typical box to prove this property.
			\item It is clear that the first property only depends on the time of edges in $\Bts$ and the second only depends on the time of edges in $\Bqs$. For the third property, by the preceding item, we know that for all points $w_1$ and $w_2$ in $\Bts$, the knowledge of the time of all edges in $\Bqs$ allows us to determine $t(w_1,w_2)$, so to know if the two inequalities are satisfied.  
			\item For each item of the definition of a typical box, we show that the probability that $B_{4,0,N}$ satisfies this item goes to $1$. To show that the probability that item $(ii)$ of the definition of a typical box is satisfied goes to $1$, we use the same proof as for $(ii)$ in the proof of Lemma \ref{Gros lemme typical boxes}, replacing $r_2-r_1$ by 1 and $\Bts$ by $\Bqs$.
			Further, using \eqref{on définit alpha.} and a similar computation as for item $(ii)$ in the proof of Lemma \ref{Gros lemme typical boxes}, we get: \[\P (B_{4,0,N} \mbox{ satisfies } (i)) \xrightarrow[N \to \infty]{} 1. \]
			
			To prove that \[\P (B_{4,0,N} \mbox{ satisfies } (iii)) \xrightarrow[N \to \infty]{} 1,\] recall that $\epsilon$ is fixed in Section \ref{Sous-section preuve, cas borné} and fix $\displaystyle \rho = \frac{1}{2d((1+\epsilon)C_\mu + \ST)}$. Let us consider the following property for $N$ large enough to have $\lfloor \rho N \rfloor \neq 0$:
			
			\begin{equation}
				\forall u',v' \in \lfloor \rho N \rfloor \Z^d \cap B_{3,0,N}, \, |t(u',v') - \mu(u'-v') | \le \epsilon \mu(u'-v'). \label{17. propriété trois bis anneaux typiques.}
			\end{equation}
			
			By \eqref{Autre version du théorème de forme asymptotique}, by stationarity, and since $\left| \left\lfloor \rho N \right\rfloor \Z^d \cap B_{3,0,N} \right|$ is uniformly bounded in $N$, we get  \[ \P \left(\eqref{17. propriété trois bis anneaux typiques.} \mbox{ holds} \right) \xrightarrow[N \to \infty]{} 1.\]
			
			Finally, the proof is completed by showing that \eqref{17. propriété trois bis anneaux typiques.} implies that $B_{4,0,N}$ satisfies $(iii)$. Assume \eqref{17. propriété trois bis anneaux typiques.} and let $u$ and $v$ be two vertices in $B_{3,0,N}$. The aim is to show 
			\begin{equation}
				|t(u,v)-\mu(u-v)| \le \epsilon \mu(u-v)+N. \label{Inégalité preuve section boîtes typiques, cas borné.}
			\end{equation}
			Let $u',v' \in \left\lfloor \rho N \right\rfloor \Z^d \cap B_{3,0,N}$ such that $\|u-u'\|_1 \le d\rho N$ and $\|u-u'\|_1 \le d\rho N$. Thus,
			\[|t(u,v)-\mu(u-v)| \le |t(u,v) - t(u',v')| + |t(u',v') - \mu(u'-v')| + |\mu(u'-v')-\mu(u-v)|. \]
			By \eqref{17. propriété trois bis anneaux typiques.}, $|t(u',v') - \mu(u'-v')| \le \epsilon \mu(u'-v')$. Furthermore,
			\[|\mu(u-v) - \mu(u'-v')| \le \mu(u-u') + \mu(v-v') \le (\|u-u'\|_1 + \|v-v'\|_1) C_\mu \le 2 d \rho N C_\mu.\]  
			Similarly \[|t(u,v) - t(u',v')| \le t(u,u') + t(v,v') \le (\|u-u'\|_1 + \|v-v'\|_1) \ST \le 2 d \rho N \ST,\]
			Thus, we get 
			\[|t(u,v)-\mu(u-v)| \le \epsilon\mu(u-v) + 2 d \rho N(\ST + (1+\epsilon)C_\mu). \]
			We get \eqref{Inégalité preuve section boîtes typiques, cas borné.} thanks to the choice of $\rho$.
		\end{enumerate}
	\end{proof}
	
	\subsection{Modification argument}\label{Sous-section modification argument, cas borné}
	
	In this section, we prove Lemma \ref{17. gros lemme modification.}.
	
	\subsubsection{Mains ideas of the proof}\label{Sous-section schéma de preuve dans le cas borné.}
	
	\paragraph{Framework.} 
	
	Before proving in detail Lemma \ref{17. gros lemme modification.}, we give the main ideas of the proof of the modification argument. We consider a geodesic $\gamma$ crossing a typical box. Recall that this box is composed of four concentric balls of different size, denoted\footnote{We denote $B_i$ instead of $B_{i,s,N}$ to lighten the notations in this subsection which is less formal than the proof.} by $B_1$, $B_2$, $B_3$ and $B_4$, and that $\gamma$ visits at least one vertex in $B_1$. Recall also that the radii of the balls satisfy $r_1 \ll r_2 \ll r_3 \ll r_4$. The aim is to modify the environment to get the following properties:
	\begin{enumerate}
		\item Every geodesic in the new environment takes the pattern in $B_2$.
		\item Every geodesic in the new environment is associated in $B_4$ with a geodesic in the first environment.
		\item The geodesic $\gamma$ is associated in $B_4$ with at least one geodesic in the new environment.
	\end{enumerate}
	
	\paragraph{First modification.} Let us begin with some notations.
	\begin{itemize}
		\item We denote by $\cD_\avant$ the set of edges $e$ verifying the following three conditions:
		\begin{itemize}
			\item $\gamma$ takes $e$ between its first entry in $B_3$ and its first entry in $B_2$.
			\item $e$ belongs to $B_3$ but $e$ does not belong to $B_2$.
			\item $T(e) > \r + \delta$.
		\end{itemize} 
		We denote by $s_1$ the first vertex in $\cD_\avant$ visited by $\gamma$.
		\item $\cD_\apres$ and $s_2$ are symmetrically defined. In particular, $s_2$ is the last vertex of $\cD_\apres$ visited by $\gamma$.
		\item We set $\cD = \cD_\avant \cup \cD_\apres$.
	\end{itemize}
	The first modification simply consists in reducing the passage times of edges in $\cD$ below $\r+\delta'$ (with $0 < \delta' < \delta$ properly chosen).
	This provides a localization of the geodesics in the new environment $\Tu$: they all take all edges of $\cD$ (see Lemma \ref{17. Première modification.} where $\cD$ is called $\Ep(T)$).
	Note also that $\gamma$ remains a geodesic in the environment $\Tu$. 
	One could easily deduce that any geodesic in this new environment visits $s_1$ and $s_2$ and then that properties $2$ and $3$ about associated geodesics hold. The point is that these properties will be preserved by the second modification whose influence on time is negligible with respect to the first modification.
	
	\paragraph{Second modification.} 
	
	\subparagraph*{Framework and overall plan.}
	The selected geodesic $\gamma$ visits at least one vertex of $B_1$, let us denote by $c_0$ the first of them.
	We denote by $u_1$ the first vertex of $\gamma$ such that $T(\gamma_{u_1,c_0}) \le \nabla N$ and $v_1$ the last vertex of $\gamma$ such that $T(\gamma_{c_0,v_1}) \le \nabla N$ (see \eqref{On introduit u1 et v1.}).
	Since the box is typical and since $r_1$ and $\nabla$ are small enough compared with $r_2$, all that we consider (here and in what follows) takes place in $B_2$ where the environments $T$ and $\Tu$ coincide (see Lemma \ref{17. minoration de mu(u1-v1) et arêtes de la zone rouges contenues dans B4.}).
	
	The rough plan is to modify the environment between $u_1$ and $v_1$:
	\begin{enumerate}
		\item We consider an oriented path $\pi$ from $u_1$ to $v_1$ and we make the passage times on its edges very small: this is our highway.
		\item We put the pattern somewhere on $\pi$.
		\item We make the passage times on the other edges in a certain neighborhood (to be defined) very large: these are our walls.
	\end{enumerate}
	
	\subparagraph*{Requirements for $\pi$.} 
	It is quite simple and it is stated in Lemma \ref{Lemme construction de pi.}. 
	We want it to be close to the segment $[u_1,v_1]$ of $\R^d$: this will allow us to have good estimates on the relevant times.
	We also require that the path contains towards its middle a sequence of steps in the same direction: this will allow us to place the pattern there.
	We call this sequence of steps the central segment of $\pi$.
	There is, however, a small difficulty: we can not choose the orientation of the central segment.
	That is why we need to be able to place the original pattern in an overlapping pattern of arbitrary orientation (this is the purpose of Lemma \ref{17. Lemme motif valable}).
	
	\subparagraph*{Requirements for the neighborhood of the walls.}
	This point is less simple.
	We do not want to put walls on edges of $\gamma \setminus \gamma_{u_1,v_1}$ but we also do not want that these edges affect our estimates.
	Ideally, we would like the only relevant edges, apart those from $\pi$ and the pattern-location, to be walls. 
	This can be done in a non technical way as follows.
	Set
	\[
	B^*(0) = B^*(0, T^*(0,u_1)) \text{ and } B^*(x) = B^*(x, T^*(x,v_1)).
	\]
	There are three types of edges:
	\begin{itemize}
		\item The "before" edges: the ones belonging to $B^*(0)$. 
		\item The "after" edges: the ones belonging to $B^*(x)$.
		\item The "intermediate" edges: all other edges.
	\end{itemize}
	Since $\gamma$ is a geodesic in the environment $\Tu$, $\gamma$ takes "before" edges, then "intermediate" edges and then "after" edges.
	Note that the edges of $\cD_\avant$ are "before" edges and that the edges of $\cD_\apres$ are "after" edges.
	
	We can then define the neighborhood on which we put walls: this is the set of "intermediate" edges of $B_2$ which do not belong to $\pi$ and to the pattern-location. 
	We call these edges the "wall" edges.
	The idea is that, as explained at \eqref{e:chronologie}, if $\overline\pi^{**}$ is a part of a geodesic from $0$ to $x$ in the environment $\Td$ linking two vertices outside $B^*(0)$ and $B^*(x)$, then 
	$\overline\pi^{**}$ only takes "intermediate" edges.
	Hence, if in addition $\overline\pi^{**}$ does not take edges of $\pi$ and is entirely contained in $B_2$, 
	then $\overline\pi^{**}$ only takes "wall" edges or edges of the pattern-location: this will ensure in the end that every geodesic takes the pattern.
	
	\subparagraph*{It makes sense.}
	Recall that $\gamma$ is still a geodesic in the environment $\Tu$ and that it visits $u_1$ and then $c_0$. 
	Using the definition of $B^*(0)$ and the triangle inequality, we get
	\[
	T^*(B^*(0),c_0) \ge T^*(u_1,c_0) = T(u_1,c_0) \approx \nabla N.
	\]
	In particular, the distance for the norm $\|\cdot\|_1$ between $B^*(0)$ and $c_0$ is at least of order $\nabla N$.
	The same applies to the distance for the norm $\|\cdot\|_1$ between $B^*(x)$ and $c_0$.
	Thus there is enough room for the central segment and the pattern since their size is of order $1$ (see Lemma \ref{17. Egalité des boules des deuxième et troisième configurations, et aucune arête du motif dans ces boules.}). 
	
	\subparagraph*{The second modification.}
	\begin{itemize}
		\item We put the pattern somewhere on the "central segment".
		\item With the exception of the edges of $\pi$ connected to $B^*(0)$ or $B^*(x)$, we put the time of every "intermediate" edge of $\pi$ which is not in the pattern below $\r+\delta'$.
		\item We put the time of every other "intermediate" edge of $B_2$ which does not belong to $\pi$ and whose time is lower than $\nu$ greater than or equal to $\nu$ (recall that in particular we take $\nu$ such that, in the pattern, the passage time of every edge is lower than or equal to $\nu$).
	\end{itemize}
	
	\subparagraph*{Consequences.}  In what follows, we denote by $\overline\gamma^{**}$ a geodesic in the environment $T^{**}$.
	\begin{enumerate}
		\item Since the passage times of the "before" and "after" edges have not been modified and since the passage times of the "intermediate" edges touching $B^*(0)$ or $B^*(x)$ have not been reduced, $B^*(0)$ remains a ball centered in $0$ for the passage times of the environment $\Td$ and similarly for $B^*(x)$ (see again Lemma \ref{17. Egalité des boules des deuxième et troisième configurations, et aucune arête du motif dans ces boules.}).
		Hence,
		\begin{equation}\label{e:chronologie}
			\overline\gamma^{**} \text{ takes "before" edges, then "intermediate" edges and then "after" edges.}
		\end{equation}
		\item The creation of the highway on $\pi$ makes the passage time from $0$ to $x$ lower (as stated in Lemma \ref{17. Temps gagné pendant la deuxième modification.}, it allows to save a time of order $\nabla N$; it can be seen by taking a geodesic from $0$ to the last vertex of $\pi$ belonging to $B^*(0)$, then following $\pi$, then taking a geodesic from the first vertex of $\pi$ belonging to $B^*(x)$ to $x$). Hence $\overline\gamma^{**}$ takes an edge whose time has been reduced during the second modification and thus
		\begin{equation}\label{e:pimotif}
			\begin{split}
				\overline\gamma^{**} \text{ takes an edge of } \pi \text{ whose time has been reduced or an edge} \\
				\text{of the pattern (and thus an "intermediate" edge of } B_2).
			\end{split}
		\end{equation}
		\item The time saved during the first modification (of order $r_3$) is such that every geodesic in the environment $\Td$ from $0$ to a vertex $w$ in $B_2$ has to take an edge of $\cD$ 
		(i.e.\ an edge whose time has been reduced during the first modification).
		If it was not the case, a path following $\gamma$ until $B_2$ and then taking edges of $B_2$ to go to $w$ would have a passage time smaller (we use here that $r_3 \gg r_2)$.
		This is similar for a geodesic between a vertex of $B_2$ and $x$.
		Hence, as stated in Lemma \ref{17. Toute géodésique touche gamma avant et après B4sN.},
		\begin{equation}\label{e:DavantB2}
			\overline\gamma^{**} \text{ takes an edge of } \cD \text{ before and after visiting } B_2.
		\end{equation}
		\item Recall that $\cD=\cD_\avant \cup \cD_\apres$, that the edges of $\cD_\avant$ are "before" edges and that those of $\cD_\apres$ are "after" edges.
		Combining \eqref{e:chronologie}, \eqref{e:pimotif} and \eqref{e:DavantB2} we get that
		\begin{equation}\label{e:Davantemotifapres}
			\begin{split}
				\overline\gamma^{**} \text{ takes an edge of } \cD_\avant \text{ then an edge of $\pi$ whose time has been reduced } \\ 
				\text{ or an edge of the pattern and then an edge of } \cD_\apres.
			\end{split}
		\end{equation}
		We can easily deduce from this that $\overline\gamma^{**}$ visits $s_1$ just before taking the first edge whose time has been modified and visits $s_2$ just after taking the last edge whose time has been modified (see Lemma \ref{17. Toute géodésique passe par s1 et s2 dans le bon ordre.}). Then we can deduce the desired properties about associated geodesics. It remains to prove that $\overline\gamma^{**}$ takes the pattern in $B_2$.
		\item Let us now prove that
		\begin{equation}\label{e:pitoucheavantapres}
			\begin{split}
				\overline\gamma^{**} \text{ takes an edge of $\pi$ whose time has been reduced } \text{ before (with respect to $\pi$)} \\
				\text{the pattern and after (with respect to $\pi$) the pattern}.
			\end{split}	
		\end{equation}
		This is the purpose of Lemma \ref{17. Toute géodésique touche pi avant et après le motif.}. By symmetry it is sufficient to prove the first part of this property. To this aim, it is sufficient to consider a vertex $w$ in the pattern or on $\pi$ between the pattern and $B^*(x)$ and to prove
		\[
		T^{**}(B^*(0),w) < T^*(B^*(0),w).
		\]
		But $T^*(B(0),w) \ge T^*(B^*(0),B^*(x)) - T^*(B^*(x),w)$ and 
		\[
		T^*(B^*(0),B^*(x)) = T^*(u_1,v_1) = T(u_1,v_1) \approx \mu(u_1-v_1)
		\]
		while 
		\[
		T^*(B^*(x),w) \le T^*(v_1,w) = T(v_1,w) \approx \mu(v_1-w).
		\]
		Thus (since the three vertices are roughly aligned in the correct order):
		\[
		T^*(B(0),w) \gtrsim \mu(u_1-v_1)-\mu(v_1-w) \approx \mu(u_1-w).
		\]
		But (following $\pi$) we have
		\[
		T^{**}(B^*(0),w)  \lesssim (\r+\delta') \|w-u_1\|
		\]
		and the result holds.
		\item By \eqref{e:pitoucheavantapres} and the remark made after the definition of walls, we can easily deduce that $\overline\gamma^{**}$ takes the pattern we have put on $\pi$ (see Lemma \ref{17. Toute géodésique qui touche pi avant et après le motif emprunte le motif.}).
	\end{enumerate}
	
	\subsubsection{First modification}
	
	Let us go back to the proof. For the rest of Section \ref{Sous-section modification argument, cas borné}, we fix $\l \in \{1,\dots,q\}$ and $s \in \Z^d$, and we assume that the event $\{T \in \cM(\l) \} \cap \left\{ \Sun_\l(T)=s \right\}$ occurs. In particular, $\Bqs$ is a typical box in the environment $T$.
	
	As explained above, the first modification is used to get that any geodesic from $0$ to $x$ in the new environment goes from $0$ to a specific vertex in $\Bqs$, then follows $\gamma$ to another specific vertex in $\Bqs$ and then goes to $x$. It is useful to get the properties about associated geodesics after the second modification. We denote by $\gamma$ the selected geodesic in the environment $T$.
	Recall the definition of entry and exit points given at the beginning of Section \ref{Sous-section modification argument, cas non borné}.	
	Let $u$ denote the entry point of $\gamma$ in $\Bts$ and $v$ the exit point, and let $u_0$ denote the entry point of $\gamma$ in $\Bds$ and $v_0$ its exit point. Then, we set
	\begin{equation}
		\Ep(T)=\left\{ \text{edges of $\Bts$ that belong to $\gamma_{u,u_0}$ or $\gamma_{v_0,v}$ and satisfy $T(e)>\r+\delta$} \right\}. \label{On définit Ep(T)}
	\end{equation}
	We denote the first edge of $\gamma$ that belongs to $\Ep(T)$ by $e_1$ and the last one by $e_2$. Moreover, $s_1$ denote the first vertex of $e_1$ visited by $\gamma$ and $s_2$ the last vertex of $e_2$ visited by $\gamma$. Assume that the event  \[\{T \in \cM(\l) \} \cap \left\{ \Sun_\l(T)=s \right\} \cap \{T' \in \cB^*(T) \} \mbox{ occurs},\] where $\cB^*(T)$ is defined in $(ii)$ of Lemma \ref{17. ancien lemme 12.9.}. Recall the definition of $\Tu$ in \eqref{Définition de T*, cas borné.}. Note that:
	\begin{itemize}
		\item only the time of the edges of $\Ep(T)$ have been modified,
		\item these edges belong to $\gamma$ and to $\Bts \setminus \Bds$,
		\item for all edges in $\Ep(T)$, we have $\Tu(e) \le \r + \delta' < \r + \delta \le T(e)$,
		\item for all other edges $e$, we have $\Tu(e)=T(e)$.
	\end{itemize}

	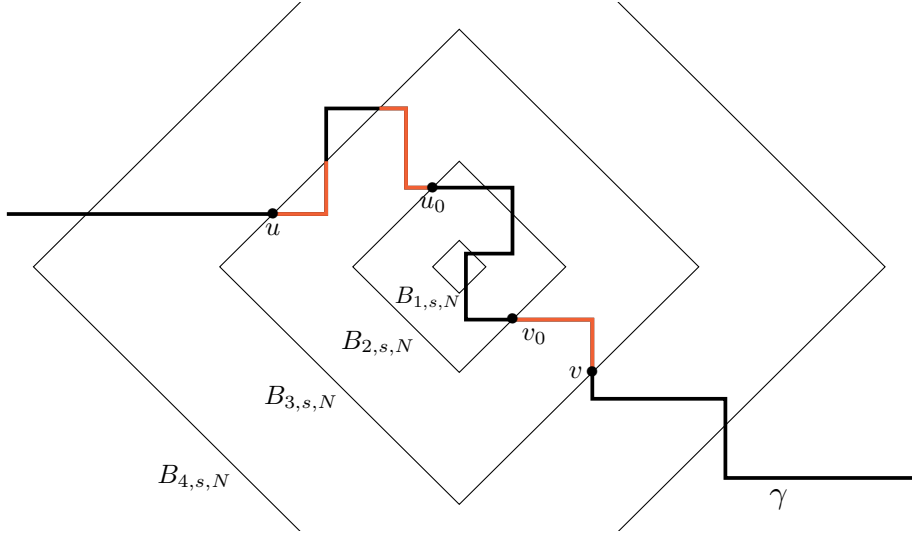
\begin{figure}
		\begin{center}
			\begin{tikzpicture}[scale=0.35]
				\clip (-17,-10) rectangle (17,10);
				\draw (0,16) -- (16,0) -- (0,-16) -- (-16,0) -- cycle;
				\draw (-8.2,-8) node[left] {$\Bqs$};
				\draw (0,9) -- (9,0) -- (0,-9) -- (-9,0) -- cycle;
				\draw (-4.2,-5) node[left] {$\Bts$};
				\draw (0,4) -- (4,0) -- (0,-4) -- (-4,0) -- cycle;
				\draw (-1.3,-2.9) node[left] {$\Bds$};
				\draw (0,1) -- (1,0) -- (0,-1) -- (-1,0) -- cycle;
				\draw (0.4,-1.2) node[left,scale=0.9] {$\Bus$};
				\draw[line width=1.4pt] (-17,2) -- (-5,2) -- (-5,6) -- (-2,6) -- (-2,3) -- (2,3) -- (2,0.5) -- (0.25,0.5) -- (0.25,-2) -- (5,-2) -- (5,-5) -- (10,-5) -- (10,-8) -- (20,-8);
				\draw (12,-8) node[below,scale=1.2] {$\gamma$};
				\draw[color=RedOrange,line width=1.4pt] (-7,2) -- (-5,2) -- (-5,4);
				\draw[color=RedOrange,line width=1.4pt] (-3,6) -- (-2,6) -- (-2,3) -- (-1,3);
				\draw[color=RedOrange,line width=1.4pt] (2,-2) -- (5,-2) -- (5,-4);
				\draw (-7,2) node[below] {$u$};
				\draw (5,-4) node[left] {$v$};
				\draw (-1,3) node[below] {$u_0$};
				\draw (2,-2) node[below right] {$v_0$};
				\draw (-7,2) node {$\bullet$};
				\draw (5,-4) node {$\bullet$};
				\draw (-1,3) node {$\bullet$};
				\draw (2,-2) node {$\bullet$};
			\end{tikzpicture}
			\caption{Elements involved in the first modification. The edges whose time can be modified by the first modification are represented in red.}\label{Figure première modification cas borné.}
		\end{center}
	\end{figure}
	
	\begin{lemma}\label{17. Première modification.}
		We have the following properties. 
		\begin{enumerate}[label=(\roman*)]
			\item There are at least $\alpha (r_3-r_2) N$ edges of $\gamma_{u,u_0}$ and $\alpha (r_3-r_2) N$ edges of $\gamma_{v_0,v}$ that belong to $\Ep(T)$. Thus, \[\min \left( T(\gamma_{0,u_0})-T^*(\gamma_{0,u_0}), T(\gamma_{v0,x})-T^*(\gamma_{v_0,x}) \right) \ge \alpha (r_3-r_2) N (\delta - \delta').\]
			\item In the environment $\Tu$, every geodesic from $0$ to $x$ visits every edge of $\Ep(T)$. 
			\item In the environment $\Tu$, $\gamma$ is a geodesic from $0$ to $x$.  
		\end{enumerate}
	\end{lemma}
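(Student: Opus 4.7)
The plan is to establish the three items in order, deducing (ii) and (iii) from (i) by a single short exchange argument.

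\textit{Item (i).} The goal is to apply the first clause of the typical-box definition (the linear lower bound on the number of slow edges) to a geodesic subpath of $\gamma$. Some care is needed because $\gamma_{u,u_0}$ need not lie in $\Bts$: a priori it could exit $\Bts$ into $\Bqs\setminus\Bts$ and re-enter (only $\Bqs$-containment is furnished by Lemma \ref{17. Propriétés typical box cas borné.}). To circumvent this, I would let $u'$ be the first vertex of the last maximal sub-path of $\gamma$ contained in $\Bts$ that reaches $u_0$; then $\gamma_{u',u_0}$ is a geodesic entirely contained in $\Bts$, and either $u'=u$ or $u'\in\partial\Bts$, so in either case the triangle inequality gives $\|u'-u_0\|_1\ge r_3N-r_2N = (r_3-r_2)N\ge N$. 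The typical-box property (i) then supplies at least $\alpha(r_3-r_2)N$ edges of $\gamma_{u',u_0}\subset\gamma_{u,u_0}$ with passage time $\ge\r+\delta$. Taking $\delta$ to avoid the (at most countable) atoms of $F$, which is harmless since \eqref{17. définition de delta} still holds for any smaller $\delta$, forces $T(e)>\r+\delta$ almost surely, and these edges therefore belong to $\Ep(T)$. On the event $\cB^*(T)$ each such $e$ satisfies $\Tu(e)=T'(e)\le\r+\delta'$, producing a saving of at least $\delta-\delta'$ per edge and thus a total saving on $\gamma_{0,u_0}$ of at least $\alpha(r_3-r_2)N(\delta-\delta')$. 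The symmetric argument, with $(u,u_0)$ replaced by $(v,v_0)$, handles $\gamma_{v_0,x}$.

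\textit{Items (ii) and (iii).} By construction $\Ep(T)\subset\gamma$, so $\Delta := \sum_{e\in\Ep(T)}(T(e)-\Tu(e))>0$ and $\Tu(\gamma)=T(\gamma)-\Delta$. For any path $\eta$ from $0$ to $x$, using $T(\eta)\ge t(0,x)=T(\gamma)$ and the fact that the non-negative sum $\sum_{e\in\eta\cap\Ep(T)}(T(e)-\Tu(e))$ is bounded above by $\Delta$,
\[
\Tu(\eta) \;=\; T(\eta) - \sum_{e\in\eta\cap\Ep(T)} (T(e)-\Tu(e)) \;\ge\; T(\gamma) - \Delta \;=\; \Tu(\gamma),
\]
which gives (iii). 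If moreover $\eta$ omits some $e^*\in\Ep(T)$, the sum is bounded by $\Delta-(T(e^*)-\Tu(e^*))$, whence $\Tu(\eta)\ge \Tu(\gamma)+(T(e^*)-\Tu(e^*))>\Tu(\gamma)$, the strict inequality coming from $T(e^*)>\r+\delta>\r+\delta'\ge\Tu(e^*)$. Hence no geodesic in $\Tu$ can skip an edge of $\Ep(T)$, proving (ii).

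\textit{Main obstacle.} The only non-routine step is the localization in item (i): one must argue that the sub-path to which the typical-box property is applied lies in $\Bts$, not merely in $\Bqs$. Once this is done by replacing $\gamma_{u,u_0}$ with $\gamma_{u',u_0}$ as above, the rest reduces to the definitions and the elementary exchange inequality.
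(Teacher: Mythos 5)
Your proof is correct and follows essentially the same approach as the paper: apply the slow-edge count from the first typical-box property to the last $\Bts$-excursion of $\gamma$ ending at $u_0$ (resp. starting at $v_0$), then run the standard exchange comparison. You make explicit what the paper leaves terse, namely that one must pass to a sub-excursion of $\gamma_{u,u_0}$ entirely inside $\Bts$ because Lemma \ref{17. Propriétés typical box cas borné.} only guarantees $\Bqs$-containment; the vertex $u'$ you construct lies on $\partial \Bts$ in every case (indeed $u$ itself already does), so the $\|u'-u_0\|_1\ge(r_3-r_2)N\ge N$ bound is valid. You also correctly spot the slight discrepancy between the typical-box clause (non-strict $T(e)\ge\r+\delta$) and the definition of $\Ep(T)$ (strict $T(e)>\r+\delta$), which the paper does not comment on; either your choice of $\delta$ avoiding an atom at $\r+\delta$, or simply replacing the strict inequality in the definition of $\Ep(T)$ by a non-strict one, closes that gap. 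Your treatment of items (ii) and (iii) is the paper's exchange argument presented in one forward computation instead of two proofs by contradiction, and the two are interchangeable.
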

	
	\begin{proof}
		\begin{enumerate}[label=(\roman*)]
			\item This item follows from the first property of a typical box applied to the portion of $\gamma_{u,u_0}$ entirely contained in $\Bts$ going from $\partial \Bts$ to $u_0$ and to the portion of $\gamma_{v_0,v}$ entirely contained in $\Bts$ going from $v_0$ to $\partial \Bts$ (note that these two portions does not have edges in common).
			\item To prove the second point, let $\gu$ be a geodesic from $0$ to $x$ in the environment $T^*$. Assume that there exists an edge of $\Ep(T)$ which is not an edge of $\gu$. Then \[T(\gu)-T^*(\gu) < T(\gamma)-T^*(\gamma).\]
			
			Since $\gamma$ is a geodesic from $0$ to $x$ in the environment $T$, we have $T(\gamma) \le T(\gu)$, which implies \[0 \le T(\gu)- T(\gamma) < T^*(\gu) - T^*(\gamma).\] 
			Thus $T^*(\gamma)<T^*(\gu)$, which contradicts the fact that $\gu$ is a geodesic from $0$ to $x$ in the environment $\Tu$.
			\item Let us now assume that $\gamma$ is not a geodesic in the environment $T^*$. Hence, if we denote by $\gu$ a geodesic from $0$ to $x$, we have $T^*(\gu) < T^*(\gamma)$. By item $(ii)$, \[ T(\gu) - T^*(\gu) = T(\gamma)-T^*(\gamma).\] Thus, $T(\gu) < T(\gamma)$, which contradicts the fact that $\gamma$ is a geodesic in the environment $T$.
		\end{enumerate}
	\end{proof}
	
	\subsubsection{Construction of $\pi$}\label{Sous-section construction du chemin pi.}
	
	Here, we shall identify an oriented path $\pi$ between two vertices of $\gamma$ in $\Bds$. This oriented path is later used to place the pattern.  Let $c_0$ denote the entry point of $\gamma$ in $\Bus$ and recall the definition of $\nabla$ in \eqref{On fixe t0 ou nabla0.}. Since by \eqref{on fixe r2, cas borné}, $\displaystyle r_2 \ge r_1+\frac{\nabla}{c_\mu}$, $B_\mu (c_0,N\nabla) \cap \Z^d$ is entirely contained in $\Bds$. We introduce
	\begin{equation}
		\text{$u_1$ the entry point of $\gamma$ in $B_\mu (c_0,N\nabla) \cap \Z^d$ and $v_1$ the exit point.} \label{On introduit u1 et v1.}
	\end{equation} 
	
	\begin{lemma}\label{17. minoration de mu(u1-v1) et arêtes de la zone rouges contenues dans B4.}
		We have $\mu(u_1-v_1) \ge N\nabla$ and $\gamma_{u_1,v_1}$ is contained in $\Bds$.
	\end{lemma}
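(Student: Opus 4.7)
The lemma has two separate claims, but both rest on the same machinery: the typical-box estimates of Lemma~\ref{17. lemme technique partie argument de modification}, applied in conjunction with the fact that $\gamma$ is a geodesic that visits $c_0$ and whose endpoints $0$ and $x$ both lie far outside $B_\mu(c_0,N\nabla)$. The last point is worth verifying first: since $B_{4,s,N}$ is entirely contained in an annulus of index at least $2$, while $x$ sits in the $(p{+}1)$-th annulus, the choices of $r_4$ made in~\eqref{On fixe r4.} and of $r$ ensure that $\mu(c_0)$ and $\mu(c_0-x)$ greatly exceed $N\nabla$. Consequently $u_1$ and $v_1$ are interior vertices of $\gamma$: they admit a predecessor $u_1'$ and a successor $v_1'$ on $\gamma$, both lying outside $B_\mu(c_0,N\nabla)$, and $c_0$ is visited by $\gamma$ strictly between $u_1$ and $v_1$, so $t(u_1,v_1)=t(u_1,c_0)+t(c_0,v_1)$.

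For the first claim, from $\mu(u_1'-c_0)>N\nabla$ and $\mu(u_1-u_1')\le C_\mu$ I get $\mu(u_1-c_0)\ge N\nabla-C_\mu$, and symmetrically $\mu(c_0-v_1)\ge N\nabla-C_\mu$. The lower bound on $\nabla$ in~\eqref{On fixe t0 ou nabla0.} makes $\|u_1-c_0\|_1$ and $\|c_0-v_1\|_1$ larger than $K''N=N/(\epsilon c_\mu)$, so Lemma~\ref{17. lemme technique partie argument de modification}(i) yields $t(u_1,c_0)+t(c_0,v_1)\ge 2(1-2\epsilon)(N\nabla-C_\mu)$. Coupling this with property~(iii) of a typical box, which gives $t(u_1,v_1)\le(1+\epsilon)\mu(u_1-v_1)+N$, produces
\[
\mu(u_1-v_1)\ge \frac{2(1-2\epsilon)(N\nabla-C_\mu)-N}{1+\epsilon},
\]
which exceeds $N\nabla$ thanks to $\epsilon<1/11$ and the size of $\nabla$ prescribed by~\eqref{On fixe t0 ou nabla0.}.

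For the second claim, I argue by contradiction: suppose $\gamma_{u_1,v_1}$ exits $\Bds$ through some vertex $w$ with $\|w-Ns\|_1>r_2N$. Since $u_1,v_1\in B_\mu(c_0,N\nabla)$ and $c_0\in\Bus$, the triangle inequality gives $\|u_1-Ns\|_1,\|v_1-Ns\|_1\le r_1N+N\nabla/c_\mu$, hence $\min(\|u_1-w\|_1,\|v_1-w\|_1)\ge (r_2-r_1-\nabla/c_\mu)N$, and by~\eqref{on fixe r2, cas borné} this is comfortably above $N/(\epsilon c_\mu)$. Applying Lemma~\ref{17. lemme technique partie argument de modification}(i) again,
\[
t(u_1,v_1)=t(u_1,w)+t(w,v_1)\ge 2(1-2\epsilon)c_\mu\bigl(r_2-r_1-\nabla/c_\mu\bigr)N,
\]
while property~(iii) combined with $\mu(u_1-v_1)\le 2N\nabla$ yields $t(u_1,v_1)\le 2(1+\epsilon)N\nabla+N$. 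The margin $r_2-r_1>2(\nabla+2)/c_\mu$ built into~\eqref{on fixe r2, cas borné} is precisely what makes these two bounds incompatible, producing the contradiction. The main difficulty I foresee is not conceptual but bookkeeping: the proof is sensitive to the parameter hierarchy $r_1\ll 1/\epsilon\ll\nabla\ll r_2$, and to the factor $c_\mu$ whenever one passes between $\mu$ and $\|\cdot\|_1$; one must also verify carefully that the range conditions on Lemma~\ref{17. lemme technique partie argument de modification}(i) are met in both applications.
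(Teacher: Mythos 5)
Your treatment of the first claim is sound and follows the same basic route as the paper, just phrased through item~(i) of Lemma~\ref{17. lemme technique partie argument de modification} rather than the ball-inclusion item~(ii): the paper passes through the neighbour $\bar u_1$ of $u_1$ outside $B_\mu(c_0,N\nabla)$ and the inclusion $B(c_0,N(\nabla(1-\epsilon)-1))\cap \Bts\subset B_\mu(c_0,N\nabla)$ to get $t(c_0,u_1)\ge N(\nabla(1-\epsilon)-1)-\ST$, which it then combines with property~(iii) of a typical box, whereas you go through $\mu(u_1-c_0)\ge N\nabla-C_\mu$ and item~(i). Both give a bound of the form $t(u_1,c_0)\gtrsim N\nabla$, and with the paper's constant hierarchy the distance-threshold $\Kh N$ is indeed comfortably met, so this part is fine.

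Your argument for the second claim, however, has a genuine quantitative gap. You lower-bound $t(u_1,v_1)=t(u_1,w)+t(w,v_1)$ by
$2(1-2\epsilon)c_\mu\bigl(r_2-r_1-\nabla/c_\mu\bigr)N$ and claim that the margin $r_2-r_1>2(\nabla+2)/c_\mu$ from~\eqref{on fixe r2, cas borné} makes this incompatible with the upper bound $2(1+\epsilon)N\nabla+N$. But with that margin one only gets $c_\mu(r_2-r_1-\nabla/c_\mu)>\nabla+4$, hence the lower bound is $2(1-2\epsilon)(\nabla+4)N$; the requested incompatibility then reduces to $7-16\epsilon>6\epsilon\nabla$, which is \emph{false} because the bullet points under~\eqref{On fixe t0 ou nabla0.} force $\epsilon\nabla>4$. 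In other words, both your lower and upper bounds on $t(u_1,v_1)$ are of the same size $\approx 2N\nabla$ and do not collide. The argument can be rescued either by invoking the \emph{other} term of the maximum in~\eqref{on fixe r2, cas borné}, namely $r_2-r_1>L_1+3\nabla/c_\mu+\cdots$, which gives $c_\mu(r_2-r_1-\nabla/c_\mu)>2\nabla$ and hence a lower bound $\ge 4(1-2\epsilon)N\nabla$, now clearly larger than $2(1+\epsilon)N\nabla+N$ since $\epsilon<1/11$; or by using the fact that $\gamma$ visits $c_0$ between $u_1$ and $v_1$, so that $t(w,v_1)=t(w,c_0)+t(c_0,v_1)$ and the extra $\approx N\nabla$ contribution from $t(c_0,v_1)$ reappears; or by reproducing the paper's argument, which compares $t(c_0,z)$ for $z\notin\Bds$ against $t(c_0,u_1)$ using the nested inclusions $B_\mu(c_0,N\nabla)\cap\Z^d\subset B(c_0,(1+\epsilon)N\nabla+N)\subset\Bds$ and the fact that a geodesic from $u_1$ to $c_0$ cannot visit a vertex whose $t$-distance to $c_0$ exceeds $t(c_0,u_1)$. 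As written, your contradiction does not go through.
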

	
	\begin{remk}
		The idea of the proof is that $\mu(u_1-v_1)$ is roughly equal to $t(u_1,v_1)$. Furthermore, since $\gamma$ is a geodesic visiting $u_1$, $c_0$ and $v_1$ in this order, we have $t(u_1,v_1)=t(u_1,c_0)+t(c_0,v_1)$. So $\mu(u_1-v_1) \approx \mu(u_1-c_0) + \mu(v_1-c_0) \approx 2N\nabla$. We have a sufficient control over the approximations to guarantee a lower bound by $N\nabla$. 
	\end{remk}
	
	\begin{proof}[Proof]
		Using item $(ii)$ of Lemma \ref{17. lemme technique partie argument de modification} with $z=c_0$ and $\rr = \nabla(1-\epsilon) - 1$ leads to
		\begin{equation}
		B(c_0,N(\nabla(1-\epsilon)-1)) \cap \Bts \subset B_\mu(c_0,N\nabla) \cap \Bts = B_\mu(c_0,N\nabla) \cap \Z^d. \label{Inclusion preuve lemme 4.10.}	
		\end{equation} 
		Since $u_1$ is the entry point of a path in $B_\mu(c_0,N\nabla)$, there exists a vertex $\ou_1 \in \Bts$ such that $\|\ou_1-u_1\|_1 =1$ and $\ou_1 \notin B_\mu(c_0,N\nabla)$. By \eqref{Inclusion preuve lemme 4.10.}, $t(c_0,\ou_1) \ge N(\nabla(1-\epsilon)-1)$, so \[t(c_0,u_1) \ge t(c_0,\ou_1)-\underbrace{t(\ou_1,u_1)}_{\le \|\ou_1-u_1\|_1 \ST }.\]
		The same argument holds for $t(c_0,v_1)$. Hence \[t(u_1,v_1)=t(u_1,c_0) + t(c_0,v_1) \ge 2N(\nabla(1-\epsilon)-1) - 2 \ST.\]
		Thus, since $\displaystyle \nabla > \frac{3 + 2 \ST}{1-3\epsilon}$, by the third item of the definition of a typical box,  \[\mu(u_1-v_1) \ge \frac{N(2\nabla(1-\epsilon)- 3) - 2\ST}{1+\epsilon} \ge N\nabla. \]
		For the second part of the proof, using the third item of Lemma \ref{17. lemme technique partie argument de modification} with $z=c_0$ and $\rr=\nabla$ leads to \[B_\mu(c_0,N\nabla) \cap \Z^d \subset B(c_0,((1+\epsilon)\nabla+1)N). \] 
		Then, by \eqref{on fixe r2, cas borné} we have $\displaystyle r_2 > r_1 + \frac{2(\nabla+2)}{c_\mu}$. So, for all $z \in B_\mu (c_0,2N(\nabla + 2))$, we have \[\|z-sN\|_1 \le \|z-c_0\|_1 + \|c_0-sN\|_1 \le \left(\frac{2(\nabla + 2)}{c_\mu}+r_1\right)N < r_2N.\] So, we have $B_\mu (c_0,2N(\nabla + 2)) \cap \Z^d \subset \Bds$ and since $\epsilon < \frac{1}{3}$, we have \[B_\mu \left( c_0, N \frac{(1+\epsilon)\nabla+2}{1-\epsilon} \right) \subset B_\mu (c_0,2N(\nabla + 2)).\] Thus, by the second item of Lemma \ref{17. lemme technique partie argument de modification} with $z=c_0$ and $\rr=\nabla$, we get \[B(c_0,((1+\epsilon)\nabla + N)) \subset B_\mu \left( c_0, N \frac{(1+\epsilon)\nabla+2}{1-\epsilon} \right) \cap \Z^d \subset B_\mu (c_0,2N(\nabla + 2)) \cap \Z^d. \] To sum up,
		\[B_\mu(c_0,N\nabla) \cap \Z^d \subset B(c_0,(1+\epsilon)N\nabla+N) \subset \Bds. \]
		Now, assume that $\gamma_{u_1,v_1}$ visits a vertex which is not in $\Bds$. Let denote by $z$ such a vertex and assume for example that $z$ is visited by $\gamma_{u_1,c_0}$. Then, we have, thanks to these inclusions, $t(c_0,z)>t(c_0,u_1)$, which is impossible since $\gamma_{u_1,c_0}$ is a geodesic.
	\end{proof}
	
	\begin{lemma}\label{Inclusion des boules des temps T étoile dans les boules des temps T.}
		Recall the definition of $B^*$ given at \eqref{Définition des boules des temps dans les environnements Tu et Td.} and that $\Tu(\gamma_{0,u_1})=t^*(0,u_1)$ and $\Tu(\gamma_{v_1,x})=t^*(v_1,x)$. 
		\begin{enumerate}[label=(\roman*)]
			\item We have the following inclusions:
			\[B^*(0,T^*(\gamma_{0,u_1})) \subset B(0,t(0,u_1)) \mbox{ and } B^*(x,T^*(\gamma_{v_1,x})) \subset B(x,t(v_1,x)).\]
			\item We have \[B^*(0,T^*(\gamma_{0,u_1})) \cap B^*(x,T^*(\gamma_{v_1,x})) = \emptyset. \]
		\end{enumerate}
	\end{lemma}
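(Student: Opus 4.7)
Both items hinge on the positivity of $\Tu(\gamma_{u_1,v_1})$, which I would establish first. By Lemma \ref{17. minoration de mu(u1-v1) et arêtes de la zone rouges contenues dans B4.}, $\gamma_{u_1,v_1}$ is entirely contained in $\Bds$, while $\Ep(T)\subset\Bts\setminus\Bds$ by \eqref{On définit Ep(T)}; hence $\Tu(\gamma_{u_1,v_1})=T(\gamma_{u_1,v_1})$. The same lemma gives $\mu(u_1-v_1)\ge N\nabla$, which together with $\mu\le C_\mu\|\cdot\|_1$ and the size of $\nabla$ yields $\|u_1-v_1\|_1\ge N$. Property $(ii)$ of a typical box applied to $\gamma_{u_1,v_1}\subset\Bds\subset\Bqs$ then gives $\Tu(\gamma_{u_1,v_1})\ge(\r+\delta)N>0$.

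For item $(ii)$, I would argue by contradiction: if some $w$ lay in both balls, then the triangle inequality together with the fact that $\gamma$ is a $\Tu$-geodesic through $u_1$ and $v_1$ (Lemma \ref{17. Première modification.}) would give
\[
t^*(0,u_1)+\Tu(\gamma_{u_1,v_1})+t^*(v_1,x)=t^*(0,x)\le t^*(0,w)+t^*(w,x)\le t^*(0,u_1)+t^*(v_1,x),
\]
forcing $\Tu(\gamma_{u_1,v_1})\le 0$, contradicting the preceding paragraph.

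For item $(i)$, I would first observe that no vertex $w$ of $\gamma_{v_0,v}$ can belong to $B^*(0,t^*(0,u_1))$: since $\gamma$ is a $\Tu$-geodesic visiting $u_1$, $v_1$, and $w$ in this order,
\[
t^*(0,w)\ge t^*(0,u_1)+\Tu(\gamma_{u_1,v_1})>t^*(0,u_1).
\]
Given $u\in B^*(0,t^*(0,u_1))$, let $\sigma^*$ be a $\Tu$-geodesic from $0$ to $u$. Every vertex $z$ visited by $\sigma^*$ satisfies $t^*(0,z)\le\Tu(\sigma^*)=t^*(0,u)\le t^*(0,u_1)$, hence lies in $B^*(0,t^*(0,u_1))$ and in particular not in $\gamma_{v_0,v}$. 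Thus no edge of $\sigma^*$ belongs to $\Ep(T)\cap\gamma_{v_0,v}$, so the edges of $\sigma^*\cap\Ep(T)$ all lie in $\Ep(T)\cap\gamma_{u,u_0}\subset\gamma_{0,u_1}$. The overall saving on $\sigma^*$ therefore does not exceed that on $\gamma_{0,u_1}$:
\[
T(\sigma^*)-\Tu(\sigma^*)\le T(\gamma_{0,u_1})-\Tu(\gamma_{0,u_1})=t(0,u_1)-t^*(0,u_1),
\]
whence $t(0,u)\le T(\sigma^*)\le t^*(0,u)+t(0,u_1)-t^*(0,u_1)\le t(0,u_1)$. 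The second inclusion follows by the symmetric argument starting from $x$ instead of $0$ and using $\gamma_{u,u_0}$ in place of $\gamma_{v_0,v}$. The only non-routine ingredient in this plan is the localization step that prevents $\sigma^*$ from sneaking through $\gamma_{v_0,v}$ to harvest additional savings, and this is exactly why the positivity of $\Tu(\gamma_{u_1,v_1})$, underpinned by the typical box property and the lower bound on $r_2$, must be established first.
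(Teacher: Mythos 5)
Your proof is correct and follows essentially the same route as the paper: in both cases the core of item (i) is that $T^*(\gamma_{u_1,v_1})>0$ (via Lemma \ref{17. minoration de mu(u1-v1) et arêtes de la zone rouges contenues dans B4.} and a typical-box property), which forces any $T^*$-geodesic from $0$ to a vertex of $B^*(0,t^*(0,u_1))$ to avoid $\gamma_{v_0,v}$ entirely, so that its $T^*$-savings are dominated by the savings realized on $\gamma_{0,u_1}$. Your formulation via "every vertex of $\sigma^*$ stays in the ball, hence off $\gamma_{v_0,v}$" replaces the paper's auxiliary vertex $s^*$ and the split of $\ogu$ at $s^*$, but this is only a presentational streamlining of the identical mechanism, and your item (ii) is the same triangle-inequality argument as the paper's.
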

	
	\begin{proof}[Proof]
		\begin{enumerate}[label=(\roman*)]
			\item Let $s'$ be a vertex in $B^*(0,T^*(\gamma_{0,u_1}))$. The aim is to show \[t(0,s') \le t(0,u_1).\]  Let $\ogu$ be a geodesic from $0$ to $s'$ in the environment $T^*$. We denote by $s^*$ the last vertex visited by $\ogu$ among those visited by $\gamma$ (note that $0$ is such a vertex). 
			First, since $\ogu_{s^*,s'}$ does not take an edge of $\gamma$, $T^*(\ogu_{s^*,s'}) = T(\ogu_{s^*,s'})$.
			Then, by Lemma \ref{17. minoration de mu(u1-v1) et arêtes de la zone rouges contenues dans B4.}, $\gamma_{u_1,v_1}$ is entirely contained in $\Bds$, so $T^*(\gamma_{u_1,v_1})=T(\gamma_{u_1,v_1})$. Thus, also by Lemma \ref{17. minoration de mu(u1-v1) et arêtes de la zone rouges contenues dans B4.}, $T^*(\gamma_{u_1,v_1}) > 0$. So $\ogu$ does not take an edge of $\gamma_{v_1,x}$. Otherwise, since $\gamma$ is a geodesic in the environment $T^*$, $t^*(0,s') \ge T^*(\gamma_{0,v_1}) > T^*(\gamma_{0,u_1})$. Hence, the time saved by $\ogu$ after the modification comes only from the edges of $\gamma_{0,u_1}$. So,  
			\begin{equation}\label{17. inégalité preuve inclusion des boules des temps T* dans celles des temps T.}
				T(\ogu_{0,s^*})-T^*(\ogu_{0,s^*}) \le T(\gamma_{0,u_1})-T^*(\gamma_{0,u_1}).
			\end{equation}	
			
			Hence, 
			
			\begin{align*}
				t(0,s') & \le T(\ogu_{0,s^*})+T(\ogu_{s^*,s'}) \\
				& \le T(\gamma_{0,u_1})-T^*(\gamma_{0,u_1}) + \underbrace{T^*(\ogu_{0,s^*})+T^*(\ogu_{s^*,s'})}_{=t^*(0,s')} \mbox{ by \eqref{17. inégalité preuve inclusion des boules des temps T* dans celles des temps T.},} \\
				& = t^*(0,s') + T(\gamma_{0,u_1})-T^*(\gamma_{0,u_1}) \\
				& \le T^*(\gamma_{0,u_1}) + T(\gamma_{0,u_1}) -T^*(\gamma_{0,u_1}) \mbox{ since $s' \in B^*(0,T^*(\gamma_{0,u_1}))$,} \\
				& = t(0,u_1),
			\end{align*}
			which proves the inclusion and the same proof gives us the second inclusion. 
			\item Let $s'$ be a point of $B^*(0,T^*(\gamma_{0,u_1})) \cap B^*(x,T^*(\gamma_{v_1,x}))$. Then
			\begin{align*}
				t(0,x) & \le t(0,s')+t(s',x) \\
				& \le t(0,u_1) + t(v_1,x) \mbox{ by $(i)$} \\
				& < t(0,x)
			\end{align*}
			since $\gamma$ is a geodesic visiting $0$, $u_1$, $v_1$ and $x$ in this order and since $t(u_1,v_1)>0$, which is a contradiction.
		\end{enumerate}
	\end{proof}
	
	Now, we can make the construction of $\pi$, which is the path on which we would like to put the pattern that the geodesics have to take. We begin by two definitions:
	\begin{itemize}
		\item We say that a path between two vertices $y_1$ and $y_2$ is oriented if its number of edges is equal to $\|y_1-y_2\|_1$.
		\item A step of length $\l$ is a path of $\l$ consecutive edges in the same direction. 
	\end{itemize}
	We state the following lemma whose proof is left to the reader and where $[u_1,v_1]$ is the segment in $\R^d$ and $\|.\|_1$ is the norm on $\R^d$.
	
	\begin{lemma}\label{Lemme construction de pi.}
		We can construct a path $\pi$ with a deterministic rule such that:
		\begin{enumerate}[label=(\roman*)]
			\item $\pi$ is an oriented path connecting $u_1$ and $v_1$, 
			\item $\pi$ is the concatenation of steps of length $10 \lll$ except in $B_\infty(v_1,10\lll)$, where $\pi$ takes steps of length $1$,
			\item there exists a constant $L_1 \in \R_+$ only depending on $\lll$ and $d$ such that for all $z \in \Z^d$ which is in $\pi$, the distance for the norm $\|.\|_1$ between $z$ and $[u_1,v_1]$ is bounded by $L_1$, and for all $y \in [u_1,v_1]$, the distance for the norm $\|.\|_1$ between $y$ and $\pi$ (seen as a set of vertices in $\Z^d$) is bounded by $L_1$.
		\end{enumerate}
	\end{lemma}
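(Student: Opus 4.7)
The plan is to construct $\pi$ as a concatenation of "big" steps of length $10\lll$, taken in axis directions, that collectively track the segment $[u_1,v_1]$, followed by short unit corrections inside $B_\infty(v_1,10\lll)$. First I would decompose the displacement: writing $v_1-u_1=\sum_{j=1}^d a_j\epsilon_j$ with $a_j\in\Z$, $\sigma_j=\mathrm{sign}(a_j)\in\{-1,0,1\}$, and performing the Euclidean division $|a_j|=10\lll\, q_j+r_j$ with $0\le r_j<10\lll$, set $Q=\sum_j q_j$ and $v^{(b)}=u_1+10\lll\sum_{j=1}^d \sigma_j q_j\epsilon_j$, so that $\|v^{(b)}-v_1\|_\infty\le 10\lll$. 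The big-step part of $\pi$ will consist of $Q$ steps of length $10\lll$ connecting $u_1$ to $v^{(b)}$, and the small-step part will consist of $\sum_j r_j\le d(10\lll-1)$ unit steps filling in the residuals coordinate by coordinate; by construction this second part lies inside $B_\infty(v_1,10\lll)$, and after a harmless local reordering one can also arrange that no big step enters this ball, so that (ii) is satisfied verbatim.

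For the big-step part I would use a deterministic greedy interleaving. Writing $k_j(k)$ for the number of big steps taken in direction $\sigma_j\epsilon_j$ after the first $k$ big steps, at step $k+1$ pick, among directions $j$ with $k_j(k)<q_j$, one minimizing $|k_j(k)+1-(k+1)q_j/Q|$ (ties broken by smallest $j$), and move by $10\lll\sigma_j\epsilon_j$. This is a classical Bresenham/apportionment-type rule, and a standard induction on $k$ shows $|k_j(k)-k q_j/Q|\le 1$ for every $k$ and every $j$. Hence the walker's position after $k$ big steps lies within $\ell_\infty$ distance $10\lll$ of the ideal point $u_1+(k/Q)(v^{(b)}-u_1)$, and in particular within $\ell_1$ distance $10\lll d$ of the segment $[u_1,v^{(b)}]$. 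Combined with $\|v^{(b)}-v_1\|_1\le 10\lll d$, this places every vertex of the big-step part within $\ell_1$ distance $20\lll d$ of $[u_1,v_1]$; the small-step part lies in $B_\infty(v_1,10\lll)$ and hence within $\ell_1$ distance $10\lll d$ of $v_1\in[u_1,v_1]$.

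Properties (i) and (ii) are immediate: the total edge count is $10\lll Q+\sum_j r_j=\sum_j |a_j|=\|v_1-u_1\|_1$, so $\pi$ is oriented, and the step-length structure is built in. For the "covering" half of (iii)—every $y\in[u_1,v_1]$ lies within $L_1$ of some vertex of $\pi$—I would observe that the ideal points $u_1+(k/Q)(v^{(b)}-u_1)$, $k=0,\dots,Q$, partition $[u_1,v^{(b)}]$ into consecutive pieces of $\ell_1$ length $10\lll$, so any $y$ on this part of the segment is within $\ell_1$ distance $10\lll$ of some ideal point and hence within $10\lll(d+1)$ of the corresponding vertex of $\pi$; for $y$ lying between $v^{(b)}$ and $v_1$ on the segment, the triangle inequality together with $\|v^{(b)}-v_1\|_1\le 10\lll d$ finishes the job. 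Taking $L_1:=30\lll d$ is then amply sufficient.

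The only non-trivial step, and the main obstacle, is the apportionment inequality $|k_j(k)-kq_j/Q|\le 1$ for the greedy rule; this is a standard short induction on $k$, and no sharp constants are needed since $L_1$ is only required to depend on $\lll$ and $d$. Everything else is bookkeeping, consistent with the authors' remark that the proof is left to the reader.
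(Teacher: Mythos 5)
The paper leaves the proof of Lemma~\ref{Lemme construction de pi.} to the reader, so there is no authored proof to compare against; your construction is a natural and essentially correct way to fill it in. The decomposition $v_1-u_1=\sum_j\sigma_j(10\lll q_j+r_j)\epsilon_j$, the split into a big-step part reaching $v^{(b)}$ and a unit-step tail inside $B_\infty(v_1,10\lll)$, and the verification of (i), (ii) and the covering half of (iii) are all sound, and $L_1=30\lll d$ indeed works once you account (as you implicitly do) for the intermediate vertices inside a single big step adding at most another $10\lll$ in $\ell_1$ on top of the $10\lll d$ deviation of the big-step endpoints from $[u_1,v^{(b)}]$ and the $10\lll d$ gap between $[u_1,v^{(b)}]$ and $[u_1,v_1]$.

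The one place where I would be more careful is the discrepancy claim for your particular greedy rule. You assert that choosing, at each step, the admissible direction $j$ minimizing $|k_j(k)+1-(k+1)q_j/Q|$ yields $|k_j(k)-kq_j/Q|\le 1$ by ``a standard short induction.'' The naive induction with hypothesis $|k_j(k)-kq_j/Q|<1$ does not close directly, because for a direction $j$ that is \emph{not} incremented the error drifts by $-q_j/Q$, and the inductive bound only gives $>-1$, not $>-1+q_j/Q$; one has to bring in the constraint $\sum_j(k_j(k)-kq_j/Q)=0$ and argue about which direction the rule must pick. So this is not quite a one-line induction for the rule as stated. That said, this is not a real gap: what the argument actually needs is only that \emph{some} deterministic interleaving has $O(1)$ discrepancy, and this is classical (Tijdeman's chairman-assignment / balanced-word theorem gives discrepancy strictly below $1$, and a cleaner-to-verify construction is to sort the ``ideal times'' $(m-\tfrac12)Q/q_j$ and read off the directions). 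If you want to keep your greedy rule, I would either supply the full induction with the $\sum_j$ constraint, or replace it with the sorted-fractions construction for which the bound is immediate; either way the constant only affects $L_1$ up to a factor and the lemma's conclusion (which allows any $L_1$ depending only on $\lll$ and $d$) is unaffected.
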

	Let $\pi$ be the path given by Lemma \ref{Lemme construction de pi.}.
	We introduce
	\begin{align}
		& \text{$u_2$ the first vertex of $\pi$ starting from $v_1$ in $B^*(0,\Tu(\gamma_{0,u_1}))$,} \label{On introduit u2.} \\
		& \text{$v_2$ the first vertex of $\pi$ starting from $u_1$ in $B^*(x,T^*(\gamma_{v_1,x}))$.} \label{On introduit v2.}
	\end{align}
	Remark that $t^*(0,u_2) \le t^*(0,u_1)$ and $t^*(v_2,x) \le t^*(v_1,x)$.
	
	\begin{lemma}\label{Chemin pi contenu dans B2.}
		The path $\pi$ is contained in $\Bds$.
	\end{lemma}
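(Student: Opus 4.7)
The plan is to show that every vertex of $\pi$ lies at controlled $\mu$-distance from $c_0$, and then translate this into an $\|\cdot\|_1$-bound that fits inside $\Bds$ thanks to the constraint \eqref{on fixe r2, cas borné} on $r_2$. The geometric input is item $(iii)$ of Lemma \ref{Lemme construction de pi.}, which controls the deviation of $\pi$ from the Euclidean segment $[u_1,v_1]$, and the ambient comparison $c_\mu \|\cdot\|_1 \le \mu(\cdot) \le C_\mu \|\cdot\|_1$ recalled in Section \ref{Sous-section Some tools and notations.}.

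First, by the very definition \eqref{On introduit u1 et v1.} of $u_1$ and $v_1$ we have $u_1,v_1 \in B_\mu(c_0,N\nabla)$, and since $\mu$ is a norm this ball is convex in $\R^d$. Hence the entire segment $[u_1,v_1]$ is contained in $B_\mu(c_0,N\nabla)$. Now let $z \in \pi$; Lemma \ref{Lemme construction de pi.}$(iii)$ produces $y \in [u_1,v_1]$ with $\|z-y\|_1 \le L_1$, so by the triangle inequality for $\mu$,
\[
\mu(z-c_0) \le \mu(z-y) + \mu(y-c_0) \le C_\mu L_1 + N\nabla.
\]

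Second, the choice $\nabla > 6dL_2 C_\mu$ made in \eqref{On fixe t0 ou nabla0.} together with $L_2 \ge L_1$ yields $N\nabla \ge C_\mu L_1$ for every $N \ge 1$, so $\mu(z-c_0) \le 2N\nabla$. Converting to the $\|\cdot\|_1$-norm and using $c_0 \in \Bus$, I obtain
\[
\|z - sN\|_1 \le \|z - c_0\|_1 + \|c_0 - sN\|_1 \le \frac{\mu(z-c_0)}{c_\mu} + r_1 N \le \left(r_1 + \frac{2\nabla}{c_\mu}\right) N,
\]
and this is strictly less than $r_2 N$ by the first lower bound on $r_2$ in \eqref{on fixe r2, cas borné}. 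Thus $z \in \Bds$, which proves the lemma.

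There is no real obstacle here: the proof is essentially the assembly of three ingredients already in place (convexity of the $\mu$-ball, the geometric control on $\pi$ provided by Lemma \ref{Lemme construction de pi.}, and the calibration of the constants $\nabla$ and $r_2$). The only point requiring care is to make sure the absorption $C_\mu L_1 \le N\nabla$ indeed follows from the hierarchy of constants fixed at the beginning of Section \ref{Sous-section preuve, cas borné}, which is why I rely on the bound $\nabla > 6dL_2 C_\mu$ rather than one of the other lower bounds on $\nabla$.
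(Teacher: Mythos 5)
Your proof is correct and follows essentially the same route as the paper's: use Lemma \ref{Lemme construction de pi.}$(iii)$ to bring $z$ close to a point $y$ on the segment $[u_1,v_1]$, use convexity of the $\mu$-ball to locate $y$ in $B_\mu(c_0,N\nabla)$, and conclude via the $\mu$--$\|\cdot\|_1$ comparison and the calibration of $r_2$. The only difference is bookkeeping: you first bound $\mu(z-c_0)\le C_\mu L_1 + N\nabla \le 2N\nabla$ (absorbing $C_\mu L_1$ via $\nabla > 6dL_2C_\mu$), which lets you invoke the first lower bound on $r_2$ in \eqref{on fixe r2, cas borné}, whereas the paper keeps $L_1$ as a separate additive term and appeals to the second lower bound on $r_2$. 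Both are valid; this is a cosmetic rearrangement, not a genuinely different argument.
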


	\begin{proof}
		Let $z$ be a vertex of $\pi$. Using Lemma \ref{Lemme construction de pi.}, there exists $y \in [u_1,v_1]$ such that $\|z-y\|_1 \le L_1$. The vertices $u_1$ and $v_1$ belong to $B_\mu(c_0,N\nabla)$ which is convex, thus $y \in B_\mu(c_0,N\nabla)$. So, we have
		\begin{align*}
			\|z-sN\|_1 & \le \underbrace{\|z-y\|_1}_{\le L_1} + \underbrace{\|y-c_0\|_1}_{\le \frac{\mu(y-c_0)}{c_\mu}} + \underbrace{\|c_0-sN\|_1}_{\le r_1 N} \\
			& \le L_1 + \frac{N \nabla}{c_\mu} + r_1 N \text{ since $y \in B_\mu(c_0,N\nabla)$,} \\
			& \le r_2 N \text{ by \eqref{on fixe r2, cas borné},}
		\end{align*}
		which proves that $z \in \Bds$.
	\end{proof}
	
	\subsubsection{Second modification}
	
	Now, let us define $\EeM$, $\Eep$, $\Eem$ and $\cO$. Let $c_1 \in \R^d$ denote the midpoint of $[u_1,v_1]$ and let us consider the set of all vertices of $\pi_{u_2,v_2}$ at distance at most $L_1$ for the norm $\|.\|_1$ from $c_1$. This set is not empty, hence we can choose one such vertex with a deterministic rule. Recall that $L_2 = L_1+(10+d)\lll$. Since $\nabla \ge 6dL_2C_\mu$ (see \eqref{On fixe t0 ou nabla0.}), we have that the distance for the norm $\|.\|_\infty$ between the chosen vertex and $v_1$ is greater than $10 \lll$. So there exist one or two steps of the path $\pi$ of length $10 \lll$ that contain the chosen vertex. We chose one step (between these one or two steps) with a deterministic rule to put the pattern. We denote the midpoint of this chosen step by $c_P$. Then, 
	\begin{equation}
		\mbox{$\cO(T,T')$ is equal to the direction of this step,} \label{On définit O(T,T').}
	\end{equation}
	\begin{equation}
		\text{and }	\EeM(T,T') = \left\lbrace \text{edges connecting vertices belonging to $B_\infty(c_P,\lll)$}\right\rbrace . \label{On définit EP(T,T').} 
	\end{equation}
	Note that by the direction of a step, we mean the integer $j$ such that for every distinct vertices $z_1$ and $z_2$ of this step, $z_1-z_2= \pm \|z_1-z_2\|_1 \epsilon_j.$ 
	Note also that for all vertex $z$ in $B_\infty(c_P,\lll)$, \[\|z-c_1\|_1 \le L_2.\]
	We define $u_3$ and $v_3$ as the endpoints of the oriented pattern such that $\pi$ visits $u_2$, $u_3$, $v_3$ and $v_2$ in this order.
	Recall that for a pattern, the entry and exit points correspond to the endpoints given by the definition of this pattern. Note that, thanks to the construction, $u_3$ and $v_3$ are two vertices of the chosen step of length $10 \lll$. We denote by $\gamma^\pi$ the path composed by the first geodesic in the lexicographic order from $0$ to $u_2$ in the environment $T^*$, then $\pi_{u_2,v_2}$ and then the first geodesic in the lexicographic order from $v_2$ to $x$ in the environment $T^*$. 
	Then, we have to define $\Eep(T,T')$ and $\Eem(T,T')$:
	\begin{align}
		\Eep(T,T') = \left\{\text{edges of $\pp$ except the one connected to $u_2$ and}\right. & \left.\text{the one connected to $v_2$} \right\}, \label{On définit Eeplus(T,T').} \\
		\Eem(T,T') = \{\text{edges $e$ in $\Bds$ such that $T(e) < \nu$ and which are not}& \nonumber \\
		\text{ in } \text{$\EeM(T,T')$, $\pp$, $B^*(0,\Tu(\gamma_{0,u_1}))$ } & \text{or $B^*(x,T^*(\gamma_{v_1,x}))$} \}. \label{On définit Eemoins(T,T').}
	\end{align}
	For all the sequel, we assume that the event  \[\{T \in \cM(\l) \} \cap \left\{ \Sun_\l(T)=s \right\} \cap \{T' \in \cB^*(T) \} \cap \{T'' \in \cB^{**}(T,T')\} \mbox{ occurs}.\] Recall the definition of the environment $\Td$ given at \eqref{Définition de T**, cas borné.} and of $\cB^{**}(T,T')$ given in Lemma \ref{17. ancien lemme 12.9.}. In particular,
	\begin{itemize}
		\item for all $e \in \Ep(T)$, $\Td(e) = \Tu(e) < T(e)$,
		\item for all $e \in \Eep(T,T')$, $\Td(e) \le \r + \delta'$,
		\item for all $e \in \Eem(T,T')$, $\Td(e) \ge \nu > T(e)=\Tu(e)$,
		\item since by Lemma \ref{Chemin pi contenu dans B2.}, $\pi$ is contained in $\Bds$, we have that $\Eep(T,T')$, $\Eem(T,T')$ and $\EeM(T,T')$ are pairwise disjoint and are contained in $\Bds$. Thus, since $\Ep(T)$ is contained in $\Bts \setminus \Bds$, item (i) of Lemma \ref{17. ancien lemme 12.9.} is satisfied.
	\end{itemize}
	In what follows, when we talk about edges whose time has been reduced, it means the edges $e$ such that $\Td(e) < T(e)$.
	Before proving item (ii) of Lemma \ref{17. ancien lemme 12.9.}, we state the following lemma which completes the vision of the sets $\Eep(T,T')$, $\Eem(T,T')$ and $\EeM(T,T')$. Recall the definition of $\Bd$ given at \eqref{Définition des boules des temps dans les environnements Tu et Td.}.
	
	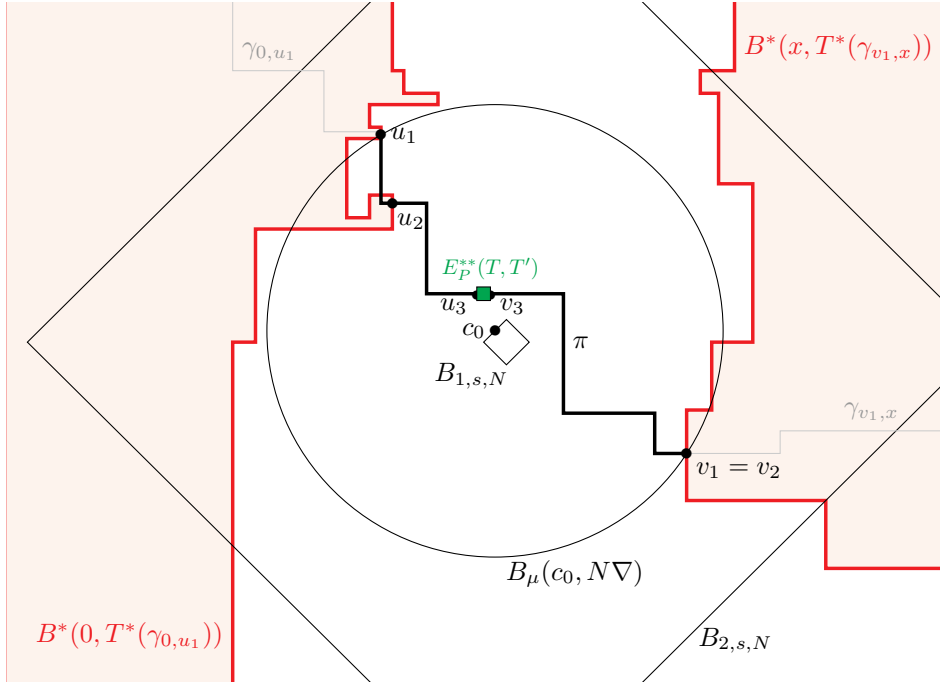
\begin{figure}\label{Figure de la deuxième modification.}
		\centering
		\begin{tikzpicture}[scale=0.3]
			\clip (-21.9,-15) rectangle (19,15);
			\filldraw[fill=Red!5,draw=Red,line width=1.3pt] (-22,22) -- (-5,22) -- (-5,12) -- (-4.5,12) -- (-4.5,11) -- (-3,11) -- (-3,10.5) -- (-6,10.5) -- (-6,9.5) -- (-5.5,9.5) -- (-5.5,9) --  (-7,9) -- (-7,5.5) -- (-6,5.5) -- (-6,6.5) -- (-5,6.5) -- (-5,5) -- (-11,5) -- (-11,0) -- (-12,0) -- (-12,-20) -- (-22,-20) -- cycle; 
			\filldraw[fill=Red!5,draw=Red,line width=1.3pt] (22,22) -- (10,22) -- (10,12) -- (8.5,12) -- (8.5,11) -- (9.3,11) -- (9.3,7) -- (10.8,7) -- (10.8,3) -- (10.8,0) -- (9,0) -- (9,-3) -- (7.9,-3) -- (7.9,-7) -- (14,-7) -- (14,-10) -- (22,-10) -- cycle;
			\draw[color=Red] (-12,-14) node[above left] {$B^*(0,\Tu(\gamma_{0,u_1}))$};
			\draw[color=Red] (10,12) node[above right] {$B^*(x,\Tu(\gamma_{v_1,x}))$};
			\draw (0,1) -- (1,0) -- (0,-1) -- (-1,0) -- cycle;
			\draw (0.5,-0.5) node[below left] {$\Bus$};
			\draw (-0.5,0.5) node {$\bullet$};
			\draw (-0.5,0.5) node[left] {$c_0$};
			\draw (-0.5,0.5) circle (10);
			\draw (3,-9.2) node[below] {$B_\mu(c_0,N\nabla)$};
			\draw (0,21) -- (21,0) -- (0,-21) -- (-21,0) -- cycle;
			\draw (8,-12.2) node [below right] {$\Bds$};
			\draw (-5,6.11) node {$\bullet$};
			\draw (-5.2,6.11) node[below right] {$u_2$};
			\draw[line width=1.3pt] (-5.5,9.14) -- (-5.5,6.14) -- (-3.5,6.14) -- (-3.5,2.14) -- (-1.5,2.14) -- (2.5,2.14) -- (2.5,-3.14) -- (6.5,-3.14) -- (6.5,-4.92) -- (7.9,-4.92);
			\draw (2.5,0) node[right] {$\pi$};
			\draw (-1.3,2.04) node[scale=0.9] {$\bullet$};
			\draw (-1.3,2.34) node[below left] {$u_3$};
			\draw (-0.7,2.044) node[scale=0.9] {$\bullet$};
			\draw (-0.7,2.34) node[below right] {$v_3$};
			\draw[fill=Green] (-1.3,1.84) rectangle (-0.7,2.44) node[above,Green,scale=0.8] {$\EeM(T,T')$};
			\draw[color=gray!40] (-5.5,9.3) -- (-8,9.3) -- (-8,12) -- (-12,12) -- (-12,17);
			\draw[color=gray!40] (7.9,-4.92) -- (12,-4.92) -- (12,-3.92) -- (20,-3.92);
			\draw[color=gray!80] (-12,12) node[above right] {$\gamma_{0,u_1}$};
			\draw[color=gray!80] (16,-3.92) node[above] {$\gamma_{v_1,x}$};
			\draw (-5.5,9.14) node {$\bullet$};
			\draw (-5.5,9.14) node[right] {$u_1$};
			\draw (7.9,-4.95) node {$\bullet$};
			\draw (7.9,-4.92) node[below right] {$v_1=v_2$};
		\end{tikzpicture}
		\centering
		\caption{The construction of $\pi$, of the vertices $u_1$, $u_2$, $u_3$, $v_1$, $v_2$, $v_3$ and of the pattern-location in dimension 2.}
	\end{figure}

	\begin{lemma}\label{17. Egalité des boules des deuxième et troisième configurations, et aucune arête du motif dans ces boules.}
		We have $\Bd(0,T^*(\gamma_{0,u_1})) = B^*(0,T^*(\gamma_{0,u_1}))$, $\Bd(x,T^*(\gamma_{v_1,x})) = B^*(x,T^*(\gamma_{v_1,x}))$ and there is no edge of $\EeM(T,T')$ in any of these balls. 
	\end{lemma}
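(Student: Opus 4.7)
The lemma bundles three assertions: the equality of the $T^{**}$- and $T^*$-balls around $0$ and around $x$, and the absence of $\EeM(T,T')$-edges inside them. I would prove the third assertion first, as it is the crucial geometric input, and then deduce the two ball equalities by a short bookkeeping comparison of passage times in the two environments.

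\textbf{Step 1: no edge of $\EeM(T,T')$ inside either ball.} By symmetry it suffices to treat $B^*(0, T^*(\gamma_{0, u_1}))$. Since $\EeM(T,T')$ only contains edges with both endpoints in $B_\infty(c_P, \lll)$, it is enough to show that no vertex $w$ of $B_\infty(c_P, \lll)$ belongs to this ball (and such $w$ automatically lies in $\Bts$ thanks to \eqref{on fixe r2, cas borné} and Lemma \ref{Chemin pi contenu dans B2.}). Assume for contradiction that $w \in B^*(0, T^*(\gamma_{0, u_1}))$. By Lemma \ref{Inclusion des boules des temps T étoile dans les boules des temps T.}, $t(0, w) \le t(0, u_1)$, and since $\gamma$ is a $T$-geodesic visiting $0, u_1, v_1$ in this order, the triangle inequality gives $t(w, v_1) \ge t(u_1, v_1)$. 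Applying the typical-box property (iii) to both $(u_1, v_1)$ and $(w, v_1)$ and invoking $\mu(u_1 - v_1) \ge N\nabla$ from Lemma \ref{17. minoration de mu(u1-v1) et arêtes de la zone rouges contenues dans B4.} yields
\[
(1-\epsilon)\mu(u_1 - v_1) - N \le t(u_1, v_1) \le t(w, v_1) \le (1+\epsilon)\mu(w - v_1) + N.
\]
The essential geometric ingredient is that $c_1$ is exactly the midpoint of $[u_1, v_1]$, so $\mu(c_1 - v_1) = \mu(u_1 - v_1)/2$; combined with $\|w - c_1\|_1 \le L_2$, the triangle inequality for the norm $\mu$ gives $\mu(w - v_1) \le C_\mu L_2 + \mu(u_1 - v_1)/2$. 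Substituting and rearranging leads to $\mu(u_1 - v_1)(1 - 3\epsilon)/2 \le (1+\epsilon) C_\mu L_2 + 2N$, which together with $\mu(u_1 - v_1) \ge N\nabla$ contradicts the lower bound $\nabla > 6 d L_2 C_\mu$ from \eqref{On fixe t0 ou nabla0.}.

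\textbf{Step 2: the ball equalities.} Set $r = T^*(\gamma_{0, u_1})$; the ball around $x$ is treated symmetrically. The key observation is that every endpoint of every edge of $\Eep(T, T')$ lies on $\pi$ strictly between $u_2$ and $v_2$ (because the edges of $\pi$ adjacent to $u_2$ and $v_2$ are excluded from $\Eep$), hence outside $B^*(0, r)$ by the very definition of $u_2$; combined with Step 1 this shows that both endpoints of any edge of $\Eep(T, T') \cup \EeM(T, T')$ lie outside $B^*(0, r)$. Furthermore, by definition of $\Eem(T, T')$, no edge of $\Eem$ has both endpoints in $B^*(0, r)$. For the inclusion $B^*(0, r) \subset \Bd(0, r)$: a $T^*$-geodesic $\sigma^*$ from $0$ to any $w \in B^*(0, r)$ has all its vertices in $B^*(0, r)$, so it uses no edge of $\Eep(T, T') \cup \Eem(T, T') \cup \EeM(T, T')$, which yields $T^{**}(\sigma^*) = T^*(\sigma^*) \le r$. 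For the reverse inclusion, let $\sigma^{**}$ be a $T^{**}$-geodesic from $0$ to $w \in \Bd(0, r)$. If $\sigma^{**}$ used any edge of $\Eep(T, T') \cup \EeM(T, T')$, the first endpoint $a$ on $\sigma^{**}$ of such an edge would satisfy $t^*(0, a) > r$; but the prefix $\sigma^{**}_{0, a}$ uses only edges with $T^{**} \ge T^*$ (either unmodified, or in $\Eem(T, T')$), so
\[
r \ge T^{**}(\sigma^{**}) \ge T^{**}(\sigma^{**}_{0, a}) \ge T^*(\sigma^{**}_{0, a}) \ge t^*(0, a) > r,
\]
a contradiction. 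Hence $\sigma^{**}$ uses only unmodified or $\Eem(T, T')$-edges, and the same comparison gives $t^*(0, w) \le T^*(\sigma^{**}) \le T^{**}(\sigma^{**}) \le r$.

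\textbf{Main obstacle.} The hard part is Step 1: the contradiction crucially requires the sharp identity $\mu(c_1 - v_1) = \mu(u_1 - v_1)/2$ (which follows from $c_1$ being the Euclidean midpoint of $[u_1, v_1]$ and $\mu$ being a norm); any rougher bound such as $\mu(c_1 - v_1) \le C_\mu \|c_1 - v_1\|_1$ introduces a factor $C_\mu/c_\mu \ge 1$ that makes the resulting inequality vacuous.
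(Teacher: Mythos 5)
Your proof is correct and has the same two-step structure as the paper's, but Step 1 follows a genuinely different computational route. The paper first places $z$ in $B_\mu(v_1, 3N\nabla/2)$, converts this to a $t$-ball via the intermediate Lemma \ref{17. lemme technique partie argument de modification}, compares to an explicit lower bound on $t(u_1,v_1)$, and only at the very end invokes Lemma \ref{Inclusion des boules des temps T étoile dans les boules des temps T.}. You instead invoke that lemma up front to get $t(0,w) \le t(0,u_1)$, pass to $t(w,v_1) \ge t(u_1,v_1)$ by a triangle inequality, and squeeze both sides with the raw typical-box estimate (iii) together with the midpoint identity $\mu(c_1-v_1) = \mu(u_1-v_1)/2$. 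Your route is more direct and bypasses the intermediate lemma; your closing remark that the exact midpoint identity is essential (rather than a crude $C_\mu\|c_1-v_1\|_1$ bound) is correct and mirrors the paper's own use of $\mu(v_1-c_1) \le N\nabla$. In Step 2 you follow essentially the paper's argument, but you are more careful on a point the paper glosses over: you explicitly verify that edges of $\Eep(T,T')$ have both endpoints strictly between $u_2$ and $v_2$ on $\pi$ and hence outside $B^*(0,r)$, so that a $T^*$-geodesic contained in $B^*(0,r)$ touches none of the three re-sampled edge sets; the paper's text only mentions $\Eem$ and $\EeM$ at that point.

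One imprecision in Step 1: after dividing by $N\ge1$, your inequality reads $\nabla(1-3\epsilon)/2 \le (1+\epsilon)C_\mu L_2 + 2$, and you assert this contradicts $\nabla > 6dL_2 C_\mu$. On its own that bound is not enough when $C_\mu L_2$ happens to be small, because of the additive constant $2$. You also need one of the other lower bounds implied by \eqref{On fixe t0 ou nabla0.} — for instance $\nabla > 1/\epsilon > 11$, which the paper records explicitly — and this closes the small-$C_\mu L_2$ regime. The conclusion stands, but the single constraint you cite does not deliver the contradiction by itself.
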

	
	\begin{proof}[Proof]
		We begin by proving that there is no edge of $\EeM(T,T')$ in $B^*(0,T^*(\gamma_{0,u_1}))$. To this aim, we prove that there is no vertex of $B_\infty (c_P,\lll)$ in this ball. Let $z$ be a vertex of $B_\infty (c_P,\lll)$.
		The idea is the following. Since $\gamma$ is a geodesic in the environment $\Tu$, since the time of the edges of $\gamma_{u_1,v_1}$ is not modified by the first modification and since $\Bqs$ is a typical box in the environment $T$, for every $w$ vertex of $\gamma_{u_1,v_1}$,
		\[t^*(0,w) = t^*(0,u_1) + t^*(u_1,w) = t^*(0,u_1) + t(u_1,w) \approx t^*(0,u_1) + \mu(u_1-w).\]
		We can not guarantee that $z$ is a vertex of $\gamma_{u_1,v_1}$ but using a similar argument: 
		\[t^*(0,z) \ge t^*(0,v_1) - t^*(v_1,z) = t^*(0,u_1) + t^*(u_1,v_1)-t^*(v_1,z),\] since $\gamma$ is a geodesic in the environment $\Tu$.
		But \[t^*(u_1,v_1)=t(u_1,v_1) \approx 2N\nabla \text{ and } t^*(v_1,z) \le t^*(v_1,c_1) + t^*(c_1,z) \approx N \nabla,\] since $t^*(c_1,z)$ is negligible compared to $N \nabla$, which gives that $t^*(0,z)$ is roughly greater than $t^*(0,u_1) + N \nabla$.
		
		Now, we make the proof rigorous. Since $z \in B_\infty(c_P,\lll)$, we have $z \in B_\mu(c_1, C_\mu L_2)$. Since by \eqref{On fixe t0 ou nabla0.}, $\nabla \ge 2 C_\mu L_2$, $\displaystyle z \in B_\mu \left(c_1, \frac{N \nabla}{2} \right)$. Furthermore, since $c_1$ is the midpoint of $[u_1,v_1]$ and $\mu(u_1-v_1) \le 2N\nabla$, we have $\mu(v_1-c_1) \le N\nabla$. Thus, since $\displaystyle \nabla \ge \frac{1}{\epsilon}$ (by \eqref{On fixe t0 ou nabla0.}), using the third item of Lemma \ref{17. lemme technique partie argument de modification} with $z=v_1$ and $\rr=\frac{3 \nabla}{2}$, we have \[z \in B_\mu \left(v_1, \frac{3N\nabla}{2} \right) \cap \Z^d \subset B \left( v_1, \frac{3N\nabla(1+2\epsilon)}{2} \right). \] Hence, using the lower bound for $t(u_1,v_1)$ of the proof of Lemma \ref{17. minoration de mu(u1-v1) et arêtes de la zone rouges contenues dans B4.} and since by \eqref{On fixe t0 ou nabla0.}, $\displaystyle \nabla > \frac{4(1+\ST)}{1-10\epsilon}$, we have \[t(0,v_1)-t(0,u_1)=t(u_1,v_1) \ge 2N(\nabla(1-\epsilon)-1) - 2\ST > \frac{3N\nabla(1+2\epsilon)}{2},\] and thus \[ t(0,z) \ge t(0,v_1)-t(v_1,z) \ge t(0,v_1)-\frac{3N\nabla(1+2\epsilon)}{2} > t(0,u_1). \] The first item of Lemma \ref{Inclusion des boules des temps T étoile dans les boules des temps T.} allows us to conclude. 
		
		Then, let us prove the first equality. The proof of the second one is the same. The inclusion $B^*(0,T^*(\gamma_{0,u_1})) \subset \Bd(0,T^*(\gamma_{0,u_1}))$ is easy to check. Let us take $z \in B^*(0,T^*(\gamma_{0,u_1}))$ and $\ogu$ a geodesic from $0$ to $z$ in the environment $T^*$. Then $\ogu$ is entirely contained in $B^*(0,T^*(\gamma_{0,u_1}))$ and there are no edges of $\Eem(T,T')$ or $\EeM(T,T')$ in $B^*(0,T^*(\gamma_{0,u_1}))$, so \[\td(0,z) \le \Td(\ogu) \le T^*(\ogu) \le T^*(\gamma_{0,u_1}). \]  
		
		For the other inclusion, assume that there exists a vertex $z \in \Bd(0,T^*(\gamma_{0,u_1})) \setminus B^*(0,T^*(\gamma_{0,u_1}))$ and let $\ogd$ be a geodesic from $0$ to $z$ in the environment $\Td$.  Let $\oz$ be the first vertex of $\ogd$ which is not in $B^*(0,T^*(\gamma_{0,u_1}))$. By construction, there is no edge of $\ogd_{0,\oz}$ in $\Eep(T,T')$ or $\EeM(T,T')$ and thus: \[\td(0,z) \ge \td(0,\oz) \ge t^*(0,\oz) > T^*(\gamma_{0,u_1}), \] which is a contradiction.
	\end{proof}

	 Now, to get item (ii) of Lemma \ref{17. ancien lemme 12.9.}, fix $\displaystyle \eta=\min_{j \in \{1,\dots,d\}} \P \left(T \in \cA^\Lambda_j \right) \tilde{p}^{|\Bts|}$, where \[\tilde{p} = \min(F([\r,\r+\delta']),F([\nu(N),\ST])) > 0.\] Thus, $\eta$ only depends on $F$, the pattern and $N$ and we have that \[\P \left( \left. T' \in \cB^*(T) \right| T \right)  \ge \tilde{p}^{|\Bts|} \ge \eta\] and \[ \P \left( \left. T'' \in \cB^{**}(T,T') \right| T,T' \right) \ge \min_{j \in \{1,\dots,d\}} \P \left(T \in \cA^\Lambda_j \right) \tilde{p}^{|\Bts|} = \eta.\]
	\medskip
	
	We end this section with two lemmas, one giving a lower bound on the time saved by the geodesics from $0$ to $x$ thanks to the second modification and the other on the distance between $u_1$ and any vertex of $\pi_{v_3,v_2}$. 

	\begin{lemma}\label{17. Temps gagné pendant la deuxième modification.}
		For all $N \in \N^*$, we have  \[T^*(\gamma)-\Td(\gamma^\pi) \ge \frac{N\nabla}{2C_\mu} (\delta-\delta') > T^\Lambda.\]
	\end{lemma}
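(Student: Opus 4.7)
The plan is to compare $\Tu(\gamma)$ with $\Td(\gamma^\pi)$ by showing that the two environments agree on the outer pieces of $\gamma^\pi$, then exploiting the drastic time reduction performed in the middle.

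First I would decompose $\gamma=\gamma_{0,u_1}\cup\gamma_{u_1,v_1}\cup\gamma_{v_1,x}$ and split $\gamma^\pi$ into the $\Tu$-geodesic from $0$ to $u_2$, the deterministic path $\pi_{u_2,v_2}$, and the $\Tu$-geodesic from $v_2$ to $x$. The outer pieces of $\gamma^\pi$ stay inside $B^*(0,\Tu(\gamma_{0,u_1}))$ and $B^*(x,\Tu(\gamma_{v_1,x}))$ respectively. By Lemma~\ref{17. Egalité des boules des deuxième et troisième configurations, et aucune arête du motif dans ces boules.}, $\EeM(T,T')$ is disjoint from these two balls; $\Eem(T,T')$ is excluded from them by definition; and the only edges of $\pi$ meeting these balls are the two incident to $u_2$ and to $v_2$, which are precisely those removed from $\Eep(T,T')$. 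Consequently $\Td$ coincides with $\Tu$ along both outer pieces, and since $u_2\in B^*(0,\Tu(\gamma_{0,u_1}))$ and $v_2\in B^*(x,\Tu(\gamma_{v_1,x}))$ we obtain
\[ \Td(\gamma^\pi_{0,u_2}) = t^*(0,u_2) \le \Tu(\gamma_{0,u_1}) \quad \text{and} \quad \Td(\gamma^\pi_{v_2,x}) = t^*(v_2,x) \le \Tu(\gamma_{v_1,x}), \]
so that $\Tu(\gamma)-\Td(\gamma^\pi) \ge \Tu(\gamma_{u_1,v_1}) - \Td(\pi_{u_2,v_2})$.

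Next I would bound each side of this difference. By Lemma~\ref{17. minoration de mu(u1-v1) et arêtes de la zone rouges contenues dans B4.} the path $\gamma_{u_1,v_1}$ lies in $\Bds$, which is disjoint from $\Ep(T)\subset\Bts\setminus\Bds$, so $\Tu(\gamma_{u_1,v_1})=T(\gamma_{u_1,v_1})$; property~(ii) of a typical box applied to this subpath of the geodesic $\gamma$ gives $\Tu(\gamma_{u_1,v_1})\ge(\r+\delta)\|u_1-v_1\|_1$. For the upper bound I split $\pi_{u_2,v_2}=\pi_{u_2,u_3}\cup\pi_{u_3,v_3}\cup\pi_{v_3,v_2}$: the $2\lll$ edges of $\pi_{u_3,v_3}$ lie in $\EeM(T,T')$, and the inclusion $\cA^{\Lambda}_{\cO(T,T')}\subset\{\forall e\in\Lambda,\,T(e)\le\nu\}$ bounds their total contribution by $2\lll\nu=\tau^\Lambda$; all edges of $\pi_{u_2,u_3}\cup\pi_{v_3,v_2}$ except the two adjacent to $u_2$ and $v_2$ lie in $\Eep(T,T')$ and have $\Td$-time at most $\r+\delta'$, while the two excluded edges contribute at most $2\ST$. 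Since $\pi$ is oriented and $\|u_2-v_2\|_1\le\|u_1-v_1\|_1$, this yields
\[ \Td(\pi_{u_2,v_2}) \le \|u_1-v_1\|_1(\r+\delta') + 2\ST + \tau^\Lambda. \]

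Combining these two estimates gives $\Tu(\gamma)-\Td(\gamma^\pi)\ge(\delta-\delta')\|u_1-v_1\|_1-(2\ST+\tau^\Lambda)$. Lemma~\ref{17. minoration de mu(u1-v1) et arêtes de la zone rouges contenues dans B4.} and $\mu\le C_\mu\|\cdot\|_1$ give $\|u_1-v_1\|_1\ge N\nabla/C_\mu$; combined with $\delta-\delta'\ge 3\delta/4$ (which follows from $\delta'\le\delta/4$) and the lower bounds $\nabla>4C_\mu(2\ST+\tau^\Lambda)$ and $\nabla>8C_\mu T^\Lambda/(3\delta)$ fixed before the definition of $\Bqs$, the error $2\ST+\tau^\Lambda$ is absorbed into half of the main term while the other half still exceeds $T^\Lambda$. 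The principal conceptual step is really the first one: verifying that the second modification leaves the outer pieces of $\gamma^\pi$ intact, for which Lemma~\ref{17. Egalité des boules des deuxième et troisième configurations, et aucune arête du motif dans ces boules.} (together with the careful exclusions built into the definitions of $\Eep, \Eem, \EeM$) is essential; once this is granted, the remaining arithmetic on edge times is routine.
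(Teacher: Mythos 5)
Your proof is correct and follows essentially the same route as the paper: split $\gamma^\pi$ into its two $\Tu$-geodesic tails and the oriented middle $\pi_{u_2,v_2}$, show the second modification leaves the tails untouched so $\Td(\gamma^\pi_{0,u_2})=\Tu(\gamma^\pi_{0,u_2})\le\Tu(\gamma_{0,u_1})$ (and symmetrically), then compare $T(\gamma_{u_1,v_1})$ against $\Td(\pi_{u_2,v_2})$ using the typical-box lower bound and the explicit construction of $\Eep$, $\EeM$. One small remark: you correctly invoke property~(ii) of a typical box (the lower bound $t(\pi)\ge(\r+\delta)\|u-v\|_1$ for paths in $\Bqs$) for the estimate $T(\gamma_{u_1,v_1})\ge(\r+\delta)\|u_1-v_1\|_1$; the paper's proof cites ``$(i)$'' at that step, which only yields $(\r+\alpha\delta)\|u_1-v_1\|_1$ and appears to be a typo for~(ii), so you have the right property. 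Also, your phrase ``the only edges of $\pi$ meeting these balls are the two incident to $u_2$ and to $v_2$'' is slightly too strong (edges of $\pi_{u_1,u_2}$ could touch $B^*(0,\Tu(\gamma_{0,u_1}))$), but it does not affect the argument since those edges are not in $\pp$ and hence not in $\Eep(T,T')$, so the conclusion that $\Td$ and $\Tu$ agree on the two tails of $\gamma^\pi$ stands.
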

	
	\begin{proof}[Proof]
		This result is an easy consequence of Lemma \ref{17. minoration de mu(u1-v1) et arêtes de la zone rouges contenues dans B4.}. Since $\nabla \ge C_\mu$ (by \eqref{On fixe t0 ou nabla0.}) and all edges of $\gamma_{u_1,v_1}$ are contained in $\Bds$, which implies that there is no edge of $\gamma_{u_1,v_1}$ whose time has been reduced between the environment $T$ and $T^*$, \[T^*(\gamma) \ge T^*(\gamma_{0,u_1})+\|u_1-v_1\|_1(\r+\delta)+T^*(\gamma_{v_1,x}),\] where we used $(i)$ of the definition of a typical box.
		Further \[\Td(\gamma^\pi) \le \Td(\gamma^\pi_{0,u_2}) + \|u_2-v_2\|_1(\r+\delta') + \Td(\gamma^\pi_{v_2,x})+ 2\ST + \tau^\Lambda, \] where the term $2\ST$ is an upper bound for the time for both, the edge connecting $u_2$ to $\Eep(T,T')$ and the one connecting $v_2$ to $\Eep(T,T')$, and the term $\tau^\Lambda$ is an upper bound for the time collected by $\gamma^\pi$ in $\EeM(T,T')$.
		Since we have $\Td(\gamma^\pi_{0,u_2}) = \Tu(\gamma^\pi_{0,u_2}) \le T^*(\gamma_{0,u_1})$, $\Td(\gamma^\pi_{v_2,x}) = \Tu(\gamma^\pi_{v_2,x}) \le T^*(\gamma_{v_1,x})$, $\|u_2-v_2\|_1 \le \|u_1-v_1\|_1$ since $\pi$ is an oriented path, and $\displaystyle \nabla \ge 4C_\mu(2\ST+\tau^\Lambda)$ (by \eqref{On fixe t0 ou nabla0.}), and using Lemma \ref{17. minoration de mu(u1-v1) et arêtes de la zone rouges contenues dans B4.}, we obtain for all $N \in \N^*$, \[
		T^*(\gamma)-\Td(\gamma^\pi) \ge \|u_1-v_1\|_1 (\delta-\delta')-\tau^\Lambda -2\ST \ge \frac{N\nabla}{2C_\mu} (\delta-\delta').\]
		Finally, since by \eqref{On fixe t0 ou nabla0.}, $\displaystyle \nabla > \frac{2 C_\mu T^\Lambda}{\delta-\delta'}$ we have the result.
	\end{proof}
	
	\begin{lemma}\label{17. Minoration distance a1 motif.}
		For all $N \in \N^*$, for all $w$ vertex of $\pi_{v_3,v_2}$, we have \[\|u_1-w\|_1 \ge \frac{\|u_1-v_1\|_1}{3}.\]
	\end{lemma}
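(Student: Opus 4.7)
The plan is to use three ingredients: (a) the path $\pi$ is oriented from $u_1$ to $v_1$, which forces the distance $\|u_1 - w\|_1$ to be monotone along $\pi$; (b) the vertex $v_3$ was placed, by construction, within a bounded distance of the midpoint $c_1$ of the segment $[u_1,v_1]$; (c) Lemma \ref{17. minoration de mu(u1-v1) et arêtes de la zone rouges contenues dans B4.} gives $\mu(u_1-v_1) \ge N\nabla$, and the choice of $\nabla$ at \eqref{On fixe t0 ou nabla0.} is large enough to beat the $O(1)$ error from step (b).

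First I would observe that since $\pi$ is an oriented path (item (i) of Lemma \ref{Lemme construction de pi.}), its length equals $\|u_1-v_1\|_1$ and the $\ell^1$-coordinates are monotone along $\pi$. Consequently, for every vertex $z$ visited by $\pi$ between $u_1$ and $v_1$ we have $\|u_1-z\|_1 + \|z-v_1\|_1 = \|u_1-v_1\|_1$, and for $w \in \pi_{v_3,v_2}$ the vertex $v_3$ is visited before $w$, so
\[
\|u_1-w\|_1 \ge \|u_1-v_3\|_1.
\]

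Next I would bound $\|u_1-v_3\|_1$ from below using the construction of the pattern-location. The vertex chosen from $\pi_{u_2,v_2}$ at which the step containing $c_P$ sits is by design within $\ell^1$-distance $L_1$ of $c_1$; since the chosen step has length $10\lll$ and $v_3$ is obtained from $c_P$ by moving $5\lll$ along a canonical direction, one gets $\|v_3 - c_1\|_1 \le L_1 + (10+d)\lll = L_2$. Because $c_1$ is the midpoint of $[u_1,v_1]$ in $\R^d$, the triangle inequality yields
\[
\|u_1-v_3\|_1 \ge \|u_1-c_1\|_1 - \|c_1-v_3\|_1 \ge \tfrac{1}{2}\|u_1-v_1\|_1 - L_2.
\]

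Finally I would invoke Lemma \ref{17. minoration de mu(u1-v1) et arêtes de la zone rouges contenues dans B4.} and the norm comparison $\mu \ge c_\mu \|\cdot\|_1 / \text{(well, } \mu \le C_\mu\|\cdot\|_1)$ to write $\|u_1-v_1\|_1 \ge \mu(u_1-v_1)/C_\mu \ge N\nabla/C_\mu$. Combining with the previous inequality,
\[
\|u_1-w\|_1 \ge \frac{\|u_1-v_1\|_1}{2} - L_2 = \frac{\|u_1-v_1\|_1}{3} + \left(\frac{\|u_1-v_1\|_1}{6} - L_2\right),
\]
so it suffices to check $\|u_1-v_1\|_1 \ge 6 L_2$, which follows from $N \ge 1$ and the lower bound $\nabla > 6dL_2 C_\mu$ fixed at \eqref{On fixe t0 ou nabla0.}. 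There is no real obstacle here: the lemma is essentially a bookkeeping step verifying that the constants chosen in \eqref{On fixe t0 ou nabla0.} are indeed large enough to render the $L_2$-sized corrections negligible compared to the macroscopic length $\|u_1-v_1\|_1$.
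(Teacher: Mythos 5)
Your proposal is correct and follows essentially the same route as the paper: use orientation of $\pi$ to reduce to bounding $\|u_1-v_3\|_1$, then the triangle inequality around the midpoint $c_1$ to get $\frac{1}{2}\|u_1-v_1\|_1 - L_2$, and finally Lemma \ref{17. minoration de mu(u1-v1) et arêtes de la zone rouges contenues dans B4.} together with the choice $\nabla > 6dL_2C_\mu$ to absorb the $L_2$ term. The only cosmetic difference is that you spell out the intermediate check $\|u_1-v_1\|_1 \ge 6L_2$ explicitly, which the paper leaves implicit.
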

	
	\begin{proof}[Proof]
		Let us note that since $\pi$ is an oriented path, for all $w$ vertex of $\pp$ after $\EeM(T,T')$, we have $\|u_1-w\|_1 \ge \|u_1-v_3\|_1$. So \[\|u_1-w\|_1 \ge \|u_1-c_1\|_1 - \|c_1-v_3\|_1 \ge \frac{1}{2} \|u_1-v_1\|_1-L_2.\]
		
		Then, since by \eqref{On fixe t0 ou nabla0.}, $\nabla \ge 6L_2C_\mu$, we have $\displaystyle \frac{N\nabla}{6C_\mu} \ge L_2$ and using Lemma \ref{17. minoration de mu(u1-v1) et arêtes de la zone rouges contenues dans B4.} leads to the result.  
	\end{proof}
	
	\subsubsection{Proof of the second and third items of Lemma \ref{17. gros lemme modification.}}
	
	Recall that we assume that  \[\{T \in \cM(\l) \} \cap \left\{ \Sun_\l(T)=s \right\} \cap \{T' \in \cB^*(T) \} \cap \{T'' \in \cB^{**}(T,T')\} \mbox{ occurs}.\]
	The aim of this section is to prove the following properties (which are the second and third items of Lemma \ref{17. gros lemme modification.}):
	\begin{enumerate}[label=(\roman*)]
		\setcounter{enumi}{1}
		\item for all geodesic $\ogd$ from $0$ to $x$ in the environment $\Td$, there exists a geodesic $\OG$ from $0$ to $x$ in the environment $T$ such that $\OG$ and $\ogd$ are associated in $B_{4,\Sun_\l(T),N}$,
		\item there exists a geodesic $\gd$ in the environment $\Td$ from $0$ to $x$ such that $\gd$ and the selected geodesic $\gamma$ in the environment $T$ are associated in $B_{4,\Sun_\l(T),N}$.
	\end{enumerate}
	
	To prove this, we use the following sequence of lemmas. 
	
	\begin{lemma}\label{17. Toute géodésique touche pi.}
		Every geodesic from $0$ to $x$ in the environment $\Td$ takes at least one edge of $\pp$. 
	\end{lemma}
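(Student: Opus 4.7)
The strategy is a contradiction argument based on a careful accounting of the passage-time difference between the environments $T^*$ and $T^{**}$ along a path that avoids $\pp$. Suppose, aiming at a contradiction, that $\ogd$ is a geodesic from $0$ to $x$ in the environment $T^{**}$ that takes no edge of $\pp$. The goal is then to show that $T^{**}(\ogd) > T^{**}(\gamma^\pi)$, contradicting the fact that $\ogd$ is optimal.

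First, recall that the only edges whose passage time differs between $T^*$ and $T^{**}$ are those in $\Eep(T,T') \cup \Eem(T,T') \cup \EeM(T,T')$ (see \eqref{Définition de T**, cas borné.}), and these three sets are pairwise disjoint. Since $\Eep(T,T') \subset \pp$ and $\ogd$ takes no edge of $\pp$ by assumption, the difference $T^{**}(\ogd) - T^*(\ogd)$ comes only from the contributions of $\Eem(T,T')$ and $\EeM(T,T')$. On $\Eem(T,T')$, by definition we have $T(e) < \nu$ while on the event $\{T'' \in \cB^{**}(T,T')\}$ we have $T^{**}(e) = T''(e) \ge \nu$, so each such edge contributes non-negatively. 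On $\EeM(T,T')$ each edge contributes at least $\r - \ST$ to the difference, and since $|\EeM(T,T')| \le K^\Lambda$ the total contribution from this set is bounded below by $-K^\Lambda(\ST-\r) = -T^\Lambda$. Hence
\[
T^{**}(\ogd) \ge T^*(\ogd) - T^\Lambda.
\]

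Next, by item $(iii)$ of Lemma \ref{17. Première modification.}, $\gamma$ is a geodesic from $0$ to $x$ in the environment $T^*$, so $T^*(\ogd) \ge T^*(\gamma)$. Combining this with Lemma \ref{17. Temps gagné pendant la deuxième modification.}, which states $T^*(\gamma) - T^{**}(\gamma^\pi) > T^\Lambda$, we obtain
\[
T^{**}(\ogd) \ge T^*(\gamma) - T^\Lambda > T^{**}(\gamma^\pi).
\]
Since $\gamma^\pi$ is a path from $0$ to $x$ in the environment $T^{**}$, this contradicts the optimality of $\ogd$, and so $\ogd$ must take at least one edge of $\pp$.

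There is no real obstacle: all the substantive work has already been done in Lemma \ref{17. Temps gagné pendant la deuxième modification.}, which was carefully calibrated so that the time saved by $\gamma^\pi$ in the second modification exceeds $T^\Lambda$, precisely the maximal amount of "time" an adversarial path could gain by exploiting the resampled pattern edges of $\EeM(T,T')$. The bookkeeping that the edges of $\Eem(T,T')$ only make $T^{**}(\ogd)$ larger (not smaller) than $T^*(\ogd)$ is built into the definition \eqref{On définit Eemoins(T,T').} of $\Eem$.
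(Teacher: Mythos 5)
Your proof is correct and follows essentially the same line of reasoning as the paper: both rely on the fact that $\gamma$ remains a geodesic in $T^*$, on Lemma \ref{17. Temps gagné pendant la deuxième modification.} to show that the second modification saves strictly more than $T^\Lambda$ in passage time, and on the observation that edges of $\Eem(T,T')$ only increase the time while the edges of $\EeM(T,T')$ can decrease it by at most $T^\Lambda$. The paper phrases the argument directly (deriving $T^*(\gd) - \Td(\gd) > T^\Lambda$ and concluding an edge of $\Eep(T,T') \subset \pp$ must be visited), whereas you run it by contradiction, but the underlying inequalities and bounds are identical.
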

	
	\begin{proof}[Proof]
		Let $\gd$ be a geodesic from $0$ to $x$ in the environment $\Td$. Since $\gamma$ is a geodesic in the environment $\Tu$, we have the following inequalities \[\Td(\gd) \le \Td(\gamma^\pi) < T^*(\gamma) \le T^*(\gd). \]
		So, using Lemma \ref{17. Temps gagné pendant la deuxième modification.}, \[T^*(\gd)-\Td(\gd) > T^\Lambda.\]
		It means that $\gd$ has to take at least one edge whose time has been reduced during the second modification which is not in $\EeM(T,T')$. Hence, since the only edges which are not in $\EeM(T,T')$ whose time has been reduced are edges of $\pp$, the result follows. 
	\end{proof}
	
	\begin{lemma}\label{17. Toute géodésique touche gamma avant et après B4sN.}
		Every geodesic from $0$ to $x$ in the environment $\Td$  takes at least one edge of $\gamma$ whose time has been reduced before taking its first edge of $\Bds$, and takes at least one edge of $\gamma$ whose time has been reduced after taking its last edge of $\Bds$.
	\end{lemma}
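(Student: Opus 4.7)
I would argue by contradiction, treating the ``before'' part first; the ``after'' part is symmetric. Let $\ogd$ be a geodesic from $0$ to $x$ in $\Td$ and assume that $\ogd$ visits its first vertex $w$ of $\Bds$ without having used any edge of $\Ep(T)$ beforehand. The point is that the edges of $\gamma$ outside $\Bds$ whose time has been modified are exactly the edges of $\Ep(T)$: by construction the first modification only changes edges of $\Ep(T)$ (which sit in $\Bts\setminus\Bds$), while the second modification only changes edges contained in $\Bds$. Since every edge of $\ogd_{0,w}$ lies outside $\Bds$ (the last edge has only $w$ in $\Bds$, the other endpoint being outside), the hypothesis forces $\Td=T$ along $\ogd_{0,w}$, so
\[
\Td(\ogd_{0,w}) \;=\; T(\ogd_{0,w}) \;\ge\; t(0,w).
\]

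The plan is then to exhibit a faster path from $0$ to $x$ in $\Td$, contradicting the assumption that $\ogd$ is a geodesic. I take
\[
\sigma \;=\; \gamma_{0,u_0}\;\cup\;\pi'\;\cup\;\ogd_{w,x},
\]
where $\pi'$ is any $\|\cdot\|_1$-shortest path from $u_0$ to $w$ inside $\Bds$, whose length is at most $2r_2N$. Because $\gamma_{0,u_0}$ is entirely outside $\Bds$, $\Td(\gamma_{0,u_0})=\Tu(\gamma_{0,u_0})$; by Lemma~\ref{17. Première modification.}$(i)$,
\[
\Tu(\gamma_{0,u_0}) \;\le\; t(0,u_0)-\alpha(r_3-r_2)N(\delta-\delta').
\]
Since passage times are uniformly bounded by $\ST$, $\Td(\pi')\le 2r_2N\ST$, and using $t(0,u_0)\le t(0,w)+t(u_0,w)\le t(0,w)+2r_2N\ST$, one obtains
\[
\Td(\ogd)-\Td(\sigma) \;\ge\; \alpha(r_3-r_2)N(\delta-\delta')-4r_2N\ST.
\]
The right-hand side is strictly positive thanks to the calibration $r_3>7r_2(4\ST+\alpha\delta)/(\alpha\delta)$ (together with $\delta-\delta'\ge 3\delta/4$), yielding the desired contradiction. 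Hence $\ogd_{0,w}$ must contain an edge of $\Ep(T)$, which is an edge of $\gamma$ whose time has been reduced.

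For the ``after'' part, the symmetric argument uses $\gamma_{v_0,x}$ in place of $\gamma_{0,u_0}$ and the matching reduction bound from Lemma~\ref{17. Première modification.}$(i)$ applied to $\gamma_{v_0,x}$. The only real subtlety is bookkeeping: one must verify that the comparison isolates exactly the savings from the first modification (which scale with $r_3-r_2$) and that all competing corrections — the detour $\pi'$ inside $\Bds$ and the slack $|t(0,w)-t(0,u_0)|$ — are absorbable by the $r_2$-scale bound $\ST$. Once the order $r_3\gg r_2$ is used, everything else is routine.
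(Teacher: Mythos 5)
Your proof is correct and follows essentially the same route as the paper: both arguments compare the $\Td$-cost of $\ogd_{0,w}$ against the alternative route $\gamma_{0,u_0}\cup\pi'$, isolate the $\alpha(r_3-r_2)N(\delta-\delta')$ saving from the first modification via Lemma \ref{17. Première modification.}(i), absorb the two $2r_2N\ST$-sized detours, and conclude from $r_3 > 7r_2(4\ST+\alpha\delta)/(\alpha\delta)$. The only cosmetic difference is that you phrase it as a contradiction with the auxiliary path $\sigma$ made explicit, while the paper directly establishes $\Td(\gd_{0,\ud}) < T(\gd_{0,\ud})$ and then reads off that an edge of $\Ep(T)$ must have been used.
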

	
	\begin{proof}[Proof]
		The idea is to use that the time saved by the geodesics from $0$ to $x$ after the first modification is much greater than the geodesic time between any two vertices of $\Bds$ in any environment. 
		Let $\gd$ be a geodesic from $0$ to $x$ in the environment $\Td$. Let $\ud$ be the first vertex in $\Bds$ that $\gd$ visits. Its existence is guaranteed by Lemma \ref{17. Toute géodésique touche pi.}. The aim of the proof is to show \[\Td(\gd_{0,\ud})<T(\gd_{0,\ud}).\] Indeed, the definition of $\ud$ and the fact that the only edges whose time has been reduced which are in $\Bts$ but not in $\Bds$ are edges of $\gamma$ gives us the result. 
		Recall that $u_0$ is the entry point of $\gamma$ in $\Bds$. First, since $\gd$ is a geodesic in the environment $\Td$, \[\Td(\gd_{0,\ud}) \le \Td (\gamma_{0,u_0}) + 2r_2N\ST. \]
		Then, using the first item of Lemma \ref{17. Première modification.}, we obtain \[\Td(\gd_{0,\ud}) \le T (\gamma_{0,u_0}) + 2r_2N\ST - \alpha(r_3-r_2)N(\delta-\delta'). \]
		Finally, using the fact that $\gamma$ is a geodesic in the environment $T$ leads to 
		\[\Td(\gd_{0,\ud}) \le T (\gd_{0,\ud}) + 4r_2N\ST - \alpha(r_3-r_2)N(\delta-\delta'). \]
		To conclude, it is sufficient to observe that the condition $\displaystyle r_3 > \frac{7r_2(4\ST+\alpha \delta)}{\alpha \delta}$ implies that $\alpha(r_3-r_2)N(\delta-\delta')-4r_2N\ST >0$, so we have the desired strict inequality. The same proof gives us the second part of the lemma.  
	\end{proof}
	
	\begin{lemma}\label{17. Toute géodésique passe par s1 et s2 dans le bon ordre.}
		Let $\gd$ be a geodesic from $0$ to $x$ in the environment $\Td$. Then the first edge of $\gd$ whose time has been reduced is $e_1$ and the last is $e_2$. Moreover, the first vertex of $e_1$ taking by $\gd$ is $s_1$ and the last of $e_2$ is $s_2$.
	\end{lemma}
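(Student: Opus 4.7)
Let $\gd$ be a $\Td$-geodesic from $0$ to $x$, let $f_1$ denote its first edge whose passage time has been modified, and let $z_1$ be the first vertex of $f_1$ visited by $\gd$. My plan is to first localize $f_1$ inside $\gamma_{u,u_0}$ via a chronology argument built on Lemma \ref{17. Egalité des boules des deuxième et troisième configurations, et aucune arête du motif dans ces boules.}, and then to pin it down exactly by a direct time comparison against $\gamma$ viewed as a $T$-geodesic.

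I would start with the following chronology: any $\Td$-geodesic visits all of its vertices in a $\Td$-ball $\Bd(0,r)$ before any vertex outside it, since otherwise the sub-geodesic property of $\gd$ would force $\td(0,w_2) = \td(0,w_1) + \Td(\gd_{w_1,w_2}) > r \ge \td(0,w_2)$. Lemma \ref{17. Egalité des boules des deuxième et troisième configurations, et aucune arête du motif dans ces boules.} promotes $B^*(0,T^*(\gamma_{0,u_1}))$ and $B^*(x,T^*(\gamma_{v_1,x}))$ to genuine $\Td$-balls, and since $\gamma_{v_0,v} \subset \gamma_{v_1,x}$ lies entirely inside $B^*(x,T^*(\gamma_{v_1,x}))$, the symmetric chronology around $x$ forces $\gd$ to enter $\Bds$ (as it must by Lemma \ref{17. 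Toute géodésique touche pi.}) before visiting any vertex of $\gamma_{v_0,v}$. The witnessing reduced edge provided by Lemma \ref{17. Toute géodésique touche gamma avant et après B4sN.} therefore cannot lie in $\gamma_{v_0,v}$; moreover, being visited before $\gd$'s first edge of $\Bds$, it cannot lie in $\Eep(T,T') \cup \EeM(T,T')$ either. It must then belong to $\Ep(T) \cap \gamma_{u,u_0}$, and so does $f_1$.

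Once $f_1$ is localized, the subpath $\gd_{0,z_1}$ contains no modified edge, so $\Td(\gd_{0,z_1}) = T(\gd_{0,z_1})$; being a $\Td$-geodesic from $0$ to $z_1$ it satisfies $\Td(\gd_{0,z_1}) \le \Td(\gamma_{0,z_1})$; and $\gamma_{0,z_1}$ being a $T$-geodesic gives $T(\gamma_{0,z_1}) \le T(\gd_{0,z_1})$. Chaining, $T(\gamma_{0,z_1}) \le \Td(\gamma_{0,z_1})$. Because $z_1$ lies on $\gamma$ strictly before $u_0$, the subpath $\gamma_{0,z_1}$ has no edge in $\Bds$ and its only possibly modified edges lie in $\Ep(T) \cap \gamma_{u,u_0}$. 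Writing $f_1 = \{a,b\}$ with $\gamma$ visiting $a$ before $b$, the scenarios $z_1 = b$ and ($z_1 = a$ with $f_1 \neq e_1$) both force $\gamma_{0,z_1}$ to contain at least one element of $\Ep(T)$ (respectively $f_1$ itself, or $e_1$), yielding $\Td(\gamma_{0,z_1}) \le T(\gamma_{0,z_1}) - (\delta - \delta') < T(\gamma_{0,z_1})$ and a contradiction. Hence $z_1 = s_1$ and $f_1 = e_1$; the statement for $e_2$ and $s_2$ follows by a strictly symmetric argument using the after-part of Lemma \ref{17. Toute géodésique touche gamma avant et après B4sN.}, the chronology around $x$, and the $T$-geodesic property of $\gamma_{s_2,x}$.

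The main obstacle is the chronology step: without promoting $B^*(0,\cdot)$ and $B^*(x,\cdot)$ to true $\Td$-balls via Lemma \ref{17. Egalité des boules des deuxième et troisième configurations, et aucune arête du motif dans ces boules.}, one cannot cleanly exclude the geometrically possible scenario in which $\gd$ detours around $\Bds$ to reach $\gamma_{v_0,v}$ first, and in that scenario the local comparison would be delicate because $\gamma_{0,z_1}$ would cross $\Bds$ and the increases coming from $\Eem(T,T')$ edges could a priori compensate the reductions produced on $\Ep(T) \cap \gamma_{u,u_0}$ by the first modification.
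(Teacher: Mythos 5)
Your proof is correct and follows essentially the same route as the paper: you invoke Lemma \ref{17. Egalité des boules des deuxième et troisième configurations, et aucune arête du motif dans ces boules.} to turn $B^*(0,\cdot)$ and $B^*(x,\cdot)$ into $\Td$-balls, use the resulting chronology together with Lemma \ref{17. Toute géodésique touche pi.} and Lemma \ref{17. Toute géodésique touche gamma avant et après B4sN.} to place the first relevant modified edge on $\gamma_{u,u_0}$ rather than $\gamma_{v_0,v}$, and then pin down $e_1$ and $s_1$ by the same three-inequality comparison between $T$- and $\Td$-geodesics. The only differences are cosmetic: the paper tracks the last vertex $\zd$ before the first reduced $\gamma$-edge rather than your first modified edge $f_1$ and its first endpoint $z_1$ (the two coincide once localized), and your concluding sentence should read \emph{chronology around $0$} for the symmetric $e_2,s_2$ case.
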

	
	\begin{proof}[Proof]
		Let $\gd$ be a geodesic from $0$ to $x$ in the environment $\Td$. Let $\zd$ denote the last vertex visited by $\gd$ before it takes for the first time an edge of $\gamma$ whose time has been reduced. We know by the construction and by Lemma \ref{17. Toute géodésique touche gamma avant et après B4sN.} that $\zd$ is a vertex visited by $\gamma_{u,u_0}$ or $\gamma_{v_0,v}$, and thus it is a vertex visited by $\gamma_{u,u_1}$ or $\gamma_{v_1,v}$. Let us prove that it is a vertex visited by $\gamma_{u,u_1}$. Assume, aiming at a contradiction that $\zd$ is a vertex visited by $\gamma_{v_1,v}$.
		
		On the one hand, by Lemma \ref{17. Toute géodésique touche gamma avant et après B4sN.}, $\gd_{0,\zd}$ does not take edges in $\Bds$ and since by Lemma \ref{Chemin pi contenu dans B2.}, $\pi$ is contained in $\Bds$, $\gd_{0,\zd}$ does not take any edge of $\pp$.
		
		On the other hand, all edges of $\gamma_{v_1,x}$ are in $B^*(x,T^*(\gamma_{v_1,x}))$. So, if $\zd$ is visited by $\gamma_{v_1,x}$, we have $\zd \in B^*(x,T^*(\gamma_{v_1,x}))$. But $\Bd(x,T^*(\gamma_{v_1,x})) = B^*(x,T^*(\gamma_{v_1,x}))$ by Lemma \ref{17. Egalité des boules des deuxième et troisième configurations, et aucune arête du motif dans ces boules.}. Since $\gd$ is a geodesic in the environment $\Td$, it implies that $\gd_{\zd,x}$ is entirely contained in $B^*(x,T^*(\gamma_{v_1,x}))$. Thus $\gd_{\zd,x}$ does not take any edge of $\pp$ since there is no edge of $\pp$ in $B^*(x,T^*(\gamma_{v_1,x}))$. 
		
		Combining these two conclusions, we get that $\gd$ does not visit any edge of $\pp$, which contradicts Lemma \ref{17. Toute géodésique touche pi.}. So $\zd$ is a vertex visited by $\gamma_{u,u_1}$. Knowing this, we can complete the proof. By the definition of $\zd$ and by Lemma \ref{17. Toute géodésique touche gamma avant et après B4sN.}, we have \[T(\gd_{0,\zd}) = \Td(\gd_{0,\zd}).\] Since $\gd$ is a geodesic, \[\Td(\gd_{0,\zd}) \le \Td(\gamma_{0,\zd}).\]
		Now, let us assume $\zd$ is not $s_1$. Then, by the definition of $s_1$, $\zd$ is visited by $\gamma_{u,u_1}$ after $s_1$ and thus, 
		\[\Td(\gamma_{0,\zd}) < T(\gamma_{0,\zd}).\]
		Combining these three inequalities yields \[T(\gd_{0,\zd}) < T(\gamma_{0,\zd}),\] which is impossible because $\gamma_{0,\zd}$ is a geodesic in the environment $T$. So, the result is proved and the same proof leads to the second part of this lemma. 
	\end{proof}
	
	We can now prove the two properties stated at the beginning of this subsection. For item $(ii)$, let $\ogd$ be a geodesic from $0$ to $x$ in the environment $\Td$. By Lemma \ref{17. Toute géodésique touche gamma avant et après B4sN.} and Lemma \ref{17. Toute géodésique passe par s1 et s2 dans le bon ordre.}, like $\gamma$, $\ogd$ visits $s_1$ and before that $\ogd$ does not visit any edge whose time has been changed when replacing $T$ by $\Td$. Hence,
	\begin{equation}
		T(\gamma_{0,s_1}) = \Td(\gamma_{0,s_1}) \ge \Td(\ogd_{0,s_1}) = T(\ogd_{0,s_1}) \ge T(\gamma_{0,s_1}). \label{Suite d'égalités et d'inégalités deuxième et troisième points du lemme 3.5.}
	\end{equation}
	This proves that $T(\gamma_{0,s_1})=T(\ogd_{0,s_1})$ and $\ogd_{0,s_1}$ is a geodesic in the environment $T$. Similarly, $\ogd_{s_2,x}$ is a geodesic in the environment $T$ and the path $\OG = \ogd_{0,s_1} \cup \gamma_{s_1,s_2} \cup \ogd_{s_2,x}$ is a geodesic in the environment $T$ that satisfies $(ii)$ in Lemma \ref{17. gros lemme modification.} if we prove that it is a self-avoiding path and it is contained in $\Bqs$. Assume, aiming at a contradiction that $\ogd_{0,s_1}$ visits a vertex of $\gamma_{s_1,s_2}$ which is not $s_1$ and denote it by $s_3$. Then $\ogd_{0,s_1} \cup \gamma_{s_1,x}$ is an optimal path for the passage time in the environment $T$ since $\ogd_{0,s_1}$ is a geodesic in this environment. It implies that $\ogd_{s_3,s_1} \cup \gamma_{s_1,s_3}$ has at least one edge which is $e_1$ and that $T(\ogd_{s_3,s_1} \cup \gamma_{s_1,s_3})=0$, which is impossible since $T(e_1)>0$. The same proof gives that $\ogd_{s_2,x}$ does not visit a vertex of $\gamma_{s_1,s_2}$ and thus $\OG$ is self-avoiding.
	To get item $(ii)$, it remains to prove that $\ogd_{s_1,s_2}$ is contained in $\Bqs$. This comes from the fact that, in any environment, the geodesic time between two vertices of $\Bts$ is bounded by $2 r_3 \ST$, and, since the edges in $\Bqs \setminus \Bts$ have the same times in the environments $T$ or $\Td$, by property $(i)$ of a typical box, the geodesic time to reach a vertex outside $\Bqs$ and to come back in $\Bts$ is bounded from below by $2(r_4-r_3)(\r+\delta)$. The condition on $r_4$ insures that $(r_4-r_3)(\r+\delta)>r_3\ST$. 
	
	For item $(iii)$, observe that from \eqref{Suite d'égalités et d'inégalités deuxième et troisième points du lemme 3.5.}, we also get $\Td(\ogd_{0,s_1})=\Td(\gamma_{0,s_1})$ and $\Td(\ogd_{s_2,x})=\Td(\gamma_{s_2,x})$. Thus, the path $\gamma_{0,s_1} \cup \ogd_{s_1,s_2} \cup \gamma_{s_2,x}$ is an optimal path for the passage time in the environment $\Td$. If it is not self-avoiding, we get a geodesic that satisfies the requirement of $(iii)$ in Lemma \ref{17. gros lemme modification.} by cutting its loops with the same process as in the proof of Lemma \ref{17. gros lemme modification, cas non borné.} in the unbounded case. If we denote by $s'_1$ and $s'_2$ the two vertices such that $\gamma_{0,s'_1} \cup \ogd_{s'_1,s'_2} \cup \gamma_{s'_2,x}$ is a geodesic in the environment $\Td$ obtained by cutting the loops, we have to justify that $\gamma_{s'_1,s'_2}$ is entirely contained in $\Bqs$. We know that $\gamma_{s_1,s_2}$ is entirely contained in $\Bqs$. Let us show that $\gamma_{s'_1,s_1}$ is also entirely contained in $\Bqs$, the proof for $\gamma_{s_2,s'_2}$ is the same. Since $\gamma_{0,s_1} \cup \ogd_{s_1,s_2} \cup \gamma_{s_2,x}$ is an optimal path for the passage time in the environment $\Td$, we have $\Td(\ogd_{s_1,s'_1})+\Td(\gamma_{s'_1,s_1})=0$ and thus in particular $\Td(\gamma_{s'_1,s_1})=0$. Since $s_1$ belongs to $\Bts$, if $\gamma_{s'_1,s_1}$ visits a vertex outside $\Bqs$, there exist two vertices $z$ and $z'$ of $\gamma_{s'_1,s_1}$ such that $z \in \partial \Bts$, $z' \in \partial \Bqs$ and $\gamma_{z,z'}$ except for $z$ is contained in $\Bqs \setminus \Bts$. So $\Td(\gamma_{s'_1,s_1})=0$ implies that $\Td(\gamma_{z,z'})=0$ but $T(\gamma_{z,z'})=\Td(\gamma_{z,z'})=0$ since the time of the edges of $\Bqs \setminus \Bts$ has not been changed. It makes a contradiction with \eqref{17. chemin pas anormalement courts} since $r_4-r_3 \ge 1$ by \eqref{On fixe r4.}. 
	
	\subsubsection{Every geodesic takes the pattern.}
	
	Recall that we assume that  \[\{T \in \cM(\l) \} \cap \left\{ \Sun_\l(T)=s \right\} \cap \{T' \in \cB^*(T) \} \cap \{T'' \in \cB^{**}(T,T')\} \mbox{ occurs}.\]
	The aim of this last subsection is to show that every geodesic in the environment $\Td$ takes the pattern in $\EeM(T,T')$. The proof is decomposed in two steps. The first step is to show that every geodesic takes an edge of $\pi_{u_2,u_3}$ and an edge of $\pi_{v_3,v_2}$. The second step is to show that every geodesic verifying this property takes the pattern in $\EeM(T,T')$. We begin with a technical lemma. 
	
	\begin{lemma}\label{17. lemme technique pour toute géodésique emprunte le motif.}
		For all $w$ vertex of $\pp$, \[|\mu(u_1-w)+\mu(v_1-w)-\mu(u_1-v_1)|\le 2 C_\mu L_1.\]
	\end{lemma}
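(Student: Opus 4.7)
The plan is to exploit the geometric fact from Lemma \ref{Lemme construction de pi.} that every vertex on $\pi$ stays within distance $L_1$ (for $\|\cdot\|_1$) of the segment $[u_1,v_1]\subset\R^d$, together with the elementary property of the norm $\mu$ that it is additive along collinear segments pointing in the same direction.

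First, I would fix a vertex $w$ of $\pi_{u_2,u_3}\cup\pi_{v_3,v_2}$ and invoke item (iii) of Lemma \ref{Lemme construction de pi.} to produce a point $y\in[u_1,v_1]$ with $\|w-y\|_1\le L_1$. Since $y$ lies on the segment joining $u_1$ and $v_1$, the vectors $y-u_1$ and $v_1-y$ are non-negative scalar multiples of the common direction $v_1-u_1$, so by homogeneity of the norm $\mu$ we get the exact identity
\[
\mu(u_1-y)+\mu(v_1-y)=\mu(u_1-v_1).
\]

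Next, I would compare $w$ with $y$ using the triangle inequality and the comparison $\mu(\cdot)\le C_\mu\|\cdot\|_1$ fixed in Section \ref{Sous-section Some tools and notations.}. This yields
\[
\mu(u_1-w)\le \mu(u_1-y)+\mu(y-w)\le \mu(u_1-y)+C_\mu L_1,
\]
and symmetrically $\mu(v_1-w)\le\mu(v_1-y)+C_\mu L_1$. Adding these two inequalities and using the collinearity identity above gives the upper bound
\[
\mu(u_1-w)+\mu(v_1-w)-\mu(u_1-v_1)\le 2C_\mu L_1.
\]
The lower bound $\mu(u_1-w)+\mu(v_1-w)-\mu(u_1-v_1)\ge 0$ is just the triangle inequality for $\mu$ applied to $u_1$, $w$, $v_1$, with no use of the geometry of $\pi$.

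There is no real obstacle here: the only input beyond norm properties is the quantitative closeness of $\pi$ to the segment $[u_1,v_1]$, which is precisely the content of Lemma \ref{Lemme construction de pi.}(iii). The constant $L_1$ in the estimate comes directly from that lemma, and the factor $C_\mu$ arises from converting $\|\cdot\|_1$-closeness to $\mu$-closeness via the equivalence of norms.
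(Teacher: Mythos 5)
Your proposal is correct and takes essentially the same approach as the paper: pick $y\in[u_1,v_1]$ with $\|w-y\|_1\le L_1$ from Lemma \ref{Lemme construction de pi.}(iii), use the additivity $\mu(u_1-y)+\mu(v_1-y)=\mu(u_1-v_1)$ along the segment, and control the error by $C_\mu\|w-y\|_1$. The paper bounds the absolute value in one stroke via the reverse triangle inequality $|\mu(u_1-w)-\mu(u_1-y)|\le\mu(w-y)$ rather than splitting into upper and lower bounds, but this is a cosmetic difference.
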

	
	\begin{proof}[Proof]
		Let $w$ be a vertex of $\pp$. Then, by the construction of $\pi$, there exists a $\ow \in [u_1,v_1] \subset \R^d$ such that $\|w-\ow\|_1 \le L_1$. We have \[\mu(u_1-v_1)=\mu(u_1-\ow)+\mu(v_1-\ow).\]
		Then
		\begin{align*}
			|\mu(u_1-w)-\mu(u_1-v_1)+\mu(v_1-w)| & \le |\mu(u_1-w)-\mu(u_1-\ow)|+|\mu(v_1-w)-\mu(v_1-\ow)| \\
			& \le 2 C_\mu \|w-\ow\|_1 \\
			& \le 2 C_\mu L_1.
		\end{align*}
	\end{proof}
	
	\begin{lemma}\label{17. Toute géodésique touche pi avant et après le motif.}
		Let $\gd$ be a geodesic from $0$ to $x$ in the environment $\Td$. Then $\gd$ visits a vertex of $\pi_{u_2,u_3}$ and one of $\pi_{v_3,v_2}$. More precisely, the first vertex of $\gd$ that belongs to $\pp$ belongs to $\pi_{u_2,u_3}$ and the last to belongs to $\pi_{v_3,v_2}$. 
	\end{lemma}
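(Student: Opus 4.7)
The plan is the following. By the $0\leftrightarrow x$, $u_i\leftrightarrow v_i$ symmetry of the whole construction, it will suffice to prove that the first vertex $w$ of $\gd$ on $\pp$ lies in $\pi_{u_2,u_3}$; the last-vertex statement will follow by applying the same argument to the reverse of $\gd$. I assume for contradiction that $w\in\pi_{v_3,v_2}$, and plan to exhibit a path from $0$ to $w$ in the environment $\Td$ strictly faster than $\gd_{0,w}$, which will contradict the optimality of $\gd$.

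For the alternative I would take $\sigma = \sigma_1 \cup \pi_{u_2,w}$, with $\sigma_1$ a $\Tu$-geodesic from $0$ to $u_2$. Since $u_2\in B^*(0,\Tu(\gamma_{0,u_1}))$, the path $\sigma_1$ lies in this ball, where Lemma~\ref{17. Egalité des boules des deuxième et troisième configurations, et aucune arête du motif dans ces boules.} forbids every second-modification edge, so $\Td(\sigma_1)=t^*(0,u_2)\le \Tu(\gamma_{0,u_1})$. The same lemma places $u_3$ outside $B^*(0)$, so $u_2$ precedes $u_3$ on the oriented path $\pi$, and $\pi_{u_2,w}$ is the forward oriented subpath of length $\|u_2-w\|_1\le\|u_1-w\|_1$. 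Classifying its edges into those of $\Eep(T,T')$ (time $\le\r+\delta'$), those of $\EeM(T,T')$ (total time $\le\tau^\Lambda$), and the two edges of $\pi$ adjacent to $u_2$ and $v_2$ (time $\le\ST$), I obtain
\[
\Td(\sigma)\;\le\;\Tu(\gamma_{0,u_1}) + (\r+\delta')\,\|u_1-w\|_1 + 2\ST + \tau^\Lambda.
\]

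For the matching lower bound on $\td(0,w)$: since $w$ is the first vertex of $\gd$ on $\pp$ and every edge of $\Eep(T,T')$ has both endpoints on $\pp$, $\gd_{0,w}$ uses no edge of $\Eep(T,T')$; modifications on $\Eem(T,T')$ only increase passage times, while savings over $\EeM(T,T')$ sum to at most $T^\Lambda$. Hence $\td(0,w)\ge \Tu(\gd_{0,w})-T^\Lambda\ge t^*(0,w)-T^\Lambda$. To lower bound $t^*(0,w)$ I insert $v_1$: $\gamma$ being a $\Tu$-geodesic by Lemma~\ref{17. Première modification.}(iii), and $\gamma_{u_1,v_1}\subset\Bds$ where $\Tu=T$, give $t^*(0,v_1)=\Tu(\gamma_{0,u_1})+t(u_1,v_1)$; then Lemma~\ref{17. lemme technique partie argument de modification}(i) (applicable since $\|u_1-v_1\|_1\ge N\nabla/C_\mu\gg \Kh N$ by the choice of $\nabla$) together with property (iii) of the typical box and the alignment estimate of Lemma~\ref{17. lemme technique pour toute géodésique emprunte le motif.} yield
\[
t^*(0,w)\;\ge\;\Tu(\gamma_{0,u_1}) + (1-2\epsilon)\mu(u_1-w) - 3\epsilon\mu(v_1-w) - 2C_\mu L_1 - N.
\]

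Subtracting the two bounds and using \eqref{Lien entre mu et delta.} in the form $\mu(u_1-w)\ge(\r+\delta)\|u_1-w\|_1$, the coefficient of $\|u_1-w\|_1$ in $\td(0,w)-\Td(\sigma)$ is the key term $(1-2\epsilon)(\r+\delta)-(\r+\delta')$, which the quantified choices $\delta'=\min(\delta/4,\delta/(d+1))$ and $\epsilon<\delta/(24C_\mu)$ (together with $\r+\delta\le C_\mu$) make a strictly positive constant $c$. The orientation of $\pi$ gives $\|v_1-w\|_1=\|u_1-v_1\|_1-\|u_1-w\|_1\le\|u_1-v_1\|_1$, so $\mu(v_1-w)\le C_\mu\|u_1-v_1\|_1$; Lemma~\ref{17. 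Minoration distance a1 motif.} gives $\|u_1-w\|_1\ge\|u_1-v_1\|_1/3$, so the leading contribution is at least $(c/3-3\epsilon C_\mu)\|u_1-v_1\|_1$, the same choice of $\epsilon$ keeping this coefficient positive. Since $\|u_1-v_1\|_1\ge N\nabla/C_\mu$ by Lemma~\ref{17. minoration de mu(u1-v1) et arêtes de la zone rouges contenues dans B4.}, this dominates the $O(1)$ remainders for $N$ as fixed in~\eqref{On fixe N, n et x dans le cas borné.}, giving $\td(0,w)>\Td(\sigma)$ and the desired contradiction. The delicate point of the argument will be precisely this balancing of the $3\epsilon\mu(v_1-w)$ loss produced by Lemma~\ref{17. lemme technique pour toute géodésique emprunte le motif.} against the $(1-2\epsilon)(\r+\delta)\|u_1-w\|_1$ gain, and it is exactly what the quantified choices of $\delta'$ and $\epsilon$ at the start of Section~\ref{Sous-section preuve, cas borné} were designed for.
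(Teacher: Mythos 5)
Your proof is correct and follows essentially the same strategy as the paper's: assume $w\in\pi_{v_3,v_2}$ for contradiction, build the alternative route through $u_2$ along $\pi$, bound $\Td$ of that route from above and $\td(0,w)$ from below via $t^*(0,v_1)-t^*(v_1,w)$ together with the alignment estimate of Lemma~\ref{17. lemme technique pour toute géodésique emprunte le motif.}, and close the gap using Lemma~\ref{17. Minoration distance a1 motif.} and the quantitative choices of $\epsilon$, $\delta'$, $\nabla$ and $N$. The only difference is minor bookkeeping: you package the typical-box estimate for $t(u_1,v_1)$ via Lemma~\ref{17. lemme technique partie argument de modification}$(i)$ rather than property $(iii)$ directly, which yields a $(1-2\epsilon)\mu(u_1-w)-3\epsilon\mu(v_1-w)$ split instead of the paper's $\mu(u_1-w)-\epsilon(\mu(u_1-v_1)+\mu(v_1-w))$, but both reduce to the same arithmetic.
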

	
	\begin{proof}[Proof]
		Let $\gd$ be a geodesic from $0$ to $x$ in the environment $\Td$. By Lemma \ref{17. Toute géodésique touche pi.}, there exists at least one vertex of $\pp$ visited by $\gd$. Let $w$ be the first vertex of $\gd$ that belongs to $\pp$ and assume that $w$ belongs to $\pi_{v_3,v_2}$. Note that $\gd_{0,w}$ does not visit any other vertex of $\pp$. The aim of the proof is to show that \[\Td(\gamma^\pi_{0,w}) < \Td(\gd_{0,w}),\] which is impossible since $\gd$ is a geodesic in the environment $\Td$. We start with 
		\begin{align*}
			T^*(\gd_{0,w}) & \ge t^*(0,w) \\
			& \ge t^*(0,v_1)-t^*(v_1,w) \\
			& = t^*(0,u_1) + t^*(u_1,v_1)-t^*(v_1,w),
		\end{align*}
		since, by Lemma \ref{17. Première modification.}, $\gamma$ is a geodesic in the environment $T^*$. By construction, there is no edge of $\gamma_{u_1,v_1}$ whose time has been changed at the first modification, thus $t^*(u_1,v_1)=t(u_1,v_1)$. Furthermore, since there is no edge whose time has been increased at the first modification,  $t^*(v_1,w) \le t(v_1,w)$, so \[T^*(\gd_{0,w}) \ge t^*(0,u_1)+t(u_1,v_1)-t(v_1,w).\]
		We now want to bound from below $\Td(\gd_{0,w})$. The only edges whose time has been reduced at the second modification are among those of $\pp$ and of $\EeM(T,T')$. So, since $\gd_{0,w}$ does not take any edge of $\pp$, it can only save time taking edges of $\EeM(T,T')$. So,
		
		\[\Td(\gd_{0,w}) \ge t^*(0,u_1) + t(u_1,v_1) - t(v_1,w) - T^\Lambda. \]
		Then, using the definition of a typical box, Lemma \ref{17. lemme technique pour toute géodésique emprunte le motif.} and the inequality $t^*(0,u_1) \ge t^*(0,u_2)$ (which comes from the definition of $u_2$) leads to  
		\[\Td(\gd_{0,w}) \ge t^*(0,u_2) + \mu(u_1-w)-\epsilon(\mu(u_1-v_1)+\mu(v_1-w))-2 N -2C_\mu L_1 - T^\Lambda.\]
		On the other hand, note that, $\pi$ being an oriented path, $\|u_1-w\|_1 \ge \|u_2-w\|_1$, so, using the knowledge of $\Td$ on edges of $\pi$,
		\begin{align*}
			\Td(\gamma^\pi_{0,w}) & = \Td(\gamma^\pi_{0,u_2})+\Td(\gamma^\pi_{u_2,w}) \\
			& \le t^*(0,u_2)+2\ST+2\ST \lll + (\r+\delta')\|u_1-w\|_1.
		\end{align*}
		To conclude, let us show that we have the inequality 
		\begin{align}
			& \hspace{-50pt} t^*(0,u_2)+2\ST+2\ST \lll + (\r+\delta')\|u_1-w\|_1 \nonumber \\
			& <  \, t^*(0,u_2) + \mu(u_1-w)-\epsilon(\mu(u_1-v_1)+\mu(v_1-v))-2 N -2C_\mu L_1 - T^\Lambda. \label{inégalité souhaitée pour la fin de la preuve du cas borné.}
		\end{align}
		First, combining Lemma \ref{17. minoration de mu(u1-v1) et arêtes de la zone rouges contenues dans B4.} and Lemma \ref{17. Minoration distance a1 motif.} leads to
		\begin{equation}
			\|u_1-w\|_1 \ge \frac{N\nabla}{3C_\mu}. \label{Minoration norme u1 moins w.}
		\end{equation} 
		Then, by \eqref{Lien entre mu et delta.}, we have \[\mu(u_1-w) \ge \left( \r + \delta \right) \|u_1-w\|_1.\]
		Recall that $\Ki = T^\Lambda+2(C_\mu L_1+\ST+\ST\lll)$, it is sufficient to have \[\delta' < \delta- \epsilon C_\mu \frac{\|u_1-v_1\|_1+\|v_1-w\|_1}{\|u_1-w\|_1 } - \frac{2 N}{\|u_1-w\|_1 } - \frac{\Ki}{\|u_1-w\|_1 }. \]
		Then, by Lemma \ref{17. Minoration distance a1 motif.}, $\displaystyle \frac{\|u_1-v_1\|_1+\|v_1-w\|_1}{\|u_1-w\|_1} \le 6$. So, since $\displaystyle \epsilon < \frac{\delta}{24 C_\mu}$, we have \[\epsilon C_\mu \frac{\|u_1-v_1\|_1+\|v_1-w\|_1}{\|u_1-w\|_1 } < \frac{\delta}{4}. \]
		By \eqref{Minoration norme u1 moins w.} and since $\displaystyle 1 < \frac{\delta \nabla}{24 C_\mu}$ (by \eqref{On fixe t0 ou nabla0.}) and $\displaystyle N > \frac{12 C_\mu \Ki}{\delta \nabla}$ (by \eqref{On fixe N, n et x dans le cas borné.}), the condition $\displaystyle \delta' \le \frac{\delta}{4}$ gives us \eqref{inégalité souhaitée pour la fin de la preuve du cas borné.}.
		Hence $\Td(\gamma^\pi_{0,w})<\Td(\gd_{0,w})$.
	\end{proof}
	
	Finally, let us prove the following lemma which completes the proof of Lemma \ref{17. gros lemme modification.}. 
	
	\begin{lemma}\label{17. Toute géodésique qui touche pi avant et après le motif emprunte le motif.}
		Any geodesic from $0$ to $x$ takes the pattern at the pattern-location $\EeM(T,T')$.
	\end{lemma}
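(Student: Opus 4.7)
Let $\gd$ be a geodesic from $0$ to $x$ in the environment $\Td$. By Lemma \ref{17. Toute géodésique touche pi avant et après le motif.} there exist $w_1 \in \pi_{u_2,u_3}$ and $w_2 \in \pi_{v_3,v_2}$ visited by $\gd$ in this order, with $w_1$ the first vertex of $\gd$ on $\pp$ and $w_2$ the last. The plan is to prove that $\gd_{w_1,w_2}$ visits $u_3$ and $v_3$ in this order, and that the subpath $\gd_{u_3,v_3}$ is entirely contained in the pattern-location $B_\infty(c_P,\lll)$. If this holds, then since $\gd_{u_3,v_3}$ is a geodesic segment, it is in particular optimal for the passage time among paths from $u_3$ to $v_3$ lying inside $B_\infty(c_P,\lll)$; using that $\theta_{Ns} T'' \in \cA^{\Lambda}_j$ for $j=\cO(T,T')$, the second property in Lemma \ref{17. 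Lemme motif valable} yields a subpath of $\gd_{u_3,v_3}$ from the appropriate translate of $u^\Lambda$ to the appropriate translate of $v^\Lambda$ entirely inside the corresponding translate of $\Lambda$, i.e.\ $\gd$ takes the pattern at $\EeM(T,T')$.

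To establish the geometric facts about $\gd_{w_1,w_2}$, I compare $\Td(\gd_{w_1,w_2})$ with the time of the alternative path $\sigma_{\mathrm{alt}} := \pi_{w_1,u_3} \cup \sigma^* \cup \pi_{v_3,w_2}$, where $\sigma^*$ is an arbitrary path from $u_3$ to $v_3$ inside $B_\infty(c_P,\lll)$. Every edge of $\pi$ outside the pattern-location and not incident to $u_2$ or $v_2$ has $\Td$-time at most $\r+\delta'$ by the definition of $\Eep(T,T')$, and every edge inside $B_\infty(c_P,\lll)$ has $\Td$-time at most $\nu$ by the inclusion $\cA^{\Lambda}_j \subset \{\forall e \in \Lambda,\, T(e) \le \nu\}$. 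Combining these two estimates yields
\[
\Td(\sigma_{\mathrm{alt}}) \le (\r+\delta')\bigl(\|w_1-u_3\|_1 + \|v_3-w_2\|_1\bigr) + \tau^\Lambda + 2\ST,
\]
and hence the same bound is inherited by $\Td(\gd_{w_1,w_2})$. In the converse direction, every edge of $\Bds$ outside $\pi \cup \EeM(T,T') \cup B^*(0,\cdot) \cup B^*(x,\cdot)$ has $\Td$-time at least $\nu$ by the construction of $\Eem(T,T')$, and by Lemma \ref{17. Egalité des boules des deuxième et troisième configurations, et aucune arête du motif dans ces boules.} the two balls $B^*(0,\cdot)$ and $B^*(x,\cdot)$ are disjoint from the pattern-location.

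A careful accounting of the edges used by $\gd_{w_1,w_2}$, treating separately the possibilities of exiting $\Bds$ (which is forbidden by property $(ii)$ of a typical box combined with the choice of $r_4$), traversing an $\Eem$-edge of weight $\nu$, and making an excursion into $B^*(0,\cdot) \cup B^*(x,\cdot)$, will show that any deviation from the structure $\pi_{w_1,u_3} \cdot (\text{subpath in } B_\infty(c_P,\lll)) \cdot \pi_{v_3,w_2}$ would drive $\Td(\gd_{w_1,w_2})$ strictly above the bound above. This forces $\gd$ to visit $u_3$ and then $v_3$, with $\gd_{u_3,v_3} \subset B_\infty(c_P,\lll)$, completing the proof. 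The main obstacle is to rule out cheap excursions into $B^*(0,\cdot) \cup B^*(x,\cdot)$: these balls are disjoint from the pattern-location but contain edges of original (small) time, so one has to argue — in the spirit of the ordering used in the proof of Lemma \ref{17. Toute géodésique touche pi avant et après le motif.} — that any detour through them requires re-crossing an $\Eem$-edge to reach $B_\infty(c_P,\lll)$, incurring at least an extra cost $\nu$ which, by the choice of $\nabla$ and $\delta'$, cannot be absorbed by the slack in the upper bound on $\Td(\sigma_{\mathrm{alt}})$.
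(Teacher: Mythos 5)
You correctly reduce the claim to showing that $\gd$ passes through $u_3$ and then $v_3$ with $\gd_{u_3,v_3}\subset B_\infty(c_P,\lll)$, and correctly invoke the second property in Lemma \ref{17. Lemme motif valable} (applied to the optimal subpath $\gd_{u_3,v_3}$) to conclude; your upper bound on $\Td(\sigma_{\mathrm{alt}})$ is also essentially right. However, the heart of the proof --- the ``careful accounting of the edges used by $\gd_{w_1,w_2}$'' --- is left as a promise rather than carried out, and the obstacle you single out is not the real one. By Lemma \ref{17. Egalité des boules des deuxième et troisième configurations, et aucune arête du motif dans ces boules.}, $B^*(0,\Tu(\gamma_{0,u_1}))$ and $B^*(x,\Tu(\gamma_{v_1,x}))$ are geodesic balls for $\Td$ centered at $0$ and at $x$; a $\Td$-geodesic from $0$ to $x$, once it leaves the ball around $0$, never re-enters it (and symmetrically at $x$), so no part of $\gd$ strictly between its first and last contacts with $\pp$ can lie in either ball. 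This is immediate and needs no cost estimate at all.

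The genuine difficulty, which a single comparison of $\Td(\gd_{w_1,w_2})$ with $\Td(\sigma_{\mathrm{alt}})$ cannot resolve, is pinning down that $\gd$ enters and exits the pattern-location exactly through $u_3$ and $v_3$ rather than slicing across it obliquely. The paper chooses a cleaner pair $u_4\in\pi_{u_2,u_3}$, $v_4\in\pi_{v_3,v_2}$ with $\gd_{u_4,v_4}$ containing no edge of $\pp$, and then runs a four-step elimination: first $\gd_{u_4,v_4}\subset\Bds$, so by construction of $\Eem(T,T')$ every edge of $\gd_{u_4,v_4}$ outside $\EeM(T,T')$ has $\Td$-time at least $\nu$; second $\|u_4-u_3\|_1\le 4\lll$ and $\|v_4-v_3\|_1\le 4\lll$; third $u_4=u_3$ and $v_4=v_3$; fourth $\gd_{u_4,v_4}\subset\EeM(T,T')$. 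The second of these steps is precisely where a total-time comparison fails: $\gd_{u_4,v_4}$ could conceivably gain in one coordinate direction what it loses in another by crossing $\EeM(T,T')$ diagonally, and the paper must use the direction-by-direction decomposition $\Td_i$, bounding the deficit in direction $\cO(T,T')$ and in the remaining $d-1$ directions separately before summing. The third and fourth steps then require explicit shortcut constructions replacing a detour of $\gd$ by a $\|\cdot\|_1$-optimal path. Until you reproduce some version of this step-by-step argument --- in particular the directional decomposition --- the proof is incomplete.
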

	
	\begin{proof}[Proof]
		Let $\gd$ be a geodesic from $0$ to $x$ in the environment $\Td$. By Lemma \ref{17. Toute géodésique touche pi avant et après le motif.}, $\gd$ visits a vertex of $\pi_{u_2,u_3}$ and one of $\pi_{v_3,v_2}$. As a consequence, there exist a vertex $u_4$ of $\pi_{u_2,u_3}$ and a vertex $v_4$ of $\pi_{v_3,v_2}$ such that $\gd$ goes from $u_4$ to $v_4$ without taking edges of $\pp$. Let us remember that for all edge $e$ of $\EeM(T,T')$, we have $\Td(e) \le \nu$. We prove successive properties. 
		
		\begin{itemize}
			\item \textit{The edges of $\gd_{u_4,v_4}$ which are not in $\EeM(T,T')$ have a passage time greater than or equal to $\nu$.}
			
			\medskip
			
			Since there is no edge of $\gd_{u_4,v_4}$ in $\Bd(0,T^*(\gamma_{0,u_1}))$, $\Bd(x,T^*(\gamma_{v_1,x}))$ or $\pp$, it is sufficient to prove that $\gd_{u_4,v_4}$ is entirely contained in $\Bds$. By convexity, we have that all points of $[u_1,v_1]$ are contained in $B_\mu(c_0,N\nabla)$, so by Lemma \ref{Lemme construction de pi.}, $\displaystyle \|u_4-c_0\|_1 \le \frac{N \nabla}{c_\mu} + L_1$, and thus, \[\|u_4-sN\|_1 \le N \left( \frac{\nabla}{c_\mu} + r_1 \right) + L_1. \] So, if $\gd_{u_4,v_4}$ is not entirely contained in $\Bds$, the number of edges whose time is greater than or equal to $\nu$ that $\gd_{u_4,v_4}$ has to travel to leave $\Bds$ is bounded from below by $\left(r_2 - \frac{\nabla}{c_\mu} - r_1\right) N -2d\lll-L_1$, and we get  \[\Td(\gd_{u_4,v_4}) \ge \left( \left( r_2 - \frac{\nabla}{c_\mu} - r_1 \right)N-2d\lll-L_1 \right) \nu. \]
			But \[\Td(\gamma^\pi_{u_4,v_4}) \le \frac{2N\nabla}{c_\mu} (\r+\delta') +2\ST(1+\lll), \]
			and since $\displaystyle N \ge 1$, $\displaystyle \frac{\ST}{\nu} \ge 1$, $\displaystyle \frac{\r + \delta}{\nu} \le 1$ and by \eqref{on fixe r2, cas borné}, \[ r_2 > r_1 + L_1 + \frac{3\nabla}{c_\mu} + \frac{2\ST}{\nu}(1+(1+d)\lll), \] we have $\displaystyle \Td(\gamma^\pi_{u_4,v_4}) < \Td(\gd_{u_4,v_4})$, which is impossible since $\gd_{u_4,v_4}$ is a geodesic in the environment $\Td$.  
			
			\item \textit{We have that $\|u_4-u_3\|_1 \le 4\lll$ and $\|v_4-v_3\|_1 \le 4\lll$.}
			
			\medskip
			
			Assume that it is not the case. Let us show that $\Td(\gd_{u_4,v_4})-\Td(\pi_{u_4,v_4})>0$, which is a contradiction since $\gd_{u_4,v_4}$ is a geodesic in the environment $\Td$. 
			To this aim, let us compare the time that can save each path compared with the other in any direction. Recall the notation introduced in the proof of Lemma \ref{Lemme théorème 6.2 KRAS.}: for $i \in \{1,\dots,d\}$ and a path $\tilde{\pi}$, $\Td_i({\tilde{\pi}})$ denotes the sum of the passage times of the edges of $\tilde{\pi}$ which are in the direction $\epsilon_i$. In the direction $\cO(T,T')$, since $\|u_4-u_3\|_1>4\lll$ or $\|v_4-v_3\|_1>4\lll$, and by the construction, we have that
			\[\Td_{\cO(T,T')}(\gd_{u_4,v_4}) - \Td_{\cO(T,T')}(\pi_{u_4,v_4}) \ge 4 \lll \nu - 4 \lll (\r + \delta') + 2 \lll \r - 2\lll \nu, \]
			where the term $2\lll\r-2\lll\nu$ comes from the time potentially saved by $\gd_{u_4,v_4}$by taking edges of $\EeM(T,T')$. Then, in any other direction, $\gd_{u_4,v_4}$ can save a time lower than or equal to $2\lll\delta'$ compared with $\pi_{u_4,v_4}$ thanks to the edges in $\EeM(T,T')$.
			Hence, 
			\begin{align*}
				\Td(\gd_{u_4,v_4})-\Td(\pi_{u_4,v_4}) & \ge 2\lll\nu+2\lll \r-4\lll(\r+\delta') - 2 (d-1) \lll \delta' \\
				& \ge 2\lll\nu-2\lll \r-(4\lll + 2(d-1)\lll) \delta'> 0
			\end{align*}  
			since $\displaystyle \r+(1+d)\delta' < \r + \delta \le \nu$. 
			
			\item \textit{We have that $u_4=u_3$ and $v_4=v_3$.}
			
			\medskip
			
			Assume that it is not the case. First, assume that $\gd_{u_4,v_4}$ does not take any edge of $\EeM(T,T')$. According to the first property, $\Td(\gd_{u_4,v_4}) \ge \|u_4-v_4\|_1 \nu$. Then, since $\pi$ is an oriented path, \[ \Td(\gamma^\pi_{u_4,v_4}) \le (\|u_4-v_4\|_1-2\lll)(\r+\delta') + 2\lll \nu.\] Since $u_4 \neq u_3$ or $v_4 \neq v_3$, we have that $\|u_4-v_4\|_1>2\lll$ and we obtain \[\Td(\gamma^\pi_{u_4,v_4}) < \Td(\gd_{u_4,v_4}),\] which is a contradiction since $\gd_{u_4,v_4}$ is a geodesic in the environment $\Td$.
			So $\gd_{u_4,v_4}$ takes an edge of $\EeM(T,T')$. Let $\ud_0$ be the first entry point of $\gd_{u_4,v_4}$ in $\EeM(T,T')$ and consider the path $\pi^{**}$ following $\pi_{u_4,u_3}$, then going from $u_3$ to $\ud_0$ in one of the shortest way for the norm $\|.\|_1$ and then following $\gd_{\ud_0,v_4}$. Then the number of edges of $\pi^{**}_{u_4,\ud_0}$ is lower than or equal to the number of edges of $\gd_{u_4,\ud_0}$, for all $e \in \pi^{**}_{u_4,\ud_0}$, $\Td(e) \le \nu$, there exists $e' \in \pi^{**}_{u_4,\ud_0}$ such that $\Td(e') \le \r + \delta'$ and for all $e \in \gd_{u_4,\ud_0}$, $\Td(e) \ge \nu$. So we have $\Td(\pi^{**}) < \Td(\gd_{u_4,v_4})$, which is impossible since $\gd$ is a geodesic in the environment $\Td$. The same proof gives $v_4=v_3$.
			
			\item \textit{$\gd$ takes the pattern at the pattern-location $\EeM(T,T')$.}
			
			\medskip
			
			Assume that $\gd_{u_4,v_4}$ is not entirely contained in $\EeM(T,T')$. Let $\vd_0$ be the first exit point from $\EeM(T,T')$ of $\gd_{u_4,v_4}$ and $\ud_1$ the first entry point after $\vd_0$. Let us consider the shortcut $\pi^{**}$ going from $\vd_0$ to $\ud_1$ in one of the shortest way for the norm $\|.\|_1$. Then let $\vd_{0,+}$ denote the first vertex visited by $\gd$ after $\vd_0$, then \[\|\ud_1-\vd_{0,+}\|_1=\|\ud_1-\vd_0\|_1+1.\] Indeed, we have that $\|\ud_1-\vd_{0,+}\|_1-\|\ud_1-\vd_0\|_1$ is equal to $1$ or $-1$, and if it is equal to $-1$, it implies that $\vd_{0,+}$ is in $\EeM(T,T')$, which is impossible. So, $\gd_{\vd_0,\ud_1}$ has strictly more edges than $\pi^{**}$. Furthermore, all edges of $\gd_{\vd_0,\ud_1}$ have a time greater than or equal to $\nu$ although all edges of $\pi^{**}$ have a time lower than or equal to $\nu$. So, $\Td(\pi^{**}) < \Td(\gd_{\vd_0,\ud_1})$, which is a contradiction since $\gd_{\vd_0,\ud_1}$ is a geodesic in the environment $\Td$.
			
			Thus $\gd_{u_4,v_4}$ is a path entirely contained in $\EeM(T,T')$, going from $u_3$ to $v_3$ and with an optimal time. So, we have the result. 
		\end{itemize}
	\end{proof}

	\section{Proofs of generalizations of modification arguments in \cite{KRAS}}\label{Section preuve des généralisations de KRAS.}
	
	\subsection{Modification proof for the Euclidean length of geodesics}\label{Section preuve du théorème 6.2.}
	
	To prove Theorem \ref{Théorème 6.2 KRAS}, we begin by defining the valid pattern in three different cases. Recall that $k$ and $\l$ are given by the assumptions of this theorem and that $(\epsilon_i)_{i \in \{1,\dots,d\}}$ are the vectors of the canonical basis.
	
	\subparagraph*{Case where zero is an atom.}
	
	In addition to the assumptions of Theorem \ref{Théorème 6.2 KRAS}, we assume that zero is an atom for $F$. We set $L_1=k+2$, $L_2=\l+2$ and if $d \ge 3$, for all $i \in \{3,\dots,d\}$, $L_i=2$. We define a pattern in $\displaystyle \Lambda=\prod_{i=1}^{d} \{0,\dots,L_i\}$. We take the endpoints $u=\sum_{i=2}^d \epsilon_i$ and $v=u+(k+2) \epsilon_1$.
	We denote by $\PP$ the path going from $u$ to $v$ by $k+2$ steps in the direction $\epsilon_1$ and by $\PPP$ the path going from $u$ to $u+\epsilon_1$ by one step in the direction $\epsilon_1$, then to $u+\epsilon_1+\l \epsilon_2$ by $\l$ steps in the direction $\epsilon_2$, then to $u+(k+1) \epsilon_1+\l \epsilon_2$ by $k$ steps in the direction $\epsilon_1$, then to $u+(k+1) \epsilon_1$ by $\l$ steps in the direction $- \epsilon_2$ and then to $v$ by one step in the direction $\epsilon_1$. We define $\cA^\Lambda$ as follows:
	\begin{itemize}
		\item for all $e \in \PP \cup \PPP$, $T(e)=0$,
		\item for all $e \in \Lambda$ which is not in $\PP \cup \PPP$, $T(e)>0$.
	\end{itemize}
	Note that $\cA^\Lambda$ has a positive probability. Then, $\PP$ and $\PPP$ are the only two optimal self-avoiding paths from $u$ to $v$ entirely contained in the pattern. Furthermore, for every vertex $z \in \PP \cup \PPP$ different from $u$ and $v$, there exists no path from $\partial \Lambda \setminus \{u,v\}$ to $z$ whose passage time is equal to $0$.
	
	\subparagraph*{Unbounded case.}
	
	Here, in addition to the assumptions of Theorem \ref{Théorème 6.2 KRAS}, we assume that zero is not an atom and that the support of $F$ is unbounded. We set $L_1=k$, $L_2=\l$ and if $d \ge 3$, for all $i \in \{3,\dots,d\}$, $L_i=0$. We define a pattern in $\displaystyle \Lambda=\prod_{i=1}^{d} \{0,\dots,L_i\}$. We take the endpoints $u=0$ and $v=k \epsilon_1$. We denote by $\PP$ the path going from $0$ to $k \epsilon_1$ by $k$ steps in the direction $\epsilon_1$ and by $\PPP$ the path going from $0$ to $\l \epsilon_2$ by $\l$ steps in the direction $\epsilon_2$, then to $k \epsilon_1+\l \epsilon_2$ by $k$ steps in the direction $\epsilon_1$ and then to $v$ by $\l$ steps in the direction $-\epsilon_2$. Then, we index the edges of $\PP$ and the ones of $\PPP$ in the order in which they are taken by theses paths. We respectively denote them by $(e^1_i)_{i \in \{1,\dots,k\}}$ and $(e^2_i)_{i \in \{1,\dots,k+2\l\}}$. We fix $\displaystyle M > \sum_{j=1}^{k}s'_j$ and we define $\cA^\Lambda$ as follows: 
	\begin{itemize}
		\item for all $i \in \{1,\dots,k\}$, $T(e^1_i)=s'_i$ and for all $i \in \{1,\dots,k+2\l\}$, $T(e^2_i)=r'_i$,
		\item for all $e \in \Lambda$ which are not in $\PP$ or $\PPP$, $T(e)>M$. 
	\end{itemize}
	Since the support of $F$ is unbounded, $\P(\cA^\Lambda)$ is positive. Furthermore, we have \[T(\PP) = T(\PPP).\] Since $M > T(\PP)$, the optimal paths from $u$ to $v$ entirely contained in the pattern can not take other edges than those in $\PP \cup \PPP$. Hence $\PP$ and $\PPP$ are the only two optimal paths from $u$ to $v$ entirely contained in the pattern. 
	
	\subparagraph*{Bounded case.}
	
	In addition to the assumptions of Theorem \ref{Théorème 6.2 KRAS}, we assume that zero is not an atom and that the support of $F$ is bounded. We set $\ST=\sup(\text{support}(F))$. We denote 
	$\aM=\max(r'_1,\dots,r'_{k+2\l},s'_1,\dots,s'_k)$. Then, there are at least $2\l$ integers $j \in \{1,\dots,k+2\l\}$ such that $r'_j < \aM$. Indeed, assume that this is not the case. Then, we have \[ \sum_{i=1}^{k+2\l} r'_i \ge (k+1)\aM > k\aM \ge \sum_{j=1}^{k}s'_j, \] and this contradicts \eqref{Egalité avec les atomes}. Thus, even if it means changing the indexes, we can assume that for all $i \in \{1,\dots,2\l\}$, $r'_{i}<\aM$ and we denote $t_w=\aM-\max(r'_1,\dots,r'_{2\l})>0$. We fix $\alpha > 0$ an integer such that: 
	\begin{equation}
		\alpha > \max \left( \frac{k}{2\l}, \frac{k \aM}{\l t_w} \right). \label{On fixe alpha dans la preuve du théorème 6.2 de KRAS.}
	\end{equation}
	We set $k'=\alpha k$, $\l' = \alpha \l$, $L_1=k'$, $L_2=\l'$ and if $d \ge 3$, for all $i \in \{3,\dots,d\}$, $L_i=0$. We define a pattern in $\displaystyle \Lambda=\prod_{i=1}^{d} \{0,\dots,L_i\}$. We take the endpoints $u=0$ and $v=k' \epsilon_1$. Let $\PP$ be the path going from $u$ to $v$ by $k'$ steps in the direction $\epsilon_1$ and $\PPP$ be the path going from $u$ to $u'=\l' \epsilon_2$ by $\l'$ steps in the direction $\epsilon_2$, then to $v'=k' \epsilon_1+\l' \epsilon_2$ by $k'$ steps in the direction $\epsilon_1$ and then to $v$ by $\l'$ steps in the direction $-\epsilon_2$. Then we index the edges of $\PP$, $\PPP_{u,u'}$, $\PPP_{u',v'}$ and $\PPP_{v',v}$ in the order in which they are taken by these paths. We respectively denote them by $\left(e^1_i\right)_{i \in \{1,\dots,k'\}}$, $\left(e^2_i\right)_{i \in \{1,\dots,\l'\}}$, $\left(e^3_i\right)_{i \in \{1,\dots,k'\}}$ and $\left(e^4_i\right)_{i \in \{1,\dots,\l'\}}$. The idea for the event $\cA^\Lambda$ is just to alternate the atoms on every boundary of the rectangle whose vertices are $u$, $u'$, $v'$ and $v$. It allows us a better control of the time of a path taking both vertices of $\PP$ and vertices of $\PPP_{u',v'}$ (see Figure \ref{Figure Théorème 6.2 KRAS.}).
	
	So, we define $\cA^\Lambda$ as follows : 
	\begin{itemize}
		\item for all $i \in \{1,\dots,k'\}$, $T(e^1_i)=s'_{i[k]}$ and $T(e^3_i)=r'_{2\l+i[k]}$ where $i[k]$ is the integer in $\{1,\dots,k\}$ such that $i-i[k]$ is divisible by $k$,
		\item for all $i \in \{1,\dots,\l'\}$, $T(e^2_i)=r'_{i[\l]}$ and $T(e^4_i)=r'_{\l+i[\l]}$,
		\item for all other edges $e \in \Lambda$, $T(e) = \aM$. 
	\end{itemize}
	
	Note that $\cA^\Lambda$ has a positive probability and that on this event, $T(\PP)=T(\PPP)$. 
	
	\begin{figure}
		\begin{center}
			\begin{tikzpicture}[scale=1.4]
				\draw[color=gray!40] (0.5,4.5) grid [xstep=0.5,ystep=0.5] (8.5,8.5);
				\draw (1,8) node[fill=white,scale=1.2] {$\Lambda$};
				\draw[color=RedOrange,line width=1.5pt] (0.5,4.5) -- (8.5,4.5);
				\draw[color=PineGreen,line width=1.5pt] (0.5,4.5) -- (0.5,8.5) -- (8.5,8.5) -- (8.5,4.5);
				\draw[color=RedOrange] (4.5,4.5) node[below] {$\PP$};
				\draw[color=PineGreen] (4.5,8.4) node[fill=white,below] {$\PPP$};
				\draw (0.5,4.5) node[below left] {$u$};
				\draw (0.5,4.5) node {$\bullet$};
				\draw (0.5,8.5) node[above left] {$u'$};
				\draw (0.5,8.5) node {$\bullet$};
				\draw (8.5,8.5) node[above right] {$v'$};
				\draw (8.5,8.5) node {$\bullet$};
				\draw (8.5,4.5) node[below right] {$v$};
				\draw (8.5,4.5) node {$\bullet$};
				\foreach \x in {0,2,4,6} \foreach \y in {1,2,3,4} \draw (0.25+\x+0.5*\y,4.5) node[above,scale=0.8] {$s'_\y$};
				\foreach \x in {0,2,4,6} \foreach \y in {5,6,7,8} \draw (0.25-2+\x+0.5*\y,8.5) node[above,scale=0.8] {$r'_\y$};
				\foreach \x in {0,1,2,3} \foreach \y in {1,2} \draw (0.5,4.25+\x+0.5*\y) node[left,scale=0.8] {$r'_\y$};
				\foreach \x in {0,1,2,3} \foreach \y in {3} \draw (8.5,4.25+\x+0.5*\y-1) node[right,scale=0.8] {$r'_4$};
				\foreach \x in {0,1,2,3} \foreach \y in {4} \draw (8.5,4.25+\x+0.5*\y-1) node[right,scale=0.8] {$r'_3$};
			\end{tikzpicture}
			\caption{Paths $\PP$ and $\PPP$ with their passage times in the pattern for the proof of Theorem \ref{Théorème 6.2 KRAS} in the bounded case with $d=2$, $k=4$, $\l=2$ and $\alpha=4$.}\label{Figure Théorème 6.2 KRAS.}
		\end{center}
	\end{figure}
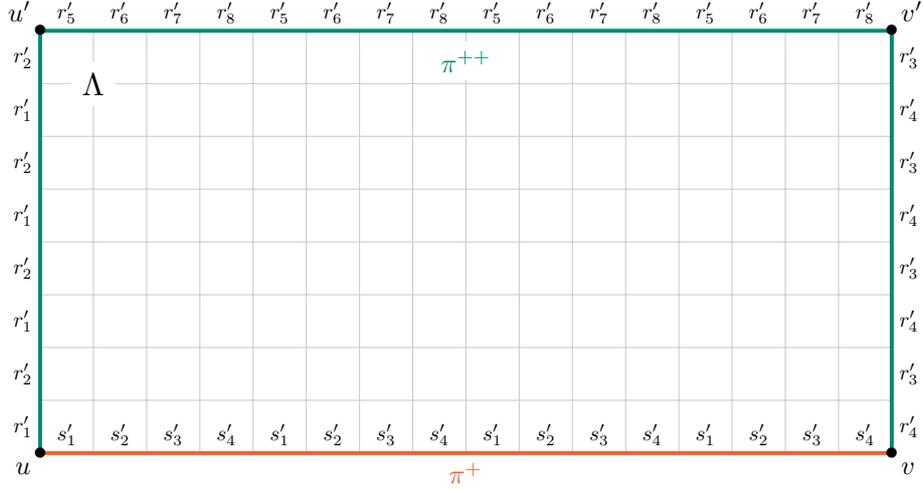
	
	\begin{lemma}\label{Lemme théorème 6.2 KRAS.}
		The paths $\PP$ and $\PPP$ belong to the family of optimal paths from $u$ to $v$ which are entirely contained in the pattern. 
	\end{lemma}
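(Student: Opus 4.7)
The proof has two parts. First, a direct computation on $\cA^\Lambda$ yields $T(\PP) = T(\PPP) = \alpha \sum_{j=1}^k s'_j$: the bottom edges of $\PP$ cycle through $s'_1, \ldots, s'_k$ exactly $\alpha$ times, while splitting $\PPP$ into its left, top, and right segments of lengths $\l'$, $k'$, $\l'$ gives $T(\PPP) = \alpha \bigl(\sum_{i=1}^\l r'_i + \sum_{j=1}^k r'_{2\l+j} + \sum_{i=1}^\l r'_{\l+i}\bigr) = \alpha \sum_{i=1}^{k+2\l} r'_i$, which equals $\alpha \sum_{j=1}^k s'_j$ by \ref{Hypothèse H1}.

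Second, we must show that every self-avoiding path $\pi$ from $u$ to $v$ in $\Lambda$ satisfies $T(\pi) \ge T(\PP)$. Since $L_i = 0$ for $i \ge 3$, $\pi$ lives in the two-dimensional rectangle $\{0,\ldots,k'\} \times \{0,\ldots,\l'\}$, whose topological boundary is a cycle. If $\pi$ lies entirely on this boundary, then by self-avoidance $\pi \in \{\PP, \PPP\}$. Otherwise, the plan is to decompose $\pi$ at its successive visits to the bottom row, writing $\pi$ as a concatenation of excursions from $(i,0)$ to $(j,0)$, each being either a single bottom edge of cost $s'_{(\min(i,j)+1)[k]}$ or a nontrivial detour through higher rows. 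It then suffices to prove that every nontrivial detour excursion from $(i,0)$ to $(j,0)$ has cost at least $\sum_{p=\min(i,j)+1}^{\max(i,j)} s'_{p[k]}$, so that summation over excursions gives $T(\pi) \ge T(\PP)$.

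For such an excursion, I would distinguish two sub-cases. A detour staying strictly below the top row uses at least $|j-i|$ horizontal moves of cost $\aM$ each, strictly exceeding the direct cost $\sum_p s'_{p[k]} \le |j-i|\aM$ because \ref{Hypothèse H1} combined with the absence of an atom at zero forbids all $s'_q$ from equaling $\aM$. A detour reaching the top row must contain at least $2\l'$ vertical edges; when all of them lie on the cheap side columns, \ref{Hypothèse H1} matches the direct cost exactly only when the detour spans $[0,k']$ (yielding $\PPP$), and otherwise the use of $\aM$-cost interior vertical edges produces an excess of at least $\l' t_w$ by the definition of $t_w$. The main obstacle will be absorbing the residual $\sum_{p=1}^r (s'_p - r'_{2\l+p})$ that arises when the horizontal span $j-i$ is not a multiple of $k$: this residual is at most $(k-1)\aM$, and the constraint $\alpha > k\aM/(\l t_w)$ is precisely what ensures $\l' t_w > k\aM$, so that the excess savings on the side columns dominate it.
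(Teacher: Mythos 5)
Your strategy is genuinely different from the paper's: you decompose the competing path $\pi$ at its successive visits to the bottom row and try to bound each excursion by the corresponding piece of $T(\PP)$, whereas the paper isolates the subpath $\pi_{x,y}$ between the first and last top-row vertices visited by $\pi$, proves rigidly that $\pi_{x,y}=\PPP_{x,y}$, and then handles the two end segments $\pi_{u,x}$ and $\pi_{y,v}$ separately. The bottom-row decomposition is a clean idea, and the summation step is sound once one notes that every column line $p\in\{1,\dots,k'\}$ lies in the span of at least one excursion.

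The gap is in the per-excursion bound for a detour that reaches the top row. You claim a non-$\PPP$ detour pays a vertical excess of at least $\l' t_w$ from interior vertical edges, which the calibration $\alpha>k\aM/(\l t_w)$ then beats. That claim fails: take the detour from $(0,0)$ to $(k'-1,0)$ which climbs the full left column, traverses the full top row to $(k',\l')$, descends the right column to $(k',1)$, then steps left to $(k'-1,1)$ and down to $(k'-1,0)$. It uses $2\l'-1$ side vertical edges and exactly one interior one, so its vertical cost exceeds the two-cheapest-per-row baseline $\alpha\sum_{q=1}^{2\l}r'_q$ by only $\aM-r'_{2\l}\le\aM$, far below $\l' t_w$ (which by your calibration exceeds $k\aM$). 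The required per-excursion inequality does hold for this detour, but for a different reason: it pays two crossings of column $k'$ (at heights $\l'$ and $1$), and the residual for a span of $k'-1$ is merely $r'_{2\l+k}-s'_k$. Your sketch tracks neither the extra horizontal crossings nor the way the residual shrinks when the detour stays close to $\PPP$, so as written the case analysis does not close. Separately, the statement that \ref{Hypothèse H1} together with the absence of an atom at zero forbids $s'_1=\dots=s'_k=\aM$ is incorrect (that situation merely forces $\sum_i r'_i=k\aM$ with at least $2\l$ of the $r'_i$ strictly below $\aM$); this error is harmless, since a non-strict excursion bound suffices there.
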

	
	\begin{proof}[Proof of Lemma \ref{Lemme théorème 6.2 KRAS.}]
		Let us begin by introducing some notations. For $i \in \{1,\dots,d\}$ and a path $\pi$, $T_i(\pi)$ denotes the sum of the passage times of the edges of $\pi$ which are in the direction $\epsilon_i$. So we have $T(\pi)=T_1(\pi)+\dots+T_d(\pi)$. Furthermore, for $j \in \{0,\dots,\l'-1\}$, we denote by $S^j_2$ the set of edges of $\Lambda$ which can be written $\{x,x+\epsilon_2\}$ where the second coordinate of $x$ is equal to $j$.
		
		
		Now, to prove the lemma, assume for a contradiction that there exists an optimal self-avoiding path $\pi$ entirely contained in $\Lambda$ such that 
		\begin{equation}
			T(\pi)<T(\PP)=T(\PPP). \label{Preuve lemme de la preuve du théorème 6.2 de KRAS.}
		\end{equation} 
		
		\textbf{First step:}
		$\pi$ takes at least one edge in $\PPP_{u',v'}$.
		Indeed, assume that it is not the case. Then \[T(\pi) \ge T_1(\pi) \ge T_1(\PP)=T(\PP),\] which contradicts \eqref{Preuve lemme de la preuve du théorème 6.2 de KRAS.}. Note that the second inequality comes from the fact that edges in the direction $\epsilon_1$ whose passage time is strictly smaller than those in $\PP$ must belong to $\PPP_{u',v'}$. 
		
		\textbf{Second step:}
		We denote by $x$ (resp. $y$) the first vertex of $\PPP_{u',v'}$ visited by $\pi$. Let us prove that $\pi_{x,y} = \PPP_{x,y}$. Note that, since $\pi$ is a self-avoiding path from $u$ to $v$ and since every path entirely contained in the pattern can only take edges of directions $\epsilon_1$ and $\epsilon_2$, 
		\begin{itemize}
			\item $x$ (resp. $y$) is also the first vertex of $\pi$ visited by $\PPP_{u',v'}$,
			\item $\pi_{x,y}$ does not take any edge of $\PPP_{u,x}$ and $\PPP_{y,v}$.
		\end{itemize}
		Assume for a contradiction that $\pi_{x,y} \neq \PPP_{x,y}$. Denote by $x'$ and $y'$ two distinct vertices of $\PPP_{x,y} \cap \pi_{x,y}$ such that $\pi_{x',y'}$ does not take any edge of $\PPP$ and such that $x'$ is visited by $\PPP_{u',v'}$ before $y'$. We have $|\pi_{x',y'}| > |\PPP_{x',y'}|$ and thus $\pi_{x',y'}$ has to take at least one edge in $\PP$, else
		\[T(\pi_{x',y'}) = |\pi_{x',y'}| \aM > |\PPP_{x',y'}| \aM \ge T(\PPP_{x',y'}),\] which contradicts the fact that $\pi$ is an optimal path. 
		Since $\pi_{x',y'}$ has to take edges in $\PP$, and since it cannot take edges of $\PPP$, we get 
		\[T_2(\pi_{x',y'}) \ge 2 \l' \aM.\]
		Furthermore, for each edge $e$ in $\PPP_{x',y'}$, $\pi_{x',y'}$ has to take an edge in the direction $\epsilon_1$ such that this edge is the edge $e-\l'\epsilon_2 \in \PP$ or such that its passage time is equal to $\aM$. Hence
		\[T_1(\pi_{x',y'}) \ge \left\lfloor \frac{\|x'-y'\|_1}{k} \right\rfloor \sum_{i=1}^{k} s'_i.\]
		But,
		\[T_1(\PPP_{x',y'}) \le \left\lceil \frac{\|x'-y'\|_1}{k} \right\rceil \sum_{i=1+2\l}^{k+2\l} r'_i.\]
		Thus, since $\displaystyle \sum_{i=1+2\l}^{k+2\l} r'_i \le \sum_{i=1}^{k} s'_i \le k \aM,$ we have
		\[ T(\PPP_{x',y'})-T(\pi_{x',y'}) \le \sum_{i=1}^{k} s'_i - 2 \alpha \l \aM \le (k-2\alpha\l) \aM < 0 \text{ by \eqref{On fixe alpha dans la preuve du théorème 6.2 de KRAS.},}\]
		which contradicts the fact that $\pi$ is an optimal path.
		
		\textbf{Third step:}
		We have just proven that $\pi_{x,y} = \PPP_{x,y}$. Hence, \eqref{Preuve lemme de la preuve du théorème 6.2 de KRAS.} implies that $T(\pi_{u,x}) < T(\PPP_{u,x})$ or that $T(\pi_{y,v}) < T(\PPP_{y,v})$. Assume for a contradiction that
		\begin{equation}
			T(\pi_{u,x}) < T(\PPP_{u,x}), \label{Troisième step dans la preuve du lemme du théorème 6.2 de KRAS.}
		\end{equation}
		the other case being the same. 
		First, we have $x \ne u'$ else, using again that $\pi$ is a self avoiding path from $u$ to $v$ and the fact that every path entirely contained in the pattern can only take edges of directions $\epsilon_1$ and $\epsilon_2$, $\pi_{u,x}$ can not take any edge of $\PPP_{x,v}$, we have $T_2(\pi_{u,u'}) \ge T_2(\PPP_{u,u'})$ and thus \[T(\pi_{u,x})=T(\pi_{u,u'}) \ge T_2(\pi_{u,u'}) \ge T_2(\PPP_{u,u'}) = T(\PPP_{u,u'})=T(\PPP_{u,x}),\] which contradicts \eqref{Troisième step dans la preuve du lemme du théorème 6.2 de KRAS.}.
		
		Denote by $x''$ the last vertex of $\PPP_{u,u'}$ visited by $\pi_{u,x}$ (note that we can have $x''=u$). We have that $x'' \ne u'$, else $x''=x=u'$.
		Now, since $\pi$ is a self-avoiding path and using the definitions of $x''$ and $x$, we get that $\pi_{x'',x}$ cannot take any edge of $\PPP_{u,x}$. Hence, $\pi_{x'',x}$ takes at least one edge of $\PP$. Indeed, if it is not the case, we have $T_1(\pi_{x'',x}) \ge T_1(\PPP_{x'',x})$ and we get (using that $\|x''-u'\|_1 >0$):
		\begin{align*}
			T(\pi_{x'',x}) & = T_1(\pi_{x'',x}) + T_2(\pi_{x'',x}) \ge T_1(\PPP_{x'',x}) + \|x''-u'\|_1 \aM \\
			& > T_1(\PPP_{x'',x}) + \|x''-u'\|_1 \max(r'_1,\dots,r'_{2\l}) \ge  T_1(\PPP_{x'',x}) +  T_2(\PPP_{x'',x}) =  T(\PPP_{x'',x}),
		\end{align*}
		which contradicts the fact that $\pi$ is an optimal path. 
		
		Now, since $\pi_{x'',x}$ takes edges of $\PP$, $\pi_{x'',x}$ has to take at least $\l'$ edges in the direction $\epsilon_2$ of passage time equal to $\aM$. Thus,
		\[T_2(\pi_{x'',x}) \ge \l' \aM.\]
		Then,  for each edge $e$ in $\PPP_{u',x}$, $\pi_{x'',x}$ has to take an edge in the direction $\epsilon_1$ such that this edge is the edge $e-\l'\epsilon_2 \in \PP$ or such that its passage time is equal to $\aM$. We get 
		\[T_1(\pi_{x',y'}) \ge \left\lfloor \frac{\|u'-x\|_1}{k} \right\rfloor \sum_{i=1}^{k} s'_i.\]
		On the other hand, we have
		\[T_1(\PPP_{x'',x}) \le\left\lceil \frac{\|u'-x\|_1}{k} \right\rceil \sum_{i=1+2\l}^{k+2\l} r'_i,\]
		and
		\[T_2(\PPP_{x'',x}) \le \|x''-u'\|_1 \max(r'_1,\dots,r'_{2\l}) \le \l' \max(r'_1,\dots,r'_{2\l}) \le \l' (\aM-t_w). \]
		Hence, since $\displaystyle \sum_{i=1+2\l}^{k+2\l} r'_i \le \sum_{i=1}^{k} s'_i$,
		\begin{align*}
			T(\PPP_{x'',x}) - T(\pi_{x'',x}) & \le T_1(\PPP_{x'',x}) - T_1(\pi_{x'',x}) + T_2(\PPP_{x'',x}) - T_2(\pi_{x'',x}) \\
			& \le \underbrace{\sum_{i=1}^{k} s'_i}_{\le k \aM} - \l' t_w \\
			& \le k \aM - \alpha \l t_w < 0 \text{ by \eqref{On fixe alpha dans la preuve du théorème 6.2 de KRAS.}},
		\end{align*}
		which contradicts the fact that $\pi$ is an optimal path. Thus, there is no optimal self-avoiding path $\pi$ entirely contained in $\Lambda$ such that \eqref{Preuve lemme de la preuve du théorème 6.2 de KRAS.} holds and the proof is complete.
	\end{proof}
	
	\subparagraph*{Conclusion in the three cases.}
	
	\begin{proof}[Proof of Theorem \ref{Théorème 6.2 KRAS}]
		For $x \in \Z^d$, we denote by $\pi(x)$ the first geodesic from $0$ to $x$ in the lexicographical order\footnote{The lexicographical order is based on the directions of the consecutive edges of the geodesics.} among those who have the minimal number of edges. We say that two patterns are disjoint if they have no vertex in common. We denote by $\cN^\mathfrak{P}(\pi(x))$ the maximum number of disjoint patterns defined above in the three cases visited by $\pi(x)$. Simple geometric considerations provide a constant $c>0$ such that for all path $\pi$, $\cN^\mathfrak{P}(\pi) \ge c N^\mathfrak{P}(\pi)$. Further, note that, in each pattern visited by $\pi(x)$, $\pi(x)$ takes the $\PP$ segment since the $\PP$ segment belongs to the set of optimal paths entirely contained in the pattern and is the only optimal path for the norm $\|.\|_1$. We can define a self-avoiding path $\widehat{\pi}(x)$ from $0$ to $x$ by replacing each $\PP$ segment of $\pi(x)$ with the $\PPP$ segment in each disjoint pattern visited by $\pi(x)$. This path $\widehat{\pi}(x)$ has the same passage time and hence both $\pi(x)$ and $\widehat{\pi}(x)$ are geodesics. Note that in the case where zero is not an atom, $\widehat{\pi}$ is obviously self-avoiding (since every geodesic is self avoiding), and in the case where zero is an atom, $\widehat{\pi}$ is self-avoiding since in the patterns, the passage times of all edges which are not in the $\PP$ or $\PPP$ segments are strictly positive, and thus $\pi(x)$ can not visit vertices in the $\PPP$ segments except those which also belong to the corresponding $\PP$ segment. In the three cases, we have: \[|\widehat{\pi}(x)| \ge |\pi(x)|+2\cN^\mathfrak{P}(\pi(x)).\] Finally, we get:
		\begin{equation}
			\Lo_{0,x} \ge |\widehat{\pi}(x)| \ge |\pi(x)|+2\cN^\mathfrak{P}(\pi(x)) \ge \Lu_{0,x}+2\cN^\mathfrak{P}(\pi(x)). \label{Inégalité preuve théorème 6.2 KRAS.}
		\end{equation} 
		The hypothesis of Theorem \ref{17. Théorème à démontrer.} are satisfied, thus there exist $\alpha>0$, $\beta_1>0$ and $\beta_2>0$ such that for all $x \in \Z^d$, \[ \P \left(\exists \mbox{ a geodesic $\gamma$ from $0$ to $x$ such that } N^\mathfrak{P}(\gamma) < \alpha \|x\|_1 \right) \le \beta_1 \mathrm{e}^{-\beta_2 \|x\|_1}. \]
		Then, taking $D=2 \alpha c$, we get by \eqref{Inégalité preuve théorème 6.2 KRAS.}, 
		\[\P \left(\Lo_{0,x}-\Lu_{0,x} \ge D \|x\|_1 \right) \ge \P \left(\cN^\mathfrak{P}(\pi(x)) \ge c \alpha \|x\|_1 \right) \ge \P \left(N^\mathfrak{P}(\pi(x)) \ge \alpha \|x\|_1 \right) \ge 1-\beta_1 \mathrm{e}^{-\beta_2 \|x\|_1}.\]	
	\end{proof}
	
	\subsection{Modification proof for the strict concavity of the expected passage times as a function of the weight shifts}
	
	\begin{proof}[Proof of Theorem \ref{Théorème 5.4 KRAS}.]
		Recall that we assume that the support of $F$ is bounded and that there are two different positive points in this support. We set $\ST=\sup(\text{support}(F))$. Let $0<r<s$ be two points in the support of $F$. Fix $b \in (0,r)$. Applying Lemma 5.5 in \cite{KRAS}, we get positive integers $k$, $\l$ fixed for the rest of the proof such that 
		\begin{equation}
			k(s+\delta) < (k+2\l)(r-\delta) < (k+2\l)(r+\delta) < k(s-\delta) + (2\l-1)b \label{Inégalités preuve Théorème 5.4 KRAS.}
		\end{equation}
		holds for sufficiently small $\delta>0$. Fix $L=k+\l+1$, $L_1=k+2L$, $L_2=\l+2L$ and if $d \ge 3$, for all $i \in \{3,\dots,d\}$, $L_i=2L$. We define a pattern in $\displaystyle \Lambda=\prod_{i=1}^{d} \{0,\dots,L_i\}$. We take the endpoints $\displaystyle \ulm = \sum_{i=2}^d L \epsilon_i$ and $\displaystyle \vlm = \ulm + (k+2L) \epsilon_1$. Then we define
		\begin{equation}
			\Lambda_0=\{L,\dots,k+L\} \times \{L,\dots,\l+L\} \times \prod_{i=3}^d \{L\}. \label{on fixe l lambda dans la preuve du théorème 5.4 de KRAS.}
		\end{equation}
		With this definition, every path from the boundary of $\Lambda$ to the boundary of $\Lambda_0$ has to take at least $L$ edges of $\Lambda \setminus \Lambda_0$.
		We take $\PP$ the path going from $u^\Lambda$ to $v^\Lambda$ by $k+2L$ steps in the direction $\epsilon_1$, and $\PPP$ the one going from $u^\Lambda$ to $u_1=\ulm+L\epsilon_1 \in \Lambda_0$ by $L$ steps in the direction $\epsilon_1$, then going to $u_2=u_1+\l\epsilon_2 \in \Lambda_0$ by $\l$ steps in the direction $\epsilon_2$, then to $u_3=u_2+k\epsilon_1 \in \Lambda_0$ by $k$ steps in the direction $\epsilon_1$, then to $u_4=u_3-\l\epsilon_2 \in \Lambda_0$ by $\l$ steps in the direction $-\epsilon_2$, and then to $v^\Lambda$ by $L$ steps in the direction $\epsilon_1$.
		
		For a deterministic family $(t_e)_{e\in \cE_\Lambda}$ of passage times on the edges of $\Lambda$ and for a path $\pi$, we use the abuse of writing $T(\pi)$ to denote $\displaystyle \sum_{e \in \pi} t_e$. For all $\delta\ge0$, we consider the set $G(\delta)$ of families $(t_e)_{e\in \cE_\Lambda}$ of passage times on the edges of $\Lambda$ which satisfy the following two conditions:
		\begin{itemize}
			\item for all $e \in \PPP \cap \Lambda_0$, $t_e \in [r-\delta,r+\delta]$, 
			\item for all other edges $e$ in $\Lambda$, $t_e \in [s-\delta,s+\delta]$.
		\end{itemize}
		
		Then, consider the set $H$ of families $(t_e)_{e\in \cE_\Lambda}$ such that:
		\begin{enumerate}[label=(P\arabic*)]
			\item\label{Propriété 1 théorème 5.4.} $\PP$ is the unique optimal path from $u^\Lambda$ to $v^\Lambda$ among the paths entirely contained in $\Lambda$,
			\item\label{Propriété 2 théorème 5.4.} for all path $\pi_1$ from a vertex $w_1$ of the boundary of $\Lambda_0$ to a vertex of the boundary of $\Lambda$, for all $w_2$ in $\Lambda_0$, for all path $\pi_2$, optimal for the norm $\|.\|_1$, going from $w_1$ to $w_2$, we have $T(\pi_2)<T(\pi_1)$.
		\end{enumerate}
		Note that for all $\delta > 0$, since $r$ and $s$ belong to the support of $F$, we have that $\P((T(e))_{e \in \cE_\Lambda} \in G(\delta)) > 0$.
		Let us prove that $G(0) \subset H$. Consider a family $(t_e)_{e\in \cE_\Lambda}\in G(0)$. So there are only two different passage times in $\Lambda$ which are $r$ and $s$. Assume for a contradiction that \ref{Propriété 1 théorème 5.4.} does not hold. Then there exists an optimal path $\pi$ going from $u^\Lambda$ to $v^\Lambda$, different from $\PP$. Recall the notation $T_i$ for $i \in \{1,\dots,d\}$ introduced in the proof of Lemma \ref{Lemme théorème 6.2 KRAS.}. Since $\PP$ is the unique path between $\ulm$ and $\vlm$ taking only edges in the direction $\epsilon_1$ and since there is no passage time equal to zero in $\Lambda$, we have $T_1(\pi) < T(\pi)$. Hence 
		\[T_1(\PP) = T(\PP) \ge T(\pi) > T_1(\pi).\]
		Since the only edges in the direction $\epsilon_1$ whose passage time is smaller than $s$ are in $\PPP_{u_2,u_3}$, $\pi$ takes an edge of $\PPP_{u_2,u_3}$ and thus $T_2(\pi) \ge 2 \l r$. Furthermore \[T_1(\pi) \ge T_1(\PP)-k(s-r).\] Hence we get \[T(\pi) \ge T_1(\pi) + T_2(\pi) \ge T_1(\PP) + 2\l r - k(s-r) > T_1(\PP)=T(\PP),\] where the strict inequality comes from  \eqref{Inégalités preuve Théorème 5.4 KRAS.}. Thus, it contradicts the fact that $\pi$ is an optimal path and \ref{Propriété 1 théorème 5.4.} holds.
		Now, to prove that \ref{Propriété 2 théorème 5.4.} holds, let $\pi_1$ be a path from a vertex $w_1$ of the boundary of $\Lambda_0$ to a vertex of the boundary of $\Lambda$ and let $w_2$ a vertex of $\Lambda_0$. Let $\pi_2$ be an optimal path for the norm $\|.\|_1$ going from $w_1$ to $w_2$. Then, by the definition of $\Lambda_0$ (see \eqref{on fixe l lambda dans la preuve du théorème 5.4 de KRAS.}), $\pi_1$ has to take at least $L$ edges whose passage time is equal to $s$ although $\pi_2$ takes at most $k+\l$ edges whose passage time is smaller than or equal to $s$. Thus \[T(\pi_1) \ge Ls > (k+\l) s \ge T(\pi_2,) \] where the strict inequality comes from the fact that $L=k+\l+1$. Hence \ref{Propriété 2 théorème 5.4.} holds and $G(0) \subset H$.
		
		Furthermore, $H$ is an open set since for a family $(t_e)_{e\in \cE_\Lambda}$ to belong to $H$, it is required that the time of a finite family of paths is strictly smaller than the time of every path of another finite family of paths. Hence, for $\delta>0$ small enough, we have 
		\begin{equation}
			G(\delta) \subset H. \label{théorème 5.4 preuve G delta contenu dans H.}
		\end{equation}
		Fix $\delta>0$ such that \eqref{Inégalités preuve Théorème 5.4 KRAS.} and \eqref{théorème 5.4 preuve G delta contenu dans H.} hold.
		Consider the pattern $\mathfrak{P}=(\Lambda,u^\Lambda,v^\Lambda,\cA^\Lambda)$ with $\cA^\Lambda=\{(T(e))_{e \in \cE_\Lambda} \in G(\delta)\}$. Now, we denote by $\pi(x)$ the first geodesic from $0$ to $x$ in the lexicographical order among those of maximal Euclidean length and we denote by $\cN^\mathfrak{P}(\pi(x))$ the maximum number of disjoint patterns visited by $\pi(x)$. Recall the existence of a constant $c>0$ small enough such that for all path $\pi$, $\cN^\mathfrak{P}(\pi) \ge c N^\mathfrak{P}(\pi)$. Since $\P(\cA^\Lambda)>0$, we can apply Theorem \ref{17. Théorème à démontrer.}. Let $\alpha, \beta_1, \beta_2 > 0$ be the constants given by Theorem \ref{17. Théorème à démontrer.}. Then, we have
		\begin{align*}
			\E[\cN^\mathfrak{P}(\pi(x))] & \ge \lfloor c \alpha \|x\|_1 \rfloor \P(\cN^\mathfrak{P}(\pi(x)) \ge c \alpha \|x\|_1) \\
			& \ge \lfloor c \alpha \|x\|_1 \rfloor \P(N^\mathfrak{P}(\pi(x)) \ge \alpha \|x\|_1) \ge \lfloor c \alpha \|x\|_1 \rfloor (1 - \beta_1 \mathrm{e}^{-\beta_2 \|x\|_1}) \ge C \|x\|_1.
		\end{align*}
		Now, let us follow the end of Stage 3 of the proof of Theorem 5.4 in \cite{KRAS}. By \ref{Propriété 1 théorème 5.4.}, $\pi(x)$ takes the $\PP$ segment in each pattern that it takes. Furthermore, by \ref{Propriété 2 théorème 5.4.}, $\pi(x)$ does not take any edge in the $\PPP$ segment which is not in the $\PP$ segment. So, we can define a self-avoiding path $\widehat{\pi}(x)$ from $0$ to $x$ by replacing each $\PP$ segment with the $\PPP$ segment in each pattern visited by $\pi(x)$. Reduce the weights on each edge $e$ from $T(e)$ to $T^{(-b)}(e)=T(e)-b$. By the definition of the pattern, the $T^{(-b)}$-passage times of the segments $\PP$ and $\PPP$ obey the inequality:
		\[T^{(-b)}(\PPP) = T(\PPP) - b|\PPP| < T(\PP) + (2\l-1)b - b|\PPP|=T^{(-b)}(\PP)-b. \]
		Then, following the proof of Theorem 5.4 in \cite{KRAS}, we get
		\begin{align}
			t^{(-b)}(0,x) & \le T^{(-b)}(\widehat{\pi}(x)) < T^{(-b)}(\pi(x)) - b \cN^\mathfrak{P}(\pi(x)) \nonumber \\
			& = T(\pi(x)) - b|\pi(x)|-b\cN^\mathfrak{P}(\pi(x)) \nonumber \\
			& =t(0,x)-b \Lo_{0,x}-b\cN^\mathfrak{P}(\pi(x)).\label{Dernière inégalité preuve théorème 5.4 KRAS.}
		\end{align}
		Since $b \in (0,\r + \epsilon_0)$, $\E[t^{(-b)}(0,x)]$ is finite. Thus, taking expectation in \eqref{Dernière inégalité preuve théorème 5.4 KRAS.}, we get the result.
	\end{proof}
	
	\appendix
	
	
	
	\section{Unbounded case}
	
	\subsection{Assumptions on the pattern in the proof of the unbounded case}\label{Annexe sur l'hypothèse sur les motif dans le cas non borné.}
	
	\begin{lemma}\label{Lemme annexe A.1 sur les overlapping pattern.}
		Let $\mathfrak{P}=(\Lambda,u^\Lambda,v^\Lambda,\cA^\Lambda)$ be a valid pattern.
		There exists a positive integer $\lll$ and a valid pattern $\mathfrak{P}_0=(\Lambda_0,u^\Lambda_0,v^\Lambda_0,\cA^\Lambda_0)$ such that:
		\begin{itemize}
			\item $\Lambda_0=B_\infty(0,\lll)$,
			\item for every self-avoiding path $\pi$, $\displaystyle N^\mathfrak{P}(\pi) \ge N^{\mathfrak{P}_0}(\pi)$.
		\end{itemize} 
	\end{lemma}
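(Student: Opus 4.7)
Given the valid pattern $\mathfrak{P}=(\Lambda,u^\Lambda,v^\Lambda,\cA^\Lambda)$ with $\Lambda=\prod_{i=1}^d\{0,\dots,L_i\}$, I would construct $\mathfrak{P}_0$ by embedding $\Lambda$ inside a sufficiently large centered cube. Concretely, I would pick an integer $\lll$ large enough and a translation $\tau\in\Z^d$ so that $\Lambda+\tau\subset B_\infty(0,\lll)$ and both endpoints $u^\Lambda+\tau$, $v^\Lambda+\tau$ land on the boundary $\partial B_\infty(0,\lll)$. This is possible by orienting $\tau$ along the external normal vectors at $u^\Lambda$ and $v^\Lambda$: if these normals are not opposite, one pushes $\Lambda$ into the corresponding corner of the cube and adjusts the remaining coordinates freely; if they are opposite (which may happen in the unbounded case, since validity is ensured by $F$ alone), one first extends $\Lambda$ into a slightly larger box by appending one extra layer of edges in a direction transverse to this axis so that the two endpoints move to faces with distinct non-opposite normals, using the unbounded support of $F$ to make the extended event still have positive probability.

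Having chosen $\tau$ and $\lll$, set $\Lambda_0=B_\infty(0,\lll)$, $u^\Lambda_0=u^\Lambda+\tau$, $v^\Lambda_0=v^\Lambda+\tau$, and
\[
\cA^{\Lambda_0}=\bigl\{(T(e))_{e\in\Lambda_0}\,:\,(T(e+\tau))_{e\in\Lambda}\in\cA^\Lambda\bigr\}\cap\cB,
\]
where $\cB$ is the event that every edge of $\Lambda_0\setminus(\Lambda+\tau)$ carries a passage time exceeding some threshold $M$. Unboundedness of $F$ gives $\P(\cB)>0$, and by independence $\P(\cA^{\Lambda_0})=\P(\cA^\Lambda)\P(\cB)>0$, so $\mathfrak{P}_0$ is a valid pattern (the other validity bullet is trivial since $F$ has unbounded support).

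For the inequality $N^\mathfrak{P}(\pi)\ge N^{\mathfrak{P}_0}(\pi)$, I would argue via the injection $x\mapsto x+\tau$ from $\{x:x\text{ satisfies }(\pi;\mathfrak{P}_0)\}$ into $\{y:y\text{ satisfies }(\pi;\mathfrak{P})\}$. Three things must be verified. The environment condition $\theta_x T\in\cA^{\Lambda_0}$ immediately gives $\theta_{x+\tau}T\in\cA^\Lambda$ by construction; the visit condition follows because $u^\Lambda_0=u^\Lambda+\tau$ and $v^\Lambda_0=v^\Lambda+\tau$, so $\theta_x\pi$ visiting $u^\Lambda_0,v^\Lambda_0$ translates into $\theta_{x+\tau}\pi$ visiting $u^\Lambda,v^\Lambda$. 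The third and crucial condition is that the subpath of $\theta_{x+\tau}\pi$ between $u^\Lambda$ and $v^\Lambda$ lies in $\Lambda$, i.e.\ that the subpath of $\theta_x\pi$ between $u^\Lambda_0$ and $v^\Lambda_0$—known only to lie in $\Lambda_0$—actually lies in $\Lambda+\tau$.

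\textbf{Main obstacle.} The subpath inclusion is a combinatorial statement about $\pi$ and is not directly enforced by any condition on the environment, so the freedom we have in designing $\cA^{\Lambda_0}$ does not obviously help. To carry this last step through for every self-avoiding path, I expect the proof to choose $\tau$ so that $\Lambda+\tau$ meets $\partial\Lambda_0$ along faces containing $u^\Lambda_0$ and $v^\Lambda_0$ in a way that the ``outer'' region $\Lambda_0\setminus(\Lambda+\tau)$ cannot be entered from $u^\Lambda_0$ or $v^\Lambda_0$ by any path that later exits $\Lambda_0$ only through its subpath's other endpoint; geometrically this amounts to arranging $u^\Lambda_0$ and $v^\Lambda_0$ on corners or edges of $\Lambda_0$ touching $\partial(\Lambda+\tau)$ only at these two vertices. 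Making this geometric construction explicit in all configurations of $(u^\Lambda,v^\Lambda)$—in particular the opposite-normal case, where the preliminary enlargement of $\Lambda$ is essential—is the technical heart of the proof.
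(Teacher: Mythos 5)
Your plan captures the paper's key device — walls of prohibitively large passage time — but the ``main obstacle'' you identify is not actually an obstruction; it is resolved by exactly the wall condition, provided you bring in optimality. Recall that the lemma is applied only to geodesics (in Proposition~\ref{17. Gros théorème à démontrer, un seul motif.}). When $\pi$ is a geodesic and $x$ satisfies $(\pi;\mathfrak{P}_0)$, the subpath $(\theta_x\pi)_{u^\Lambda_0,v^\Lambda_0}$ is a geodesic contained in $\Lambda_0$, hence optimal among paths inside $\Lambda_0$. If the wall event sets the time of every edge of $\Lambda_0$ outside the ``cheap region'' strictly larger than the total time of one cheap route, then a single wall edge already costs more than the alternative, so the optimal subpath cannot touch a wall edge and is forced into the cheap region. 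Your sentence that this ``is not directly enforced by any condition on the environment'' misses this point: it is enforced by the environment together with optimality, which comes for free for geodesics. (Strictly speaking, for a non-geodesic self-avoiding $\pi$ the subpath need not be optimal and the implication can fail, but that case never arises in the paper's application of the lemma.)

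On the construction itself, the paper avoids the complications your translation scheme runs into. It keeps $\Lambda=\prod_i\{0,\dots,L_i\}$ in place (already a subset of $\Lambda_0=B_\infty(0,\lll)$ once $\lll=\max L_i$), and attaches two disjoint deterministic connecting paths $\pi_u$ and $\pi_v$ from new boundary endpoints $u^\Lambda_0,v^\Lambda_0\in\partial\Lambda_0$ to $u^\Lambda,v^\Lambda$, each following a single external normal direction. The event $\cA^{\Lambda_0}$ asks for $\cA^\Lambda$ (truncated to a bounded cutoff $M^\Lambda$), passage times at most $M^\Lambda$ on $\pi_u\cup\pi_v$, and times strictly exceeding $|\Lambda_0|_e M^\Lambda$ on every remaining edge of $\Lambda_0$. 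This renders the cheap region a ``dumbbell'' $\pi_u\cup\Lambda\cup\pi_v$, so any optimal path from $u^\Lambda_0$ to $v^\Lambda_0$ within $\Lambda_0$ must traverse $\Lambda$ from $u^\Lambda$ to $v^\Lambda$. There is then no need to move $\Lambda$ into a corner, no case split on whether the normals at $u^\Lambda,v^\Lambda$ are opposite, and no preliminary enlargement of $\Lambda$; the unboundedness of $F$ is used only to give the wall event positive probability.
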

	
	\begin{proof}
		Denote by $L_1,\dots,L_d$ the integers such that $\displaystyle \Lambda=\prod_{i=1}^{d} \{0,\dots,L_i\}$.
		Fix $\lll=\max(L_1,\dots,L_d)$.
		Let $M^\Lambda>0$ such that 
		\[\P \left( \cA^\Lambda \cap \{\forall e \in \Lambda, \, T(e) \le M^\Lambda\} \right) > 0.\]
		Consider the pattern $\mathfrak{P}_0=(\Lambda_0,u^\Lambda_0,v^\Lambda_0,\cA^\Lambda_0)$ defined by:
		\begin{itemize}
			\item $\Lambda_0=B_\infty(0,\lll)$
			\item $u^\Lambda_0$ (resp. $v^\Lambda_0$) one vertex of $\partial \Lambda_0$ such that there exists a path $\pi_u$ (resp. $\pi_v$) fixed for the remaining of the proof linking $u^\Lambda_0$ and $\ulm$ (resp. $v^\Lambda_0$ and $\vlm$) which satisfies the following two conditions:
			\begin{itemize}
				\item $\pi_u$ (resp. $\pi_v$) does not visit any vertex of $\Lambda$ except $\ulm$ (resp. $\vlm$),
				\item $\pi_u$ (resp. $\pi_v$) only takes edges in the direction of one external normal unit vector associated with $\ulm$ (resp. $\vlm$) chosen arbitrarily if there are several such vectors,
			\end{itemize}
			\item $\cA^\Lambda_0$ the event such that:
			\begin{itemize}
				\item $\cA^\Lambda \cap \{\forall e \in \Lambda, \, T(e) \le M^\Lambda\}$ occurs,
				\item for all $e$ belonging to $\pi_u \cup \pi_v$, we have $T(e) \le M^\Lambda$,
				\item for all $e$ which does not belong to $\Lambda \cup \pi_u \cup \pi_v$, \[T(e) > |\Lambda_0|_e M^\Lambda,\]
				where $|\Lambda_0|_e$ is the number of edges in $\Lambda_0$.
			\end{itemize}
		\end{itemize}
		Note that, since $\ulm$ and $\vlm$ are distinct, $\pi_u$ and $\pi_v$ are disjoint. We get:
		\begin{itemize}
			\item $\P(\cA^\Lambda_0)$ is positive since $\mathfrak{P}$ is valid and since the support of $F$ is unbounded, and then $\mathfrak{P}_0$ is a valid pattern.
			\item On $\cA^\Lambda_0$, any path from $\ulm_0$ to $\vlm_0$ optimal for the passage time among the paths entirely inside $\Lambda_0$ contains a subpath from $\ulm$ to $\vlm$ entirely inside $\Lambda$. Indeed, let $\pi_0$ be a path from $\ulm_0$ to $\vlm_0$ which does not contain a subpath from $\ulm$ to $\vlm$ entirely inside $\Lambda$. It implies that $\pi_0$ takes an edge whose time is greater than $|\Lambda_0|_e M^\Lambda$. Let $\pi'_0$ be a path following $\pi_u$, then going from $\ulm$ to $\vlm$ inside $\Lambda$, and then following $\pi_v$. We have $T(\pi'_0) \le |\Lambda_0|_e M^\Lambda < T(\pi_0)$ and thus $\pi_0$ is not an optimal path. Hence, for every path $\pi$, if a vertex $x \in \Z^d$ satisfies the condition $(\pi;\mathfrak{P}_0)$, $x$ satisfies the condition $(\pi;\mathfrak{P})$. 
		\end{itemize}
		
		Thus we get that $\displaystyle N^\mathfrak{P}(\pi) \ge N^{\mathfrak{P}_0}(\pi)$.
	\end{proof}
	
	\subsection{Construction of the path $\pi$ for the modification in the unbounded case}\label{Annexe construction de pi pour le cas non borné.}
	
	Recall that, here, $u^\Lambda$ and $v^\Lambda$ are the vertices defined at the beginning of Section \ref{Sous-section modification argument, cas non borné}. The aim is to prove that we can construct a path $\pi$ in a deterministic way such that:
	\begin{enumerate}[label=(\roman*)]
		\item $\pi$ goes from $u$ to $u^\Lambda$ without visiting a vertex of $B_\infty(sN,\lll)$, then going from $u^\Lambda$ to $v^\Lambda$ in a shortest way for the norm $\|.\|_1$ (and thus being contained in $B_\infty(sN,\lll)$) and then goes from $v^\Lambda$ to $v$ without visiting a vertex of $B_\infty(sN,\lll)$,
		\item $\pi$ is entirely contained in $\Bds$ and does not have vertices on the boundary of $\Bds$ except $u$ and $v$, 
		\item $\pi$ is self-avoiding,
		\item the length of $\pi_{u,u^\Lambda}  \cup \pi_{v^\Lambda,v}$ is bounded from above by $2r_2 N+\Ka$, where $\Ka$ is the number of edges in $B_\infty(0,\lll+3)$.
	\end{enumerate}
	
	As it is said in Section \ref{Sous-section modification argument, cas non borné}, we want to construct a path from $u$ to $sN$ and a path from $sN$ to $v$ which have no vertex in common except $sN$ and such that their lengths are bounded from above by $r_2 N$. To get them, we use the following lemma whose proof is left to the reader.
	
	\begin{lemme}
		Let $m \in \N^*$ and $x$, $y$ two vertices of $\Z^d$ such that $\|x\|_1=\|y\|_1=m$ and $x \neq y$. Then we can build in a deterministic way two paths $\pi_x$ and $\pi_y$ linking respectively $x$ and $y$ to $0$ and such that their length is equal to $m$, they have only $0$ as a common vertex and they have respectively only $x$ and $y$ as vertices whose norm $\|.\|_1$ is greater than or equal to $m$.
	\end{lemme}
	
	
	Using this lemma and replacing $0$ by $sN$ using a translation, we get two paths linking $u$ to $sN$ and $v$ to $sN$ with the stated properties. Recall that $B_\infty(sN,\lll+3) \subset \Bds$ and let $u_0$ (resp. $v_0$) denote the first vertex in $B_\infty(sN,\lll+3)$ visited by the path going from $u$ to $sN$ (resp. the one going from $v$ to $sN$). Then we get two paths $\pi_{u,u_0}$ and $\pi_{v_0,v}$ respectively from $u$ to $u_0$ and from $v_0$ to $v$ both constructed in a deterministic way such that $\pi_{u,u_0}$ and $\pi_{v_0,v}$ do not have any vertex in common, are entirely contained in $\Bds$, have only $u$ or $v$ as points on the boundary of $\Bds$, and their lengths are bounded from above by $r_2 N - (\lll+3)$.

	\begin{figure}
		\begin{center}
			\begin{tikzpicture}[scale=0.6]
				\draw[dashed] (0,0) rectangle (10,10);
				\draw[dashed] (0.5,0.5) rectangle (9.5,9.5);
				\draw (1.5,1.5) rectangle (8.5,8.5);
				\draw[line width=1.2pt,color=Green] (5,10) -- (5,9.5) -- (0.5,9.5) -- (0.5,0.5) -- (8,0.5) -- (8,1.5);
				\draw[line width=1.2pt,color=Red] (0,6.5) -- (0,0) -- (10,0) -- (10,5.5) -- (8.5,5.5); 
				\draw (8.5,8.25) node[below,left,scale=0.5] {$B_\infty(sN,\lll)$};
				\draw (9.5,9.2) node[above,left,scale=0.5] {$B_\infty(sN,\lll+2)$};
				\draw (10,9.75) node[above,left,scale=0.5] {$B_\infty(sN,\lll+3)$};
				\draw[color=Red] (0,6.5) node[left,scale=0.8] {$u_0$};
				\draw[color=Red] (10,5.5) node[right,scale=0.8] {$u^{\Lambda,0}$};
				\draw[color=Red] (8.5,5.5) node[left,scale=0.8] {$u^{\Lambda}$};
				\draw[color=Red] (2,0) node[below,scale=1] {$\pi_{u_0,u^\Lambda}$};
				\draw[color=Green] (5,10) node[above,scale=0.8] {$v_0$};
				\draw[color=Green] (8,0.5) node[below,right,fill=white,scale=0.8] {$v^{\Lambda,0}$};
				\draw[color=Green] (8,1.5) node[above,scale=0.8] {$v^{\Lambda}$};
				\draw[color=Green] (3,9.5) node[below,scale=1] {$\pi_{v^\Lambda,v_0}$};
				\draw[dashed] (15,0) rectangle (25,10);
				\draw[dashed] (15.5,0.5) rectangle (24.5,9.5);
				\draw[dashed] (16,1) rectangle (24,9);
				\draw (16.5,1.5) rectangle (23.5,8.5);
				\draw[line width=1.4pt,color=gray] (12.5,-2) -- (12.5,12);
				\draw[line width=1.2pt,color=Green] (17,0) -- (25,0) -- (25,10) -- (15,10) -- (15,6) -- (16,6) -- (16,5.5) -- (16.5,5.5);
				\draw[line width=1.2pt,color=Red] (15,5.5) -- (15,0) -- (16.5,0) -- (16.5,0.5) -- (17,0.5) -- (17,1.5);
				\draw (23.5,8.25) node[below,left,scale=0.5] {$B_\infty(sN,\lll)$};
				\draw (24,8.75) node[above,left,scale=0.5] {$B_\infty(sN,\lll+1)$};
				\draw (24.5,9.25) node[above,left,scale=0.5] {$B_\infty(sN,\lll+2)$};
				\draw (25,9.75) node[above,left,scale=0.5] {$B_\infty(sN,\lll+3)$};
				\draw[color=Red] (15,5.5) node[left,scale=0.8] {$u_0$};
				\draw[color=Red] (16.5,0) node[below,scale=0.8] {$u''_0$};
				\draw[color=Red] (17,1.5) node[above,scale=0.8] {$u^{\Lambda}$};
				\draw[color=Red] (15,3) node[left,scale=1] {$\pi_{u_0,u^\Lambda}$};
				\draw[color=Green] (17.2,-0.1) node[below,scale=0.8] {$v_0$};
				\draw[color=Green] (15,6.1) node[left,scale=0.8] {$v''_0$};
				\draw[color=Green] (16.5,5.5) node[right,scale=0.8] {$v^{\Lambda}$};
				\draw[color=Green] (21,0) node[below,scale=1] {$\pi_{v^\Lambda,v_0}$};
			\end{tikzpicture}
			\caption{Example of the construction of the portion of $\pi$ which is contained in $B_\infty(sN,\lll+3) \setminus B_\infty(sN,\lll)$ in dimension $2$. On the left, this is an example of the case where $\|u_0-v^\Lambda\|_1>3$ or $\|v_0-u^\Lambda\|_1>3$ and on the left the other case.}\label{Figure de l'annexe de la construction du chemin pi. }
		\end{center}
	\end{figure}
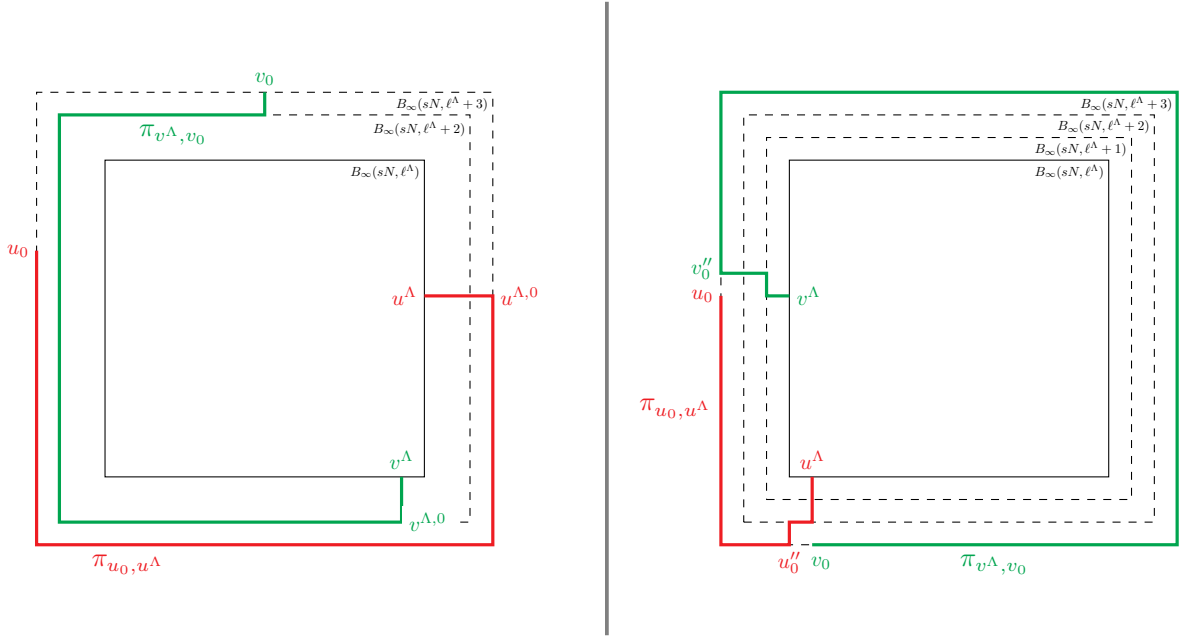 
	
	Then we build two paths $\pi_{u_0,u^\Lambda}$ and $\pi_{v^\Lambda,v_0}$ respectively from $u_0$ to $u^\Lambda$ and from $v^\Lambda$ to $v_0$ contained in $B_\infty(sN,\lll+3) \setminus B_\infty(sN,\lll)$ except for $u^\Lambda$ and $v^\Lambda$, such that they do not have any vertex in common. By the definition of $\Ka$, this implies that the sum of their lengths is bounded from above by $\Ka$ (recall that $\Ka$ is the number of edges in $B_\infty(0,\lll+3)$).
	To get these paths, assume first that $\|u_0-v^\Lambda\|_1>3$ or $\|v_0-u^\Lambda\|_1>3$. Assume that we have $\|v_0-u^\Lambda\|_1>3$, the other case being the same.
	We begin by considering a path $\tpm$ going in a shortest way from $v_0$ to a vertex of the boundary of $B_\infty(sN,\lll+2)$, denoted by $v'_0$. This path has at most $d$ edges.
	Then, let $u^{\Lambda,0}$ be a vertex on the boundary of  $B_\infty(sN,\lll+3)$ such that $\|u^\Lambda-u^{\Lambda,0}\|_1=3$. We get $\pi_{u_0,u^\Lambda}$ by going from $u_0$ to $u^{\Lambda,0}$ in a shortest way on the boundary of $B_\infty(sN,\lll+3)$ avoiding all vertices of $\tpm$, and then by going from $u^{\Lambda,0}$ to $u^\Lambda$ by three steps. This is possible since $\|v_0-u^\Lambda\|_1>3$.
	To get $\pi_{v_0,v^\Lambda}$, we begin by following $\tpm$. Let $v^{\Lambda,0}$ be a vertex on the boundary of $B_\infty(sN,\lll+2)$ such that $\|v^\Lambda-v^{\Lambda,0}\|_1=2$. Then, $\pi_{v_0,v^\Lambda}$ goes from $v'_0$ to $v^{\Lambda,0}$ in a shortest way on the boundary of $B_\infty(sN,\lll+2)$ avoiding the unique vertex of $\pi_{u_0,v^\Lambda}$ belonging to $B_\infty(sN,\lll+2)$. Finally, $\pi_{v_0,v^\Lambda}$ goes from $v^{\Lambda,0}$ to $v^\Lambda$ by two steps (see the left side of Figure \ref{Figure de l'annexe de la construction du chemin pi. } for an example of this construction in dimension $2$).
	
	Now, if $\|u_0-v^\Lambda\|_1=3$ and $\|v_0-u^\Lambda\|_1=3$, the construction has to be slightly different. From $u_0$, $\pi_{u_0,u^\Lambda}$ goes to a vertex $u''_0$ belonging to the boundary of $B_\infty(sN,\lll+3)$ and such that $\|u''_0-v_0\|_1=1$ in a shortest way on the boundary of $B_\infty(sN,\lll+3)$. After, it makes one step to go to the boundary of $B_\infty(sN, \lll + 2)$ and goes to $u^\Lambda$ in a shortest way by taking only one vertex in $B_\infty(sN,\lll+1)$. Then, $\pi_{v_0,v^\Lambda}$ goes on the boundary of $B_\infty(sN,\lll+3)$ to a vertex $v''_0$ belonging to the boundary of $B_\infty(sN,\lll+3)$ and such that $\|v''_0-u_0\|_1=1$ by avoiding every vertex which belongs to $\pi_{u_0,u^\Lambda}$, and then makes two steps to go to the boundary of $B_\infty(sN,\lll+1)$ and goes to $v^\Lambda$ in a shortest way (see the right side of Figure \ref{Figure de l'annexe de la construction du chemin pi. } for an example of this construction in dimension $2$). In this case, we also have that the sum of their lengths is bounded from above by $\Ka$.
	
	Finally, $\pi$ is the path obtained by concatenating $\pi_{u,u_0}$, $\pi_{u_0,u^\Lambda}$, a path going from $u^\Lambda$ to $v^\Lambda$ in a shortest way for the norm $\|.\|_1$, $\pi_{v^\Lambda,v_0}$ and $\pi_{v_0,v}$ in this order. We have that $\pi$ is a self-avoiding path contained in $\Bds$ and has only $u$ and $v$ on the boundary of $\Bds$.
	
	\section{Overlapping pattern in the bounded case}\label{Annexe overlapping pattern}
	
	\begin{proof}[Proof of Lemma \ref{17. Lemme motif valable}]
		Recall that $\nu$ is fixed at the beginning of Section \ref{Section cas borné.} and that $\displaystyle \delta'=\min \left(\frac{\delta}{8}, \frac{\delta}{1+d} \right)$. Then, we fix a positive real $\nu_0$ such that: 
		\begin{itemize}[label=\textbullet]
			\item $\r+\delta \le \nu_0 \le \nu \le \ST$,
			\item the event $\cA^\Lambda \cap \{\forall e \in \Lambda, \, t_e \le \nu_0\}$ has a positive probability, 
			\item $F([\nu_0,\nu])>0$. 
		\end{itemize}
		Notice that, if $F$ has an atom, one could have $\nu_0=\nu$, or even $\nu_0=\nu=\ST$. Then, fix
		\begin{equation}
			\lll = \max(L_1,\dots,L_d)  , \label{17. on fixe llambda}
		\end{equation}
		
		\begin{equation}
			\l_1 > \frac{4d\lll\left(\nu_0-\r-\frac{\delta'}{2}\right)}{\nu_0-\r-\delta'}  , \label{17. on fixe l2}
		\end{equation}
		
		\begin{equation}
			\mbox{and } \l_0 > \frac{\lll(\left(2d+1) \nu_0 + (2d-1)\r + 2d\delta'\right) + \l_1(\nu_0+3\r+4\delta')}{\nu_0-\r-2\delta'}. \label{17. on fixe l1}
		\end{equation}
		Fix also 
		\begin{equation}
			\delta'' < \min \left( \delta', \frac{\nu_0-\r}{2d\l_0} \right). \label{17. on fixe delta''}
		\end{equation}
		
		Let $j \in \{1,...,d\}$. Let us construct the overlapping pattern in $\Lambda_0 = \{-\l_0,...,\l_0\}^d$ with endpoints $\l_0 \epsilon_j$ and $- \l_0 \epsilon_j$. We denote by $u_3$ and $v_3$ the endpoints of the original pattern. The first step is the construction of an oriented path going from the face of $\Lambda_0$ containing $\l_0 \epsilon_j$ to the face containing $-\l_0 \epsilon_j$ and visiting $u_3$ and $v_3$. We denote by $\Lambda_1$ the set $\displaystyle \prod_{i=1}^d \{-\l_1,\dots,L_i+\l_1\}$. Then we define $u_2$ (resp. $v_2$) the vertex of $\partial \Lambda_1$ which can be linked to $u_3$ (resp. $v_3$) by a path using exactly $\l_1$ edges in only one direction (the direction of the external normal vector associated to $u_3$ (resp. $v_3$)). If $u_3$ or $v_3$ are associated to several external normal unit vectors, then the choice of the direction is not unique. We choose one external normal unit vector in an arbitrary way but we ensure that the two normal unit vectors chosen to build $u_2$ and $v_2$ are different. Note that this is possible since the pattern is valid (recall Definition \ref{Définition motif valable.}). For a vertex $z \in \Z^d$, denote by $z(j)$ its $j$-th coordinate. Even if it means exchanging the roles of $u_3$ and $v_3$, we can assume that $u_2(j) \ge v_2(j)$. Then, we define $u_1 = u_2 + (\l_0-u_2(j)) \epsilon_j$ and $v_1 = v_2 - (\l_0+v_2(j)) \epsilon_j$. Note that $u_1$ (resp. $v_1$) belongs to the face of $\Lambda_0$ containing $\l_0 \epsilon_j$ (resp. $-\l_0 \epsilon_j$).
		We define a path $\tpm$ going from $\l_0 \epsilon_j$ to $u_1$ in a shortest way, then to $u_2$ in the shortest way, then to $u_3$ in the shortest way, then to $v_3$ in a shortest way, then to $v_2$ in the shortest way, then to $v_1$ in the shortest way and to $v_0$ in a shortest way. Note that $\tpm_{u_1,v_1}$ is an oriented path. Indeed, $\tpm_{u_3,v_3}$ is an oriented path. Extending it outside $\Lambda$ following the direction of an external normal unit vector preserves the fact that it is oriented. Thus $\tpm_{u_2,v_2}$ is an oriented path. Then, we assume that $u_2(j) \ge v_2(j)$ on purpose to guarantee that, with the definitions of $u_1$ and $v_1$, the path $\tpm_{u_1,v_1}$ remains an oriented path.

		\begin{figure}
			\begin{center}
				\begin{tikzpicture}[scale=1.3]
					\draw (-3,-3) rectangle (3,3);
					\draw (-3,3) node[below right] {$\Lambda_0$};
					\draw[pattern= north west lines, pattern color=gray!70] (0,0) rectangle (1,0.6);
					\draw (0,0.6) node[below right] {$\Lambda$};
					\draw[dashed] (-0.8,-0.8) rectangle (1.8,1.4);
					\draw (-0.8,1.4) node[below right] {$\Lambda_1$};
					\draw[line width=1.5pt] (0,3) -- (1.8,3) -- (1.8,0.2) -- (1,0.2);
					\draw[line width=1.5pt] (0,-3) -- (0.7,-3) -- (0.7,0);
					\draw[line width=1.5pt] (0.7,0) -- (0.7,0.2) -- (1,0.2);
					\draw (0,3) node[above] {$\l_0 \epsilon_2$};
					\draw (0,3) node {$\bullet$};
					\draw (1.8,3) node[above right] {$u_1$};
					\draw (1.8,3) node {$\bullet$};
					\draw (1.8,0.2) node[right] {$u_2$};
					\draw (1.8,0.2) node {$\bullet$};
					\draw (1,0.2) node[above right] {$u_3$};
					\draw (1,0.2) node {$\bullet$};
					\draw (0,-3) node[below] {$-\l_0 \epsilon_2$};
					\draw (0,-3) node {$\bullet$};
					\draw (0.7,-3) node[below right] {$v_1$};
					\draw (0.7,-3) node {$\bullet$};
					\draw (0.7,-0.8) node[below left] {$v_2$};
					\draw (0.7,-0.8) node {$\bullet$};
					\draw (0.7,0) node[below left] {$v_3$};
					\draw (0.7,0) node {$\bullet$};
					\draw (1.8,1.65) node[right] {$\tpm$};
					\draw [<->] (1,3.3) -- (1.8,3.3) node[midway,above] {$\l_1$};
					\draw [<->] (3.3,3) -- (3.3,0) node[midway,above,sloped] {$\l_0$};
				\end{tikzpicture}
				\caption{Example of the construction of $\tpm$ in an overlapping pattern in dimension $2$.}\label{Figure du overlapping pattern.}
			\end{center}
		\end{figure}
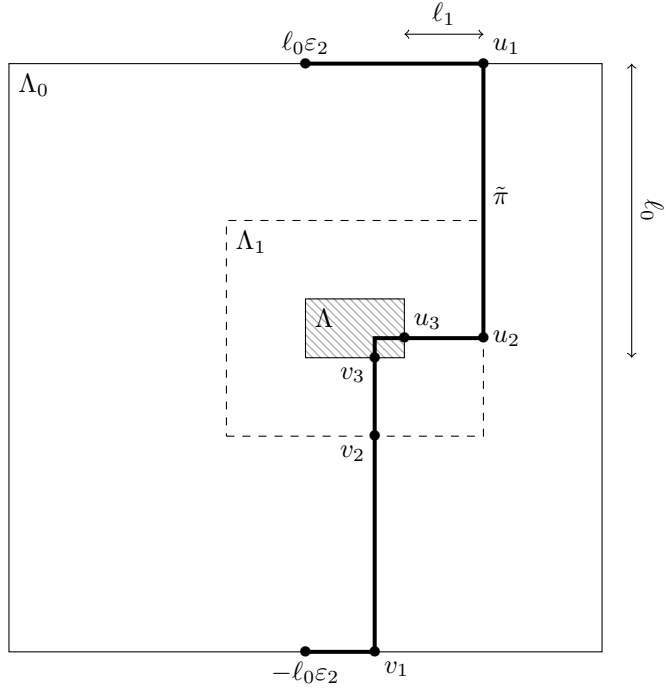
		
		The event (whose probability is positive) of the overlapping pattern, denoted by $\cA^{\Lambda_0}_j$ is the following:  
		
		\begin{itemize}
			\item for all $e \in \tpm_{\l_0 \epsilon_j,u_3}$ and $e \in \tpm_{v_3,-\l_0 \epsilon_j}$, $T(e) < \r + \delta''$,
			\item the event $\cA^\Lambda \cap \{ \forall e \in \Lambda, \, T(e) \le \nu_0 \}$ is realized,
			\item for all $e$ in $\Lambda_0 \setminus (\tpm \cup \Lambda)$, $\nu_0 \le T(e) \le \nu$.
		\end{itemize}
		
		On this event, let $\OG$ be one of the fastest path from $\l_0 \epsilon_j$ to $-\l_0 \epsilon_j$ among the path entirely contained in $\Lambda_0$ and let us show that $\OG$ visits $u_3$ and $v_3$ and that $\OG_{u_3,v_3}$ is entirely contained in $\Lambda$. We proceed by proving successive properties.
		
		\begin{itemize}
			\item \textit{There exist one vertex $a_0$ of $\tpm_{u_1,u_2}$ and one vertex $b_0$ of $\tpm_{v_2,v_1}$ visited by $\OG$. Further, $\OG_{\l_0 \epsilon_j,a_0}=\tpm_{\l_0 \epsilon_j,a_0}$ and $\OG_{b_0,-\l_0 \epsilon_j}=\tpm_{b_0,-\l_0 \epsilon_j}$.}
			
			\medskip
			
			Let us assume that $\OG$ does not visit any vertex of $\tpm_{u_1,u_2}$, the other case being the same. The path $\OG$ has to take at least $\l_0-\lll-\l_1$ edges connecting vertices such that at least one of them has its $j$-th coordinate strictly between $\lll+\l_1$ and $\l_0$. The only edges in this set whose passage time is smaller than $\nu_0$ are those of $\tpm_{u_1,u_2}$. Hence \[T(\OG) \ge (\l_0-\lll-\l_1) \nu_0 + (\l_0 + \lll + \l_1)\r. \]
			But, thanks to our construction, we have
			\begin{align*}
				T(\tpm) & \le \underbrace{T(\tpm_{\l_0 \epsilon_j,u_1})+T(\tpm_{v_1,-\l_0 \epsilon_j})}_{\le (2d\lll+2\l_1)(\r+\delta'')} + \underbrace{T(\tpm_{u_1,u_2}) + T(\tpm_{v_2,v_1})}_{\le 2 \l_0 (\r+\delta'')} + \underbrace{T(\tpm_{u_2,u_3}) + T(\tpm_{v_3,v_2})}_{\le 2 \l_1 (\r+\delta'')} + \underbrace{T(\tpm_{u_3,v_3})}_{\le 2d\lll\nu_0} \\
				& \le 2(d\lll+2\l_1+\l_0)(\r+\delta') + 2d\lll\nu_0 \text{ since $\delta'' < \delta'$.}
			\end{align*}
			
			So, \eqref{17. on fixe l1} leads to $T(\tpm) < T(\OG)$, which is impossible.
			
			To prove that $\OG_{\l_0 \epsilon_j,a_0}=\tpm_{\l_0 \epsilon_j,a_0}$, we use the fact that by the construction, if $\OG_{\l_0 \epsilon_j,a_0}\ne\tpm_{\l_0 \epsilon_j,a_0}$, $\OG_{\l_0 \epsilon_j,a_0}$ has to take at least one edge whose time is greater than or equal to $\nu_0$. Hence, \[T(\OG_{\l_0 \epsilon_j,a_0}) \ge \nu_0 + (\|\l_0 \epsilon_j-a_0\|_1-1)\r,\] although 
			\[T(\tpm_{\l_0 \epsilon_j,a_0}) \le \|\l_0 \epsilon_j-a_0\|_1 (\r + \delta'').\]
			Since $\|\l_0 \epsilon_j-a_0\|_1 \le 2d\l_0$, \eqref{17. on fixe delta''} leads to $T(\tpm_{\l_0 \epsilon_j,a_0}) < T(\OG_{\l_0 \epsilon_j,a_0})$, which is impossible sing $\OG$ is an optimal path. 
			
			\item \textit{If $a_1$ (resp. $b_1$) is a vertex of $\tpm_{u_2,u_3}$ (resp. $\tpm_{v_3,v_2})$ visited by $\OG$, then $\OG_{\l_0 \epsilon_j,a_1}=\tpm_{\l_0 \epsilon_j,a_1}$ (resp. $\OG_{b_1,-\l_0 \epsilon_j}=\tpm_{b_1,-\l_0 \epsilon_j}$).}
			
			\medskip
			
			Indeed, let $a_1$ be such a vertex and let $a_0$ be the vertex of the preceding property. We only have to prove that $\OG_{a_0,a_1}=\tpm_{a_0,a_1}$ and the proof is the same as the one for $\OG_{\l_0 \epsilon_j,a_0}=\tpm_{\l_0 \epsilon_j,a_0}$.
		\end{itemize}
		
		Among the vertices visited by $\OG$, we denote by $a$ (resp. $b$) the last vertex of $\tpm_{\l_0 \epsilon_j,u_3}$ (resp. the first vertex of $\tpm_{v_3,-\l_0 \epsilon_j}$). Then, $\OG_{a,b}$ does not visit any vertex of $\tpm_{\l_0 \epsilon_j,u_3} \cup \tpm_{v_3,-\l_0 \epsilon_j}$ (except $a$ and $b$). Indeed, by the two properties above, we have that $\OG_{\l_0 \epsilon_j,a}=\tpm_{\l_0 \epsilon_j,a}$ and $\OG_{b,-\l_0 \epsilon_j}=\tpm_{b,-\l_0 \epsilon_j}$ and $\OG_{a,b}$ can not visit a vertex of $\tpm_{a,u_3} \cup \tpm_{v_3,b}$ thanks to the definition of $a$ and $b$. 
		
		\begin{itemize}
			\item \textit{The vertex $a$ (resp. $b$) belongs to $\tpm_{u_2,u_3}$ (resp. $\tpm_{v_3,v_2}$).}
			
			\medskip
			
			Assume that $a$ or $b$ does not satisfy this property. Then $\| a - u_3 \|_1 \ge \l_1$ or $\| b - v_3 \|_1 \ge \l_1$. Since $\tpm_{a,b}$ is oriented and since $\|u_3-v_3\|_1 \le 2d\lll$, \[T(\tpm_{a,b}) \le (\|a-b\|_1-2d\lll)(\r+\delta'')+2d\lll \nu_0 \le (\|a-b\|_1-2d\lll)(\r+\delta')+2d\lll \nu_0.\]
			
			Then, since the edges whose time is smaller than $\nu_0$ taken by $\OG_{a,b}$ are those in $\Lambda$, \[T(\OG_{a,b}) \ge (\|a-b\|_1-2d\lll) \nu_0 + 2d\lll \r.\]
			
			Using \eqref{17. on fixe l2}, we get $T(\OG_{a,b}) > T(\tpm_{a,b})$, which is impossible.
			\item \textit{We have that $a=u_3$ and $b=v_3$.}
			
			\medskip
			Assume that $a \neq u_3$, the other case being the same. 
			We have \[T(\tpm_{a,b}) \le (\|a-b\|_1-\|u_3-v_3\|_1)(\r+\delta') + \|u_3-v_3\|_1 \nu_0,\] and thus $\OG_{a,b}$ takes at least one edge of $\Lambda$ otherwise $T(\OG_{a,b}) \ge \|a-b\|_1 \nu_0 > T(\tpm_{a,b})$ since $\|a-b\|_1-\|u_3-v_3\|_1 > 0$.
			Let $u_0$ be the first entry point of $\OG_{a,b}$ in $\Lambda$ and let us consider the path $\pi$ following $\tpm_{a,u_3}$, then going from $u_3$ to $u_0$ in a shortest way and then following $\OG_{u_0,b}$. Then, the number of edges of $\pi_{a,u_0}$ is lower than or equal to the number of edges of $\OG_{a,u_0}$, for all $e \in \pi_{a,u_0}$, $T(e) \le \nu_0$, there exists $e' \in \pi_{a,u_0}$ such that $T(e') \le \r + \delta'$ and for all $e \in \OG_{a,u_0}$, $T(e) \ge \nu_0$. So, we have $T(\pi) < T(\OG_{a,b})$ which is impossible since $\OG$ is an optimal path among paths entirely contained in $\Lambda_0$. 
			
			\item \textit{$\OG_{a,b}$ takes the original pattern.}
			
			\medskip
			
			Assume that $\OG_{a,b}$ is not entirely contained in $\Lambda$. Let $v_0$ be the first exit point from $\Lambda$ of $\OG_{a,b}$ and $u_0$ the first entry point after $v_0$. Let us consider the shortcut $\pi$ going from $v_0$ to $u_0$ in a shortest way. Then, using the same argument as in the proof of Lemma \ref{17. Toute géodésique qui touche pi avant et après le motif emprunte le motif.}, we have that $\OG_{v_0,u_0}$ has strictly more edges than $\pi$. Furthermore, all edges of $\OG_{v_0,u_0}$ have a time greater than or equal to $\nu_0$ although all edges of $\pi$ have a time lower than or equal to $\nu_0$. So, $T(\pi)<T(\OG_{v_0,u_0})$, which is impossible since $\OG$ is an optimal path among paths entirely contained in $\Lambda_0$. Since $\OG_{a,b}$ is a path entirely contained in $\Lambda$, going from $u_3$ to $v_3$ and with an optimal time, so we have the result. 
			
			
		\end{itemize}
	\end{proof}


\begin{thebibliography}{1}
	
	\bibitem{AndjelVares}
	Enrique~D. Andjel and Maria~E. Vares.
	\newblock {First passage percolation and escape strategies}.
	\newblock {\em {Random Structures and Algorithms}}, 47(3):414--423, October
	2015.
	
	\bibitem{50years}
	A.~Auffinger, M.~Damron, and J.~Hanson.
	\newblock {\em 50 years of first-passage percolation}.
	\newblock American Mathematical Society, 2017.
	
	\bibitem{Boucheron}
	S.~Boucheron, G.~Lugosi, and P.~Massart.
	\newblock {\em Concentration Inequalities: A Nonasymptotic Theory of
		Independence}.
	\newblock OUP Oxford, 2013.
	
	\bibitem{Grimmett}
	Geoffrey Grimmett.
	\newblock {\em Percolation}, volume 321 of {\em Grundlehren der Mathematischen
		Wissenschaften [Fundamental Principles of Mathematical Sciences]}.
	\newblock Springer-Verlag, Berlin, second edition, 1999.
	
	\bibitem{KRAS}
	Arjun Krishnan, Firas Rassoul-Agha, and Timo Sepp{\"a}l{\"a}inen.
	\newblock Geodesic length and shifted weights in first-passage percolation.
	\newblock {\em arXiv preprint arXiv:2101.12324}, 2021.
	
	\bibitem{LSS}
	T.~M. Liggett, R.~H. Schonmann, and A.~M. Stacey.
	\newblock {Domination by product measures}.
	\newblock {\em The Annals of Probability}, 25(1):71 -- 95, 1997.
	
	\bibitem{Nakajima}
	Shuta Nakajima.
	\newblock On properties of optimal paths in first-passage percolation.
	\newblock {\em Journal of Statistical Physics}, 174(2):259–275, Oct 2018.
	
	\bibitem{VdBK}
	J.~van~den Berg and H.~Kesten.
	\newblock {Inequalities for the Time Constant in First-Passage Percolation}.
	\newblock {\em The Annals of Applied Probability}, 3(1):56 -- 80, 1993.
	
\end{thebibliography}
\end{document}